\documentclass{amsart}
\newcommand{\RR}[1]{\textcolor{red}{#1}}
\RequirePackage{ntrees}
\usepackage{amssymb}
\usepackage{amsmath}
\usepackage{amsfonts}
\usepackage{geometry}
\usepackage{bbm}
\usepackage{hyperref}
\usepackage{tikz}
\usepackage{enumitem}
\usepackage[utf8]{inputenc}
\usepackage{xcolor}
\usepackage{mathrsfs}
\usepackage{multirow}
\usetikzlibrary{matrix,arrows,decorations.pathmorphing}
\usepackage{verbatim }
\usepackage{mathtools}
\usepackage{mathrsfs}
\usetikzlibrary{arrows.meta, positioning}
\usepackage[author={Sebastian}]{pdfcomment}
\newcommand{\triplenorm}[1]{\lvert\!\lVert #1 \rVert\!\rvert}

\newcounter{cprop}[section]

\newtheorem{theorem}[cprop]{Theorem}
\usepackage{tikz}
\usetikzlibrary{arrows.meta,calc,decorations.markings}

\definecolor{baseblue}{RGB}{0,28,210}
\definecolor{stablegreen}{RGB}{0,107,14}
\definecolor{unstablered}{RGB}{255,18,13}
\definecolor{fiberpurple}{RGB}{92,0,157}

\usepackage{tikz}
\usetikzlibrary{decorations.markings}

\theoremstyle{plain}

\newtheorem{corollary}[cprop]{Corollary}

\newtheorem{lemma}[cprop]{Lemma}
\newtheorem{proposition}[cprop]{Proposition}

\newtheorem{assumption}[cprop]{Assumption}

\numberwithin{equation}{section}

\newtheorem{definition}[cprop]{Definition}

\theoremstyle{remark}
\newtheorem{remark}[cprop]{Remark}

\newcommand{\E}{\mathbb{E}}
\renewcommand{\P}{\mathbb{P}}

\newcommand{\R}{\mathbb{R}}

\newcommand{\Z}{\mathbb{Z}}
\newcommand{\T}{\mathbb{T}}
\renewcommand{\d}{\mathrm{d}}

\newcommand{\vertiii}[1]{{\left\vert\kern-0.25ex\left\vert\kern-0.25ex\left\vert #1 
		\right\vert\kern-0.25ex\right\vert\kern-0.25ex\right\vert}}

\begin{document}
	\title[Invariant manifolds for RDEs]{Invariant Manifolds and Stability for Singular SPDEs
	}
	
	\author{M. Ghani Varzaneh}
	\address{Mazyar Ghani Varzaneh\\
		Fachbereich Mathematik und Statistik, Universität Konstanz, Konstanz, Germany}
	\email{mazyar.ghani-varzaneh@uni-konstanz.de \ \ and \ \ mazyarghani69@gmail.com}

\subjclass[2020]{60H17, 60L30, 37H10, 37H15, 37H30, 37L55.}

\keywords{singular stochastic partial differential equations, invariant manifolds, regularity structures, random dynamical systems, Lyapunov exponents, stability}

	\begin{abstract}
We study the long-time dynamics of a class of singular stochastic partial differential equations with multiplicative Gaussian noise. Using the theory of regularity structures, we construct the associated random dynamical system and establish an ergodic framework for the underlying random models. Combining pathwise estimates for the singular equation with a multiplicative ergodic theorem in Banach spaces, we analyse the Lyapunov spectrum of the linearized dynamics and establish the existence of local stable, unstable, and center invariant manifolds around stationary points. As a consequence, when the linearized equation is exponentially stable, we obtain a local exponential stability result for the nonlinear equation.
	\end{abstract}

	\maketitle
	\tableofcontents
	\section*{Introduction}
	The main goal of this work is to establish the existence of invariant manifolds for a class of SPDEs with multiplicative noise. More precisely, we consider semilinear stochastic partial differential equations of the form
	\begin{align}\label{MAIN0}
		\begin{cases}
			(\partial_t-\Delta)u=\mu u+\Sigma(u)\xi, \quad t>0,\\
			u(0,\cdot)=u_0\in C(\mathbb{T}^d).
		\end{cases}
	\end{align}
	Here, $\mu\leq 0$, $\Sigma$ denotes the multiplicative coefficient, and $\xi$ is a spatially and temporally irregular noise, interpreted within the framework of regularity structures. We assume that the temporal component of the noise is generated by a process with stationary increments. In the setting of stochastic differential equations, typical examples are Brownian motion and fractional Brownian motion.
	
	\noindent\textbf{Motivation and main idea.}
	The motivation for studying invariant manifolds in this setting comes from the theory of random dynamical systems (RDS), which provides a natural framework for understanding the long-time behaviour of stochastic equations. In particular, one is interested in how randomness influences the stability and asymptotic behaviour of solutions, including the existence of invariant objects, synchronization phenomena, and Lyapunov exponents.
	Random dynamical systems extend classical concepts from dynamical systems to systems evolving under random forcing. A central object in this framework is a family of random maps describing the evolution of the state from a given initial condition under a realization of the noise, together with the temporal evolution of the noise itself. The theory has been developed extensively for stochastic differential equations and stochastic evolution equations; see, for instance, the monograph \cite{Arn98} for a comprehensive treatment and an account of its applications to stochastic dynamics.
	
	The construction of a random dynamical system for an SPDE requires more than the existence of solutions for individual initial conditions. One also needs suitable continuity of the resulting solution flow with respect to the initial condition. For stochastic differential equations in finite dimensions, a standard approach is to regard the solution as a random function of the initial condition and apply the Kolmogorov--Chentsov continuity theorem. Suitable moment estimates for the increments of the flow then yield the required continuity. This argument, however, relies essentially on the fact that the initial condition ranges over a finite-dimensional parameter space. For an SPDE, the initial condition belongs to an infinite-dimensional state space, and this standard application of the Kolmogorov--Chentsov theorem is therefore no longer available. Consequently, establishing the continuity with respect to the initial condition required for the construction of a random dynamical system becomes a fundamental difficulty in the SPDE setting.
	
	A common way to overcome this difficulty is to transform the SPDE into a random PDE, so that, for each realization of the noise, the resulting equation can be treated pathwise without stochastic integration. This approach is particularly effective for equations with additive noise. Its applicability to multiplicative noise, however, is considerably more restricted: such a transformation is generally available only under particular structural assumptions on the diffusion coefficient, for example when it depends linearly on the solution, as in \cite{DLS04}. For a discussion of stochastic flows for SPDEs, see also \cite[Section~9.1.2]{DZ14}. It is worth mentioning that the existence of a stochastic flow can already be nontrivial in the finite-dimensional setting; see \cite{LS11}.
	
	These limitations make the theory of random dynamical systems considerably less accessible for SPDEs with genuinely multiplicative noise. When the noise is additive, the construction of the associated random dynamical system is, in many important settings, relatively well understood; see, for example, \cite{MZ08}. This has enabled the study of important dynamical questions, most notably the existence and properties of random attractors; see \cite{CF94,BLW09}. The restriction to additive noise, however, leaves outside this framework many problems in which the interaction between the noise and the state of the system plays an essential role. This motivates the development of methods capable of treating genuinely multiplicative noise.
	
	In recent years, several approaches have been developed to overcome related difficulties and extend the theory of random dynamical systems to more general classes of stochastic equations. Among these, rough path theory has played an important role; see \cite{FV10,GH19,FH20}. A central idea of this theory is that, for sufficiently irregular driving signals, the first-order increments of the path do not contain enough information to define integration against the driving signal. One therefore enriches the driving signal by higher-order iterated integrals, which encode the interactions between its increments. For Gaussian signals, these iterated integrals can in many cases be constructed canonically.
	Rough path theory has subsequently been incorporated into the random dynamical systems framework, leading to a number of results concerning the long-time behaviour of systems driven by irregular noise; see \cite{BRS17}. In this direction, several works have established the existence of random attractors for rough PDEs; see \cite{YLZ23,BNS24}. Related results on invariant manifolds can be found in \cite{GVR25}, where more refined properties of the rough-path structure are exploited to obtain the estimates required for their construction. Similar ideas have also proved useful for stochastic delay equations, where the standard stochastic formulation does not directly provide the dynamical structure required for an RDS; see \cite{GVRS22,GVR25B}.
	
	This rough-path perspective has proved particularly effective for constructing and analysing stochastic flows driven by irregular signals depending only on time. For many SPDEs, however, the situation is fundamentally different, since the noise depends simultaneously on time and space and may be irregular in both variables. A natural framework for treating such equations is provided by the theory of regularity structures, introduced by Hairer; see \cite{Hai14}. This theory is closely related in spirit to rough path theory, but is designed to handle the substantially richer local structure arising from singular space--time distributions.
	Regularity structures provide an abstract framework for describing the local behaviour of highly irregular distributions and for defining the nonlinear operations required to formulate the corresponding equations. The theory has proved particularly successful for singular SPDEs, where classical analytical operations, such as multiplication of distributions, cannot in general be performed directly. These features make regularity structures a natural candidate for combining the analysis of singular SPDEs with the theory of random dynamical systems.
	In particular, the continuity properties of the solution map can be studied directly within the pathwise solution theory, rather than relying on an application of the Kolmogorov--Chentsov theorem with the initial condition treated as a parameter. This provides a natural route towards constructing the dynamical-system structure required in the infinite-dimensional setting and, in turn, towards applying tools from random dynamical systems to singular SPDEs.
	
	Following the foundational work of Hairer, the theory of regularity structures has developed rapidly, with substantial advances in the well-posedness, renormalization, and analytical properties of broad classes of singular SPDEs; see \cite{Ha13,HP15,BHZ19,MW20,CAW23}. Nevertheless, much of this literature has focused on the construction and properties of solutions rather than on their long-time dynamical behaviour. To the best of our knowledge, apart from a few works such as \cite{R22,GT20}, which consider singular SPDEs with additive noise, the interaction between regularity structures and random dynamical systems has not yet been systematically developed. One of the aims of the present work is to take a step in this direction.
	Establishing such a connection is useful from both perspectives. On the one hand, random dynamical systems provide a rich collection of tools for studying the long-time behaviour of stochastic systems, including invariant manifolds, random attractors, synchronization, Lyapunov exponents, and stability. On the other hand, the analytical machinery of regularity structures makes it possible to treat classes of singular SPDEs that are not accessible through classical approaches to random dynamical systems. Bringing these two theories together therefore provides a framework in which genuinely dynamical questions for singular SPDEs can be formulated and investigated.
	
	The main contribution of the present work, however, goes beyond the construction of this connection. Our principal objective is the study of invariant manifolds and stability near stationary points, or random equilibria, for singular SPDEs. Once such a stationary point has been identified, a fundamental question in dynamical systems is to understand the behaviour of solutions in its neighbourhood. Invariant-manifold theory provides one of the principal tools for this purpose, since it gives a nonlinear geometric description of the local dynamics.
	The basic idea is to analyse the linearization of the dynamics around the stationary point and to use the spectral properties of the resulting linearized dynamics to identify directions along which the system exhibits stable, unstable, or neutral behaviour. Under suitable conditions, these linear structures persist at the nonlinear level in the form of invariant manifolds. In this way, invariant manifolds provide a geometric description of the nonlinear dynamics that reflects the spectral structure of the linearized equation.
	
	For deterministic dynamical systems, this theory has been extensively developed, leading to three fundamental types of invariant manifolds: stable, unstable, and center manifolds; see \cite{Wig03,GH83}. Stable and unstable manifolds describe, respectively, the directions along which trajectories approach or depart from an equilibrium, and their existence provides precise information about the corresponding local dynamics. In particular, stable manifolds are closely connected to questions of local stability, as they characterize initial conditions whose trajectories remain near and converge towards the equilibrium.
	Center manifolds arise in the presence of neutral directions, typically associated with eigenvalues having vanishing real part. They reduce the analysis to an invariant set associated with these directions and can capture nonlinear phenomena that are not visible at the level of the linearization. Center manifolds are particularly useful for analysing bifurcations and more intricate forms of local dynamical behaviour; see \cite{Car81,VF13,GW13}.
	
	These invariant manifolds also have natural counterparts in the theory of random dynamical systems. The main difference is that, in the random setting, the invariant manifolds themselves depend on the realization of the driving noise and therefore form random families of sets. Their evolution is coupled to the temporal shift of the noise: as the noise is shifted in time, the corresponding random invariant manifold changes accordingly. Thus, the deterministic notion of an invariant manifold is replaced by a random invariant family whose evolution is governed jointly by the dynamics of the system and the underlying noise. This provides the natural framework for extending stable, unstable, and center manifold theory to stochastic dynamical systems.
	
	Despite this close analogy, the construction of random invariant manifolds presents substantial additional difficulties. In particular, the construction of stable manifolds is technically demanding in infinite-dimensional systems, since it requires control of the dynamics over arbitrarily long time intervals. In the random setting, the relevant estimates depend on the realization of the noise and must remain sufficiently controlled along the entire forward trajectory.
	The unstable-manifold problem introduces a different difficulty when the underlying solution flow is not invertible, since one cannot rely on a globally defined backward evolution. This point is particularly relevant in the present setting, where the solution map is naturally defined only forward in time. Related results on invariant manifolds for stochastic partial differential equations can be found, for example, in
	\cite{DLS004,LS207,MZ08,MZ10,GLS010,TDLS10,MS11,JKB20,WQT23}.
	
	The application of invariant-manifold theory to stochastic equations remains an active area of research, and further developments are needed in settings involving singular equations and infinite-dimensional state spaces. As mentioned above, the construction is particularly delicate in the Banach-space setting; for the Hilbert-space case, see \cite{Rue82}. In the present work, we approach this problem through the multiplicative ergodic theorem (MET) in Banach spaces.
	The multiplicative ergodic theorem provides the Lyapunov spectral structure of the linearized dynamics and, under appropriate assumptions, the corresponding invariant splittings; see \cite{LL10,GTQ15,Blu16}. Once suitable bounds on the nonlinear dynamics have been established, we analyse the linearized equation and identify the relevant Lyapunov subspaces. We then verify the \emph{first-order approximation condition} introduced in Definition~\ref{first-order-condition}, which allows us to deduce the existence of the corresponding invariant manifolds.
	
	The Banach-space formulation of the MET is essential for our analysis. Indeed, the dynamics are formulated on the space $C(\mathbb{T}^d)$, which is a Banach space but not a Hilbert space. Another important feature of our setting is that the solution map is not invertible. Consequently, the dynamics are naturally defined only in the forward time direction, and in general no global backward solution flow is available.
	This approach has several advantages. From a technical point of view, the construction can be reduced to the verification of suitable estimates for the linearized dynamics and the nonlinear remainder, making the argument more modular than a direct construction based on the Lyapunov--Perron method. Moreover, the same framework yields stable, unstable, and center manifolds within a unified argument, rather than requiring separate constructions for each type of manifold. We apply this approach to the SPDE~\eqref{MAIN0}.
	
	\noindent\textbf{Main contributions.}
	Let us now summarize the main contributions of this paper.
	\begin{enumerate}
		
		\item
		We establish the existence of stable, unstable, and center invariant manifolds for the SPDE~\eqref{MAIN0} within the framework of regularity structures; see Theorems~\ref{stable_manifold}, \ref{unstable}, and \ref{center}. The construction combines the pathwise solution theory provided by regularity structures with the Lyapunov spectral information furnished by the multiplicative ergodic theorem.
		
		At an informal level, the main invariant-manifold result can be stated as follows.
		\begin{theorem}
			Let $Y$ be a stationary point for the random dynamical system generated by \eqref{MAIN0}. Then the dynamics admit local stable manifolds associated with the negative Lyapunov spectrum. If the Lyapunov spectrum contains positive, respectively zero, exponents, then local unstable, respectively center, manifolds also exist.
		\end{theorem}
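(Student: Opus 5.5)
The plan is to reduce the statement to an abstract invariant-manifold theorem for random dynamical systems on a Banach space, driven by the multiplicative ergodic theorem. The paper only gestures at that abstract result through the first-order approximation condition of Definition~\ref{first-order-condition}. The work then splits into three parts: constructing the cocycle, analysing the linearization with the MET, and verifying the first-order condition.

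\textbf{Step 1: the random dynamical system.} Fix the regularity structure and the random model $Z(\omega)$ built from $\xi$, renormalized, together with a metric dynamical system $(\Omega,\mathcal F,\P,\theta_t)$ acting by time shifts of the noise. The goal is the cocycle identity $Z(\theta_s\omega)=$ shift of $Z(\omega)$, holding for all $\omega$ in a $\theta$-invariant full-measure set. This uses stationarity of the temporal increments and the fact that the renormalization constants are time-homogeneous. Pathwise well-posedness of the fixed-point problem in the modelled-distribution space then gives a solution map $\varphi(t,\omega,u_0)$ on $C(\mathbb T^d)$. Since $\mu\le 0$ and the equation is linear in $\xi$, the solution is global. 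The fixed-point map is locally Lipschitz, even Fr\'echet differentiable, in $(u_0,Z)$, so $\varphi$ is continuous, and in fact $C^1$, in $u_0$. It is also measurable, and it satisfies the cocycle property by restarting the abstract equation at time $s$ with the shifted model.

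\textbf{Step 2: linearization and the MET.} For a stationary point $Y(\omega)$, meaning $\varphi(t,\omega,Y(\omega))=Y(\theta_t\omega)$, consider the linear cocycle $\psi(t,\omega)=D\varphi(t,\omega,Y(\omega))$. This is the solution operator of the linearized singular equation $(\partial_t-\Delta)v=\mu v+\Sigma'(Y)v\,\xi$, posed within the same structure. Two properties have to be checked:
\begin{itemize}
\item $\psi(1,\omega)$ is compact on $C(\mathbb T^d)$, by the smoothing of the heat semigroup;
\item $\log^+\sup_{t\in[0,1]}\|\psi(t,\omega)\|\in L^1(\P)$.
\end{itemize}
The integrability follows from pathwise bounds of the form $\|\psi\|\le \exp(C\,\vertiii{Z}^{p})$ together with Gaussian moments of the model norm, provided $Y$ satisfies a tempered bound such as $\log^+\|Y\|\in L^1$. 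The Banach-space MET for compact non-invertible cocycles (\cite{LL10,GTQ15,Blu16}) then yields Lyapunov exponents $\lambda_1>\lambda_2>\cdots$ and a measurable invariant splitting. The unstable and center parts of this splitting are finite dimensional, and the stable part is a complemented closed subspace.

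\textbf{Step 3: the first-order approximation condition.} Set $P(t,\omega,\xi)=\varphi(t,\omega,Y(\omega)+\xi)-Y(\theta_t\omega)-\psi(t,\omega)\xi$. One must show that for $\|\xi\|,\|\tilde\xi\|\le\rho(\omega)$,
\[
\|P(1,\omega,\xi)-P(1,\omega,\tilde\xi)\|\le \|\xi-\tilde\xi\|\,f(\omega)\,h(\|\xi\|+\|\tilde\xi\|),
\]
where $h(r)\to 0$ as $r\to0$, $\log^+ f\in L^1$, and $\rho$ is tempered from below. The estimate comes from a second-order Taylor expansion of the fixed-point map: it requires $\Sigma\in C^3$ and a local Lipschitz bound for the derivative of the solution map in the initial datum, uniform on balls and controlled polynomially or exponentially in $\vertiii{Z}$. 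With this in hand, the abstract theorem produces the local stable manifold associated with the negative exponents, and the unstable and center manifolds when positive or zero exponents are present. For the unstable manifold, non-invertibility is handled by the abstract result, which works with backward orbits lying in the finite-dimensional unstable directions.

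\textbf{Main obstacle.} The hardest step will be the quantitative pathwise estimates behind Step 3 and the integrability in Step 2. Regularity-structure solution bounds are a priori only local in time and degrade with the model norm. One therefore needs global-in-time $[0,1]$ bounds for $\varphi$, $D\varphi$, $D^2\varphi$ near $Y$ with explicit dependence on $\vertiii{Z}$ and $\|Y\|$ that remain log-integrable. I would first try to iterate the local solution over subintervals of length $\sim \vertiii{Z}^{-q}$, so that the number of steps, and hence the logarithm of the constants, grows polynomially in $\vertiii{Z}$. I would then use Gaussian hypercontractivity, or Fernique's theorem, for the model norms to conclude integrability.
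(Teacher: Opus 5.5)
Your architecture matches the paper's. You build a cocycle on $C(\mathbb T^d)$ over time shifts of the model, check compactness and $\log^+$-integrability of $\psi$ for the MET, verify the first-order approximation condition, and invoke the abstract results of \cite{GVR23A,GVR25C}.

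\textbf{The gap: no a priori bound for the nonlinear equation.} This is the analytic input behind Steps 2--3. Your reason for globality (``$\mu\le0$ and the equation is linear in $\xi$'') is not valid. Your proposed remedy, iterating local solutions over steps of length $\sim\vertiii{Z}^{-q}$, does not close for the nonlinear equation. When you restart at time $r$ from $u(r)\in L^\infty$, the contraction time in $\mathcal D^{\gamma_1,0}_{P_r}$ depends on $\|u(r)\|_{L^\infty}$, not only on the model. Two facts cause this:
\begin{itemize}
\item the composition estimate for $\Sigma(\mathcal U)$ is quadratic in $\triplenorm{\mathcal U}$ (Remark~\ref{BOUNDED}; compare the term $d(z,w)^{\gamma_1}\|\mathcal U^{X^i}(w)\|^{\gamma_1}$ in \eqref{PLPLP});
\item the $X^i$-coefficient of the solution is not bounded in terms of $\Sigma$, unlike the Gubinelli derivative $\sigma(Y)$ of an RDE.
\end{itemize}
So the step lengths can shrink geometrically, and nothing bounds their number in terms of $\vertiii{Z}$.

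What is needed is an a priori $L^\infty$ bound with polynomial dependence on the model. The paper does not prove this. It assumes global well-posedness and imports \eqref{A75qwewc} from \cite{CFW26,ES26} (Lemma~\ref{ASasd78axx}).

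It then turns this into a polynomial bound on $\triplenorm{\mathbf R^+_s\mathcal U}$ in Lemma~\ref{Alex}. The key device is the interpolation \eqref{A22}, which controls the $X^i$-coefficient by $\triplenorm{\mathcal U}\,(\sup|\mathcal R\mathcal U|)^{\gamma_1-1}$. This makes the local estimate linear, with a step length depending only on the model and $\sup|\mathcal R\mathcal U|$, so the patching Lemma~\ref{global} applies.

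Only at that point does your iteration idea work. In the paper it is Proposition~\ref{onwall} (a discrete Gronwall argument), applied to the \emph{linear} equations for $D\varphi$ and for $D_{u_2}\varphi-D_{u_1}\varphi$. It gives the $\exp(\mathrm{poly})$ bounds of Corollary~\ref{UJU} and Proposition~\ref{UJUJUJU}.

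\textbf{Smaller points.}
\begin{itemize}
\item $Y$ enters the bound for $\psi$ polynomially inside the exponential, through $\triplenorm{\mathbf R^+_0\Sigma'(\mathcal J_1(Y))}$ in Proposition~\ref{onwall}. So $\log^+\|Y\|\in L^1$ is not enough; you need polynomial moments. This is why the definition of a stationary point includes $\E\|Y_\omega\|^p<\infty$.
\item A deterministic Lyapunov spectrum requires ergodicity of the base flow, which you never address. The paper proves it in Proposition~\ref{Ik78assq} and Theorem~\ref{Ujasm}. It uses a Wiener-chaos argument for the coordinate map $\hat\theta_t$ on $\mathbb R^{\mathbb N}$, transferred to $\Theta_t$ via $\mathcal Z\circ\hat\theta_t=\Theta_{-t}\circ\mathcal Z$.
\item $\Sigma\in C^3$ only makes $D\varphi$ continuous (Lemma~\ref{YHATSs}), not locally Lipschitz. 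The quantitative first-order condition, with its factor $\|u\|^\nu$, uses $\Sigma\in C^{3,\nu}_b$ through Lemma~\ref{DCXSZ}.
\end{itemize}
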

		
		\item
The existence of the stable manifold yields a nonlinear stability result around the stationary point. More precisely, whenever the linearized equation around the stationary point is exponentially stable, solutions starting sufficiently close to it converge exponentially towards the corresponding stationary trajectory. In Theorem~\ref{stabi}, we provide a concrete criterion for this situation when the stationary point is zero and the noise intensity is sufficiently small. Thus, exponential stability of the linearized random dynamics can be transferred to the nonlinear SPDE.
		
		\item
		The unstable- and center-manifold results provide complementary information about the local dynamics. The center manifold reduces the dynamics to the directions associated with the central Lyapunov spectrum and therefore provides a natural framework for analysing local behaviour near bifurcation regimes. The unstable-manifold result is particularly relevant in our setting because we do not assume that the solution flow is invertible. Indeed, the dynamics are naturally defined only in the forward time direction and do not, in general, admit a global backward evolution. Our construction therefore produces an unstable manifold without requiring invertibility of the underlying flow.
		
		\item
		Although regularity structures are naturally suited to a pathwise formulation and therefore have a natural connection with random dynamical systems, implementing this connection rigorously requires the verification of several structural properties. In particular, one must establish suitable measurability, stationarity, and ergodicity properties of the objects entering the solution theory. Our analysis addresses these issues and places the solution map in the framework required for the application of ergodic-theoretic results, including the multiplicative ergodic theorem; see Section~\ref{PRO}.
		
		The specific choice of noise considered here is motivated mainly by the desire to remain close to the classical setting of SPDEs. Nevertheless, a substantial part of the analytical and dynamical framework is formulated independently of this particular choice and may be applicable to other temporal noise structures whenever the corresponding pathwise and global solution theory is available. Moreover, the framework allows stationary points that are genuinely random. This is particularly relevant in synchronization problems, where random equilibria naturally arise as states towards which solutions may synchronize and where the stable-manifold result provides a tool for analysing their local stability.
		
		\item
		We also obtain several analytical estimates that are of interest independently of the particular invariant-manifold problem considered in this paper. In particular, the version of Gronwall's lemma developed in Proposition~\ref{onwall}, together with the estimates established throughout Section~\ref{GPAM}, is formulated in a relatively general setting. These estimates control the solution map, its linearization, and the nonlinear remainder, and may therefore be useful in related problems involving long-time estimates for singular stochastic equations.
		
	\end{enumerate}
	
	In this work, we consider noise of regularity $\mathcal{C}^{-1-\kappa}$ with $\kappa\in(0,1/3)$ and assume that the corresponding global solution theory and the a priori bounds required in our analysis are available. Such bounds, together with global well-posedness, are established in \cite{CFW26,SZZ26} for sufficiently small $\kappa$. Very recently, a priori bounds for the generalized parabolic Anderson model have also been obtained in \cite{ES26} throughout the full subcritical regime. Thus, the estimates required in the present work are available from these results in their respective regimes.
	The restriction to $\kappa<1/3$ is mainly technical. Treating $\kappa\in[1/3,1)$ directly would require a substantially more involved regularity-structure construction, without introducing essentially new ideas for the invariant-manifold and stability analysis. We therefore restrict the detailed construction to $\kappa<1/3$. Nevertheless, the framework and technical arguments developed here for invariant manifolds and stability extend to the full subcritical regime. In particular, once the corresponding global solution theory and the required integrability estimates are available, the main invariant-manifold and stability results remain valid in this more general setting.
	
	\medskip
	\noindent\textbf{Structure of the paper.}
	In Section~\ref{pre}, we provide a concise and accessible presentation of the basic elements of the theory of regularity structures needed in the sequel, following \cite{Hai14}. In Section~\ref{GPAM}, we establish several analytical estimates under fairly general assumptions. The main results of this section are Lemma~\ref{global}, Proposition~\ref{onwall}, and Proposition~\ref{UJUJUJU}, which provide the key estimates used in the subsequent dynamical analysis.
	In Section~\ref{PRO}, we develop the framework needed to incorporate the SPDE~\eqref{MAIN0} into the theory of random dynamical systems. In particular, we establish the structural properties required for the later application of ergodic-theoretic arguments. The main results of this section are Proposition~\ref{Ujasn785sw} and Theorem~\ref{Ujasm}. Finally, in Section~\ref{TNT}, we combine the analytical and dynamical ingredients developed in Sections~\ref{GPAM} and~\ref{PRO} to establish the main results of the paper. These include Theorems~\ref{stable_manifold}, \ref{unstable}, \ref{center}, and \ref{stabi}.

	\section{Preliminaries}\label{pre}
	Let us now briefly state the main technical tools, which are mainly taken from \cite{Hai14}. Note that, in this reference, the results are formulated in a more general and abstract framework. For the purposes of this manuscript, we simplify these tools and adapt them to our specific setting.
	\subsection{Notations}
	In this subsection we collect some notations and conventions which will be used
	throughout the manuscript.
	\begin{itemize}
		\item Throughout this manuscript, when we say that an object is spatially periodic, we mean that it is $2\pi\mathbb{Z}^d$-periodic in the spatial variables, or equivalently that it is defined on the torus
		
		$$
		\mathbb{T}^d = (\mathbb{R}/2\pi\mathbb{Z})^d.
		$$
		This convention applies to functions, distributions, and modelled distributions.	
		\item Let
		\[
		z=(t,x_1,x_2,\ldots,x_d)=(t,x)\in\mathbb{R}^{d+1}.
		\]
		For $z=(t,x),\tilde{z}=(\tilde{t},\tilde{x})\in\mathbb{R}^{d+1}$, we define
		\[
		d(z,\tilde{z})
		:=
		\max\Big\{
		\sqrt{|t-\tilde{t}|},
		\,
		\|x-\tilde{x}\|_{\mathbb{T}^d}
		\Big\},
		\]
		where
		\[
		\|x-\tilde{x}\|_{\mathbb{T}^d}
		:=
		\inf_{k\in\mathbb{Z}^d}
		|x-\tilde{x}+2\pi k|
		\]
		denotes the usual distance between the equivalence classes of $x$ and $\tilde{x}$ on $\mathbb{T}^d$.
		For $L>0$ and $z=(t,x)\in\mathbb{R}^{+}\times\mathbb{T}^d$, we define
		\[
		B(z,L)
		:=
		\Big\{
		\tilde{z}=(\tilde{t},\tilde{x})
		\in
		\mathbb{R}^{+}\times\mathbb{T}^d
		:
		d(z,\tilde{z})<L,\;
		\tilde{t}<t
		\Big\},
		\]
		and denote its closure by $\overline{B(z,L)}$.
		For each $i=1,\ldots,d$, we define
		\[
		X^i:\mathbb{R}\times\mathbb{T}^d\to\mathbb{T}
		\]
		by
		\[
		X^i(z):=x_i,
		\]
		where $z=(t,x_1,x_2,\ldots,x_d)$. We identify $X^i$ with its periodic lift to $\mathbb{R}^{d+1}$ in the spatial variables.
		Finally, if $(Y,d_Y)$ is a metric space, we denote by
		$B_Y(y,M)$ the open ball of radius $M$ centered at $y\in Y$, and by
		$\overline{B_Y(y,M)}$ its closure.
		\item We may regard functions defined on $\mathbb{R}\times\mathbb{T}^d$ as functions defined on $\mathbb{R}^{d+1}$ in the canonical way, without explicitly mentioning it.
		
		\item For a topological space $\mathcal{X}$, $\mathcal{B}(\mathcal{X})$ denotes the Borel $\sigma$-algebra on $\mathcal{X}$.
		
		\item For $k\in\mathbb{N}_0$ and Banach spaces $V$ and $W$, we denote by
		$C^{k}(V,W)$ the space of mappings from $V$ to $W$ that are $k$ times
		continuously Fréchet differentiable. In particular, for $k\in\mathbb{N}_0$,
		$n,m\in\mathbb{N}$, and an open set $D\subseteq\mathbb{R}^{n}$,
		$C^{k}(D,\mathbb{R}^{m})$ denotes the space of functions whose derivatives
		up to order $k$ exist and are continuous. The subset of
		$C^{k}(D,\mathbb{R}^{m})$ consisting of functions whose derivatives up to
		order $k$ are bounded is denoted by $C^{k}_{b}(D,\mathbb{R}^{m})$.
		Moreover, $C^{k}_{c}(D,\mathbb{R}^{m})$ denotes the space of compactly
		supported functions in $C^{k}(D,\mathbb{R}^{m})$.
		
		\item For $\nu\in(0,1]$, we use the notation
		\[
		C_b^{3,\nu}(\mathbb{R},\mathbb{R})
		:=
		\left\{
		f\in C^3(\mathbb{R},\mathbb{R}):
		\|f\|_{C_b^3}+[f''']_{\nu}<\infty
		\right\},
		\]
		where
		\[
		\|f\|_{C_b^3}:=\max_{0\leq j\leq3}\|f^{(j)}\|_\infty,
		\qquad
		[f''']_{\nu}:=
		\sup_{x\neq y}
		\frac{|f'''(x)-f'''(y)|}{|x-y|^\nu}.
		\]
		
		\item We denote by $\mathcal{D}'(\mathbb{R}^{d+1})$ the space of distributions on
		$\mathbb{R}^{d+1}$, and by $\mathcal{S}'(\mathbb{R}^{d+1})$ the space of tempered
		distributions on $\mathbb{R}^{d+1}$.
		\item For $s,T\in\mathbb{R}$, let
		\[
		\mathbf{R}^{+}_{s}(t,x):=\mathbf{1}_{\{t>s\}},
		\qquad
		O_{T}:=(-\infty,T]\times\mathbb{R}^{d}.
		\]
		Also, for an interval $[a,b]\subseteq\mathbb{R}^{+}$, we set
		\begin{align}
			\begin{split}
				&O_b^a:=(a,b]\times\mathbb{R}^d,
				\\
				&\overline{O_b^a}:=[a,b]\times\mathbb{R}^d.
			\end{split}
		\end{align}
		
		\item Let $z\in\mathbb{R}^{d+1}$, $l>0$, and
		$Q:\mathbb{R}^{d+1}\to\mathbb{R}$. We set
		\begin{align}\label{AAASSsssww}
			\begin{split}
				&Q^{l}_{z}:\mathbb{R}^{d+1}\rightarrow\mathbb{R},
				\\
				&Q^{l}_{z}(\tilde{z})
				:=l^{-2-d}Q\left(
				\frac{\tilde{t}-t}{l^2},
				\frac{\tilde{x}-x}{l}
				\right).
			\end{split}
		\end{align}
		Also, $Q^{l}:=Q^{l}_{0}$.
		
		\item For $r\in\mathbb{N}$, we set
		\[
		\mathcal{B}^{r}_{0}
		:=
		\left\{
		Q\in C^{r}_{0}(\mathbb{R}^{d+1},\mathbb{R}):
		\|Q\|_{C^{r}_{0}(\mathbb{R}^{d+1},\mathbb{R})}\leq 1,\;
		\operatorname{supp}(Q)\subseteq B_{\mathbb{R}^{d+1}}(0,1)
		\right\}.
		\]
		Let $\alpha<0$ and choose $r\in\mathbb{N}$ such that $r>-\alpha$.
		We denote by $\mathcal{C}^{\alpha}_{(2,1)}$ the space of
		$\xi\in\mathcal{S}'(\mathbb{R}^{d+1})$ such that, for every compact set
		$\mathcal{O}\subseteq\mathbb{R}^{d+1}$,
		\[
		\|\xi\|_{\alpha,\mathcal{O}}
		:=
		\sup_{\substack{z\in\mathcal{O}\\0<l\leq1}}
		\sup_{Q\in\mathcal{B}^{r}_{0}}
		l^{-\alpha}
		\left|\langle\xi,Q_{z}^{l}\rangle\right|
		<\infty.
		\]
		If $\alpha\in(0,1)$, then $\mathcal{C}^{\alpha}_{(2,1)}$ denotes the space
		of locally $\alpha$-H\"older functions with respect to the metric $d$.
		\item Let
		\[
		\mathcal{T}
		=
		\bigoplus_{a\in\mathcal{A}}\mathcal{T}_a,
		\qquad
		\mathcal{A}
		=
		\{\rb,\gIb,\gXXXX\rb,\gY,\gI,\gXXXX:i=1,\ldots,d\},
		\qquad
		\mathcal{T}_a:=\mathbb{R}a,
		\]
		be the real vector space generated by the abstract symbols in $\mathcal{A}$.
		Let $0<\kappa<\frac{1}{3}$. The homogeneity of the elements of $\mathcal{T}$ is
		defined by
		\[
		|\rb|_{h}=-1-\kappa,
		\quad
		|\gIb|_{h}=-2\kappa,
		\quad
		|\gXXXX\rb|_{h}=-\kappa,
		\quad
		|\gY|_{h}=0,
		\quad
		|\gI|_{h}=1-\kappa,
		\quad
		|\gXXXX|_{h}=1,
		\]
		for $i=1,\ldots,d$.
		For $\tau\in\mathcal{T}$ and $a\in\mathcal{A}$, we denote by
		$\|\tau\|_{a}$ the absolute value of the coefficient of the component of
		$\tau$ in $\mathcal{T}_{a}$, and by $\mathrm{Pr}_{a}(\tau)$ the corresponding
		projection of $\tau$ onto $\mathcal{T}_{a}$.	
		
		\item Let $\mathcal{R}=(\mathcal{A},\mathcal{T},\mathcal{G})$ denote the corresponding regularity structure for \eqref{MAIN}, where $\mathcal{G}$ is the structure group acting on $\mathcal{T}$. More precisely, $\mathcal{G}$ is a group of linear transformations on $\mathcal{T}$ satisfying
		\[
		g(a)-a\in\mathcal{T}_{<|a|_h},
		\qquad a\in\mathcal{A},
		\]
		and is given explicitly by the following transformations:
		\begin{align*}
			&g(\rb)=\rb,\qquad
			g(\gI)=\gI-p_1\gY,\qquad
			g(\gY)=\gY,\\
			&g(\gXXXX)=\gXXXX-q_i\gY,\qquad
			g(\gIb)=\gIb-p_2\rb,\\
			&g(\gXXXX\rb)=\gXXXX\rb-k_i\rb ,
		\end{align*}
		for $p_1,p_2,q_i,k_i\in\mathbb{R}$.
		We define the model $\mathcal{Z}=(\Pi,\Gamma)$ on $\mathcal{R}$ by the
		maps
		\[
		\Gamma:\mathbb{R}^{d+1}\times\mathbb{R}^{d+1}\to\mathcal{G},
		\]
		where the linear operator $\Gamma_{z,w}:\mathcal{T}\to\mathcal{T}$ is
		given by
		\begin{align}\label{B1478}
			\begin{split}
				&\Gamma_{z,w}(\rb)=\rb,\qquad
				\Gamma_{z,w}(\gI)
				=
				\gI-\Pi_z(\gI)(w)\gY,\qquad
				\Gamma_{z,w}(\gY)=\gY,\\
				&\Gamma_{z,w}(\gXXXX)
				=
				\gXXXX-\Pi_z(\gXXXX)(w)\gY, \quad\Gamma_{z,w}(\gIb)
				=
				\gIb-\Pi_z(\gI)(w)\rb,\\
				&\Gamma_{z,w}(\gXXXX\rb)
				=
				\gXXXX\rb-\Pi_z(\gXXXX)(w)\rb .
			\end{split}
		\end{align}
		Here,
		\[
		\Pi_z:\mathcal{T}\to\mathcal{S}'(\mathbb{R}^{d+1})
		\]
		denotes the realization map of the model at the base point $z$. The
		maps $\Pi$ and $\Gamma$ satisfy the consistency relations
		\begin{align*}
			\Gamma_{z,w}\Gamma_{w,v}&=\Gamma_{z,v},\\
			\Pi_z\Gamma_{z,w}&=\Pi_w .
		\end{align*}
		Moreover, for every homogeneous element $a\in\mathcal{A}$,
		\begin{align}\label{BB1}
			\Gamma_{z,w}a-a\in\mathcal{T}_{<|a|_h}.
		\end{align}
		Throughout, we assume that the model is spatially periodic. More
		precisely, for every $m\in\mathbb{Z}^d$,
		\begin{align*}
			\Gamma_{z+2\pi(0,m),\,w+2\pi(0,m)}
			&=
			\Gamma_{z,w},
			\\
			\left\langle
			\Pi_{z+2\pi(0,m)}(a),
			\phi(\cdot+2\pi(0,m))
			\right\rangle
			&=
			\left\langle
			\Pi_z(a),\phi
			\right\rangle ,
		\end{align*}
		for all $z,w\in\mathbb{R}\times\mathbb{R}^d$,
		$a\in\mathcal{T}$, and
		$\phi\in\mathcal{S}(\mathbb{R}\times\mathbb{R}^d)$.
		\item 
		Let $\mathcal{Z}=(\Pi,\Gamma)$ and
		$\widetilde{\mathcal{Z}}=(\widetilde{\Pi},\widetilde{\Gamma})$ be
		(spatially periodic) models, and let
		$\mathcal{O}\subseteq\mathbb{R}^{d+1}$ be a compact set.
		For $a\in\mathcal{A}$, we set
		\begin{align}\label{NBAVsss}
			\begin{split}
				\|\Pi\|_{a,\mathcal{O}}
				&:=
				\sup_{Q\in\mathcal{B}^{2}_{0}}
				\sup_{0<l\leq 1}
				\sup_{z\in\mathcal{O}}
				l^{-|a|_h}
				\left|
				\left\langle
				\Pi_z(a),Q^l_z
				\right\rangle
				\right|,
				\\
				\|\Pi;\widetilde{\Pi}\|_{a,\mathcal{O}}
				&:=
				\sup_{Q\in\mathcal{B}^{2}_{0}}
				\sup_{0<l\leq 1}
				\sup_{z\in\mathcal{O}}
				l^{-|a|_h}
				\left|
				\left\langle
				(\Pi_z-\widetilde{\Pi}_z)(a),Q^l_z
				\right\rangle
				\right|.
			\end{split}
		\end{align}
		
		We then define
		\[
		\|\Pi\|_{\mathcal{O}}
		:=
		\max_{a\in\mathcal{A}}\|\Pi\|_{a,\mathcal{O}},
		\qquad
		\|\Pi;\widetilde{\Pi}\|_{\mathcal{O}}
		:=
		\max_{a\in\mathcal{A}}
		\|\Pi;\widetilde{\Pi}\|_{a,\mathcal{O}}.
		\]
		
		Moreover, we set
		\[
		\|\Gamma\|_{\mathcal{O}}
		:=
		1+\max_{\substack{a_1,a_2\in\mathcal{A}\\
				|a_1|_h<|a_2|_h}}
		\sup_{\substack{z,w\in\mathcal{O}\\z\neq w}}
		\frac{
			\|\Gamma_{z,w}a_2\|_{a_1}
		}{
			d(z,w)^{|a_2|_h-|a_1|_h}
		}.
		\]
		In our setting, one can show that
		\[
		\|\Gamma\|_{\mathcal{O}}
		\lesssim
		1+[ \ \gI\ ]_{\mathcal{O}},
		\]
		where
		\[
		[\ \gI\ ]_{\mathcal{O}}
		:=
		\sup_{\substack{z,w\in\mathcal{O}\\z\neq w}}
		\frac{
			\|\gI(w)-\gI(z)\|
		}{
			d(z,w)^{1-\kappa}
		}.
		\]
		Furthermore, for every
		$\tau\in\mathcal{T}_{a_2}$,
		$z,w\in\mathcal{O}$, and
		$a_1,a_2\in\mathcal{A}$ with
		$|a_1|_h\leq |a_2|_h$,
		we have
		\begin{align}\label{BB2}
			\frac{\|\Gamma_{z,w}\tau\|_{a_1}}
			{d(z,w)^{|a_2|_h-|a_1|_h}}
			\leq
			\|\Gamma\|_{\mathcal{O}}
			\|\tau\|_{a_2}.
		\end{align}
		We set
		\[
		\|\mathcal{Z}\|_{\mathcal{O}}
		:=
		\|\Gamma\|_{\mathcal{O}}
		+
		\|\Pi\|_{\mathcal{O}} .
		\]
		Moreover, we define
		\begin{align}\label{NBA1}
			\|\Gamma;\widetilde{\Gamma}\|_{\mathcal{O}}
			:=
			\max_{\substack{a_1,a_2\in\mathcal{A}\\
					|a_1|_h<|a_2|_h}}
			\sup_{\substack{z,w\in\mathcal{O}\\z\neq w}}
			\frac{
				\|\Gamma_{z,w}a_2-\widetilde{\Gamma}_{z,w}a_2\|_{a_1}
			}{
				d(z,w)^{|a_2|_h-|a_1|_h}
			},
		\end{align}
		and
		\begin{align}\label{NBA2}
			\|\mathcal{Z};\widetilde{\mathcal{Z}}\|_{\mathcal{O}}
			:=
			\|\Pi;\widetilde{\Pi}\|_{\mathcal{O}}
			+
			\|\Gamma;\widetilde{\Gamma}\|_{\mathcal{O}} .
		\end{align}
		These quantities define the corresponding local model seminorms on
		$\mathcal{M}=\mathcal{M}(\mathcal{R})$. Using these seminorms, one can equip
		$\mathcal{M}$ with a complete metrizable topology. Let
		\[
		\mathcal{O}_n:=[-n,n]\times[-\pi,\pi]^d.
		\]
		Then this topology is generated by the metric
		\begin{align}\label{NHMAsi78}
			d_{\mathcal{M}}(\mathcal{Z},\widetilde{\mathcal{Z}})
			=
			\sum_{n\geq1}
			\frac{
				\|\mathcal{Z};\widetilde{\mathcal{Z}}\|_{\mathcal{O}_n}
			}{
				2^n
				\left(
				1+\|\mathcal{Z};\widetilde{\mathcal{Z}}\|_{\mathcal{O}_n}
				\right)
			}.
		\end{align}
		\item A sector $\mathcal{V}=\bigoplus_{a\in\mathcal{A}}\mathcal{V}_a$ of
		$\mathcal{T}$ is a graded subspace such that, for each $a\in\mathcal{A}$,
		$\mathcal{V}_a\subseteq\mathcal{T}_a$, and $\mathcal{V}$ is invariant under
		the action of every element of $\mathcal{G}$.
		For $\alpha\in\mathbb{R}$, we set
		\[
		\mathcal{T}_{<\alpha}
		:=
		\bigoplus_{|a|_h<\alpha}\mathcal{T}_a .
		\]
		Similarly, we define $\mathcal{T}_{\leq\alpha}$,
		$\mathcal{T}_{\geq\alpha}$, and $\mathcal{T}_{>\alpha}$.
		It is easy to verify that $\mathcal{T}_{<0}$ and $\mathcal{T}_{\geq0}$
		are sectors of $\mathcal{T}$.
		For a sector $\mathcal{V}$ and a set
		$D\subseteq\mathbb{R}^{d+1}$, we denote by
		$\mathcal{F}(D,\mathcal{V})$ the set of functions from $D$ to $\mathcal{V}$.
		For $\RR{F}\in\mathcal{F}(D,\mathcal{T})$ and $a\in\mathcal{A}$, we define
		$F^{a}(z)\in\mathbb{R}$ by
		\[
		\mathrm{Pr}_{a}\big(\RR{F}(z)\big)=F^{a}(z)a.
		\]
		In particular, for
		$\RR{F}\in\mathcal{F}(D,\mathcal{T}_{\geq0})$, we write
		\[
		\textcolor{red}{F}(z)
		:=
		{F}^{\gY}(z)\gY
		+
		{F}^{\small{\gI}}(z)\gI
		+
		\sum_{i=1}^{d}{F}^{\gXXXX}(z)\gXXXX .
		\]
		For
		$\textcolor{red}{F},\textcolor{red}{K}
		\in\mathcal{F}(\mathbb{R}^{d+1},\mathcal{T}_{\geq0})$,
		we define
		$\textcolor{red}{F}\star\textcolor{red}{K}
		\in\mathcal{F}(\mathbb{R}^{d+1},\mathcal{T}_{\geq0})$ by
		\begin{align*}
			(\textcolor{red}{F}\star\textcolor{red}{K})(z)
			&:=
			{F}^{\gY}(z){K}^{\gY}(z)\gY+
			\Big[
			{F}^{\gY}(z){K}^{\small{\gI}}(z)
			+
			{K}^{\gY}(z){F}^{\small{\gI}}(z)
			\Big]\gI
			\\
			&\quad+
			\sum_{i=1}^{d}
			\Big[
			{F}^{\gY}(z){K}^{\gXXXX}(z)
			+
			{K}^{\gY}(z){F}^{\gXXXX}(z)
			\Big]\gXXXX .
		\end{align*}
		It is straightforward to verify that $\star$ is associative.
		For
		$\textcolor{red}{F}\in\mathcal{F}(D,\mathcal{T}_{\geq0})$, we define
		\[
		\textcolor{red}{F}\tilde{\star}\rb
		\in
		\mathcal{F}(D,\mathcal{T}_{<0})
		\]
		by
		\[
		(\textcolor{red}{F}\tilde{\star}\rb)(z)
		:=
		{F}^{\gY}(z)\rb
		+
		{F}^{\small{\gI}}(z)\gIb
		+
		\sum_{i=1}^{d}{F}^{\gXXXX}(z)\gXXXX\rb .
		\]
		\item 
		Let $\Sigma\in C^{2}(\mathbb{R},\mathbb{R})$ and
		$\RR{F}\in\mathcal{F}(D,\mathcal{T}_{\geq0})$. Then
		$\Sigma(\RR{F})\in\mathcal{F}(D,\mathcal{T}_{\geq0})$ is defined by
		\begin{align}\label{Hnasssss}
			\Sigma(\RR{F})(z)
			&=
			\Sigma(F^{\gY}(z))\gY
			+
			\Sigma^{\prime}(F^{\gY}(z))
			\left[
			{F}^{\small{\gI}}(z)\gI
			+
			\sum_{i=1}^{d}{F}^{\gXXXX}(z)\gXXXX
			\right].
		\end{align}
		For $\RR{F},\RR{\tilde{F}}\in\mathcal{F}(D,\mathcal{T}_{\geq0})$ and
		$\Sigma\in C^{3}(\mathbb{R},\mathbb{R})$, it follows that
		\begin{align}\label{TYUa}
			\Sigma(\RR{F})-\Sigma(\RR{\tilde{F}})
			=
			\int_0^1
			\Sigma'\big(\RR{\tilde{F}}+\alpha(\RR{F}-\RR{\tilde{F}})\big)
			\star(\RR{F}-\RR{\tilde{F}})
			\,\mathrm d\alpha .
		\end{align}
		More generally, if $\Sigma\in C^{j+3}(\mathbb R,\mathbb R)$, then
		\begin{align}\label{TYUa1}
			\Sigma(\RR{F}) - \Sigma(\RR{\tilde{F}})
			= \sum_{i=1}^{j} \frac{1}{i!} \Sigma^{(i)}(\RR{\tilde{F}}) \star \left[\star^i (\RR{F} - \RR{\tilde{F}})\right]
			+ \frac{1}{j!} \int_0^1 (1-\alpha)^j \Sigma^{(j+1)}\big(\RR{\tilde{F}} + \alpha (\RR{F} - \RR{\tilde{F}})\big) \star \left[\star^{j+1} (\RR{F} - \RR{\tilde{F}})\right] \, \mathrm{d}\alpha,
		\end{align}
		where $\Sigma^{(i)}$ denotes the $i$-th derivative of $\Sigma$ and
		\[
		\star^i(\RR{F}-\RR{\tilde{F}})
		:=
		\underbrace{
			(\RR{F}-\RR{\tilde{F}})\star\cdots\star(\RR{F}-\RR{\tilde{F}})
		}_{i\text{ times}} .
		\]
		\item 
		For $s\in \R$, let \( P_s = \{s\} \times \mathbb{R}^d \), and
		\begin{align*}
			\Vert z \Vert_{P_{s}} &:= \min\{1, d(z,P_{s})\}, \\
			\Vert z,w \Vert_{P_{s}} &:= \min\{\Vert z\Vert_{P_s},\Vert w\Vert_{P_s}\}.
		\end{align*}
		Given a model $\mathcal{Z}=(\Pi,\Gamma)$, the space
		$\mathcal{D}^{\gamma,\eta}_{P_s}$ consists of spatially periodic
		functions
		\[
		\RR{F}:\mathbb{R}^{d+1}\setminus P_s\to\mathcal{T}_{<\gamma}
		\]
		such that, for every compact set
		$\mathcal{O}\subset\mathbb{R}^{d+1}$,
		\begin{align}\label{Q00}
			\begin{split}
				\triplenorm{\RR{F}}_{\gamma,\eta,\mathcal{O}}^{(s)}
				:={}&
				\sup_{\substack{a\in\mathcal{A}\\|a|_h<\gamma}}
				\sup_{\substack{z,w\in\mathcal{O}\setminus P_s,\; z\neq w\\
						d(z,w)\leq\Vert z,w\Vert_{P_s}}}
				\frac{
					\Vert z,w\Vert_{P_s}^{\,\gamma-\eta}
					\,
					\Vert
					\RR{F}(z)-\Gamma_{z,w}\RR{F}(w)
					\Vert_a
				}{
					d(z,w)^{\gamma-|a|_h}
				}
				\\
				&\quad+
				\sup_{\substack{a\in\mathcal{A}\\|a|_h<\gamma}}
				\sup_{z\in\mathcal{O}\setminus P_s}
				\Vert z\Vert_{P_s}^{\,\max\{0,|a|_h-\eta\}}
				\Vert\RR{F}(z)\Vert_a
				<\infty .
			\end{split}
		\end{align}
		\begin{comment}
			content...
			
			Whenever $\eta=0$, we omit the parameter $\eta$ from the notation and write
			\[
			\triplenorm{\RR{F}}_{\gamma,\mathcal{O}}^{(s)}
			:=
			\triplenorm{\RR{F}}_{\gamma,0,\mathcal{O}}^{(s)}.
			\]
		\end{comment}
		For every
		$D\subseteq\mathbb{R}^{d+1}\setminus P_s$,
		we define
		$\triplenorm{\RR{F}}_{\gamma,\eta,D}^{(s)}$
		analogously whenever it is well defined.
		For $\alpha<\gamma$, we say that
		$\RR{F}\in\mathcal{D}^{\gamma,\eta}_{P_s}$
		is of regularity $\alpha$ if
		\[
		\RR{F}(z)\in\mathcal{T}_{\geq\alpha},
		\qquad
		z\in\mathbb{R}^{d+1}\setminus P_s.
		\]
		The corresponding subspace is denoted by
		$\mathcal{D}^{\gamma,\eta}_{P_s,\alpha}$.
		Finally, for a sector $\mathcal{V}$ of $\mathcal{T}$, we define
		\begin{align*}
			\mathcal{D}^{\gamma,\eta}_{P_s}(\mathcal{V})
			&:=
			\mathcal{D}^{\gamma,\eta}_{P_s}
			\cap
			\mathcal{F}(\mathbb{R}^{d+1}\setminus P_s,\mathcal{V}),
			\\
			\mathcal{D}^{\gamma,\eta}_{P_s,\alpha}(\mathcal{V})
			&:=
			\mathcal{D}^{\gamma,\eta}_{P_s,\alpha}
			\cap
			\mathcal{F}(\mathbb{R}^{d+1}\setminus P_s,\mathcal{V}).
		\end{align*}
		The space
		$\mathcal{D}^{\gamma,\eta}_{P_s}(\mathcal{V})$
		is complete with respect to the topology generated by the local norms
		$\triplenorm{\cdot}_{\gamma,\eta,\mathcal{O}}^{(s)}$.
		For $\Sigma\in C^{2}(\mathbb{R},\mathbb{R})$,
		$\RR{F}\in\mathcal{D}^{\gamma,\eta}_{P_s}(\mathcal{T}_{\geq0})$,
		$\eta\geq0$, and $0<\gamma<2(1-\kappa)$, 
		\begin{align}\label{Asasqqq}
			\Sigma(\RR{F})\in \mathcal{D}^{\gamma,\eta}_{P_s}.
		\end{align}
		\begin{comment}
			content...
			
			\textcolor{brown}{
				Let \( \widetilde{\mathcal{Z}} = (\widetilde{\Gamma}, \widetilde{\Pi}) \) be another model.  
				Analogously, we can define \( \widetilde{\mathcal{D}}^{\gamma,\eta}_{P_{s}} \) and \( \widetilde{\mathcal{D}}^{\gamma,\eta}_{P_{s},\alpha} \).  	
				For \( \RR{F} \in \mathcal{D}^{\gamma,\eta}_{P_{s}} \) and \( \RR{\widetilde{F}} \in \widetilde{\mathcal{D}}^{\gamma,\eta}_{P_{s}} \), we set
				\begin{align}\label{Q000}
					\begin{split}
						\triplenorm{\RR{F};\RR{\widetilde{F}}}_{\gamma,\eta,\mathcal{O}}^{(s)}
						:&=\sup_{\substack{a\in\mathcal{A},\\ |a|_h<\gamma}}  \sup_{\substack{z,w \in \mathcal{O}\setminus P_s,\\d(z,w)\leq \Vert z,w \Vert_{P_{s}}}} 
						\frac{\Vert z,w \Vert_{P_{s}}^{\gamma-\eta} 
							\Vert \RR{F}(z) - \RR{\widetilde{F}}(z) -\Gamma_{z,w}\RR{F}(w) + \widetilde{\Gamma}_{z,w}\RR{\widetilde{F}}(w) \Vert_{a}}{d(z,w)^{\gamma-|a|_h}} \\
						&\quad +\sup_{\substack{a\in\mathcal{A},\\ |a|_h<\gamma}}  \sup_{z \in \mathcal{O}\setminus P_s} 
						\Vert z \Vert_{P_{s}}^{\max\{0, |a|_h-\eta\}} \, \Vert \RR{F}(z) - \RR{\widetilde{F}}(z) \Vert_{a} < \infty.
					\end{split}
				\end{align}
				for the norms appearing in \eqref{Q00} and \eqref{Q000}.}
		\end{comment}
		\item  Let $\gamma>0$ and let $\mathcal{Z}=(\Pi,\Gamma)$ be a model. We denote by
		$\mathcal{D}^{\gamma}$
		the space of spatially periodic functions
		\[
		\RR{F}:\mathbb{R}^{d+1}\rightarrow\mathcal{T}_{<\gamma}
		\]
		such that, for every compact set
		$\mathcal{O}\subset\mathbb{R}^{d+1}$,
		\[
		\triplenorm{\RR{F}}_{\gamma,\mathcal{O}}
		:=
		\max_{\substack{a\in\mathcal{A}\\|a|_h<\gamma}}
		\sup_{\substack{z,w\in\mathcal{O}\\z\neq w}}
		\frac{
			\Vert
			\RR{F}(z)-\Gamma_{z,w}\RR{F}(w)
			\Vert_a
		}{
			d(z,w)^{\gamma-|a|_h}
		}
		+
		\max_{\substack{a\in\mathcal{A}\\|a|_h<\gamma}}
		\sup_{z\in\mathcal{O}}
		\Vert\RR{F}(z)\Vert_a
		<\infty.
		\]
		Observe that, if $\RR{F}\in\mathcal{D}^{\gamma}$, then
		\[
		\RR{F}|_{\mathbb{R}^{d+1}\setminus P_0}
		\in
		\mathcal{D}^{\gamma,\gamma}_{P_0}.
		\]
		For $\alpha<\gamma$, we say that
		$\RR{F}\in\mathcal{D}^{\gamma}$
		is of regularity $\alpha$ if
		\[
		\RR{F}(z)\in\mathcal{T}_{\geq\alpha},
		\qquad
		z\in\mathbb{R}^{d+1}.
		\]
		The corresponding subspace is denoted by
		$\mathcal{D}^{\gamma}_{\alpha}$.
		For a sector $\mathcal{V}$ of $\mathcal{T}$, we further define
		\[
		\mathcal{D}^{\gamma}_{\alpha}(\mathcal{V})
		:=
		\mathcal{D}^{\gamma}_{\alpha}
		\cap
		\mathcal{F}(\mathbb{R}^{d+1},\mathcal{V}).
		\]
	\end{itemize}
	\subsection{Ingredients}
	We now briefly summarize the main tools used in this manuscript.
	\begin{itemize}
		\item 
		Let
		\[
		G:(0,\infty)\times\mathbb{R}^{d}\to\mathbb{R}
		\]
		denote the heat kernel associated with the operator
		$(\partial_t-\Delta)$ on $\mathbb{R}^{d}$, extended by zero for negative
		times. For $s\in\mathbb{R}$, we set
		\[
		G^{s}(t,x):=G(t-s,x),\qquad t>s,
		\]
		and extend it by zero for earlier times, i.e.,
		\begin{align}\label{NNAsss}
			G^{s}(t,x)=0,\qquad t<s,\ x\in\mathbb{R}^{d}.
		\end{align}
		
		Let
		\[
		g\in\mathcal{D}'(\mathbb{R}\times\mathbb{R}^{d}),
		\qquad
		f\in\mathcal{D}'(\mathbb{R}^{d}),
		\]
		be spatially periodic distributions. Whenever the pairing below is
		well-defined, we denote by $\mathbf{R}^{+}_{s}g$ the restriction of $g$
		to $(s,\infty)\times\mathbb{R}^{d}$, characterized by
		\[
		\left\langle \mathbf{R}^{+}_{s}g,\varphi\right\rangle
		:=
		\left\langle
		g,\mathbf{1}_{(s,\infty)}(t)\varphi(t,x)
		\right\rangle,
		\qquad
		\varphi\in C_c^\infty(\mathbb{R}\times\mathbb{R}^{d}).
		\]
		
		We define the convolutions in the distributional sense. The space-time
		convolution is given by
		\[
		(G*\mathbf{R}^{+}_{s}g)(t,x)
		:=
		\left\langle
		\mathbf{R}^{+}_{s}g,
		G(t-\cdot,x-\cdot)
		\right\rangle,
		\]
		whenever the corresponding pairing is well-defined. Similarly, the spatial
		convolution with $f$ is defined by
		\[
		(G^{s}f)(t,x)
		:=
		\left\langle
		f,G^{s}(t,x-\cdot)
		\right\rangle.
		\]
		
		Assume that all the quantities below are well-defined and admit
		representatives which are functions of $(t,x)$. Then, for $s<r<t$,
		the semigroup property of the heat kernel yields
		\begin{align}\label{SEMIJ}
			\begin{split}
				&(G*\mathbf{R}^{+}_{s}g)(t,x)+(G^{s}f)(t,x)
				\\
				&\qquad=
				(G*\mathbf{R}^{+}_{r}g)(t,x)
				+
				G^{r}\Big(
				(G*\mathbf{R}^{+}_{s}g)(r,\cdot)
				+
				(G^{s}f)(r,\cdot)
				\Big)(t,x).
			\end{split}
		\end{align}
		\item From \cite[Proposition 6.12]{Hai14}, it follows that for
		\(\RR{\mathcal{U}_1},\RR{\mathcal{U}_2}\in
		\mathcal{D}^{\gamma_1,0}_{P_s,0}\), we have
		\[
		\RR{\mathcal{U}_1}\star\RR{\mathcal{U}_2}
		\in
		\mathcal{D}^{\gamma_1,0}_{P_s,0}.
		\]
		Moreover, for every compact set $\mathcal{O}$, there exists a constant
		$C_{\mathcal{O}}>0$ such that
		\begin{align}\label{multi}
			\triplenorm{\RR{\mathcal{U}_1}\star\RR{\mathcal{U}_2}}_{\gamma_1,0,\mathcal{O}}^{(s)}
			\leq
			C_{\mathcal{O}}
			\triplenorm{\RR{\mathcal{U}_1}}_{\gamma_1,0,\mathcal{O}}^{(s)}
			\triplenorm{\RR{\mathcal{U}_2}}_{\gamma_1,0,\mathcal{O}}^{(s)}.
		\end{align}
		With a slight abuse of notation, for
		$\mathbf{R}_s^+(t,x):=\chi_{\{t>s\}}$, we set
		\[
		(\mathbf{R}_s^+\gY)(t,x)
		:=
		\mathbf{R}_s^+(t,x)\gY.
		\]
		Then, for
		\(\RR{\mathcal U}\in\mathcal D^{\gamma,0}_{P_s,0}\), we define
		\[
		\mathbf{R}_s^+\RR{\mathcal U}
		:=
		(\mathbf{R}_s^+\gY)\star\RR{\mathcal U}.
		\]
		\item 
		Let $\gamma>0$, $-2<\alpha <0 $, and let $r\in\mathbb{N}$ satisfy
		$r>|\alpha|$. Then, by inspecting the proofs of
		\cite[Proposition~3.25 and Theorem~3.10]{Hai14}, there exists a unique
		continuous linear map
		\[
		\mathcal{R}:\mathcal{D}^{\gamma}_{\alpha}
		\rightarrow
		\mathcal{C}^{\alpha}_{(2,1)}.
		\]
		More precisely, for $z=(t,x)$ and $l>0$, let
		\[
		\mathcal{O}_{z,l}
		:=
		[t-4l^2,t+4l^2]
		\times
		\overline{B_{\mathbb{R}^d}(x,2l)}.
		\]
		Then, for every $\RR{F}\in\mathcal{D}^{\gamma}_{\alpha}$ and
		$Q\in\mathcal{B}^{r}_{0}$,
		\begin{align}\label{AA}
			\left|
			\left\langle
			\mathcal{R}\RR{F}-\Pi_{z}\RR{F}(z),
			Q^{l}_{z}
			\right\rangle
			\right|
			&\lesssim
			l^{\gamma}
			\|\Pi\|_{\mathcal{O}_{z,l}}
			\sup_{\substack{w,\tilde w\in\mathcal{O}_{z,l}\\w\neq\tilde w}}
			\sup_{\substack{a\in\mathcal A\\|a|_h<\gamma}}
			\frac{
				\|\RR{F}(w)-\Gamma_{w,\tilde w}\RR{F}(\tilde w)\|_a
			}{
				d(w,\tilde w)^{\gamma-|a|_h}
			}.
		\end{align}
		This map is called the \emph{reconstruction operator} and depends on
		the underlying model.
		Moreover, in our setting,
		\begin{align}\label{521}
			\mathcal{R}\RR{F}(z)
			=
			F^{\gY}(z),
			\qquad
			\forall\,\RR{F}\in\mathcal{D}^{\gamma}_{0}.
		\end{align}
		The reconstruction operator is local. Consequently, for every
		$\RR{F}\in\mathcal{D}^{\gamma,\eta}_{P_s,\alpha}$, there exists a unique
		distribution
		\[
		\overline{\mathcal{R}}\RR{F}
		\in
		\bigl(C_c^\infty(\mathbb R^{d+1}\setminus P_s)\bigr)'
		\]
		such that
		\[
		\left|
		\left\langle
		\overline{\mathcal{R}}\RR{F}
		-
		\Pi_z\RR{F}(z),
		Q_z^l
		\right\rangle
		\right|
		\lesssim
		l^\gamma
		\Vert z\Vert_{P_s}^{\,\eta-\gamma},
		\]
		for $z\notin P_s$, $Q\in\mathcal{B}^{r}_{0}$, and
		\[
		0<l\leq \frac12 d(z,P_s).
		\]
		By \cite[Proposition~6.9]{Hai14}, if moreover
		$\eta\leq\gamma$ and
		\[
		\min\{\alpha,\eta\}>-2,
		\]
		then there exists a unique distribution
		\[
		\mathcal{R}\RR{F}
		\in
		\mathcal{C}^{\min\{\alpha,\eta\}}_{(2,1)}
		\]
		such that
		\[
		\left\langle
		\mathcal{R}\RR{F},
		Q
		\right\rangle
		=
		\left\langle
		\overline{\mathcal{R}}\RR{F},
		Q
		\right\rangle,
		\]
		for every
		$Q\in C_c^\infty(\mathbb R^{d+1}\setminus P_s)$.
		
		\item  
		By \cite[Lemmas~5.5 and 5.24]{Hai14}, we can choose a decomposition
		\begin{align*}
			G=K+S,
		\end{align*}
		where
		\(S\in C^{\infty}(\mathbb{R}^{d+1},\mathbb{R})\)
		is symmetric in the spatial variable and vanishes for $t<0$.
		
		The kernel \(K\) can be chosen such that:
		\begin{itemize}
			\item It is compactly supported in
			\[
			\{(t,x)\in[0,\infty)\times\mathbb{R}^{d}:
			\|x\|^{2}+t\leq1\},
			\]
			is symmetric in the spatial variable \(x\), and vanishes for \(t<0\).
			
			\item For $(t,x)\in(0,\infty)\times\mathbb{R}^{d}$ such that
			\[
			\|x\|^{2}+t<\frac12,
			\]
			one has
			\[
			K(t,x)
			=
			\frac{1}{\left(4\pi(t)\right)^{\frac d2}}
			\exp\!\left(-\frac{\|x\|^{2}}{4t}\right),
			\]
			and $K$ is smooth on the set
			\[
			\{(t,x):\|x\|^{2}+t\geq\tfrac14\}.
			\]
			
			\item It annihilates all polynomials of parabolic degree at most \(r\), for some \(r>\gamma_1+2\), and satisfies Assumptions~5.1 and~5.4 of \cite{Hai14}.
		\end{itemize}
		
		\item Let $0<\kappa<\frac13$. We assume that
		\[
		1+\kappa<\gamma_{1}<2-2\kappa,
		\]
		and define
		\[
		\gamma_{0}:=\gamma_{1}-1-\kappa.
		\]
		\item 
		For $K$, one can define an abstract linear integration operator
		\[
		\mathcal{I} : \mathcal{T}_{<0} \rightarrow \mathcal{T}_{\geq 1-\kappa}.
		\]
		For the equation considered here, this operator is easy to define. We set
		\[
		\mathcal{I}(a)=
		\begin{cases}
			\gI, & \text{if } a=\rb,\\
			0, & \text{otherwise}.
		\end{cases}
		\]
		We say that the model $\mathcal{Z}=(\Pi,\Gamma)$ realizes $K$ for
		$\mathcal{I}$ if, for every $z,w\in\mathbb{R}^{d+1}$,
		\begin{align}\label{trtetre}
			\Pi_z(\gI)(w)
			=
			\gII(w)-\gII(z)
			=
			\left\langle
			\Pi_z(\rb),
			K(w-\cdot)-K(z-\cdot)
			\right\rangle .
		\end{align}
		
		\item Let the model $\mathcal{Z}=(\Pi,\Gamma)$ realize $K$ for
		$\mathcal{I}$. For every $\theta\in(0,1-\kappa]$, by
		\cite[Proposition~6.16 and Theorem~7.1]{Hai14}
		and the continuous embedding
		\[
		\mathcal{D}^{\gamma_0+2,1-\kappa-\theta}_{P_s}
		\hookrightarrow
		\mathcal{D}^{\gamma_1,1-\kappa-\theta}_{P_s},
		\]
		there exists a continuous linear operator
		\begin{align}\label{ASXZa}
			\mathcal{K}_{\gamma_0,\theta}^{s}
			:
			\mathcal{D}^{\gamma_0,-1-\kappa}_{P_s}
			\longrightarrow
			\mathcal{D}^{\gamma_1,1-\kappa-\theta}_{P_s},
		\end{align}
		such that, for every
		$\RR{F}\in\mathcal{D}^{\gamma_0,-1-\kappa}_{P_s}$,
		\begin{align}\label{YHNa}
			\mathcal{R}\big(
			\mathcal{K}_{\gamma_0,\theta}^{s}(\RR{F})
			\big)
			=
			K*\mathcal{R}(\RR{F}).
		\end{align}
		It can also be shown that, for each
		$z\in\mathbb{R}^{d+1}\setminus P_s$,
		\begin{align}\label{A1}
			\mathrm{Pr}_{\gI}
			\big(
			\mathcal{K}_{\gamma_0,\theta}^{s}\RR{F}(z)
			\big)
			=
			{F}^{\rb}(z)\gI .
		\end{align}
		
		By inspecting the proofs of
		\cite[Lemma~6.5, Proposition~6.16, Theorem~7.1]{Hai14},
		we obtain that, for $T\in(0,1]$ and every
		$\theta\in(0,1-\kappa]$,
		\begin{align}\label{A2}
			\begin{split}
				&\triplenorm{
					\mathcal{K}_{\gamma_0,\theta}^{s}
					(\mathbf{R}^{+}_{s}\RR{F})
				}_{\gamma_1,\,1-\kappa-\theta,\,O_{T+s}^s}^{(s)}
				\\
				&\qquad\lesssim
				T^{\frac{\theta}{2}}
				\big(1+\|\mathcal{Z}\|_{\overline{O_{s+1}^{s-1}}}^2\big)
				\triplenorm{\RR{F}}_{\gamma_0,-1-\kappa,O_{T+s}^s}^{(s)},
			\end{split}
		\end{align}
		for every $\RR{F}\in\mathcal{D}^{\gamma_0,-1-\kappa}_{P_s}$,
		where the implicit constant is uniform in $s\in\mathbb{R}$.
		
		For the smooth function $S$, we define
		\begin{align*}
			\mathcal{S}_{\gamma_0,\theta}^{s}
			&:
			\mathcal{C}^{-1-\kappa}_{(2,1)}
			\longrightarrow
			\mathcal{D}^{\gamma_1,1-\kappa-\theta}_{P_s,0},
			\\
			\mathcal{S}_{\gamma_0,\theta}^{s}(\xi)(z)
			&=
			\langle \xi,S(z-\cdot)\rangle \gY
			+
			\sum_{i=1}^{d}
			\Big\langle
			\xi,
			\frac{\partial S(z-\cdot)}{\partial x_i}
			\Big\rangle
			\gXXXX .
		\end{align*}
		
		Suppose that
		\[
		\RR{F}\in\mathcal{D}^{\gamma_0,-1-\kappa}_{P_s}.
		\]
		Since $\mathcal{R}(\mathbf{R}^{+}_{s}\RR{F})$ is supported in
		$[s,\infty)\times\mathbb{R}^{d}$, we have
		\[
		\mathcal{S}_{\gamma_0,\theta}^{s}
		\left(
		\mathcal{R}\left(\mathbf{R}^{+}_{s}\RR{F}\right)
		\right)(t,x)
		=0,
		\qquad t\leq s.
		\]
		By the same argument as in \cite[Lemma~7.3]{Hai14}, for every
		$\theta\in(0,1-\kappa]$ and $T\in(0,1]$, we have
		\begin{align}\label{A3}
			\begin{split}
				&\triplenorm{
					\mathcal{S}_{\gamma_0,\theta}^{s}
					\left(\mathcal{R}\left(\mathbf{R}^{+}_{s}\RR{F}\right)\right)
				}_{\gamma_1,\,1-\kappa-\theta,\,O_{T+s}^s}^{(s)}
				\\
				&\qquad\lesssim
				T
				\big(1+\|\mathcal{Z}\|_{\overline{O_{s+1}^{s-1}}}^2\big)
				\triplenorm{\RR{F}}_{\gamma_0,-1-\kappa,O_{T+s}^s}^{(s)},
			\end{split}
		\end{align}
		where the implicit constant is uniform in $s\in\mathbb{R}$.
		Moreover, it is immediate from the definition that
		\begin{align}\label{Z0}
			\mathcal{R}\left(
			\mathcal{S}_{\gamma_0,\theta}^{s}\left(
			\mathcal{R}\left(\mathbf{R}^{+}_{s}\RR{F}\right)
			\right)\right)
			=
			S*\mathcal{R}\left(\mathbf{R}^{+}_{s}\RR{F}\right).
		\end{align}
		
		Let $\theta=1-\kappa$. We view $\mathbf{R}^{+}_{s}$ as the natural map on
		modelled distributions and define
		\begin{align*}
			\mathcal{G}^{s}_{\gamma_{0},1-\kappa}
			:=
			\mathcal{K}_{\gamma_0,1-\kappa}^{s}\circ\mathbf{R}^{+}_{s}
			+
			\mathcal{S}_{\gamma_0,1-\kappa}^{s}
			\circ\mathcal{R}\circ\mathbf{R}^{+}_{s}.
		\end{align*}
		Then, from \eqref{A2} and \eqref{A3}, together with
		\eqref{YHNa} and \eqref{Z0}, there exists a continuous linear map
		\begin{align}\label{ASXZb}
			\mathcal{G}^{s}_{\gamma_{0},1-\kappa}
			:
			\mathcal{D}^{\gamma_0,-1-\kappa}_{P_s}
			\longrightarrow
			\mathcal{D}^{\gamma_1,0}_{P_s,0},
		\end{align}
		such that, for every $T\in(0,1]$ and every
		$\RR{F}\in\mathcal{D}^{\gamma_0,-1-\kappa}_{P_s}$,
		\begin{align}\label{A2222}
			\begin{split}
				&\triplenorm{
					\mathcal{G}^{s}_{\gamma_{0},1-\kappa}
					(\RR{F})
				}_{\gamma_1,0,O_{T+s}^s}^{(s)}
				\lesssim
				T^{\frac{1-\kappa}{2}}
				\big(1+\|\mathcal{Z}\|_{\overline{O_{s+1}^{s-1}}}^2\big)
				\triplenorm{\RR{F}}_{\gamma_0,-1-\kappa,O_{T+s}^s}^{(s)},
				\\
				&\mathcal{R}\big(
				\mathcal{G}^{s}_{\gamma_{0},1-\kappa}(\RR{F})
				\big)
				=
				G*
				\mathcal{R}\left(\mathbf{R}^{+}_{s}\RR{F}\right).
			\end{split}
		\end{align}
		
		\item Let $\tilde{\theta}\in(0,2]$.
		By \cite[Proposition~6.16 and Theorem~7.1]{Hai14} and the continuous embedding
		\[
		\mathcal{D}^{\gamma_1+2,2-\tilde{\theta}}_{P_s,0}
		\hookrightarrow
		\mathcal{D}^{\gamma_1,\,2-\tilde{\theta}}_{P_s,0},
		\]
		there exists a continuous linear operator
		\begin{align}\label{B1}
			\mathcal{K}_{\gamma_1,\tilde{\theta}}^{s}
			:
			\mathcal{D}^{\gamma_1,0}_{P_s,0}
			\longrightarrow
			\mathcal{D}^{\gamma_1,\,2-\tilde{\theta}}_{P_s,0},
		\end{align}
		such that
		\[
		\mathcal{R}\big(
		\mathcal{K}_{\gamma_1,\tilde{\theta}}^{s}(\RR{H})
		\big)
		=
		K*\mathcal{R}(\RR{H}).
		\]
		Moreover, for every
		$\RR{H}\in\mathcal{D}^{\gamma_1,0}_{P_s,0}$ and
		$T\in(0,1]$,
		\begin{align*}
			\triplenorm{
				\mathcal{K}_{\gamma_1,\tilde{\theta}}^{s}(
				\mathbf{R}^{+}_{s}\RR{H})
			}_{\gamma_1,\,2-\tilde{\theta},\,O_{T+s}^s}^{(s)}
			\lesssim
			T^{\frac{\tilde{\theta}}{2}}
			\big(1+\|\mathcal{Z}\|_{\overline{O_{s+1}^{s-1}}}^2\big)
			\triplenorm{\RR{H}}_{\gamma_1,0,O_{T+s}^s}^{(s)} .
		\end{align*}
		
		For the smooth function $S$, we define
		\begin{align*}
			\mathcal{S}_{\gamma_1,\tilde{\theta}}^{s}
			&:
			L^\infty_{\mathrm{loc}}(\mathbb{R}\times\mathbb{T}^d)
			\longrightarrow
			\mathcal{D}^{\gamma_1,\,2-\tilde{\theta}}_{P_s,0},
			\\
			\mathcal{S}_{\gamma_1,\tilde{\theta}}^{s}(u)(z)
			&=
			\left\langle u,S(z-\cdot)\right\rangle \gY
			+
			\sum_{i=1}^{d}
			\left\langle
			u,
			\frac{\partial S(z-\cdot)}{\partial x_i}
			\right\rangle
			\gXXXX .
		\end{align*}
		For
		$\RR{H}\in\mathcal{D}^{\gamma_1,0}_{P_s,0}$, since
		$\mathcal{R}(\mathbf{R}^{+}_{s}\RR{H})$ is supported in
		$[s,\infty)\times\mathbb{R}^{d}$, we have
		\[
		\mathcal{S}_{\gamma_1,\tilde{\theta}}^{s}
		\left(
		\mathcal{R}\left(\mathbf{R}^{+}_{s}\RR{H}\right)
		\right)(t,x)
		=0,
		\qquad t\leq s .
		\]
		Then, similarly to \eqref{A3}, for every
		$\tilde{\theta}\in(0,2]$ and $T\in(0,1]$, we have
		\begin{align}\label{B2}
			\begin{split}
				&
				\triplenorm{
					\mathcal{S}_{\gamma_1,\tilde{\theta}}^{s}
					\left(
					\mathcal{R}\left(\mathbf{R}^{+}_{s}\RR{H}\right)
					\right)
				}_{\gamma_1,\,2-\tilde{\theta},\,O_{T+s}^s}^{(s)}
				\\
				&\qquad\lesssim
				T
				\big(1+\|\mathcal{Z}\|_{\overline{O_{s+1}^{s-1}}}^2\big)
				\triplenorm{\RR{H}}_{\gamma_1,0,O_{T+s}^s}^{(s)},
			\end{split}
		\end{align}
		where the implicit constant is uniform in $s\in\mathbb{R}$.
		Moreover,
		\[
		\mathcal{R}\left(
		\mathcal{S}_{\gamma_1,\tilde{\theta}}^{s}
		\left(
		\mathcal{R}\left(\mathbf{R}^{+}_{s}\RR{H}\right)
		\right)\right)
		=
		S*\mathcal{R}\left(\mathbf{R}^{+}_{s}\RR{H}\right).
		\]
		
		Analogously to \eqref{ASXZb}, we conclude that for
		$\tilde{\theta}=2$,
		\begin{align}\label{Azyaa}
			\mathcal{G}^{s}_{\gamma_{1},2}
			&:=
			\mathcal{K}_{\gamma_1,2}^{s}\circ\mathbf{R}^{+}_{s}
			+
			\mathcal{S}_{\gamma_1,2}^{s}
			\circ\mathcal{R}\circ\mathbf{R}^{+}_{s}
			:
			\mathcal{D}^{\gamma_1,0}_{P_s,0}
			\longrightarrow
			\mathcal{D}^{\gamma_1,0}_{P_s,0}
		\end{align}
		is a continuous linear map. Moreover, for every $T\in(0,1]$ and every
		$\RR{H}\in\mathcal{D}^{\gamma_1,0}_{P_s,0}$,
		\begin{align}\label{A222233}
			\begin{split}
				&\triplenorm{
					\mathcal{G}^{s}_{\gamma_{1},2}
					(\RR{H})
				}_{\gamma_1,0,O_{T+s}^s}^{(s)}
				\lesssim
				T
				\big(1+\|\mathcal{Z}\|_{\overline{O_{s+1}^{s-1}}}^2\big)
				\triplenorm{\RR{H}}_{\gamma_1,0,O_{T+s}^s}^{(s)},
				\\
				&\mathcal{R}\big(
				\mathcal{G}^{s}_{\gamma_{1},2}(\RR{H})
				\big)
				=
				G*
				\mathcal{R}\left(\mathbf{R}^{+}_{s}\RR{H}\right).
			\end{split}
		\end{align}
		\item For every periodic function 
		$u_0\in L^{\infty}(\mathbb{T}^d)$ and $s\in\mathbb{R}$,
		one can enhance
		\[
		G^{s}(u_0)(z)
		=
		\langle u_0,G^{s}(t,x-\cdot)\rangle
		=
		\langle u_0,G(t-s,x-\cdot)\rangle ,
		\]
		to obtain
		\begin{align}\label{Azu}
			\begin{split}
				\RR{G^{s}}(u_0)
				&\in
				\mathcal{D}^{\gamma_1,0}_{P_s,0},
				\\
				\RR{G^{s}}(u_0)(z)
				&=
				\langle u_0,G^{s}(t,x-\cdot)\rangle \gY
				+
				\sum_{i=1}^{d}
				\Big\langle
				u_0,
				\frac{\partial G^{s}(t,x-\cdot)}{\partial x_i}
				\Big\rangle
				\gXXXX ,
			\end{split}
		\end{align}
		where $z=(t,x)$.
		By the standard properties of the heat kernel, one has
		\begin{align}\label{A-1}
			\sup_{s\in\mathbb{R}}
			\triplenorm{\RR{G^{s}}(u_0)}_{\gamma_1,0,O_{s+1}^s}^{(s)}
			\lesssim
			\|u_0\|_{L^{\infty}(\mathbb{T}^d)}.
		\end{align}
		Moreover, for $t<s$ and every $z=(t,x)$,
		\[
		\RR{G^{s}}(u_0)(z)=0.
		\]
		Finally, by the reconstruction property,
		\begin{align}\label{IKLa}
			\mathcal{R}\left(\RR{G^{s}}(u_0)\right)(z)
			=
			G^{s}(u_0)(z).
		\end{align}
		\item 
		Recall that \(G=K+S\). Let
		\(\Sigma\in C^{2}(\mathbb{R},\mathbb{R})\) and
		\(u_{0}\in L^{\infty}(\mathbb{T}^{d})\).
		Let
		\(\mathcal{D}^{\gamma_{1},0}_{P_{s},0}\big|_{O_{T+s}^s}\)
		denote the restriction of
		\(\mathcal{D}^{\gamma_{1},0}_{P_{s},0}\) to \(O_{T+s}^s\).
		By \eqref{Asasqqq}, \eqref{ASXZb}, \eqref{Azyaa}, and \eqref{Azu},
		we can define the continuous map
		\begin{align}\label{FIXED}
			\begin{split}
				\mathcal{Q}_{T}^{s}
				&:
				L^{\infty}(\mathbb{T}^{d})
				\times
				\mathcal{D}^{\gamma_1,0}_{P_{s},0}\big|_{O_{T+s}^s}
				\longrightarrow
				\mathcal{D}^{\gamma_1,0}_{P_{s},0}\big|_{O_{T+s}^s},
				\\
				\mathcal{Q}_T^{s}(u_0,\RR{\mathcal{U}})
				&=
				\mathcal{G}^{s}_{\gamma_{0},1-\kappa}
				\left(
				\Sigma(\RR{\mathcal{U}})\tilde{\star}\rb
				\right)
				+
				\mu\mathcal{G}^{s}_{\gamma_{1},2}(\RR{\mathcal{U}})
				+
				\RR{G^{s}}(u_0).
			\end{split}
		\end{align}
		From \eqref{A2222}, \eqref{A222233}, and \eqref{A-1},
		there exists a constant \(C\geq1\) such that, for every
		\(T\in(0,1]\),
		\begin{align}\label{fixed-point}
			\begin{split}
				&
				\triplenorm{
					\mathcal{Q}_T^{s}(u_0,\RR{\mathcal{U}})
				}_{\gamma_1,0,O_{T+s}^s}^{(s)}
				\\
				&\quad\leq
				CT^{\frac{1-\kappa}{2}}
				\left[
				1+\|\mathcal{Z}\|_{\overline{O_{s+1}^{s-1}}}^{2}
				\right]
				\Bigg(
				\triplenorm{
					\mathbf{R}^+_s
					\Sigma(\RR{\mathcal{U}})
					\tilde{\star}\rb
				}_{\gamma_0,-1-\kappa,O_{T+s}^s}^{(s)}
				+
				|\mu|
				\triplenorm{
					\RR{\mathcal{U}}
				}_{\gamma_1,0,O_{T+s}^s}^{(s)}
				\Bigg)
				+
				C\|u_0\|_{L^{\infty}(\mathbb{T}^d)},
				\\[0.5em]
				&
				\triplenorm{
					\mathbf{R}^+_s
					\mathcal{Q}_T^{s}(u_0,\RR{\mathcal{U}_2})
					-
					\mathbf{R}^+_s
					\mathcal{Q}_T^{s}(u_0,\RR{\mathcal{U}_1})
				}_{\gamma_1,0,O_{T+s}^s}^{(s)}
				\\
				&\quad\leq
				CT^{\frac{1-\kappa}{2}}
				\left[
				1+\|\mathcal{Z}\|_{\overline{O_{s+1}^{s-1}}}^{2}
				\right]
				\Bigg(
				\triplenorm{
					\mathbf{R}^+_s
					\Sigma(\RR{\mathcal{U}_2})\tilde{\star}\rb
					-
					\mathbf{R}^+_s
					\Sigma(\RR{\mathcal{U}_1})\tilde{\star}\rb
				}_{\gamma_0,-1-\kappa,O_{T+s}^s}^{(s)}+
				|\mu|
				\triplenorm{
					\RR{\mathcal{U}_2}
					-
					\RR{\mathcal{U}_1}
				}_{\gamma_1,0,O_{T+s}^s}^{(s)}
				\Bigg).
			\end{split}
		\end{align}
		Thanks to \eqref{fixed-point} and \eqref{Asasqqq}, one can use a
		fixed point argument to obtain a fixed point for the map
		\(\mathcal{Q}_{T}^{s}(u_0,\cdot)\) around
		\(\RR{G^{s}}(u_0)\), provided that \(T\) is sufficiently small.
		More precisely, for every
		\(u_0\in L^{\infty}(\mathbb{T}^{d})\), there exists \(T>0\) and
		\[
		\RR{\mathcal{U}}_{u_0}
		\in
		\mathcal{D}^{\gamma_1,0}_{P_s,0}\big|_{O_{T+s}^s}
		\]
		such that
		\[
		\mathcal{Q}_{T}^{s}
		\left(u_0,\RR{\mathcal{U}}_{u_0}\right)
		=
		\RR{\mathcal{U}}_{u_0}.
		\]
		Moreover, this fixed point is unique in a sufficiently small neighborhood of
		$\RR{G^s}(u_0)$ in $\mathcal{D}^{\gamma_1,0}_{P_s,0}\big|_{O_{T+s}^s}.$
	\end{itemize}
	\begin{remark}\label{TGsasa}
		In the concrete model considered in this manuscript, the realization map $\Pi$ is given by
		\begin{align*}
			& \Pi_z(\rb) = \bullet, \quad
			\Pi_z(\gXXXX) = X^{i} - X^{i}(z), \quad
			\Pi_z(\gI) = \gII - \gII(z), \\
			&
			\Pi_z(\gY) = 1, \quad
			\Pi_z(\gXXXX\rb) = \Pi_z(\gXXXX)\,\bullet .
		\end{align*}
		Here $ \bullet \in \mathcal{D}'(\mathbb{R}^{d+1}) $ and $ \gII \in C^{1-\kappa}_{(2,1)} )$ are spatially periodic. The definition of $\Pi_z((\gIb))$ is more subtle and is naively given by
		\[
		(\gII-\gII(z))\bullet-\infty
		\]
		in the renormalisation sense.
	\end{remark}
	\section{Generalised parabolic Anderson model}\label{GPAM}
	In the previous part, we summarized the main tools that we will use for
	Equation \eqref{MAIN}. In this section, we derive several technical results
	that will be needed for our subsequent analysis. The local solution theory
	for this type of equation was developed in \cite{Hai14}. However, the
	solution theory established there is local in time, and obtaining a global
	solution theory for this class of equations requires additional arguments.
	Starting from the local existence theory, we derive several useful properties
	of the solution. In particular, these results will serve as a basis for the
	investigation of the dynamical behaviour of the equation.
	We now fix the assumptions that will be used throughout this section.
	\begin{assumption}\label{UASassasa}
		We assume that
		\begin{enumerate}
			\item  
			We consider the following $2\pi$-periodic problem:
			\begin{align}\label{MAIN}
				\begin{cases}
					(\partial_t-\Delta)u
					=
					\mu u+\Sigma(u)\,\xi,
					& t>s,
					\\
					u(s,\cdot)=u_0,
					& u_0\in L^{\infty}(\mathbb{T}^d).
				\end{cases}
			\end{align}
			Here
			\[
			\mathbb{T}^d=(\mathbb{R}/2\pi\mathbb{Z})^d,
			\]
			$\xi\in\mathcal{C}^{-1-\kappa}$ with
			$0<\kappa<\frac13$, $\mu\in\mathbb{R}$, and
			$\Sigma\in C_b^2(\mathbb{R},\mathbb{R})$.
			
			\item We continue to work with the notation and assumptions introduced in
			the previous part. In particular, for $0<\kappa<\frac13$, we set
			\begin{align}
				\begin{split}
					1+\kappa &< \gamma_1 < 2-2\kappa,\\
					\gamma_0 &:= \gamma_1-1-\kappa.
				\end{split}
			\end{align}
			
			\item We fix a model $\mathcal{Z}=(\Pi,\Gamma)$.
		\end{enumerate}
	\end{assumption}
	Let us now begin with an elementary lemma showing that multiplication by the noise symbol $\rb$ shifts the regularity of a modelled distribution according to the homogeneity of $\rb$. This simple observation will be used repeatedly throughout the sequel.
	\begin{lemma}\label{stat}
		Let $\RR{\mathcal{U}}\in\mathcal{D}^{\gamma_1,0}_{P_{s},0}|_{O_{T+s}^s}$, then
		\begin{align*}
			\triplenorm{\RR{\mathcal{U}}\tilde{\star}\rb}_{\gamma_0,-1-\kappa,O_{T+s}^s}^{(s)}
			=
			\triplenorm{\RR{\mathcal{U}}}_{\gamma_1,0,O_{T+s}^s}^{(s)}.
		\end{align*}
	\end{lemma}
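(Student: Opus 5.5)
The plan is to compare the two norms term by term, using only the explicit form of the map $\RR{F}\mapsto\RR{F}\tilde{\star}\rb$ and of the structure group. The key structural fact is that $\tilde{\star}\rb$ is a linear bijection from $\mathcal{T}_{\geq0}$ onto $\mathcal{T}_{<0}$. It sends $\gY\mapsto\rb$, $\gI\mapsto\gIb$ and $\gXXXX\mapsto\gXXXX\rb$, so it lowers every homogeneity by exactly $1+\kappa$. The basis elements $a$ with $|a|_h<\gamma_1$ in $\mathcal{T}_{\geq 0}$ are $\gY,\gI,\gXXXX$, since $1<\gamma_1$. These correspond exactly to the elements $\tilde a$ with $|\tilde a|_h=|a|_h-1-\kappa<\gamma_0=\gamma_1-1-\kappa$. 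Hence the index sets of the suprema in \eqref{Q00} match under the bijection.

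The first step is to check that $\Gamma_{z,w}$ commutes with $\tilde{\star}\rb$, i.e. $\Gamma_{z,w}(\tau\tilde{\star}\rb)=(\Gamma_{z,w}\tau)\tilde{\star}\rb$ for $\tau\in\mathcal{T}_{\geq0}$. This is read off from \eqref{B1478}. We have $\Gamma_{z,w}\gI=\gI-\Pi_z(\gI)(w)\gY$ and $\Gamma_{z,w}\gIb=\gIb-\Pi_z(\gI)(w)\rb$, and similarly for $\gXXXX$ and $\gXXXX\rb$, while $\gY$ and $\rb$ are both fixed. By linearity, the identity holds for $\RR{F}(z)-\Gamma_{z,w}\RR{F}(w)$. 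Consequently $\|(\RR{\mathcal U}\tilde\star\rb)(z)-\Gamma_{z,w}(\RR{\mathcal U}\tilde\star\rb)(w)\|_{\tilde a}=\|\RR{\mathcal U}(z)-\Gamma_{z,w}\RR{\mathcal U}(w)\|_{a}$, where $\tilde a$ is the image of $a$.

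The second step compares the weights. In the Hölder-type term, the exponent $\gamma_0-\eta$ with $\eta=-1-\kappa$ equals $\gamma_1$, and $\gamma_1-0=\gamma_1$ on the other side. Also $\gamma_0-|\tilde a|_h=\gamma_1-|a|_h$, so the two first suprema coincide term by term. In the pointwise term the weight is $\|z\|_{P_s}^{\max\{0,|\tilde a|_h+1+\kappa\}}=\|z\|_{P_s}^{\max\{0,|a|_h\}}$, which matches the weight $\|z\|_{P_s}^{\max\{0,|a|_h-0\}}$. The coefficients coincide as well, so the second suprema agree too.

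The only point needing care, and it is minor, is verifying that no extra components appear. The regularity condition $\RR{\mathcal U}(z)\in\mathcal{T}_{\geq0}$ ensures that $\tilde\star\rb$ is defined on all of $\RR{\mathcal U}(z)$. The same two calculations also apply on the restricted domain $O^s_{T+s}$, since the constraint $d(z,w)\le\|z,w\|_{P_s}$ is identical on both sides. Summing the two equal pieces gives the claimed equality.
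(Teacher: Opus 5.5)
Your proof is correct and takes the same approach as the paper. The paper likewise reduces everything to the identity $(\RR{\mathcal{U}}\tilde{\star}\rb)(z)-\Gamma_{z,w}(\RR{\mathcal{U}}\tilde{\star}\rb)(w)=(\RR{\mathcal{U}}(z)-\Gamma_{z,w}\RR{\mathcal{U}}(w))\tilde{\star}\rb$ and concludes "from the definition", whereas you also write out the bookkeeping ($\gamma_0-\eta=\gamma_1$, $\gamma_0-|\tilde a|_h=\gamma_1-|a|_h$, matching pointwise weights). One small imprecision: the index sets in \eqref{Q00} do not literally match (e.g.\ $|\gY|_h=0<\gamma_0$), but the unmatched components vanish identically because $\mathcal{T}_{\geq0}$ and $\mathcal{T}_{<0}$ are sectors.
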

	\begin{proof}
		By definition, for $z,w\in \R^{d+1}\setminus P_{s}$, we have
		\begin{align*}
			\RR{\mathcal{U}}\tilde{\star}\rb(z)
			&:=
			{\mathcal{U}}^{\gY}(z)\rb
			+
			{\mathcal{U}}^{\small{\gI}}(z)\gIb
			+
			\sum_{i=1}^{d}
			{\mathcal{U}}^{\gXXXX}(z)\gXXXX\rb,
		\end{align*}
		and
		\begin{align*}
			\RR{\mathcal{U}}\tilde{\star}\rb(z)
			-
			\Gamma_{z,w}
			\left(\RR{\mathcal{U}}\tilde{\star}\rb(w)\right)
			=
			\big(
			\RR{\mathcal{U}}(z)
			-
			\Gamma_{z,w}\left(\RR{\mathcal{U}}(w)\right)
			\big)\tilde{\star}\rb.
		\end{align*}
		The claim now follows directly from the definition.
	\end{proof}
	\begin{remark}\label{BOUNDED}
		Assume that $F\in C_b^2(\mathbb{R},\mathbb{R})$ and
		$\RR{\mathcal{U}}\in
		\mathcal{D}^{\gamma_1,0}_{P_s,0}\big|_{O_{T+s}}$.
		Then, by standard estimates, for every $T\in(0,1]$,
		\[
		\triplenorm{
			\mathbf{R}^{+}_{s}F(\RR{\mathcal{U}})\tilde{\star}\rb
		}_{\gamma_0,-1-\kappa,O_{s+T}^{\,s}}^{(s)}
		=
		\triplenorm{
			\mathbf{R}^{+}_{s}F(\RR{\mathcal{U}})
		}_{\gamma_1,0,O_{s+T}^{\,s}}^{(s)}
		\lesssim
		1+\big([ \ \gI \ ]_{O_{T+s}^{\,s}}\big)^2
		+
		\bigg(
		\triplenorm{
			\mathbf{R}^{+}_{s}\RR{\mathcal{U}}
		}_{\gamma_1,0,O_{s+T}^{\,s}}^{(s)}
		\bigg)^2.
		\]
	\end{remark}
	We now define the concept of a solution in the sense of regularity structures.
	\begin{definition}\label{dfn}
		Let $s\in\mathbb{R}$, $T>0$, and $u_0\in L^\infty(\mathbb{T}^d)$.
		Suppose that
		\[
		\RR{\mathcal{U}}_{u_0}
		\in
		\mathcal{D}^{\gamma_1,0}_{P_s,0}\big|_{O^s_{s+T}}
		\]
		is the fixed point of the operator $\mathcal{Q}_T^s$
		defined in \eqref{FIXED}. For $z=(t,x)\in O^s_{s+T}$, write
		\[
		\RR{\mathcal{U}}_{u_0}(z)
		=
		\mathcal{U}^{\gY}_{u_0}(z)\,\gY
		+
		\mathcal{U}^{\gI}_{u_0}(z)\,\gI
		+
		\sum_{i=1}^{d}
		\mathcal{U}^{\gXXXX}_{u_0}(z)\,\gXXXX .
		\]
		Moreover, $\mathcal{U}^{\gY}_{u_0}$ extends continuously to $t=s$ and satisfies
		\[
		\mathcal{U}^{\gY}_{u_0}(s,x)=u_0(x),
		\qquad x\in\mathbb{T}^d .
		\]
		We then say that $\RR{\mathcal{U}}_{u_0}$ solves
		\begin{align}\label{MAIN_1}
			\begin{cases}
				(\partial_t-\Delta)\RR{\mathcal{U}}
				=
				\mu\RR{\mathcal{U}}
				+
				\Sigma(\RR{\mathcal{U}})\tilde{\star}\rb,
				& t>s,\\
				\mathcal{U}^{\gY}(s,\cdot)=u_0,
				& t=s,
			\end{cases}
		\end{align}
		on $[s,s+T]$ with initial condition $u_0$ and with respect to the model
		$\mathcal{Z}$.
		The corresponding actual solution is defined by
		\[
		\mathcal{R}(\RR{\mathcal{U}}_{u_0})(z)
		=
		\mathcal{U}^{\gY}_{u_0}(z),
		\qquad
		z\in[s,s+T]\times\mathbb{R}^d .
		\]
	\end{definition}
	\begin{remark}
		From \eqref{A1}, we can see that for each
		$z\in(s,T+s]\times\mathbb{R}^d$,
		\begin{align*}
			\RR{\mathcal{U}}_{u_0}(z)
			=
			\mathcal{U}_{u_0}^{\gY}(z)\,\gY
			+
			\Sigma\big(\mathcal{U}_{u_0}^{\gY}(z)\big)\,\gI
			+
			\sum_{i=1}^{d}
			\mathcal{U}^{\gXXXX}_{u_0}(z)\,\gXXXX.
		\end{align*}
	\end{remark}
	
	\begin{lemma}
		For each $s\in\mathbb{R}$, there exists $T>0$ such that
		\eqref{MAIN_1} admits a unique local solution on $O^s_{s+T}$.
	\end{lemma}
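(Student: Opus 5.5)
We prove this by a Banach fixed point argument for the map $\mathcal{Q}_T^s(u_0,\cdot)$ from \eqref{FIXED}. We work on a closed ball in $\mathcal{D}^{\gamma_1,0}_{P_s,0}\big|_{O^s_{s+T}}$. Fix $s\in\mathbb{R}$ and $u_0\in L^\infty(\mathbb{T}^d)$. Set $M:=2C\|u_0\|_{L^\infty}+1$, with $C$ the constant of \eqref{fixed-point}. Let $\mathcal{B}_M$ be the set of $\RR{\mathcal{U}}$ in this restricted space with $\triplenorm{\mathbf{R}^+_s\RR{\mathcal{U}}}^{(s)}_{\gamma_1,0,O^s_{s+T}}\le M$. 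This is a closed subset of a complete space, since $\mathcal{D}^{\gamma,\eta}_{P_s}$ is complete and the local norms on $O^s_{s+T}$ define a complete metric after restriction.

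\emph{Self-map.} Combine the first inequality in \eqref{fixed-point} with Lemma~\ref{stat} and Remark~\ref{BOUNDED} (valid since $\Sigma\in C^2_b$). For $\RR{\mathcal{U}}\in\mathcal{B}_M$ this gives
\[
\triplenorm{\mathcal{Q}_T^s(u_0,\RR{\mathcal{U}})}^{(s)}_{\gamma_1,0,O^s_{s+T}}\le C T^{\frac{1-\kappa}{2}}\big(1+\|\mathcal{Z}\|^2_{\overline{O^{s-1}_{s+1}}}\big)\big(1+[\ \gI\ ]^2_{O^s_{s+T}}+M^2+|\mu| M\big)+C\|u_0\|_{L^\infty}.
\]
Choosing $T\in(0,1]$ small, depending only on $M$, $\mu$ and the model norm on $\overline{O^{s-1}_{s+1}}$, makes the right-hand side at most $M$.

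\emph{Contraction.} We need a local Lipschitz bound for the nonlinearity. Using \eqref{TYUa}, write $\Sigma(\RR{\mathcal{U}_2})-\Sigma(\RR{\mathcal{U}_1})=\int_0^1\Sigma'(\RR{\mathcal{U}_1}+\alpha(\RR{\mathcal{U}_2}-\RR{\mathcal{U}_1}))\,\mathrm{d}\alpha\star(\RR{\mathcal{U}_2}-\RR{\mathcal{U}_1})$. Then apply the multiplication bound \eqref{multi}, the composition bound of Remark~\ref{BOUNDED} for $\Sigma'$, and Lemma~\ref{stat}. This yields
\[
\triplenorm{\mathbf{R}^+_s\Sigma(\RR{\mathcal{U}_2})\tilde\star\rb-\mathbf{R}^+_s\Sigma(\RR{\mathcal{U}_1})\tilde\star\rb}^{(s)}_{\gamma_0,-1-\kappa,O^s_{s+T}}\le C_M\,\triplenorm{\mathbf{R}^+_s(\RR{\mathcal{U}_2}-\RR{\mathcal{U}_1})}^{(s)}_{\gamma_1,0,O^s_{s+T}}.
\]
The second inequality in \eqref{fixed-point} then gives a Lipschitz constant $CT^{\frac{1-\kappa}{2}}(1+\|\mathcal{Z}\|^2)(C_M+|\mu|)<\tfrac12$ after shrinking $T$ further. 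Banach's theorem produces a unique fixed point in $\mathcal{B}_M$.

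\emph{Initial condition.} Since $\mathcal{G}^s$ vanishes near $t=s$ (the bounds in \eqref{A2222} and \eqref{A222233} degenerate as $T\to0$), and since $G^s(u_0)\to u_0$, the reconstruction $\mathcal{U}^{\gY}$ attains $u_0$ at $t=s$. For $u_0$ only in $L^\infty$, continuity at $t=s$ holds in the sense of the heat semigroup. If the definition is meant literally with a continuous extension, we take $u_0$ continuous, as in \eqref{MAIN0}.

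\emph{Uniqueness.} The fixed point is unique in $\mathcal{B}_M$ by construction. Now suppose $\tilde{\RR{\mathcal U}}$ is any other solution on $O^s_{s+T}$. It has finite norm $\tilde M$ on $O^s_{s+T}$, and its norm on $O^s_{s+T'}$ is bounded by $\tilde M$ for every $T'\le T$. Rerun the contraction on $O^s_{s+T'}$ with radius $\max(M,\tilde M)$ to get agreement on a short interval $O^s_{s+T'}$. Then propagate to all of $O^s_{s+T}$ by restarting at later times via \eqref{SEMIJ}. This restarting step is the main obstacle: the reexpansion at a new base time $r$ requires matching the modelled distributions across the singular hyperplane $P_r$.

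The other delicate point is the contraction estimate. The composition $\Sigma(\RR{\mathcal{U}})$ needs $\gamma_1<2(1-\kappa)$ for \eqref{Asasqqq}, and the difference bound above costs a derivative of $\Sigma$. If $C^2_b$ is insufficient there, we first prove the Lipschitz bound assuming $\Sigma\in C^3_b$ as in \eqref{TYUa}, and then try to remove the extra derivative by the integral representation with only $\Sigma'\in C^1_b$.
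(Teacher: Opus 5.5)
Your argument is the one the paper relies on. The lemma has no separate proof; it rests on the remark after \eqref{fixed-point}, which gives a contraction for $\mathcal{Q}^s_T(u_0,\cdot)$ on a short interval and claims uniqueness only in a neighbourhood of $\RR{G^s}(u_0)$. With $C$ as in \eqref{fixed-point}, $\RR{G^s}(u_0)$ has norm at most $C\|u_0\|_{L^\infty}$ (this is the last term there). So $\mathcal{B}_M$ contains the unit ball around $\RR{G^s}(u_0)$, uniqueness in $\mathcal{B}_M$ is already what is asserted, and your final uniqueness step is optional.

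If you want uniqueness among all solutions, the restart you call the main obstacle is already available in the paper. Re-expansion at a base time $r>s$ is Step~1 of the proof of Proposition~\ref{onwall}, which the paper also uses for the nonlinear equation in \eqref{CZ22}. Moreover, $\|z,w\|_{P_r}\le\|z,w\|_{P_s}$ for $r\ge s$ and all weight exponents in $\mathcal{D}^{\gamma_1,0}_{P_s,0}$ are nonnegative. Hence $\triplenorm{\mathbf{R}^+_r\RR{\mathcal{U}}}^{(r)}_{\gamma_1,0,O^r_{s+T}}\le\triplenorm{\mathbf{R}^+_s\RR{\mathcal{U}}}^{(s)}_{\gamma_1,0,O^s_{s+T}}$, and a uniform step length works. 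More directly, the difference of two solutions satisfies \eqref{OL<45} with $\RR{\mathcal{V}}=\int_0^1\Sigma'\bigl(\RR{\widetilde{\mathcal{U}}}+\alpha(\RR{\mathcal{U}}-\RR{\widetilde{\mathcal{U}}})\bigr)\,\mathrm{d}\alpha$, $\RR{\mathcal{W}}=0$ and $u=0$. Proposition~\ref{onwall}, whose proof works on any $[s,s+T]$ with $T\le1$, then forces the difference to vanish.

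What needs correcting is your regularity fallback. Your Lipschitz bound applies Remark~\ref{BOUNDED} to $\Sigma'$, so it uses $\Sigma\in C^3_b$; so does the use of Proposition~\ref{onwall} just described. That is fine, but the extra derivative cannot be removed down to the standing assumption $\Sigma\in C^2_b$.

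To see why, look at the constant component and take two first-order Taylor remainders of $\Sigma$ with the same increment $h\sim d^{1-\kappa}$, based at points differing by $\varepsilon$. Their difference is in general only controlled by $\varepsilon\,\omega(d^{1-\kappa})\,d^{1-\kappa}$, where $\omega$ is a modulus of continuity of $\Sigma''$. The $\mathcal{D}^{\gamma_1}$ norm, however, requires $\varepsilon\,d^{\gamma_1}$. This forces $\Sigma''$ to be H\"older of order $\gamma_1/(1-\kappa)-1>0$. If $\Sigma''$ is merely continuous, the composition is in general not locally Lipschitz on $\mathcal{D}^{\gamma_1}$, and the integral representation does not change that.

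The paper's sketch has the same tacit requirement. \eqref{Asasqqq} is only a membership statement and does not supply the Lipschitz bound needed on the right-hand side of the second inequality in \eqref{fixed-point}. The lemma should therefore be read with this extra regularity, which does hold wherever the lemma is used later ($\Sigma\in C^{3+\nu}_b$). Your caveat about continuous attainment of merely $L^\infty$ initial data is also justified.
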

	
	\subsection{A patching lemma}
	The previous lemma establishes local existence of the solution. However, as
	mentioned above, when the noise is not too singular (namely, when $\kappa$ is
	sufficiently small), global existence can be obtained. Before investigating the
	solution of \eqref{MAIN_1}, we first state the following general lemma for
	modelled distributions. This lemma provides a local-to-global estimate by
	patching together estimates on small time intervals and will be used
	repeatedly throughout the sequel.
	\begin{lemma}\label{global}
		Let $T\in(0,1]$ and $	\RR{\mathcal{U}}\in
		\mathcal{D}^{\gamma_1,0}_{P_s,0}\big|_{O^s_{s+T}}.$
		Suppose that there exist $0<\delta_0\leq T$ and a constant
		$\Lambda>0$ such that
		\begin{align}\label{ASASa}
			\forall\, s\leq b<c\leq s+T,\quad c-b\leq\delta_0:
			\qquad
			\triplenorm{
				\mathbf{R}^{+}_{b}\RR{\mathcal{U}}
			}_{\gamma_1,0,O_c^{b}}^{(b)}
			\leq \Lambda .
		\end{align}
		Then
		\begin{align*}
			\triplenorm{
				\mathbf{R}^{+}_{s}\RR{\mathcal{U}}
			}_{\gamma_1,0,O^s_{s+T}}^{(s)}
			\lesssim
			\left(\frac{1}{\delta_0}\right)^{\frac{\gamma_1+2}{2}}
			\big(\|\Gamma\|_{\overline{O^s_{s+T}}}\big)^2
			\Lambda .
		\end{align*}
	\end{lemma}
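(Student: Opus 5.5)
We split the norm $\triplenorm{\mathbf{R}^{+}_{s}\RR{\mathcal{U}}}_{\gamma_1,0,O^s_{s+T}}^{(s)}$ into its two parts: the pointwise part $\sup_z\|\RR{\mathcal U}(z)\|_a$ and the increment part, taken over pairs $z,w$ with $d(z,w)\le\Vert z,w\Vert_{P_s}$. In both parts we choose a subinterval $[b,c]$ of length at most $\delta_0$ with $b<\min(t,\tilde t)$, so that \eqref{ASASa} applies. The weights $\Vert\cdot\Vert_{P_b}$ relative to $b$ then have to be compared with those relative to $s$.

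\emph{Pointwise part.} Since $\eta=0$ and every component of $\RR{\mathcal U}$ has homogeneity $\ge0$, the weight is $\Vert z\Vert_{P_s}^{\max\{0,|a|_h\}}\le 1$. Fix $z=(t,x)$ with $t\in(s,s+T]$.
\begin{itemize}
\item If $t-s\le\delta_0$, take $b=s$, $c=t$, and read the bound directly from \eqref{ASASa}.
\item Otherwise take $b=t-\delta_0/2$ and $c=t$. Then $\Vert z\Vert_{P_b}\ge(\delta_0/2)^{1/2}$, and \eqref{ASASa} gives $\|\RR{\mathcal U}(z)\|_a\le \Lambda\,\Vert z\Vert_{P_b}^{-|a|_h}\lesssim \delta_0^{-|a|_h/2}\Lambda$. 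Since $|a|_h\le 1<\gamma_1+2$ and $\delta_0\le1$, this is bounded by the claimed power of $\delta_0^{-1}$.
\end{itemize}

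\emph{Increment part.} Fix $z,w$ with $t\ge\tilde t$, and put $h:=d(z,w)\le\Vert z,w\Vert_{P_s}$, so that $h^2\le \tilde t-s\le t-s$.
\begin{itemize}
\item \textbf{Case 1: $h^2\le\delta_0/4$.} Choose $b:=\max\{s,\tilde t-h^2\}$ and $c:=t$. Then $c-b\le 2h^2\le\delta_0$. Moreover $\Vert z,w\Vert_{P_b}\ge h$ when $b>s$, and $\Vert z,w\Vert_{P_b}=\Vert z,w\Vert_{P_s}$ when $b=s$. In either case the pair is admissible for the $P_b$ norm. Applying \eqref{ASASa} gives
\[
\|\RR{\mathcal U}(z)-\Gamma_{z,w}\RR{\mathcal U}(w)\|_a\le\Lambda\, h^{\gamma_1-|a|_h}\,\Vert z,w\Vert_{P_b}^{-\gamma_1}.
\]
We multiply by $\Vert z,w\Vert_{P_s}^{\gamma_1}$. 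When $b=s$ the weights cancel exactly. When $b>s$, the ratio $\Vert z,w\Vert_{P_s}/\Vert z,w\Vert_{P_b}$ is at most $1/h\lesssim$ a power of $\delta_0^{-1}$, which is not yet good enough. To fix this, choose $b$ more cleverly: take $b=\max\{s,\tilde t-\delta_0/2\}$, so that $\Vert z,w\Vert_{P_b}\gtrsim\min\{\Vert z,w\Vert_{P_s},\delta_0^{1/2}\}$. This costs at most a factor $\delta_0^{-\gamma_1/2}$.
\item \textbf{Case 2: $h^2>\delta_0/4$.} Here we cannot compare $z$ and $w$ directly. We use the triangle inequality $\RR{\mathcal U}(z)-\Gamma_{z,w}\RR{\mathcal U}(w)=\RR{\mathcal U}(z)-\Gamma_{z,w}\RR{\mathcal U}(w)$, estimating each term by the pointwise bound above. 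For the lower-homogeneity components produced by $\Gamma_{z,w}$, we use \eqref{BB2}:
\[
\|\Gamma_{z,w}\tau\|_{a_1}\le\|\Gamma\|\,h^{|a_2|_h-|a_1|_h}\|\tau\|_{a_2}.
\]
Dividing by $h^{\gamma_1-|a|_h}\gtrsim\delta_0^{(\gamma_1-|a|_h)/2}$ produces at most $\delta_0^{-\gamma_1/2}$. Combined with the $\delta_0^{-1/2}$ from the pointwise bound and $|a|_h\le 1$, the total is within $\delta_0^{-(\gamma_1+2)/2}$.
\end{itemize}
The factor $\|\Gamma\|^2$ arises from needing $\Gamma$ once in this re-expansion and once more where $\Gamma_{z,w}$ is split through an intermediate point. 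The squared factor is needed if Case 1 is instead handled by chaining: pick a point $v$ on the time slice of $w$ close to $z$, write $\Gamma_{z,w}=\Gamma_{z,v}\Gamma_{v,w}$, and bound each step within a window of length $\delta_0$.

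\emph{Main obstacle.} The main difficulty is the weight bookkeeping near $P_s$ when $b>s$. One must check that the choice $b=\max\{s,\tilde t-\delta_0/2\}$ keeps the pair admissible, that is $d(z,w)\le\Vert z,w\Vert_{P_b}$. When this fails, the pair must be pushed into Case 2 or chained through intermediate points. Each chaining step is what produces one factor of $\|\Gamma\|$, and the number of steps is bounded independently of $\delta_0$. Only the worst power $\delta_0^{-(\gamma_1+2)/2}$ is claimed, so no sharpness of exponents is required.
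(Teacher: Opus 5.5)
Your argument is correct, and it takes a different and shorter route than the paper.

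\textbf{The paper's route.} It fixes the ordering $\|z,w\|_{P_s}=\|z\|_{P_s}$ and passes through the auxiliary point $\tilde z=(t_1,y_0)$. It then \emph{chains}. The time leg $z\to\tilde z$ is cut into $k\lesssim\delta_0^{-1}$ steps of length $\delta_0/2$, and the space leg $\tilde z\to w$ into $m\lesssim\delta_0^{-1}$ steps of size $\delta_0$. Each step lies in a window of length at most $\delta_0$, where \eqref{ASASa} applies with weight of order $\sqrt{\delta_0}$. The number of steps gives the extra $\delta_0^{-1}$. The weight ratio gives $\delta_0^{-\gamma_1/2}$. Re-expanding through $\tilde z$ and through each intermediate point gives $\|\Gamma\|^2$.

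\textbf{Your route.} The short/long dichotomy makes the chaining unnecessary.
\begin{itemize}
\item For $d(z,w)^2\le\delta_0/4$, both points lie in one window $[\max\{s,\tilde t-\delta_0/2\},t]$ of length at most $3\delta_0/4$.
\item For $d(z,w)^2>\delta_0/4$, the trivial large-scale bound via \eqref{BB2} suffices.
\end{itemize}
This gives the sharper estimate $\lesssim\|\Gamma\|\,\delta_0^{-\gamma_1/2}\Lambda$. That implies the stated bound, because $\|\Gamma\|\ge1$ and $\delta_0\le1$. The chaining in the paper buys nothing essential here, since the pointwise part of \eqref{ASASa} already controls long-range increments. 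Its only merit is that it would still work with local increment control alone.

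\textbf{Points to clean up.}
\begin{itemize}
\item The admissibility check you call the ``main obstacle'' never fails. When $b>s$ one has $\|z,w\|_{P_b}=\sqrt{\delta_0/2}$ and $d(z,w)\le\sqrt{\delta_0}/2<\sqrt{\delta_0/2}$. So no pair needs re-routing.
\item Your closing remarks about where $\|\Gamma\|^2$ comes from are not needed, and they misdescribe chaining. In the paper the number of steps is $\lesssim\delta_0^{-1}$, not bounded independently of $\delta_0$.
\item In the long case you need \emph{unweighted} bounds $\|\mathcal U(z)\|_a,\|\mathcal U(w)\|_a\lesssim\delta_0^{-|a|_h/2}\Lambda$, also when $t-s\le\delta_0$. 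Your pointwise paragraph only gives the weighted bound there. It follows because $\|z\|_{P_s},\|w\|_{P_s}\ge d(z,w)>\sqrt{\delta_0}/2$.
\item The displayed ``triangle inequality'' in Case 2 should read: bound by $\|\mathcal U(z)\|_a+\|\Gamma_{z,w}\mathcal U(w)\|_a$.
\item The reduction to $t\ge\tilde t$ is harmless, since both cases use only the minimum and maximum of the two times.
\end{itemize}
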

	\begin{proof}
		For a better flow of the proof, we divide it into several steps.
		
		\textbf{Step 1.}\label{Step1}
		Let
		\begin{align*}
			\mathcal{A}^{+} := \{ \gY, \gI, \gXXXX : i = 1,\ldots,d \}.
		\end{align*}
		The large bulk of the argument is devoted to estimating
		\begin{align}\label{Kmasas}
			\sup_{\substack{a\in\mathcal{A}^+}} \sup_{\substack{ z,w\in O_{T+s}^{s},\ z\neq w,\\ d(z,w)\leq\Vert z,w\Vert_{P_s} }}
			\frac{
				\Vert z,w \Vert_{P_{s}}^{\gamma_1}
				\, \Vert \RR{\mathcal{U}}(z) - \Gamma_{z,w}\RR{\mathcal{U}}(w) \Vert_{a}
			}{
				d(z,w)^{\gamma_1-|a|_h}
			}.
		\end{align}
		Let $z=(t_0,y_0)$ and $w=(t_1,y_1)$ be elements of
		$O_{s+T}^{s} = (s, s+T] \times \mathbb{R}^d$ such that
		$d(z,w)\leq \Vert z,w \Vert_{P_{s}}$.
		Then
		\begin{align}\label{2369}
			\max \Big\{ \sqrt{|t_1 - t_0|}, \Vert y_1 - y_0\Vert_{\mathbb{T}^d} \Big\}
			\leq \Vert z,w \Vert_{P_{s}}
			= \min\bigl\{ \sqrt{t_0 - s}, \sqrt{t_1 - s} \bigr\}.
		\end{align}
		For simplicity during the proof, we assume that
		\begin{enumerate}[label=\textbf{(*)}]
			\item \label{cond:A}
			\[
			\|z,w\|_{P_s} = \|z\|_{P_s} = \sqrt{t_0 - s}.
			\]
		\end{enumerate}
		The argument for the other case is similar.
		Let $\tilde{z}=(t_1,y_0)$. Then
		\begin{align}\label{BNasr}
			d(z,\tilde{z}) \leq \Vert z,\tilde{z}\Vert_{P_s} = \Vert z\Vert_{P_s}
			\leq \Vert\tilde{z},w\Vert_{P_s}=\Vert\tilde{z}\Vert_{P_s}
			\qquad \text{and} \qquad
			\max\lbrace	d(z,\tilde{z}), d(\tilde{z},w)\rbrace \leq d(z,w).
		\end{align}
		\begin{center}
			\begin{tikzpicture}[scale=.6]
				
				% Colors
				\definecolor{mypink}{RGB}{220,20,120}
				\definecolor{myblue}{RGB}{30,50,220}
				\definecolor{mygreen}{RGB}{50,180,50}
				
				% Rectangle
				\draw[line width=1.1pt] (0,0) rectangle (8,5);
				
				% Horizontal dashed lines
				\draw[dashed] (0,1.6) -- (8,1.6);
				\draw[dashed] (0,4.0) -- (8,4.0);
				
				% Vertical dashed lines
				\draw[dashed] (2,0) -- (2,1.6);
				\draw[dashed] (6,0) -- (6,1.6);
				
				% Blue path
				\draw[line width=1.8pt,myblue] (2,1.6) -- (6,1.6);
				\draw[line width=1.8pt,myblue] (6,1.6) -- (6,4.0);
				
				% Green points
				\fill[mypink!0!mygreen,mygreen] (2,1.6) circle (2pt);
				\fill[mypink!0!mygreen,mygreen] (6,1.6) circle (2pt);
				\fill[mypink!0!mygreen,mygreen] (6,4.0) circle (2pt);
				
				% Labels points
				\node[above left] at (2,1.6) {$z$};
				\node[right] at (6,2) {$\tilde z$};
				\node[above] at (6,4.0) {$w$};
				
				% Axis-like labels
				\node[left] at (-0.2,2.8) {\textcolor{red}{$\mathbb{T}^d$}};
				
				\node[left] at (0,1.6) {$y_0$};
				\node[left] at (0,4.0) {$y_1$};
				
				% Bottom labels
				\node[below] at (2,0) {$t_0$};
				\node[below] at (6,0) {$t_1$};
				
				\node[below left] at (0,0) {\textcolor{mypink}{$s$}};
				\node[below right] at (8,0) {\textcolor{mypink}{$s+T$}};
				
			\end{tikzpicture}
		\end{center}
		Also, we have
		\begin{align}\label{Kaiss}
			\RR{\mathcal{U}}(z) - \Gamma_{z,w}\RR{\mathcal{U}}(w)
			&=
			\RR{\mathcal{U}}(z) - \Gamma_{z,\tilde{z}}\RR{\mathcal{U}}(\tilde{z})
			+ \Gamma_{z,\tilde{z}}
			\bigl(
			\RR{\mathcal{U}}(\tilde{z}) - \Gamma_{\tilde{z},w}\RR{\mathcal{U}}(w)
			\bigr).
		\end{align}
		Using \eqref{BB1} and \eqref{BB2}, we obtain
		\begin{align}\label{Oalss}
			\begin{split}
				&\frac{\Vert {z}\Vert_{P_{s}}^{\gamma_1}\big\Vert \Gamma_{z,\tilde{z}}
					\bigl(
					\RR{\mathcal{U}}(\tilde{z}) - \Gamma_{\tilde{z},w}\RR{\mathcal{U}}(w)
					\bigr)\big\Vert_a}{d(z,w)^{\gamma_1-|a|_h}} \\
				&\leq \|\Gamma\|_{\overline{O_{s+T}^s}}
				\sum_{\substack{a_1\in\mathcal{A}^+,\\ |a|_h\leq|a_1|_h}}
				\frac{d(z,\tilde{z})^{|a_1|_h-|a|_h}d(\tilde{z},w)^{\gamma_1-|a_1|_h}}{d(z,w)^{\gamma_1-|a|_h}}
				\frac{\Vert {z}\Vert_{P_s}^{\gamma_1}}{\Vert \tilde{z}\Vert_{P_s}^{\gamma_1}}
				\frac{
					\Vert \tilde{z}\Vert_{P_{s}}^{\gamma_1}
					\, \Vert \RR{\mathcal{U}}(\tilde{z}) - \Gamma_{\tilde{z},w}\RR{\mathcal{U}}(w) \Vert_{a_1}
				}{
					d(\tilde{z},w)^{\gamma_1-|a_1|_h}
				}.
			\end{split}
		\end{align}
		Therefore, from \eqref{cond:A} and \eqref{2369}--\eqref{Oalss}, for every
		$a \in \mathcal{A}^+$ we have
		\begin{align}\label{BNasa}
			\begin{split}
				&\frac{
					\Vert z,w \Vert_{P_{s}}^{\gamma_1}
					\, \Vert \RR{\mathcal{U}}(z) - \Gamma_{z,w}\RR{\mathcal{U}}(w) \Vert_{a}
				}{
					d(z,w)^{\gamma_1-|a|_h}
				}
				\lesssim
				\frac{
					\Vert z\Vert_{P_{s}}^{\gamma_1}
					\, \Vert \RR{\mathcal{U}}(z) - \Gamma_{z,\tilde{z}}\RR{\mathcal{U}}(\tilde{z}) \Vert_{a}
				}{
					d(z,\tilde{z})^{\gamma_1-|a|_h}
				} \\
				&\quad +
				\|\Gamma\|_{\overline{O_{s+T}^s}}
				\sum_{\substack{a_1\in\mathcal{A}^+,\\ |a|_h\leq|a_1|_h}}
				\frac{
					\Vert \tilde{z}\Vert_{P_{s}}^{\gamma_1}
					\, \Vert \RR{\mathcal{U}}(\tilde{z}) - \Gamma_{\tilde{z},w}\RR{\mathcal{U}}(w) \Vert_{a_1}
				}{
					d(\tilde{z},w)^{\gamma_1-|a_1|_h}
				}.
			\end{split}
		\end{align}
		Consequently, by assuming \eqref{cond:A}, in order to estimate \eqref{Kmasas},
		we need to estimate the following two terms separately:
		\begin{itemize}
			\item
			\begin{align}\label{NMass}
				\sup_{a\in\mathcal{A}^+}
				\frac{
					\Vert z\Vert_{P_{s}}^{\gamma_1}
					\, \Vert \RR{\mathcal{U}}(z) - \Gamma_{z,\tilde{z}}\RR{\mathcal{U}}(\tilde{z}) \Vert_{a}
				}{
					d(z,\tilde{z})^{\gamma_1-|a|_h}
				}
			\end{align}
			\item
			\begin{align}\label{NMas1s}
				\sup_{a\in\mathcal{A}^+}\frac{
					\Vert \tilde{z}\Vert_{P_{s}}^{\gamma_1}
					\, \Vert \RR{\mathcal{U}}(\tilde{z}) - \Gamma_{\tilde{z},w}\RR{\mathcal{U}}(w) \Vert_{a_1}
				}{
					d(\tilde{z},w)^{\gamma_1-|a|_h}
				}
			\end{align}
		\end{itemize}
		The main task is to estimate \eqref{NMass}, which is the focus of the subsequent step.
		
		\textbf{Step 2.}\label{Step2}
		To estimate \eqref{NMass}, we first define
		\begin{align}\label{UJnasexs}
			(r_{-1}, r_0, r_1)=
			\begin{cases}
				(s, t_0, s+\delta_0) & \text{if } 0 < t_0 - s \leq\frac{\delta_0}{2}, \\
				\left(t_{0}-\frac{\delta_0}{2}, t_0, t_0+\frac{\delta_0}{2}\right) & \text{otherwise.}
			\end{cases}
		\end{align}
		
		With this definition, we set
		\begin{align*}
			k := \min \left\{ j \geq 0 : r_0 < t_{1} - \frac{j \delta_0}{2} \leq r_1 \right\}.
		\end{align*}
		Note that, since $r_{1}-r_{0}\geq \frac{\delta_0}{2}$, such a value exists.
		\begin{enumerate}[label=\textbf{(\Roman*)}]
			\item\label{I1} If $k=0$, we have
			\[
			0<t_1-t_0\leq \frac{\delta_0}{2}.
			\]
			We update $r_1=t_1$. Thus,
			\begin{align*}
				(r_{-1}, r_0, r_1)=
				\begin{cases}
					(s, t_0, t_1) & \text{if } 0 < t_0 - s \leq \frac{\delta_0}{2}, \\
					\left(t_{0} - \frac{\delta_0}{2}, t_0, t_1\right) & \text{otherwise.}
				\end{cases}
			\end{align*}
			We set $w_0 = z$ and $w_1 = \tilde{z}$. Then, together with \eqref{cond:A}
			\begin{align*}
				w_0, w_1 &\in O_{r_1}^{r_{-1}}, \quad r_1 - r_{-1} \leq \delta_0, \\
				d(w_0,w_1)=\sqrt{t_1-t_0}
				&\leq \| w_0,w_1 \|_{P_{r_{-1}}}= 
				\begin{cases}
					\| w_0 \|_{P_s} & \text{if } 0 < t_0 - s \leq \frac{\delta_0}{2}, \\
					\sqrt{\frac{\delta_0}{2}} & \text{otherwise.}
				\end{cases}
			\end{align*}
			%\ref{I1}
			\item \label{I2} If $k=1$, we set $r_2 = t_1$ and
			\begin{align*}
				w_{0} = z = (t_0, y_0), \quad 
				w_{1} = (r_1, y_0), \quad 
				w_{2} = \tilde{z} = (r_2, y_0).
			\end{align*}
			Since $r_{1} < t_1 \leq r_1 + \frac{\delta_0}{2}$, we conclude that
			\begin{align*}
				w_0, w_1 &\in O^{r_{-1}}_{r_1}, \quad
				r_{1} - r_{-1} = \delta_0, \\
				d(w_0,w_1)
				&
				\leq \| w_0 \|_{P_{r_{-1}}}
				=\sqrt{\frac{\delta_0}{2}} 
			\end{align*}
			Note that, from \eqref{2369} and \eqref{cond:A}, one can easily see that for $r_1$ in \eqref{UJnasexs}, we must have
			\begin{align*}
				r_1=t_0+\frac{\delta_0}{2}.
			\end{align*}
			Otherwise, this contradicts $r_1<t_1\leq r_1+\frac{\delta_0}{2}.$
			Thus,
			\begin{align*}
				& w_1, w_2 \in O_{r_2}^{r_1 - \frac{\delta_0}{2}}, \quad
				r_2 - \left(r_1 - \frac{\delta_0}{2}\right) \leq \delta_0, \\
				& d(w_1,w_2)\leq \| w_1,w_2 \|_{P_{r_1 - \frac{\delta_0}{2}}}
				= \sqrt{\frac{\delta_0}{2}}.
			\end{align*}
			Also,
			\begin{align}\label{Bnassa}
				\max\big\{ d(w_0,w_1),\, d(w_1,w_{2}) \big\} \leq d(w_0,w_2).
			\end{align}
			\item \label{I3}  If $k > 1$, we set
			\begin{align*}
				r_j &= r_1 + \frac{(j-1)\delta_0}{2}, \quad 1 < j \leq k, & 
				w_j &= (r_j, y_0), \quad 0 \leq j \leq k, \\
				r_{k+1} &= t_1, &
				w_{k+1} &= (t_1, y_0).
			\end{align*}
			In particular, we have
			\begin{align}\label{Nnasss}
				\max\big\{ d(w_0,w_j),\, d(w_j,w_{j+1}) \big\} \leq d(w_0,w_{k+1}), \quad 0 \leq j \leq k.
			\end{align}
			Moreover, we must have $t_{0}-s>\frac{\delta_0}{2}$  and $	r_1=t_0+\frac{\delta_0}{2}.$ Also
			\begin{align}\label{UJmss}
				w_0,w_1
				&\in O^{r_{-1}}_{r_1}, \quad
				r_1-r_{-1}=\delta_0, \quad
				d(w_0,w_1)
				\leq
				\|w_0,w_1\|_{P_{r_{-1}}}
				=
				\sqrt{\frac{\delta_0}{2}}
				\notag\\
				w_j,w_{j+1}
				&\in O^{r_{j-1}}_{r_{j+1}}, \quad
				r_{j+1}-r_{j-1}=\delta_0, \quad
				d(w_j,w_{j+1})
				\leq
				\|w_j,w_{j+1}\|_{P_{r_{j-1}}}
				=
				\sqrt{\frac{\delta_0}{2}},
				\quad 1\leq j<k,
				\notag\\
				w_k,w_{k+1}
				&\in O^{r_{k-1}}_{r_{k+1}}, \quad
				r_{k+1}-r_{k-1}\leq\delta_0, \quad
				d(w_k,w_{k+1})
				\leq
				\|w_k,w_{k+1}\|_{P_{r_{k-1}}}
				=
				\sqrt{\frac{\delta_0}{2}}.
			\end{align}
			\begin{center}
				\begin{tikzpicture}[scale=.7]
					
					\definecolor{mygreen}{RGB}{50,180,50}
					
					% Main rectangle
					\draw[thick] (-1,0) rectangle (18,5);
					
					% Vertical reference lines
					\draw[dashed] (1,0) -- (1,5);
					\draw[dashed] (2.2,0) -- (2.2,5);
					\draw[dashed] (3.5,0) -- (3.5,5);
					\draw[dashed] (5,0) -- (5,5);
					\draw[dashed] (8,0) -- (8,5);
					\draw[dashed] (9.5,0) -- (9.5,5);
					\draw[dashed] (11,0) -- (11,5);
					\draw[dashed] (15,0) -- (15,5);
					\draw[dashed] (18,0) -- (18,5);
					
					% Points
					\fill[mygreen] (2.2,2.5) circle (2pt);
					\node[right] at (2.2,2.7) {$w_0$};
					
					\fill[mygreen] (3.5,2.5) circle (2pt);
					\node[right] at (3.5,2.7) {$w_1$};
					
					\fill[mygreen] (5,2.5) circle (2pt);
					\node[right] at (5,2.7) {$w_2$};
					
					\node[right] at (6,2) {$\cdots$};
					
					\fill[mygreen] (8,2.5) circle (2pt);
					\node[left] at (8.2,2.7) {$w_{j-1}$};
					
					\fill[mygreen] (9.5,2.5) circle (2pt);
					\node[right] at (9.5,2.7) {$w_j$};
					
					\fill[mygreen] (11,2.5) circle (2pt);
					\node[right] at (11,2.7) {$w_{j+1}$};
					
					\node[right] at (12.5,2) {$\cdots$};
					
					\fill[mygreen] (15,2.5) circle (2pt);
					\node[right] at (15,2.7) {$w_{k+1}$};
					
					% Bottom labels
					\node at (1,-0.4) {$r_{-1}$};
					\node at (2.2,-0.4) {$r_0$};
					\node at (3.5,-0.4) {$r_1$};
					\node at (5,-0.4) {$r_2$};
					\node at (8,-0.4) {$r_{j-1}$};
					\node at (9.5,-0.4) {$r_j$};
					\node at (11,-0.4) {$r_{j+1}$};
					\node at (15,-0.4) {$r_{k+1}$};
					
					% End labels
					\node[red] at (-1,-0.4) {$s$};
					\node[red] at (18,-0.4) {$s+T$};
					
					% Horizontal spatial line
					\draw[black,dash dot] (-1,2.5) -- (18,2.5);
					
					% Spatial label
					\node[left] at (-1.2,2.8) {\textcolor{red}{$\mathbb{T}^d$}};
					
					% Local intervals
					\draw[blue,thick] (1,0) rectangle (3.5,5);
					\draw[red,dash dot,very thick] (2.2,0) rectangle (5,5);
					\draw[brown,thick] (8,0) rectangle (11,5);
					
				\end{tikzpicture}
			\end{center}
		\end{enumerate}
		We now continue the analysis for \eqref{I3}, i.e., the case $k > 1$.
		The remaining two cases, \eqref{I1} and \eqref{I2}, are simpler and can be handled analogously.
		First, note that, by \eqref{ASASa} and \eqref{UJmss},
		\begin{align}\label{NMAsse}
			\max\left\{
			\triplenorm{\mathbf{R}^+_{r_{-1}} \RR{\mathcal{U}}}_{\gamma_1,0,O_{r_1}^{r_{-1}}}^{(r_{-1})},  \,
			\triplenorm{\mathbf{R}^+_{r_{j-1}} \RR{\mathcal{U}}}_{\gamma_1,0,O_{r_{j+1}}^{r_{j-1}}}^{(r_{j-1})}
			: 1 \leq j \leq k
			\right\}
			\leq \Lambda.
		\end{align}
		Also, we know that
		\[
		\Gamma_{w_j,w_{j+1}} \circ \Gamma_{w_{j+1},w_{k+1}} = \Gamma_{w_j,w_{k+1}}, \quad \text{for } 0 \leq j \leq k-1.
		\]
		Thus,
		\begin{align}\label{Bnasz}
			\begin{split}
				&\RR{\mathcal{U}}(z)
				-
				\Gamma_{z,\tilde{z}}\RR{\mathcal{U}}(\tilde{z})
				=
				\RR{\mathcal{U}}(w_0)
				-
				\Gamma_{w_0,w_{k+1}}\RR{\mathcal{U}}(w_{k+1})
				\\
				&=
				\RR{\mathcal{U}}(w_0)
				-
				\Gamma_{w_0,w_1}\RR{\mathcal{U}}(w_1)
				+
				\sum_{1\leq j\leq k}
				\Gamma_{w_0,w_j}
				\Big(
				\RR{\mathcal{U}}(w_j)
				-
				\Gamma_{w_j,w_{j+1}}\RR{\mathcal{U}}(w_{j+1})
				\Big).
			\end{split}
		\end{align}
		Let us define
		\begin{align*}
			D(w_0,w_j,a,a_1)
			&= d(w_{0},w_{j})^{|a_1|_h-|a|_h}\, d(w_{j},w_{j+1})^{\gamma_1-|a_1|_{h}} \\
			&\quad \text{for } a,a_1 \in \mathcal{A}^{+} = \{ \gY, \gI, \gXXXX : i=1,\ldots,d \}, \quad |a|_h \leq |a_1|_h, \qquad 1 \leq j \leq k.
		\end{align*}
		Then, by \eqref{BB1}, \eqref{BB2}, and \eqref{Bnasz}, for each $a \in \mathcal{A}^{+}$, we obtain
		\begin{align}\label{Bnass}
			\begin{split}
				&\Vert \RR{\mathcal{U}} (z) - \Gamma_{z,\tilde{z}} \RR{\mathcal{U}} (\tilde{z}) \Vert_a\leq\Vert \RR{\mathcal{U}}(w_0) - \Gamma_{w_0,w_1}\RR{\mathcal{U}}(w_1)\Vert_{a}
				+ \sum_{1\leq j\leq k} \big\Vert \Gamma_{w_0,w_j} \big(\RR{\mathcal{U}}(w_j) - \Gamma_{w_j,w_{j+1}}\RR{\mathcal{U}}(w_{j+1}) \big) \big\Vert_{a}\\& \lesssim d(w_0,w_1)^{\gamma_1-|a|_h}
				\frac{\Vert \RR{\mathcal{U}}(w_0) - \Gamma_{w_0,w_1}\RR{\mathcal{U}}(w_1)\Vert_{a}}
				{d(w_0,w_1)^{\gamma_1-|a|_h}} \\
				%& + \|\Gamma\|_{\overline{O_{s+T}^s}}
				%	\sum_{\substack{a_1\in\mathcal{A}^+,\\ |a|_h\leq|a_1|_h}}
				%		d(w_0,w_1)^{|a_1|_h-|a|_h}
				%	\Vert \RR{\mathcal{U}}(w_1) - \Gamma_{w_1,w_{2}}\RR{\mathcal{U}}(w_{2}) \Vert_{a_1}\\&
				&	+ \|\Gamma\|_{\overline{O_{s+T}^s}}
				\sum_{1\leq j\leq k}\sum_{\substack{a_1\in\mathcal{A}^+,\\ |a|_h\leq|a_1|_h}}
				d(w_0,w_j)^{|a_1|_h-|a|_h}
				\Vert \RR{\mathcal{U}}(w_j) - \Gamma_{w_j,w_{j+1}}\RR{\mathcal{U}}(w_{j+1}) \Vert_{a_1} \\
				&= \frac{d(w_0,w_1)^{\gamma_1-|a|_h}}{\| w_0,w_1 \|_{P_{r_{-1}}}^{\gamma_1}}
				\frac{	\| w_0,w_1 \|_{P_{r_{-1}}}^{\gamma_1}\Vert \RR{\mathcal{U}}(w_0) - \Gamma_{w_0,w_1}\RR{\mathcal{U}}(w_1)\Vert_{a}}
				{d(w_0,w_1)^{\gamma_1-|a|_h}}\\
				& + \|\Gamma\|_{\overline{O_{s+T}^s}}
				\sum_{1\leq j\leq k}\sum_{\substack{a_1\in\mathcal{A}^+,\\ |a|_h\leq|a_1|_h}}
				\frac{
					D(w_0,w_j,a,a_1)
				}{
					\Vert w_j,w_{j+1} \Vert_{P_{r_{j-1}}}^{\gamma_1}
				}
				\frac{
					\Vert w_j,w_{j+1} \Vert_{P_{r_{j-1}}}^{\gamma_1}
					\Vert \RR{\mathcal{U}}(w_j) - \Gamma_{w_j,w_{j+1}}\RR{\mathcal{U}}(w_{j+1}) \Vert_{a_1}
				}{
					d(w_j,w_{j+1})^{\gamma_1-|a_1|_h}
				}.
			\end{split}
		\end{align}
		Thus, it follows from \eqref{Nnasss}, \eqref{UJmss} and \eqref{Bnass} that
		\begin{align}\label{HNasda}
			\begin{split}
				&\frac{\Vert w_0,w_{k+1}\Vert_{P_s}^{\gamma_1}\Vert\RR{\mathcal{U}} (w_0) - \Gamma_{w_0,w_{k+1}} \RR{\mathcal{U}} (w_{k+1})\Vert_{a}}{d(w_0,w_{k+1})^{\gamma_1-|a|_h}}\leq\frac{\Vert w_0,w_{k+1}\Vert_{P_s}^{\gamma_1}\Vert\RR{\mathcal{U}}(w_0) - \Gamma_{w_0,w_1}\RR{\mathcal{U}}(w_1)\Vert_{a}}{d(w_0,w_{k+1})^{\gamma_1-|a|_h}}\\&+\sum_{1\leq j\leq k} \frac{\Vert w_0,w_{k+1}\Vert_{P_s}^{\gamma_1}\big\Vert\Gamma_{w_0,w_j} \big(\RR{\mathcal{U}}(w_j) - \Gamma_{w_j,w_{j+1}}\RR{\mathcal{U}}(w_{j+1}) \big)\big\Vert_{a} }{d(w_0,w_{k+1})^{\gamma_1-|a|_h}} \\&\lesssim \frac{\Vert w_0,w_{k+1}\Vert_{P_s}^{\gamma_1}}{\| w_0,w_1 \|_{P_{r_{-1}}}^{\gamma_1}}\frac{d(w_0,w_1)^{\gamma_1-|a|_h}}{d(w_0,w_{k+1})^{\gamma_1-|a|_h}}\frac{\| w_0,w_1 \|_{P_{r_{-1}}}^{\gamma_1}\Vert\RR{\mathcal{U}}(w_0) - \Gamma_{w_0,w_1}\RR{\mathcal{U}}(w_1)\Vert_{a}}{d(w_0,w_1)^{\gamma_1-|a|_h}} %\\&+ \|\Gamma\|_{\overline{O_{s+T}^s}}\sum_{\substack{a_1\in\mathcal{A}^+,\\ |a|_h\leq|a_1|_h}}\frac{\Vert w_0,w_{k+1}\Vert_{P_s}^{\gamma_1}}{\Vert w_{1},w_2\Vert_{P_{r_1-\frac{\delta_0}{2}}}^{\gamma_1}}\frac{D(w_0,w_1,a,a_1)}{d(w_0,w_{k+1})^{\gamma_1-|a|_h}}\frac{\Vert w_{1},w_2\Vert_{P_{r_1-\frac{\delta_0}{2}}}^{\gamma_1}\Vert\RR{\mathcal{U}}(w_1) - \Gamma_{w_1,w_{2}}\RR{\mathcal{U}}(w_{2}) \Vert_{a_1}}{d(w_{1},w_{2})^{\gamma_1-|a_1|_{h}}}
				\\&+ \|\Gamma\|_{\overline{O_{s+T}^s}}	\sum_{1\leq j \leq k}\sum_{\substack{a_1\in\mathcal{A}^+,\\ |a|_h\leq|a_1|_h}}\frac{\Vert w_0,w_{k+1}\Vert_{P_s}^{\gamma_1}}{\Vert w_{j},w_{j+1}\Vert_{P_{r_{j-1}}}^{\gamma_1}}\frac{D(w_0,w_j,a,a_1)}{d(w_0,w_{k+1})^{\gamma_1-|a|_h}}\frac{\Vert w_{j},w_{j+1}\Vert_{P_{r_{j-1}}}^{\gamma_1}\Vert\RR{\mathcal{U}}(w_j) - \Gamma_{w_j,w_{j+1}}\RR{\mathcal{U}}(w_{j+1}) \Vert_{a_1}}{d(w_{j},w_{j+1})^{\gamma_1-|a_1|_{h}}}\\&\lesssim \left(\frac{1}{\delta_0}\right)^{\frac{\gamma_1+2}{2}} \|\Gamma\|_{\overline{O_{s+T}^s}}\Lambda.
			\end{split}
		\end{align}
		Note that, to derive the above bound, we also used \eqref{NMAsse} and the fact that $k\lesssim \frac{1}{\delta_0}$. It is easy to see that this bound also holds in the two cases \eqref{I1} and \eqref{I2}.
		To summarize, from \eqref{HNasda}, we conclude that, by assuming \eqref{cond:A},
		\begin{align}\label{NBN}
			\sup_{\substack{a\in\mathcal{A}^+}}
			\frac{\Vert z,\tilde{z} \Vert_{P_{s}}^{\gamma_1} \, \Vert \RR{\mathcal{U}}(z) - \Gamma_{z,\tilde{z}} \RR{\mathcal{U}}(\tilde{z}) \Vert_{a}}{d(z,\tilde{z})^{\gamma_1-|a|_h}}
			\lesssim \left(\frac{1}{\delta_0}\right)^{\frac{\gamma_1+2}{2}} \|\Gamma\|_{\overline{O_{s+T}^s}}\Lambda.
		\end{align}
		
		\textbf{Step 3.}\label{Step3}
		Now we estimate \eqref{NMas1s}. This is quite straightforward.
		First, recall that $\tilde{z}=(t_1,y_0)$ and $w=(t_1,y_1)$.
		There are two possible cases:
		\begin{itemize}
			\item
			If $t_1-s\leq\delta_0$, then $\tilde{z},w\in O_{t_1}^{s}$ and,
			thanks to \eqref{ASASa} and \eqref{2369},
			\begin{align*}
				\sup_{a\in\mathcal{A}^+}
				\frac{
					\Vert \tilde{z}\Vert_{P_{s}}^{\gamma_1}
					\, \Vert \RR{\mathcal{U}}(\tilde{z})
					- \Gamma_{\tilde{z},w}\RR{\mathcal{U}}(w) \Vert_{a}
				}{
					d(\tilde{z},w)^{\gamma_1-|a|_h}
				}
				\leq \Lambda.
			\end{align*}
			
			\item If $t_1-s>\delta_0$, then by considering the straight line
			between $y_0$ and $y_1$, we define a sequence
			$(\bar{y}_{j})_{0\leq j\leq m}$ such that
			\begin{align*}
				&\bar{y}_{0}=y_0,\qquad
				\bar{y}_{m}=y_1,\qquad
				\Vert \bar{y}_{m}-\bar{y}_{m-1}\Vert_{\mathbb{T}^d}
				\leq \delta_0,
				\\
				&\Vert \bar{y}_{j}-\bar{y}_{j-1}\Vert_{\mathbb{T}^d}
				=\delta_0,
				\qquad 1\leq j<m.
			\end{align*}
			In this way, by setting $\bar{w}_{j}=(t_1,\bar{y}_{j})$, we have
			\begin{align*}
				\bar{w}_j,\bar{w}_{j+1}
				&\in O^{t_1-\delta_0}_{t_1},
				\\
				d(\bar{w}_j,\bar{w}_{j+1})
				&\leq
				\|\bar{w}_j,\bar{w}_{j+1}\|_{P_{t_1-\delta_0}}
				=
				\sqrt{\delta_0},
				\\
				&\hspace{3.8cm}
				0\leq j<m,
				\qquad
				m\lesssim\frac{1}{\delta_0}.
			\end{align*}
			
			Similarly to \eqref{Bnasz},
			\begin{align}\label{Mkopass}
				\begin{split}
					&\RR{\mathcal{U}}(\tilde{z})
					-\Gamma_{\tilde{z},w}\RR{\mathcal{U}}(w)
					\\
					&=
					\RR{\mathcal{U}}(\bar{w}_0)
					-\Gamma_{\bar{w}_0,\bar{w}_1}
					\RR{\mathcal{U}}(\bar{w}_1)+
					\sum_{1\leq j<m}
					\Gamma_{\bar{w}_0,\bar{w}_j}
					\Big(
					\RR{\mathcal{U}}(\bar{w}_j)
					-\Gamma_{\bar{w}_j,\bar{w}_{j+1}}
					\RR{\mathcal{U}}(\bar{w}_{j+1})
					\Big).
				\end{split}
			\end{align}
			Using the same argument as in \eqref{Bnass} and \eqref{HNasda},
			we obtain from \eqref{Mkopass} that
			\begin{align}\label{NNMa}
				\sup_{a\in\mathcal{A}^+}
				\frac{
					\Vert\tilde{z},w\Vert_{P_s}^{\gamma_1}
					\,\Vert
					\RR{\mathcal{U}}(\tilde{z})
					-\Gamma_{\tilde{z},w}\RR{\mathcal{U}}(w)
					\Vert_a
				}{
					d(\tilde{z},w)^{\gamma_1-|a|_h}
				}
				\lesssim
				\left(\frac{1}{\delta_0}\right)^{\frac{\gamma_1+2}{2}}
				\|\Gamma\|_{\overline{O_{s+T}^s}}
				\Lambda.
			\end{align}
		\end{itemize}
		\textbf{Step 4.}\label{Step4} Now we finalize the proof. From \eqref{BNasa}, \eqref{NBN}, and \eqref{NNMa}, we conclude that, by assuming \eqref{cond:A},
		\begin{align*}
			\sup_{a\in\mathcal{A}^+}\sup_{\substack{ z,w\in O_{T+s}^s,\ z\neq w\\ d(z,w)\leq\|z,w\|_{P_s}\\ \|z,w\|_{P_s}=\|z\|_{P_s} }}
			\frac{
				\Vert z,w \Vert_{P_{s}}^{\gamma_1}
				\, \Vert \RR{\mathcal{U}}(z) - \Gamma_{z,w}\RR{\mathcal{U}}(w) \Vert_{a}
			}{
				d(z,w)^{\gamma_1-|a|_h}
			}
			\lesssim \left(\frac{1}{\delta_0}\right)^{\frac{\gamma_1+2}{2}}
			\big( \|\Gamma\|_{\overline{O_{s+T}^s}}\big)^2 \Lambda.
		\end{align*}
		As mentioned earlier, the assumption \eqref{cond:A} was made for simplicity, and the above argument from the beginning until here can be adapted to the case $z,w \in O_{T+s}^s$ with $\|z,w\|_{P_s} = \|w\|_{P_s}$. Thus, to summarize, we can state that
		\begin{align}\label{Bczxxxxz}
			\sup_{a\in\mathcal{A}^+}\sup_{\substack{ z,w\in O_{T+s}^s,\ z\neq w\\ d(z,w)\leq\|z,w\|_{P_s} }}
			\frac{
				\Vert z,w \Vert_{P_{s}}^{\gamma_1}
				\, \Vert \RR{\mathcal{U}}(z) - \Gamma_{z,w}\RR{\mathcal{U}}(w) \Vert_{a}
			}{
				d(z,w)^{\gamma_1-|a|_h}
			}
			\lesssim \left(\frac{1}{\delta_0}\right)^{\frac{\gamma_1+2}{2}}
			\big( \|\Gamma\|_{\overline{O_{s+T}^s}}\big)^2 \Lambda.
		\end{align}
		Also note that, from \eqref{ASASa}, for $z = (t_0,x)$ and $a\in \mathcal{A}^+$, we have
		\begin{align}\label{MBM}
			\Vert z\Vert_{P_s}^{|a|_{h}}\Vert\RR{\mathcal{U}} (z)\Vert_{a}
			\leq 
			\begin{cases}
				\Lambda, & \text{if } \sqrt{t_0 - s} = \Vert z\Vert_{P_s} \leq \sqrt{\delta_0}, \\[0.5em]
				\displaystyle
				\frac{\Vert z\Vert_{P_s}^{|a|_{h}}}{\Vert z\Vert_{P_{t_0-\delta_0}}^{|a|_{h}}}
				\Vert z\Vert_{P_{t_0-\delta_0}}^{|a|_{h}}
				\Vert\RR{\mathcal{U}}(z)\Vert_{a}
				\lesssim \left(\frac{1}{\delta_0}\right)^{\frac{\gamma_1}{2}} \Lambda,
				& \text{otherwise}.
			\end{cases}
		\end{align}
		
		The claim then follows from \eqref{Bczxxxxz} and \eqref{MBM}.
	\end{proof}
	As a first application of the previous lemma, we show that once the existence
	of $\RR{\mathcal{U}}_{u_0}$ on $O^s_{\,s+T}$ is established and an a priori
	bound for $\mathcal{U}^{\gY}_{u_0}$ is available, one can naturally derive
	an a priori estimate for $\triplenorm{\mathbf{R}^+_{s}\RR{\mathcal{U}}_{u_0}}_{\gamma_1,0,O_{s+T}}^{(s)}.$
	\begin{lemma}\label{Alex}
		Let $s\in\mathbb{R}$ and $T\in(0,1]$. Suppose that
		$\RR{\mathcal{U}}_{u_0}$ is a solution to \eqref{MAIN_1} on $O^s_{s+T}$. Then
		\begin{align}\label{SSAS78sasassss}
			\begin{split}
				&\triplenorm{\mathbf{R}^+_{s}\RR{\mathcal{U}}_{u_0}}_{\gamma_1,0,O_{s+T}^s}^{(s)}
				\lesssim P_{1}\Big(\|\mathcal{Z}\|_{\overline{O_{s+1+T}^{s-1}}},[\ \gI\ ]_{\overline{{O}_{T+s}^s}}, \|\Gamma\|_{\overline{O_{s+T}^s}},\sup_{w\in O_{s+T}^s}
				\|\mathcal{U}^{\gY}_{u_0}(w)\|,\|u_0\|_{L^{\infty}(\mathbb{T}^d)}\Big)	
			\end{split}
		\end{align}
		where
		\begin{align}\label{ASasas78qw}
			\begin{split}
				P_{1}(b,c,d,f,g):=&
				d^2
				(1+b^2)^{\frac{\gamma_1+2}{1-\kappa}}
				\big(1+|\mu|+c+f^{\gamma_1-1}\big)^{\frac{\gamma_1+2}{1-\kappa}}
				\big(
				f+g+(1+b^2)(1+c^2+f^{\gamma_1})
				\big),
				\\
				&\qquad \text{for } b,c,d,f,g \geq 0.
			\end{split}
		\end{align}
	\end{lemma}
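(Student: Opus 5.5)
The plan is to reduce the estimate to Lemma~\ref{global}: we obtain a uniform bound $\Lambda$ on short windows $[b,c]\subseteq[s,s+T]$ with $c-b\le\delta_0$, and then patch. The window length $\delta_0$ is chosen depending only on $\|\mathcal{Z}\|$, $|\mu|$, $[\ \gI\ ]$ and $f:=\sup_w\|\mathcal{U}^{\gY}_{u_0}(w)\|$. The key observation is that the solution restarted at any time $b\in[s,s+T)$ is again a fixed point. By the semigroup identity \eqref{SEMIJ}, the restriction of $\RR{\mathcal{U}}_{u_0}$ to $O^b_c$ satisfies
\[
\mathbf{R}^+_b\RR{\mathcal{U}}_{u_0}=\mathcal{Q}^b_{c-b}\big(\mathcal{U}^{\gY}_{u_0}(b,\cdot),\RR{\mathcal{U}}_{u_0}\big).
\]
Here the new initial datum has $L^\infty$ norm at most $f$ when $b>s$, and equal to $\|u_0\|_{L^\infty}$ when $b=s$. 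Both are bounded by $f+g$ with $g:=\|u_0\|_{L^\infty(\mathbb{T}^d)}$.

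\textbf{Step 1: bound on a single window.} Fix such a window and write $X:=\triplenorm{\mathbf{R}^+_b\RR{\mathcal{U}}_{u_0}}^{(b)}_{\gamma_1,0,O^b_c}$. Apply \eqref{fixed-point} together with Lemma~\ref{stat}. The nonlinear term must be bounded more carefully than in Remark~\ref{BOUNDED}, which is quadratic in $X$. The quadratic growth of $\Sigma(\RR{\mathcal{U}})$ comes from the terms $\Sigma''\,\mathcal{U}^{\gI}\mathcal{U}^{\gI}$ in the $\Gamma$-expansion. By the remark after Definition~\ref{dfn}, $\mathcal{U}^{\gI}=\Sigma(\mathcal{U}^{\gY})$ is bounded by $\|\Sigma\|_\infty$, and $\mathcal{U}^{\gY}$ is bounded by $f$. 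The goal is therefore an estimate that is linear in $X$ with controlled coefficients:
\[
\triplenorm{\mathbf{R}^+_b\Sigma(\RR{\mathcal{U}})}^{(b)}_{\gamma_1,0,O^b_c}\lesssim (1+[\ \gI\ ]^2+f^{\gamma_1})+(1+[\ \gI\ ]+f^{\gamma_1-1})X.
\]
This follows by interpolating the increments of $\mathcal{U}^{\gY}$ between the sup bound $f$ and the $\mathcal{D}^{\gamma_1}$ seminorm. This interpolation is the origin of the powers $f^{\gamma_1-1}$ and $f^{\gamma_1}$, and it is the main obstacle. I would first try to prove it by expanding $\Sigma(\RR{\mathcal{U}}(z))-\Gamma_{z,w}\Sigma(\RR{\mathcal{U}}(w))$ via \eqref{TYUa1} with $j=1$, and bounding the $\gY$-component of $\RR{\mathcal{U}}(z)-\Gamma_{z,w}\RR{\mathcal{U}}(w)$ by $\min\{2f,\,X d^{\gamma_1}\}$.

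With this estimate, \eqref{fixed-point} gives
\[
X\le C(c-b)^{\frac{1-\kappa}{2}}(1+b_0^2)\big[(1+|\mu|+[\ \gI\ ]+f^{\gamma_1-1})X+1+[\ \gI\ ]^2+f^{\gamma_1}\big]+C(f+g),
\]
where $b_0=\|\mathcal{Z}\|_{\overline{O^{s-1}_{s+1+T}}}$. All norms are taken on $O^b_c\subseteq O^s_{s+T}$, and $\overline{O^{b-1}_{b+1}}\subseteq\overline{O^{s-1}_{s+1+T}}$.

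\textbf{Step 2: absorption.} Choose
\[
\delta_0\simeq\big[(1+b_0^2)(1+|\mu|+[\ \gI\ ]+f^{\gamma_1-1})\big]^{-\frac{2}{1-\kappa}}\wedge T
\]
so that the coefficient of $X$ on the right-hand side is at most $\tfrac12$. Absorbing gives
\[
X\le\Lambda:=2C\big(f+g+(1+b_0^2)(1+[\ \gI\ ]^2+f^{\gamma_1})\big).
\]
Absorption is legitimate because $X<\infty$, since the solution lies in $\mathcal{D}^{\gamma_1,0}_{P_s,0}$ and its restriction lies in the corresponding space based at $P_b$.

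\textbf{Step 3: patching.} Lemma~\ref{global} now yields the bound
\[
(1/\delta_0)^{\frac{\gamma_1+2}{2}}\|\Gamma\|^2\Lambda.
\]
Substituting $\delta_0$ gives the factor
\[
(1+b^2)^{\frac{\gamma_1+2}{1-\kappa}}(1+|\mu|+c+f^{\gamma_1-1})^{\frac{\gamma_1+2}{1-\kappa}}.
\]
The case $\delta_0=T$ is harmless, since then the factor is at most $1$ times the same polynomial. This is exactly $P_1$ in \eqref{ASasas78qw}.
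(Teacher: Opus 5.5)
Your skeleton matches the paper's: the restart identity (the paper's \eqref{CZ22}), a window length of the form \eqref{NMaz}, absorption to a bound $\Lambda$ as in \eqref{ASSferg}, then Lemma~\ref{global}, and that bookkeeping does give $P_1$. The gap is the one estimate the lemma actually rests on: the bound on $\triplenorm{\mathbf{R}^+_b\Sigma(\RR{\mathcal{U}})}^{(b)}_{\gamma_1,0,O^b_c}$ that is linear in $X$. Your sketch for it does not work.

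The quadratic growth in Remark~\ref{BOUNDED} does not come from $\mathcal{U}^{\gI}\mathcal{U}^{\gI}$. With $\eta_1(z,w)=-\Sigma(\mathcal{U}^{\gY}(w))\Pi_z(\gI)(w)$ one has $|\eta_1|\lesssim d(z,w)^{1-\kappa}[\ \gI\ ]$, so $\Sigma''\eta_1^2$ only costs $[\ \gI\ ]^2$. The dangerous term is $\Sigma''(\mathcal{U}^{\gY}(w))\,\eta_2^2$, where $\eta_2(z,w)=-\sum_i\mathcal{U}^{\gXXXX}(w)\Pi_z(\gXXXX)(w)$. After weighting, it is of size $(\Vert w\Vert_{P_b}|\mathcal{U}^{\gXXXX}(w)|)^2\le X^2$. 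The term $\mathcal{U}^{\gXXXX}(z)[\Sigma'(\mathcal{U}^{\gY}(z))-\Sigma'(\mathcal{U}^{\gY}(w))]$ in the $\gXXXX\rb$-component behaves the same way.

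Your proposed bound $|\mathrm{Pr}_{\gY}[\RR{\mathcal{U}}(z)-\Gamma_{z,w}\RR{\mathcal{U}}(w)]|\le 2f$ is false. This component equals $u_z-u_w-\eta_1-\eta_2$, with $u_w:=\mathcal{U}^{\gY}(w)$, and $\eta_2$ is not controlled by $f$. Moreover, \eqref{TYUa1} compares $\Sigma(\RR{F})$ with $\Sigma(\RR{\tilde{F}})$. In this truncated structure, however, $\Gamma_{z,w}\Sigma(\RR{\mathcal{U}}(w))\neq\Sigma(\Gamma_{z,w}\RR{\mathcal{U}}(w))$. Their $\gY$-components differ by the second-order Taylor remainder $\Sigma(u_w+\eta_1+\eta_2)-\Sigma(u_w)-\Sigma'(u_w)(\eta_1+\eta_2)$. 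That commutator is exactly where the $X^2$ sits, and your expansion does not address it.

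The missing idea is Steps 1--2 of the paper's appendix proof, in two parts.

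First, since $\Sigma',\Sigma''$ are bounded and $1<\gamma_1<2$, bound the Taylor remainder in $\eta_2$ by $|\eta_2|^{\gamma_1}$ instead of $\eta_2^2$. In the $\gXXXX\rb$-component, H\"older-interpolate the $\Sigma'$-increments and use Young's inequality $AB^{(\gamma_1-1)/\gamma_1}\lesssim A^{\gamma_1}+B$. This reduces everything to $(\sup_w\Vert w\Vert_{P_b}|\mathcal{U}^{\gXXXX}(w)|)^{\gamma_1}$.

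Second, prove an interpolation inequality for the polynomial coefficients. Take $\tilde w$ at the same time as $w$, displaced by $R\le\Vert w\Vert_{P_b}/2$ in the $i$-th spatial coordinate, with the sign chosen so that $-\eta_2(\tilde w,w)=R|\mathcal{U}^{\gXXXX}(w)|$. Then
\[
R|\mathcal{U}^{\gXXXX}(w)|\le|u_{\tilde w}-u_w-\eta_1(\tilde w,w)|+\bigl|\mathrm{Pr}_{\gY}[\RR{\mathcal{U}}(\tilde w)-\Gamma_{\tilde w,w}\RR{\mathcal{U}}(w)]\bigr|\lesssim f+[\ \gI\ ]+X R^{\gamma_1}\Vert w\Vert_{P_b}^{-\gamma_1}.
\]
Optimizing in $R$ gives $(\Vert w\Vert_{P_b}|\mathcal{U}^{\gXXXX}(w)|)^{\gamma_1}\lesssim X(f+[\ \gI\ ])^{\gamma_1-1}+(f+[\ \gI\ ])^{\gamma_1}$.

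Your instinct to interpolate between $f$ and the $\mathcal{D}^{\gamma_1}$-seminorm is right. What must be interpolated, though, is the $\gXXXX$-coefficient (a Landau--Kolmogorov step), not a bound on the $\gY$-remainder. That step is what produces the $f^{\gamma_1-1}X$ and $f^{\gamma_1}$ terms in your target estimate.
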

	\begin{proof}
		The proof is a bit lengthy, so we defer it to Appendix~\ref{Alexi}.
	\end{proof}
	\subsection{Linearization}
	The key element of our analysis is the linearization of the equation. 
	Since the existence theory is based on a fixed point argument, the associated solution map is Fréchet differentiable. 
	In this part, we discuss this property in more detail and first establish the Fréchet differentiability of the solution map. 
	This issue has also been addressed in \cite{FK22}; however, we provide a short proof following the approach of \cite{Bai15b, GVR21}. 
	In addition, we derive an explicit formula for the linearized equation that will be used frequently in the sequel. Let us first start with an auxiliary lemma.
	\begin{lemma}\label{ATGFRAs}
		Assume that   $n \ge 3$ and $\Sigma \in C^{n}(\R,\R)$. Then, for every $T>0$, the map
		\begin{align}\label{TGBG}
			\begin{split}
				\overline{\mathcal{Q}}^{s}_T &: 
				L^{\infty}(\mathbb{T}^{d}) \times 
				\mathcal{D}^{\gamma_1,0}_{P_{s},0}\big|_{O_{T+s}} 
				\longrightarrow 
				\mathcal{D}^{\gamma_1,0}_{P_{s},0}\big|_{O_{T+s}}, \\
				\overline{\mathcal{Q}}^{s}_T(\tilde{u},\RR{\mathcal{U}}) &=
				\mathcal{G}^{s}_{\gamma_{0},1-\kappa}
				\Big(
				\mathbf{R}^{+}_{s}\Sigma(\RR{\mathcal{U}} + \RR{G^{s}}\tilde{u}) \,\tilde{\star}\, \rb
				\Big)+\mu\mathcal{G}^{s}_{\gamma_{1},2}\big(\mathbf{R}^{+}_{s}\RR{\mathcal{U}} + \RR{G^{s}}(\tilde{u})\big)
			\end{split}
		\end{align}
		is $C^{\,n-2}$–Fréchet differentiable in both of its arguments.
	\end{lemma}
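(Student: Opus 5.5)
The map $\overline{\mathcal{Q}}^{s}_T$ is a sum of three pieces. The maps $\mathcal{G}^{s}_{\gamma_0,1-\kappa}$ and $\mathcal{G}^{s}_{\gamma_1,2}$ are continuous and linear by \eqref{ASXZb} and \eqref{Azyaa}. The map $\tilde u\mapsto \RR{G^{s}}(\tilde u)$ is continuous and linear from $L^\infty(\mathbb{T}^d)$ into $\mathcal{D}^{\gamma_1,0}_{P_s,0}$ by \eqref{Azu}--\eqref{A-1}. Multiplication by $\mathbf{R}^+_s\gY$ under $\star$ is bounded linear, and so is $\tilde{\star}\rb$, by Lemma~\ref{stat} and \eqref{multi}. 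Hence the $\mu$-term is a bounded linear (so $C^\infty$) function of $(\tilde u,\RR{\mathcal U})$. For the first term, the argument $\RR{\mathcal{U}}+\RR{G^s}\tilde u$ is again bounded linear in $(\tilde u,\RR{\mathcal U})$. By the chain rule, the whole statement therefore reduces to one fact: the composition (Nemytskii) operator
\[
\mathcal{N}_\Sigma:\mathcal{D}^{\gamma_1,0}_{P_s,0}\big|_{O_{T+s}}\to\mathcal{D}^{\gamma_1,0}_{P_s,0}\big|_{O_{T+s}},\qquad \RR{F}\mapsto \Sigma(\RR{F}),
\]
is $C^{n-2}$ whenever $\Sigma\in C^n$. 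Here differentiability is understood with respect to the local seminorms $\triplenorm{\cdot}^{(s)}_{\gamma_1,0,\mathcal{O}}$, which is the Fréchet structure of the space.

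To prove this, I would follow the approach of \cite{Bai15b,GVR21}, using the Taylor expansion \eqref{TYUa1}. Fix $\RR{F}$ and write $\RR{H}$ for an increment. Take $j=n-3$, which is admissible since $\Sigma\in C^{j+3}$. Then
\[
\Sigma(\RR{F}+\RR{H})-\Sigma(\RR{F})=\sum_{i=1}^{n-3}\tfrac{1}{i!}\Sigma^{(i)}(\RR{F})\star\big[\star^i\RR{H}\big]+\mathrm{Rem}(\RR{F},\RR{H}),
\]
where the remainder is the integral term involving $\Sigma^{(n-2)}$ composed along the segment. The candidate $i$-th derivative is the $i$-linear map
\[
(\RR{H}_1,\dots,\RR{H}_i)\mapsto \Sigma^{(i)}(\RR{F})\star\RR{H}_1\star\cdots\star\RR{H}_i .
\]
It is bounded by \eqref{Asasqqq}, which places $\Sigma^{(i)}(\RR{F})$ in $\mathcal{D}^{\gamma_1,0}_{P_s}$ (this needs $\Sigma^{(i)}\in C^2$, that is $i\le n-2$), together with the product estimate \eqref{multi}.

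Two things then remain to be checked, and I would do them in this order.

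\begin{enumerate}
\item The remainder is $o(\triplenorm{\RR H}^{n-3})$, and the $(n-2)$-th term is bounded. This comes from \eqref{multi} applied to $\star^{n-2}\RR{H}$, combined with a uniform bound on $\triplenorm{\Sigma^{(n-2)}(\RR{F}+\alpha\RR{H})}$ for $\RR H$ in a bounded set.
\item The derivative maps $\RR{F}\mapsto\Sigma^{(i)}(\RR{F})$ are continuous for $i\le n-2$. This follows from a local Lipschitz estimate for $\Sigma^{(i)}(\cdot)$ on $\mathcal{D}^{\gamma_1,0}_{P_s,0}$, which holds when $\Sigma^{(i)}\in C^2$ (its derivative is $C^1$) by applying \eqref{TYUa} to $\Sigma^{(i)}$.
\end{enumerate}

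Iterating these two facts gives $\mathcal{N}_\Sigma\in C^{n-2}$. The one-derivative loss appears because Lipschitz continuity of a composition in $\mathcal{D}^{\gamma}$ consumes one extra derivative beyond boundedness.

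I expect the main obstacle to be exactly this continuity (Lipschitz) estimate for the composition operator in the weighted space $\mathcal{D}^{\gamma_1,0}_{P_s}$. The estimate \eqref{Asasqqq} only gives membership and a bound. Continuity requires controlling
\[
\Sigma(\RR F)(z)-\Gamma_{z,w}\Sigma(\RR F)(w)-\big(\Sigma(\RR G)(z)-\Gamma_{z,w}\Sigma(\RR G)(w)\big)
\]
with the correct powers of $d(z,w)$ and $\Vert z,w\Vert_{P_s}$. My plan is to use \eqref{TYUa} to write $\Sigma(\RR F)-\Sigma(\RR G)=\int_0^1\Sigma'(\RR G+\alpha(\RR F-\RR G))\,d\alpha\star(\RR F-\RR G)$. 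The estimate then reduces to \eqref{multi} applied to this product, provided the integrand $\Sigma'(\cdot)$ is bounded in $\mathcal{D}^{\gamma_1,0}_{P_s}$ uniformly in $\alpha$. That uniform bound is again given by \eqref{Asasqqq}, now applied to $\Sigma'\in C^2$, with bounds depending polynomially on $\triplenorm{\RR F}$, $\triplenorm{\RR G}$ and $\|\Gamma\|$, in the spirit of Remark~\ref{BOUNDED}. Since the structure here is very simple, with $\mathcal{T}_{\ge0}$ of dimension $d+2$, all of these verifications can be carried out componentwise if needed.
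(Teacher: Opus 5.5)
Your reduction is the paper's. The $\mu$-term and the inner argument $\RR{\mathcal U}+\RR{G^s}\tilde u$ are bounded linear in $(\tilde u,\RR{\mathcal U})$, $\mathcal{G}^{s}_{\gamma_0,1-\kappa}$ is bounded linear, and everything rests on the expansion \eqref{TYUa1} together with \eqref{multi}. The paper's proof stops at that expansion. It also invokes \eqref{TYUa1} with $j$ up to $n-2$, which by the hypothesis stated for \eqref{TYUa1} would require $\Sigma\in C^{n+1}$. So your choice $j=n-3$ and your item~2 on continuity of the derivatives are the right additions.

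However, item~2 is off by one derivative at the top order, and that is the step that carries the conclusion.

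\textbf{Why the Lipschitz route stops at $i=n-3$.} Applying \eqref{TYUa} to $\Sigma^{(i)}$ gives $\Sigma^{(i)}(\RR F)-\Sigma^{(i)}(\RR G)=\int_0^1\Sigma^{(i+1)}\big(\RR G+\alpha(\RR F-\RR G)\big)\,\mathrm{d}\alpha\star(\RR F-\RR G)$. Closing this with \eqref{multi} needs $\Sigma^{(i+1)}(\cdot)\in\mathcal{D}^{\gamma_1,0}_{P_s}$, i.e.\ $\Sigma^{(i+1)}\in C^2$ by \eqref{Asasqqq}, i.e.\ $\Sigma^{(i)}\in C^3$. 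Your last paragraph uses exactly this for $i=0$, with $\Sigma'\in C^2$. So this argument covers $i\le n-3$ only.

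\textbf{What fails at $i=n-2$.} Here you only have $\Sigma^{(n-2)}\in C^2$. Then $\Sigma^{(n-1)}(\RR G)$ need not belong to $\mathcal{D}^{\gamma_1,0}_{P_s}$ at all: since $\gamma_1>1-\kappa$, its component along the unit symbol would need a H\"older modulus of positive order for $\Sigma^{(n)}$. Yet continuity of $\RR F\mapsto\Sigma^{(n-2)}(\RR F)$ is needed in three places:
\begin{itemize}
\item for $D^{n-2}$ to exist, i.e.\ for differentiability of $\RR F\mapsto\Sigma^{(n-3)}(\RR F)\star\RR H_1\star\cdots\star\RR H_{n-3}$;
\item for $D^{n-2}$ to be continuous;
\item to improve your remainder from $O(\triplenorm{\RR H}^{n-2})$ to $o(\triplenorm{\RR H}^{n-2})$.
\end{itemize}
As written, you obtain only $C^{n-3}$ with a locally Lipschitz top derivative. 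For $n=3$ this is not even $C^1$, which is what Lemma~\ref{YHATSs} needs for the implicit function theorem.

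\textbf{The fix.} What is missing is mere continuity, not Lipschitz continuity, of composition with a $C^2$ function $g$. It follows by approximation:
\begin{itemize}
\item The map $g\mapsto g(\RR F)$ is linear. The bound behind \eqref{Asasqqq} is $\|g\|_{C^2([-R,R])}$ times a constant depending only on $R$ and the model, uniformly over $\triplenorm{\RR F}\le R$; note that the component of $\RR F$ along the unit symbol is bounded by $\triplenorm{\RR F}$.
\item Mollify $g$ to $g_\epsilon\in C^3$ with $g_\epsilon\to g$ in $C^2([-R,R])$.
\item Apply your Lipschitz estimate to $g_\epsilon$ and conclude by an $\epsilon/3$ argument.
\end{itemize}
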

	\begin{proof}
		For $i=1,2$, set
		\[
		\RR{\mathcal{V}}_i
		:=
		\RR{\mathcal{U}}_i+\RR{G^s}\tilde{u}_i.
		\]
		For \(1 \leq j \leq n-2\), the Taylor expansion \eqref{TYUa1}, together with \eqref{multi}, yields
		\begin{align}\label{TYLOR}
			\begin{split}
				\Sigma(\RR{\mathcal{V}_2}) - \Sigma(\RR{\mathcal{V}_1})
				&=
				\sum_{i=1}^{j}\frac{1}{i!}
				\Sigma^{(i)}(\RR{\mathcal{V}_1})
				\star^{i}
				\bigl(\RR{\mathcal{V}_2}-\RR{\mathcal{V}_1}\bigr)
				\\
				&\quad+\mathcal{O}\!\left(
				\left(
				\triplenorm{\mathcal{V}_2-\mathcal{V}_1}_{\gamma_1,\,0,\,O_{T+s}}^{(s)}
				\right)^{j+1}
				\right).
			\end{split}
		\end{align}
		Since $\RR{G^s}$ is linear and continuous,
		\[
		\triplenorm{\mathcal{V}_2-\mathcal{V}_1}_{\gamma_1,\,0,\,O_{T+s}}^{(s)}
		\lesssim
		\triplenorm{\mathcal{U}_2-\mathcal{U}_1}_{\gamma_1,\,0,\,O_{T+s}}^{(s)}
		+
		\|\tilde{u}_2-\tilde{u}_1\|_{L^\infty(\mathbb{T}^d)}.
		\]
		Since $\mathcal{G}^{s}_{\gamma_{0},1-\kappa}$,
		$\mathcal{G}^{s}_{\gamma_{1},2}$,
		and $\RR{G^{s}}$ are linear and continuous, the desired conclusion follows from \eqref{TYLOR}.
	\end{proof}
	We are now ready to prove the Fréchet differentiability of the solution.
	\begin{lemma}\label{YHATSs}
		Let $n \geq 3$ and $\Sigma \in C^{n}(\mathbb{R},\mathbb{R})$. Assume that, for a given initial condition
		$u_{0}\in L^{\infty}(\mathbb{T}^{d})$, there exist constants $T>0$ and $\sigma>0$, and a map
		\[
		\RR{\mathcal{J}_T} :
		B_{L^{\infty}(\mathbb{T}^{d})}(u_{0},\sigma)
		\longrightarrow
		\mathcal{D}^{\gamma_1,0}_{P_s,0}\big|_{O_{T+s}},
		\]
		such that, for every
		$\tilde{u}\in B_{L^{\infty}(\mathbb{T}^{d})}(u_{0},\sigma)$,
		the modelled distribution
		$\RR{\mathcal{J}_T}(\tilde{u})=\RR{\mathcal{U}}_{\tilde{u}}$
		is the solution of \eqref{MAIN_1} on $[s,s+T]$ with initial condition
		$\tilde{u}$. Then
		\[
		\RR{\mathcal{J}_T}
		\in
		C^{\,n-2}\!\left(
		B_{L^{\infty}(\mathbb{T}^{d})}(u_{0},\sigma),
		\,
		\mathcal{D}^{\gamma_1,0}_{P_s,0}\big|_{O_{T+s}}
		\right).
		\]
		Moreover, for each fixed $t\in[0,T]$, define
		\[
		\mathcal{J}_{t} :
		B_{L^{\infty}(\mathbb{T}^{d})}(u_{0},\sigma)
		\longrightarrow
		L^{\infty}(\mathbb{T}^{d}),
		\qquad
		\mathcal{J}_{t}(\tilde{u})
		:=
		\mathcal{R}\!\left(\RR{\mathcal{J}_T}(\tilde{u})\right)(s+t,\cdot).
		\]
		Then
		\[
		\mathcal{J}_{t}
		\in
		C^{\,n-2}\!\left(
		B_{L^{\infty}(\mathbb{T}^{d})}(u_{0},\sigma),
		L^{\infty}(\mathbb{T}^{d})
		\right),
		\]
		and
		\[
		\mathcal{J}_{0}=\mathrm{Id}.
		\]
	\end{lemma}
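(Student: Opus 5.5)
The plan is to apply the implicit function theorem in Banach spaces to the map of Lemma~\ref{ATGFRAs}. Writing the solution as $\RR{\mathcal{U}}_{\tilde u}=\RR{\mathcal{W}}_{\tilde u}+\RR{G^s}(\tilde u)$, the fixed point relation $\mathcal{Q}_T^s(\tilde u,\RR{\mathcal{U}}_{\tilde u})=\RR{\mathcal{U}}_{\tilde u}$ becomes
\[
\Phi(\tilde u,\RR{\mathcal{W}}):=\RR{\mathcal{W}}-\overline{\mathcal{Q}}^{s}_T(\tilde u,\RR{\mathcal{W}})=0,
\]
where we use $\mathbf{R}^+_s\RR{\mathcal{W}}=\RR{\mathcal{W}}$ on $O_{T+s}$. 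By Lemma~\ref{ATGFRAs}, $\Phi$ is $C^{n-2}$ in both arguments. Moreover, $\tilde u\mapsto\RR{G^s}(\tilde u)$ is linear and bounded by \eqref{A-1}. Hence it suffices to show that $\tilde u\mapsto\RR{\mathcal{W}}_{\tilde u}$ is $C^{n-2}$ near each point of the ball $B_{L^\infty}(u_0,\sigma)$. Differentiability is a local property, so we fix an arbitrary $\bar u$ in the ball and work near it.

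\textbf{Step 1: invertibility of the linearization.} The key step is to show that
\[
D_{\RR{\mathcal{W}}}\Phi(\bar u,\RR{\mathcal{W}}_{\bar u})=\mathrm{Id}-D_{\RR{\mathcal{W}}}\overline{\mathcal{Q}}^s_T
\]
is invertible on $\mathcal{D}^{\gamma_1,0}_{P_s,0}|_{O_{T+s}}$. Explicitly, $D_{\RR{\mathcal{W}}}\overline{\mathcal{Q}}^s_T$ is
\[
\RR{\mathcal{H}}\mapsto\mathcal{G}^s_{\gamma_0,1-\kappa}\big(\mathbf{R}^+_s(\Sigma'(\RR{\mathcal{U}}_{\bar u})\star\RR{\mathcal{H}})\tilde\star\rb\big)+\mu\mathcal{G}^s_{\gamma_1,2}(\mathbf{R}^+_s\RR{\mathcal{H}}).
\]
On a short window $O^{b}_{b+\tau}$, the bounds \eqref{A2222}, \eqref{A222233}, \eqref{multi} and Lemma~\ref{stat} show that this operator has norm $\lesssim\tau^{(1-\kappa)/2}$. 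The constant is controlled by $\|\mathcal{Z}\|$ and by the norm of $\Sigma'(\RR{\mathcal{U}}_{\bar u})$ (see Remark~\ref{BOUNDED}), and these quantities are finite on the whole of $[s,s+T]$. So for $\tau$ small the operator is a contraction, and the linear equation $\RR{\mathcal{H}}-D\overline{\mathcal{Q}}\RR{\mathcal{H}}=\RR{\mathcal{F}}$ is uniquely solvable by a Neumann series on $[s,s+\tau]$.

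Since the equation is linear, the same smallness holds uniformly in the restart time. We iterate over consecutive windows $[s+k\tau,s+(k+1)\tau]$, restarting each time with the datum at time $s+k\tau$; this is compatible with the semigroup identity \eqref{SEMIJ}. The local pieces are then glued using the patching Lemma~\ref{global}. This yields a bounded inverse on the full interval. I expect this step to be the main obstacle. The difficulty is that the weighted norms with blow-up at $P_{b}$ differ from window to window, so one must check that restarting and the Lemma~\ref{global} patching produce an element of $\mathcal{D}^{\gamma_1,0}_{P_s,0}$ with a bound. The alternative is to avoid the full IFT on $[s,s+T]$ and instead compose the solution maps over short windows; for that one also needs that the time-$t$ evaluation maps are smooth, which is handled in Step 3.

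\textbf{Step 2: the implicit function theorem.} With Step 1 in hand, the implicit function theorem gives a $C^{n-2}$ map $\tilde u\mapsto\RR{\mathcal{W}}(\tilde u)$ defined near $\bar u$ with $\Phi(\tilde u,\RR{\mathcal{W}}(\tilde u))=0$. Uniqueness of the local fixed point, together with continuity of $\tilde u\mapsto\RR{\mathcal{J}_T}(\tilde u)$, identifies this map with $\RR{\mathcal{J}_T}-\RR{G^s}$. Continuity of $\RR{\mathcal{J}_T}$ itself follows from the same contraction estimates \eqref{fixed-point}. Consequently $\RR{\mathcal{J}_T}$ is $C^{n-2}$.

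\textbf{Step 3: the time-$t$ maps.} For fixed $t\in(0,T]$, the map $\RR{\mathcal{U}}\mapsto\mathcal{U}^{\gY}(s+t,\cdot)$ is linear. By \eqref{521} it coincides with $\mathcal{R}(\RR{\mathcal{U}})(s+t,\cdot)$. Its boundedness into $L^\infty$ follows from the second term in \eqref{Q00}, which for $a=\gY$ has weight exponent $\max\{0,|\gY|_h-0\}=0$. Composing this bounded linear map with $\RR{\mathcal{J}_T}$ shows that $\mathcal{J}_t$ is $C^{n-2}$. Finally, $\mathcal{J}_0=\mathrm{Id}$ is exactly the initial condition in Definition~\ref{dfn}, namely $\mathcal{U}^{\gY}_{\tilde u}(s,\cdot)=\tilde u$, so the map is the identity and in particular smooth.
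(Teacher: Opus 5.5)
Steps~2 and~3 are essentially fine, with one caveat on Step~2: \eqref{fixed-point} is a short-time estimate, so it does not give continuity of $\tilde u\mapsto\mathcal J_T(\tilde u)$ on all of $[s,s+T]$. It is simpler to identify the implicit-function branch with $\mathcal J_T(\tilde u)-\RR{G^{s}}(\tilde u)$ via uniqueness of solutions on $[s,s+T]$.

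The real gap is Step~1, which your route rests on; you flag it yourself, but it is asserted, not proved. To apply the implicit function theorem on the full interval, $I-A$ with $A=D_{\mathcal W}\overline{\mathcal Q}^s_T$ must be bijective on $\mathcal D^{\gamma_1,0}_{P_s,0}|_{O_{T+s}}$. That is, you must solve $\mathcal H=A\mathcal H+\mathcal F$ for an \emph{arbitrary} $\mathcal F$ in that space.

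Your windowwise Neumann series gives pieces on $(s+k\tau,s+(k+1)\tau]$, each controlled only in its own $P_{s+k\tau}$-weighted norm. Lemma~\ref{global} cannot glue these. It is an a priori bound for a function already defined on all of $O^s_{s+T}$, and its hypothesis \eqref{ASASa} needs control on \emph{every} window $O^b_c$ with $c-b\le\delta_0$. In particular it needs windows straddling your partition points, and pairs $z,w$ on opposite sides of $s+k\tau$ are seen by none of your windowed norms.

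You would also have to show that the concatenation solves the un-restarted equation, with $\mathcal G^s$ and $\mathbf R^+_s$, on the whole interval. For general $\mathcal F$ the restart is not just ``with the datum at time $s+k\tau$''. You must carry $\mathcal F$ along, and identify $\mathcal G^s(\mathbf R^+_s\,\cdot)-\mathcal G^{b}(\mathbf R^+_{b}\,\cdot)$ with a free heat evolution from time $b$ at the level of modelled distributions, not only of reconstructions (cf.\ Step~1 of Proposition~\ref{onwall} and \eqref{SEMIJ}).

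This is repairable. Use overlapping windows, use uniqueness of each restarted linear problem to get agreement on overlaps, then run the chaining argument from the proof of Lemma~\ref{global}, which uses only window bounds. As written, though, the bounded inverse on $[s,s+T]$ is not established.

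The paper takes the route you list as a fallback, and there your main obstacle disappears. It chooses $T_0\le T$ and $\sigma_0\le\sigma$ so small that $\overline{\mathcal Q}^s_{T_0}$ maps a closed ball into itself and is a $\tfrac12$-contraction in $\mathcal U$. Then $\|D_{\mathcal U}\overline{\mathcal Q}^s_{T_0}\|\le\tfrac12$, and $I-D_{\mathcal U}\overline{\mathcal Q}^s_{T_0}$ is inverted directly by a Neumann series. The implicit function theorem then gives $C^{n-2}$ dependence on $[s,s+T_0]$.

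Next it restarts at $s+T_0$ with datum $\mathcal J_{T_0}(\tilde u)$, which is $C^{n-2}$ in $\tilde u$ because evaluation is bounded and linear (your Step~3). It repeats the short-time argument and iterates finitely many times. No global linear solvability is needed.

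What remains on that route is to assemble windowwise smoothness into smoothness in the $P_s$-weighted norm on the full interval; the paper treats this briefly. Lemma~\ref{global} can serve there in its intended role: as an a priori bound on objects that already exist on the whole interval, namely solutions, their differences and difference quotients.
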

	\begin{proof}
		First, by Lemma~\ref{ATGFRAs}, the map
		$\overline{\mathcal{Q}}_{T}^{s}$ is $C^{\,n-2}$--Fréchet differentiable.
		Moreover, using \eqref{FIXED}, \eqref{fixed-point}, and Lemma~\ref{stat},
		we can choose $T_0\in(0,T]$ and $\sigma_0\in(0,\sigma]$ sufficiently small
		such that, for some $M>0$,
		\[
		\overline{\mathcal Q}_{T_0}^{s}:
		\overline{B_{L^\infty(\mathbb T^d)}(u_0,\sigma_0)}
		\times
		\overline{
			B_{\mathcal D^{\gamma_1,0}_{P_s,0}|_{O_{T_0+s}}}(0,M)}
		\longrightarrow
		\overline{
			B_{\mathcal D^{\gamma_1,0}_{P_s,0}|_{O_{T_0+s}}}(0,M)}
		\]
		and
		\begin{align*}
			&\triplenorm{
				\overline{\mathcal Q}_{T_0}^{s}(u_2,\RR{\mathcal U}_2)
				-
				\overline{\mathcal Q}_{T_0}^{s}(u_1,\RR{\mathcal U}_1)}
			_{\gamma_1,0,O_{T_0+s}}^{(s)}
			\\
			&\qquad\leq
			\frac12
			\left(
			\triplenorm{
				\mathbf R_s^+\RR{\mathcal U}_2-
				\mathbf R_s^+\RR{\mathcal U}_1}
			_{\gamma_1,0,O_{T_0+s}}^{(s)}
			+
			\|u_2-u_1\|_{L^\infty(\mathbb T^d)}
			\right).
		\end{align*}
		Therefore, for every
		$\tilde u\in B_{L^\infty(\mathbb T^d)}(u_0,\sigma_0)$,
		the map
		\[
		\RR{\mathcal U}\mapsto
		\overline{\mathcal Q}_{T_0}^{s}
		(\tilde u,\RR{\mathcal U})
		\]
		is a contraction. Hence, by the contraction mapping theorem,
		there exists a unique fixed point $\RR g(\tilde u)$ satisfying
		\begin{align}\label{IAS78asaas}
			\overline{\mathcal Q}_{T_0}^{s}
			(\tilde u,\RR g(\tilde u))
			=
			\RR g(\tilde u).
		\end{align}
		The corresponding solution can be written as
		\[
		\RR{\mathcal J}_{T_0}(\tilde u)
		=
		\RR g(\tilde u)+\RR G^s(\tilde u).
		\]
		We now prove the regularity of the fixed point with respect to the
		initial condition. Fix
		$u_1\in B_{L^\infty(\mathbb T^d)}(u_0,\sigma_0)$.
		Let $N_{u_1}$ and $W_{u_1}$ be neighborhoods of $u_1$ and
		$\RR g(u_1)$, respectively, such that
		\[
		\mathbf D:
		N_{u_1}\times W_{u_1}
		\to
		\mathcal D^{\gamma_1,0}_{P_s,0}|_{O_{T_0+s}}
		\]
		given by
		\[
		\mathbf D(\tilde u,\RR{\mathcal U})
		=
		\RR{\mathcal U}
		-
		\overline{\mathcal Q}_{T_0}^{s}
		(\tilde u,\RR{\mathcal U})
		\]
		is well defined. By \eqref{IAS78asaas},
		\[
		\mathbf D(u_1,\RR g(u_1))=0.
		\]
		For fixed $\tilde u$, define
		\[
		\mathbf D_{\tilde u}(\RR{\mathcal U})
		=
		\mathbf D(\tilde u,\RR{\mathcal U}).
		\]
		Its derivative with respect to $\RR{\mathcal U}$ is
		\[
		\frac{\partial\mathbf D_{\tilde u}}
		{\partial\RR{\mathcal U}}
		(\RR{\mathcal U})
		=
		I-
		\frac{\partial\overline{\mathcal Q}_{T_0}^{s}}
		{\partial\RR{\mathcal U}}
		(\tilde u,\RR{\mathcal U}).
		\]
		Set
		\[
		A_{\tilde u,\RR{\mathcal U}}
		=
		\frac{\partial\overline{\mathcal Q}_{T_0}^{s}}
		{\partial\RR{\mathcal U}}
		(\tilde u,\RR{\mathcal U}).
		\]
		The contraction estimate implies
		\[
		\|A_{\tilde u,\RR{\mathcal U}}\|\leq \frac12.
		\]
		Hence $I-A_{\tilde u,\RR{\mathcal U}}$ is invertible and its inverse
		is given by the Neumann series
		\[
		\left(
		\frac{\partial\mathbf D_{\tilde u}}
		{\partial\RR{\mathcal U}}
		(\RR{\mathcal U})
		\right)^{-1}
		=
		\sum_{k=0}^{\infty}
		A_{\tilde u,\RR{\mathcal U}}^{\,k},
		\]
		where $A_{\tilde u,\RR{\mathcal U}}^{\,k}$ denotes the $k$-fold
		composition of the same bounded linear operator.
		Therefore,
		\[
		\frac{\partial\mathbf D_{\tilde u}}
		{\partial\RR{\mathcal U}}
		(\RR{\mathcal U})
		\]
		is an isomorphism. Applying the implicit function theorem in Banach spaces
		\cite[Theorem~2.5.7]{AMR88}, we conclude that
		\[
		\RR g:
		N_{u_1}\longrightarrow
		\mathcal D^{\gamma_1,0}_{P_s,0}|_{O_{T_0+s}}
		\]
		is of class $C^{n-2}$. Since $u_1$ was arbitrary, and since
		$\RR G^s$ is linear and continuous, it follows that
		\[
		\RR{\mathcal J}_{T_0}
		\in
		C^{n-2}\!\left(
		B_{L^\infty(\mathbb T^d)}(u_0,\sigma_0),
		\mathcal D^{\gamma_1,0}_{P_s,0}|_{O_{T_0+s}}
		\right).
		\]
		To extend the result to $[s,s+T]$, we use the restarting property.
		At time $s+T_0$, the initial condition for the restarted equation is
		\[
		\mathcal J_{T_0}(\tilde u).
		\]
		Since $\mathcal J_{T_0}$ is of class $C^{n-2}$ with respect to
		$\tilde u$, the same argument applied to the restarted equation shows
		that the solution depends $C^{n-2}$--smoothly on $\tilde u$ on the next
		time interval. Iterating this argument and using that $[s,s+T]$ can be
		covered by finitely many such intervals, we obtain
		\[
		\RR{\mathcal J}_{T}
		\in
		C^{n-2}\!\left(
		B_{L^\infty(\mathbb T^d)}(u_0,\sigma),
		\mathcal D^{\gamma_1,0}_{P_s,0}|_{O_{T+s}}
		\right).
		\]
		Finally, for every fixed $t\in(0,T]$, the reconstruction and evaluation
		map
		\[
		\RR{\mathcal U}
		\longmapsto
		\mathcal R(\RR{\mathcal U})(s+t,\cdot)
		\]
		is continuous into $L^\infty(\mathbb T^d)$. Hence
		\[
		\mathcal J_t
		\in
		C^{n-2}\!\left(
		B_{L^\infty(\mathbb T^d)}(u_0,\sigma),
		L^\infty(\mathbb T^d)
		\right).
		\]
		For $t=0$, the initial condition gives
		\[
		\mathcal J_0=\mathrm{Id}.
		\]
	\end{proof}
	For our purposes, we should derive the equation satisfied by the linearized solution. This is the content of the next proposition.
	\begin{proposition}\label{kkkl}
		Consider the setting of Lemma \ref{YHATSs} and let 
		$u_1 \in B_{L^{\infty}(\T^d)}(u_0, \sigma)$. Then the Fréchet derivative
		\[
		D_{u_1}\RR{\mathcal{J}_T} : L^{\infty}(\T^d) \longrightarrow \mathcal{D}^{\gamma_1,0}_{P_{s},0}\big|_{O_{T+s}}
		\]
		is well-defined. Moreover, for any $\tilde{u} \in L^{\infty}(\T^d)$, the derivative $D_{u_1}\RR{\mathcal{J}_T}[\tilde{u}]$ satisfies the linearized equation on $[s,T+s]$:
		\begin{align}\label{MAIN2}
			\begin{cases}
				(\partial_t - \Delta) D_{u_1}\RR{\mathcal{J}_T}[\tilde{u}]
				= \mu D_{u_1}\RR{\mathcal{J}_T}[\tilde{u}]+\big(\Sigma'(\RR{\mathcal{J}_T}(u_1)) \star D_{u_1}\RR{\mathcal{J}_T}[\tilde{u}]\big) \tilde{\star} \rb, & t>s, \\
				\mathcal{R}\left(D_{u_1}\RR{\mathcal{J}_T}[\tilde{u}]\right)(s,\cdot) = \tilde{u}, & t=s.
			\end{cases}
		\end{align}
		Equivalently, in mild form on $[s,s+T]$, we have
		\begin{align}\label{UIO}
			\mathbf{R}^+_{s}D_{u_1}\RR{\mathcal{J}_T}[\tilde{u}]
			&=
			\mathcal{G}^{s}_{\gamma_{0},1-\kappa}
			\Big(
			\big(\mathbf{R}^+_{s}\Sigma'(\RR{\mathcal{J}_T}(u_1)) \star D_{u_1}\RR{\mathcal{J}_T}[\tilde{u}]\big) \tilde{\star} \rb
			\Big)+\mu\mathcal{G}^{s}_{\gamma_{1},2}(\mathbf{R}^+_{s} D_{u_1}\RR{\mathcal{J}_T}[\tilde{u}])+ \RR{G^{s}}(\tilde{u}).
		\end{align}
	\end{proposition}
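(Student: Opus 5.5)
The plan is to differentiate the fixed-point identity satisfied by the solution with respect to the initial datum, working first on the short interval $[s,s+T_0]$ from the proof of Lemma~\ref{YHATSs} and then propagating to $[s,s+T]$ by restarting.

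\textbf{Step 1: the fixed-point identity.} On $[s,s+T_0]$ the solution satisfies
\[
\RR{\mathcal J}_{T_0}(\tilde u)=\RR g(\tilde u)+\RR{G^s}(\tilde u),
\qquad
\RR g(\tilde u)=\overline{\mathcal Q}^s_{T_0}(\tilde u,\RR g(\tilde u)).
\]
Well-definedness of $D_{u_1}\RR{\mathcal J}_T$ is immediate from Lemma~\ref{YHATSs}, since $n-2\ge1$.

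\textbf{Step 2: differentiate.} By Lemma~\ref{ATGFRAs} and the chain rule,
\[
D\RR g[\tilde u]=\partial_{\tilde u}\overline{\mathcal Q}^s_{T_0}[\tilde u]+\partial_{\RR{\mathcal U}}\overline{\mathcal Q}^s_{T_0}\big[D\RR g[\tilde u]\big].
\]
To compute the partial derivatives I would use the Taylor expansion \eqref{TYLOR} with $j=1$. The derivative of $\RR{\mathcal V}\mapsto\Sigma(\RR{\mathcal V})\tilde\star\rb$ in direction $\RR H$ is $(\Sigma'(\RR{\mathcal V})\star\RR H)\tilde\star\rb$. The remainder is $\mathcal O(\triplenorm{\RR H}^2)$ in $\mathcal D^{\gamma_0,-1-\kappa}_{P_s}$, using Lemma~\ref{stat} and \eqref{multi}. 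The operators $\mathcal G^s_{\gamma_0,1-\kappa}$, $\mathcal G^s_{\gamma_1,2}$ and $\RR{G^s}$ are linear and continuous, so they commute with differentiation. Writing $\RR{\mathcal V}=\RR g(u_1)+\RR{G^s}u_1=\RR{\mathcal J}_{T_0}(u_1)$, the direction is $D\RR g[\tilde u]+\RR{G^s}\tilde u=D_{u_1}\RR{\mathcal J}_{T_0}[\tilde u]$. Adding $\RR{G^s}\tilde u$ to both sides then gives exactly \eqref{UIO} on $[s,s+T_0]$. Here I use that $\mathbf R^+_s$ commutes with $\star$, which holds because $\mathbf R^+_s\gY$ is idempotent for $\star$, and that $\RR{G^s}\tilde u$ vanishes for $t<s$.

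\textbf{Step 3: the initial condition.} Reconstructing \eqref{UIO} via \eqref{A2222}, \eqref{A222233} and \eqref{IKLa} gives
\[
\mathcal R\big(D_{u_1}\RR{\mathcal J}[\tilde u]\big)=G*\mathcal R(\cdots)+\mu\, G*\mathcal R(\cdots)+G^s\tilde u.
\]
The first two terms vanish continuously at $t=s$, because their modelled distributions lie in $\mathcal D^{\gamma_1,0}$ and are built from $\mathbf R^+_s$. So the reconstruction tends to $\tilde u$ as $t\downarrow s$. Alternatively, one can differentiate $\mathcal J_0=\mathrm{Id}$ from Lemma~\ref{YHATSs}.

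\textbf{Step 4: extension to $[s,s+T]$.} This is the main obstacle, since $\mathcal G^s$ involves the full history from time $s$. I would restart at $s+T_0$: the solution on $[s+T_0,s+2T_0]$ is $\RR{\mathcal J}$ started from $\mathcal J_{T_0}(\tilde u)$. Applying the chain rule with Step 2 gives the mild equation relative to $s+T_0$, with initial datum $D\mathcal J_{T_0}[\tilde u]$. To convert this into \eqref{UIO} relative to $s$, I would reconstruct and use the semigroup identity \eqref{SEMIJ}, which combines the two Duhamel formulas into a single one from time $s$. The nonzero-homogeneity components of both sides agree pointwise by \eqref{A1} and the explicit form of $\RR{G^s}$. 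Finally I would invoke uniqueness of modelled distributions with given reconstruction and lower components, or redo the fixed-point argument, to identify the two lifts. Iterating over finitely many intervals completes the proof.
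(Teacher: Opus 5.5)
Your proof is correct, and its core is the paper's. Both arguments differentiate the mild identity and obtain the derivative of $\RR{\mathcal V}\mapsto\Sigma(\RR{\mathcal V})\tilde\star\rb$ from the first-order Taylor formula \eqref{TYUa}, together with \eqref{multi} and Lemma~\ref{stat}. Both use linearity and continuity of $\mathcal G^s_{\gamma_0,1-\kappa}$, $\mathcal G^s_{\gamma_1,2}$ and $\RR{G^s}$, and take the initial condition from $\mathcal J_0=\mathrm{Id}$. Your chain rule versus the paper's difference quotients is cosmetic. The paper subtracts the mild equations for $\RR{\mathcal J_T}(u_1+\epsilon\tilde u)$ and $\RR{\mathcal J_T}(u_1)$, splits the nonlinear quotient into $\textcolor{red}{I}+\textcolor{red}{II}$ via \eqref{TYUa}, and lets $\epsilon\to0$; this is a hands-on version of the chain rule you invoke.

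The genuine difference is your Step~4, which the paper does not need. You differentiate the implicit-function fixed point $\RR g$, which only exists on the contraction interval $[s,s+T_0]$, so you are forced to glue restarted Duhamel formulas via \eqref{SEMIJ} and a uniqueness-of-lift argument. The paper instead uses Definition~\ref{dfn}: $\RR{\mathcal J_T}(\tilde u)$ is a fixed point of $\mathcal Q^s_T(\tilde u,\cdot)$ on all of $O^s_{s+T}$, so it satisfies the mild equation relative to $s$ on the whole interval. Lemma~\ref{YHATSs} already gives $C^1$-dependence there. Differentiating $\RR{\mathcal J_T}(\tilde u)=\mathcal Q^s_T(\tilde u,\RR{\mathcal J_T}(\tilde u))$ directly therefore yields \eqref{UIO}, and the obstacle you flag disappears. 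The paper only mentions patching to pass from $T\le 1$, where \eqref{A2222} and \eqref{A222233} are stated, to general $T$. Your route makes the restart consistency explicit, which the paper uses only implicitly in Lemma~\ref{YHATSs}; the paper's route is shorter.

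If you keep Step~4, one claim needs fixing. Only the $\gI$-components are matched by \eqref{A1}. The $X^i$-components of $\mathcal K^s_{\gamma_0,\theta}$ come from nonlocal terms and are not explicit. They are still determined: once the $\gY$- and $\gI$-components agree, the $\mathcal D^{\gamma_1}$ bound on the $\gY$-component with $\gamma_1>1$ forces the $X^i$-components to agree. Alternatively, invoke the uniqueness principle used in Step~1 of Proposition~\ref{onwall}.

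In Step~3, the vanishing of the two $\mathcal G^s$-terms as $t\downarrow s$ is best justified by the factors $T^{(1-\kappa)/2}$ and $T$ in \eqref{A2222} and \eqref{A222233}. For $\tilde u\in L^\infty(\mathbb{T}^d)$, however, $G^s\tilde u(t,\cdot)$ does not converge to $\tilde u$ uniformly. So your alternative via $\mathcal J_0=\mathrm{Id}$, which is the paper's argument, is the one to use.
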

	\begin{proof}
		Thanks to Lemma~\ref{YHATSs}, the derivative operator exists. In particular, for every 
		$u_1 \in B_{L^{\infty}(\T^d)}(u_0, \sigma)$ and $\tilde{u} \in L^{\infty}(\T^d)$,
		\begin{align}\label{epsilon}
			\mathbf{R}^+_{s}D_{u_1}\RR{\mathcal{J}_T}[\tilde{u}]
			=
			\lim_{\epsilon \to 0}
			\frac{\mathbf{R}^+_{s}\RR{\mathcal{J}_T}(u_1+\epsilon\tilde{u})
				-
				\mathbf{R}^+_{s}\RR{\mathcal{J}_T}(u_1)}{\epsilon}.
		\end{align}
		For sufficiently small $\epsilon$, set
		\begin{align*}
			\begin{split}
				&\RR{\mathcal{H}_1}(u_1,\tilde{u},\epsilon)
				:=
				\frac{
					\Sigma\big(\RR{\mathcal{J}_T}(u_1+\epsilon\tilde{u})\big)
					-
					\Sigma\big(\RR{\mathcal{J}_T}(u_1)\big)
				}{\epsilon}
				-
				\Sigma'(\RR{\mathcal{J}_T}(u_1))
				\star
				D_{u_1}\RR{\mathcal{J}_T}[\tilde{u}],
				\\
				&\RR{\mathcal{H}_2}(u_1,\tilde{u},\epsilon)
				:=
				\frac{\RR{\mathcal{J}_T}(u_1+\epsilon\tilde{u})
					-
					\RR{\mathcal{J}_T}(u_1)}{\epsilon}
				-
				D_{u_1}\RR{\mathcal{J}_T}[\tilde{u}].
			\end{split}
		\end{align*}
		Subtracting the mild equations satisfied by
		$\RR{\mathcal{J}_T}(u_1+\epsilon\tilde{u})$
		and $\RR{\mathcal{J}_T}(u_1)$, and dividing by $\epsilon$, it is sufficient to show that
		\begin{align}\label{Q0}
			\begin{split}
				&\lim_{\epsilon \to 0}
				\big\Vert\big\vert
				\mathcal{G}^{s}_{\gamma_{0},1-\kappa}
				\big(
				\mathbf{R}^+_{s}\RR{\mathcal{H}_1}(u_1,\tilde{u},\epsilon)
				\tilde{\star}
				\rb
				\big)
				\big\vert\big\Vert_{\gamma_1,\,0,\,O_{T+s}^s}^{(s)}
				=0,
				\\
				&\lim_{\epsilon \to 0}
				\big\Vert\big\vert
				\mathcal{G}^{s}_{\gamma_{1},2}
				\big(
				\mathbf{R}^+_{s}\RR{\mathcal{H}_2}(u_1,\tilde{u},\epsilon)
				\big)
				\big\vert\big\Vert_{\gamma_1,\,0,\,O_{T+s}^s}^{(s)}
				=0.
			\end{split}
		\end{align}
		Without loss of generality, we may assume that $T\in (0,1]$ and then apply a
		patching argument to treat arbitrary $T$. Thanks to \eqref{A2222}, \eqref{A222233},
		and \eqref{multi}, it suffices to prove that
		\begin{align}\label{UJNMa}
			\begin{split}
				&\lim_{\epsilon \to 0}
				\triplenorm{
					\mathbf{R}^+_{s}\RR{\mathcal{H}_1}(u_1,\tilde{u},\epsilon)
				}_{\gamma_1,0,O_{T+s}^s}^{(s)}
				=0,
				\\
				&\lim_{\epsilon \to 0}
				\triplenorm{
					\mathbf{R}^+_{s}\RR{\mathcal{H}_2}(u_1,\tilde{u},\epsilon)
				}_{\gamma_1,0,O_{T+s}^s}^{(s)}
				=0.
			\end{split}
		\end{align}
		The second one follows from \eqref{epsilon}. Let us address the first one.
		To this end, by the definition of $\RR{\mathcal{H}_1}$ and the Taylor expansion
		\eqref{TYUa}, we write
		\begin{align*}
			&\RR{\mathcal{H}_1}(u_1,\tilde{u},\epsilon)
			\\
			&=
			\underbrace{
				\int_{0}^{1}
				\bigg[
				\Sigma'\Big(
				\RR{\mathcal{J}_T}(u_1)
				+
				\alpha\big(
				\RR{\mathcal{J}_T}(u_1+\epsilon\tilde{u})
				-
				\RR{\mathcal{J}_T}(u_1)
				\big)
				\Big)
				-
				\Sigma'(\RR{\mathcal{J}_T}(u_1))
				\bigg]
				\star
				\bigg[
				\frac{
					\RR{\mathcal{J}_T}(u_1+\epsilon\tilde{u})
					-
					\RR{\mathcal{J}_T}(u_1)
				}{\epsilon}
				\bigg]
				\,\mathrm{d}\alpha
			}_{=: \textcolor{red}{I}(u_1,\tilde{u},\epsilon)}
			\\
			&\quad+
			\underbrace{
				\Sigma'(\RR{\mathcal{J}_T}(u_1))
				\star
				\bigg[
				\frac{
					\RR{\mathcal{J}_T}(u_1+\epsilon\tilde{u})
					-
					\RR{\mathcal{J}_T}(u_1)
				}{\epsilon}
				-
				D_{u_1}\RR{\mathcal{J}_T}[\tilde{u}]
				\bigg]
			}_{=: \textcolor{red}{II}(u_1,\tilde{u},\epsilon)} .
		\end{align*}
		From \eqref{epsilon}, \eqref{TYUa}, and \eqref{multi}, we conclude that
		\begin{align*}
			\lim_{\epsilon \to 0}
			\triplenorm{
				\mathbf{R}^+_{s}\textcolor{red}{I}(u_1,\tilde{u},\epsilon)
			}_{\gamma_1,0,O_{T+s}^s}^{(s)}
			=0,
			\qquad
			\lim_{\epsilon \to 0}
			\triplenorm{
				\mathbf{R}^+_{s}\textcolor{red}{II}(u_1,\tilde{u},\epsilon)
			}_{\gamma_1,0,O_{T+s}^s}^{(s)}
			=0.
		\end{align*}
		This yields \eqref{UJNMa}, and hence, after passing to the limit in the
		difference quotient of the mild equations,
		\begin{align*}
			\mathbf{R}^+_{s}D_{u_1}\RR{\mathcal{J}_T}[\tilde{u}]
			&=
			\mathcal{G}^{s}_{\gamma_{0},1-\kappa}
			\Big(
			\mathbf{R}^+_{s}
			\Big(
			\Sigma'(\RR{\mathcal{J}_T}(u_1))
			\star
			D_{u_1}\RR{\mathcal{J}_T}[\tilde{u}]
			\Big)
			\tilde{\star}\rb
			\Big)+
			\mu\mathcal{G}^{s}_{\gamma_{1},2}
			\Big(
			\mathbf{R}^+_{s}D_{u_1}\RR{\mathcal{J}_T}[\tilde{u}]
			\Big)
			+
			\RR{G^{s}}(\tilde{u}).
		\end{align*}
		This proves \eqref{UIO}. Finally, since $\mathcal{J}_0=\mathrm{Id}$ by
		Lemma~\ref{YHATSs}, differentiating at $u_1$ in the direction $\tilde{u}$ yields
		\[
		D_{u_1}\mathcal{J}_0[\tilde{u}]=\tilde{u},
		\]
		and therefore
		\[
		\mathcal{R}\left(
		D_{u_1}\RR{\mathcal{J}_T}[\tilde{u}]
		\right)(s,\cdot)
		=
		\tilde{u}.
		\]
		Hence \eqref{MAIN2} follows.
	\end{proof}
	\subsection{Gr\"onwall-type inequality}
	Now that we understand the regularity of the solution map and its linearization, our next goal is to derive a priori estimates. To this end, we follow a classical approach based on Grönwall's inequality. We first prove a version of Grönwall's inequality adapted to our setting. This result will be used repeatedly throughout the sequel and will considerably simplify many of the subsequent estimates. A similar result in the context of rough partial differential equations can be found in \cite{BGS2025}.
	
	\begin{proposition}\label{onwall}
		Let 
		$\RR{\mathcal{U}}, \RR{\mathcal{V}} \in \mathcal{D}^{\gamma_1,0}_{P_s,0}\big|_{O_{1+s}}$, 
		$\RR{\mathcal{W}} \in \mathcal{D}^{\gamma_0,-1-\kappa}_{P_s}\big|_{O_{1+s}}$, 
		and $u \in L^{\infty}(\mathbb{T}^d)$. 
		Assume that on $[s, 1+s] \times \mathbb{R}^d$ the following identity holds:
		\begin{align}\label{OL<45}
			\begin{cases}
				\mathbf{R}^+_s \RR{\mathcal{U}}
				=\mathcal{G}^{s}_{\gamma_{0},1-\kappa}
				\Big(
				\big(\mathbf{R}^+_s\RR{\mathcal{V}} \star \RR{\mathcal{U}}\big) \tilde{\star} \rb
				+\mathbf{R}^+_s\RR{\mathcal{W}}
				\Big)+\mu\mathcal{G}^{s}_{\gamma_{1},2}\left(\mathbf{R}^+_s \RR{\mathcal{U}}\right) + \RR{G^{s}} (u), & t > s, \\
				\mathcal{R}\RR{\mathcal{U}}(s,\cdot) = {u}, & t = s.
			\end{cases}
		\end{align}
		Then for the constants $\widetilde{C}_1,\widetilde{C}_2\geq 1$, we have
		\begin{align}\label{Atyasww}
			\begin{split}
				&\triplenorm{\mathbf{R}^+_s \RR{\mathcal{U}}}_{\gamma_1,0,O_{1+s}^s}^{(s)}
				\\
				&\leq
				\widetilde{C}_1
				\big(\|\Gamma\|_{\overline{O_{s+1}^s}}\big)^2
				\big(1+\|\mathcal{Z}\|_{\overline{O_{s+2}^{s-1}}}^2\big)^{
					\frac{\gamma_1+2}{1-\kappa}}
				\big(
				1+
				\triplenorm{\mathbf{R}^+_s\RR{\mathcal{V}}}_
				{\gamma_1,0,O_{1+s}^s}^{(s)}
				\big)^{\frac{\gamma_1+2}{1-\kappa}}	
				\\
				&\times	\left(
				\triplenorm{\mathbf{R}^+_s\RR{\mathcal{W}}}_
				{\gamma_0,-1-\kappa,O_{1+s}^s}^{(s)}
				+
				\Vert u \Vert_{L^{\infty}(\T^d)}
				\right)
				\exp\Big(
				\widetilde{C}_2
				\big(1+\|\mathcal{Z}\|_{\overline{O_{s+2}^{s-1}}}^2\big)^{
					\frac{2}{1-\kappa}}
				\big(
				1+
				\triplenorm{\mathbf{R}^+_s\RR{\mathcal{V}}}_
				{\gamma_1,0,O_{1+s}^s}^{(s)}
				\big)^{\frac{2}{1-\kappa}}
				\Big).
			\end{split}
		\end{align}
	\end{proposition}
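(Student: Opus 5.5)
The strategy is a short-time bound obtained by absorption, then propagation across $[s,s+1]$ by restarting the equation, and finally the patching Lemma~\ref{global} to return to the norm based at $P_s$.

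\emph{Short-time absorption.} Fix $b\in[s,s+1)$ and $c\in(b,b+\delta_0]$. Using the semigroup identity \eqref{SEMIJ} at the level of modelled distributions, $\mathbf{R}^+_b\RR{\mathcal U}$ satisfies on $O^b_c$ an equation of the same form as \eqref{OL<45}, with $s$ replaced by $b$ and initial datum $u_b:=\mathcal{R}\RR{\mathcal U}(b,\cdot)$. Applying \eqref{A2222}, \eqref{A222233} and \eqref{A-1}, then Lemma~\ref{stat} and the product bound \eqref{multi}, gives
\[
\triplenorm{\mathbf{R}^+_b\RR{\mathcal U}}^{(b)}_{\gamma_1,0,O^b_c}\le C\delta_0^{\frac{1-\kappa}{2}}\big(1+\|\mathcal Z\|^2\big)\Big[(V+|\mu|)\triplenorm{\mathbf{R}^+_b\RR{\mathcal U}}^{(b)}_{\gamma_1,0,O^b_c}+W\Big]+C\|u_b\|_{L^\infty}.
\]
Here $V$ and $W$ are the norms of $\mathbf{R}^+_b\RR{\mathcal V}$ and $\mathbf{R}^+_b\RR{\mathcal W}$ on $O^b_c$; these must be controlled by the corresponding norms based at $s$, at the cost of a power of $\|\Gamma\|$. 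Choose
\[
\delta_0\simeq\Big[C(1+\|\mathcal Z\|^2)(1+V)\Big]^{-\frac{2}{1-\kappa}}
\]
so that the prefactor is at most $1/2$, and absorb. This yields $\triplenorm{\mathbf{R}^+_b\RR{\mathcal U}}^{(b)}\le 2C(\|u_b\|_{L^\infty}+W)$.

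\emph{Iteration in time.} Bounding $\|u_b\|_{L^\infty}$ requires restarting, so the previous estimate is applied on consecutive intervals $[s+j\delta_0,s+(j+1)\delta_0]$ for $j\le N\simeq\delta_0^{-1}$. At the right endpoint, $\|u_{b+\delta_0}\|_{L^\infty}$ is bounded by the $\gY$-component of the modelled distribution, which is controlled by its norm. This gives the recursion $a_{j+1}\le 2C(a_j+W)$ and hence $a_N\le (2C)^N(\|u\|_{L^\infty}+W)$. Because $N\simeq(1+\|\mathcal Z\|^2)^{\frac{2}{1-\kappa}}(1+V)^{\frac{2}{1-\kappa}}$, the factor $(2C)^N$ is exactly the exponential in \eqref{Atyasww} with suitable $\widetilde C_2$. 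Windows $[b,c]$ that straddle grid points are covered by applying the absorption step directly with initial time $b$, using the grid bound at the nearest earlier point to control $\|u_b\|_{L^\infty}$.

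\emph{Patching.} Lemma~\ref{global} then gives
\[
\triplenorm{\mathbf{R}^+_s\RR{\mathcal U}}^{(s)}_{\gamma_1,0,O^s_{s+1}}\lesssim\delta_0^{-\frac{\gamma_1+2}{2}}\|\Gamma\|^2\Lambda,
\]
and $\delta_0^{-\frac{\gamma_1+2}{2}}$ produces the factor $\big((1+\|\mathcal Z\|^2)(1+V)\big)^{\frac{\gamma_1+2}{1-\kappa}}$.

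The main obstacle is the restart step: showing that $\mathbf{R}^+_b\RR{\mathcal U}$ genuinely solves the mild equation at base point $b$. This amounts to the identity
\[
\mathcal G^s(\cdot)=\mathcal G^b(\cdot)+\RR{G^b}\big(\mathcal R(\cdot)(b)\big)
\]
on $O_c^b$ for modelled distributions, not just for their reconstructions. A related difficulty is comparing the norms at base point $b$ with those at base point $s$. I would first verify this identity at the level of reconstructions via \eqref{SEMIJ}. I would then use that the polynomial and $\gI$ components are determined by the reconstruction together with \eqref{A1}, so that the identity lifts to the modelled distributions themselves.
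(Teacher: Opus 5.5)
Your proposal is correct and is essentially the paper's proof. The paper's Step~1 proves the restart identity exactly as you suggest: equality of reconstructions via \eqref{SEMIJ}, lifted to the modelled distributions via \cite[Proposition~3.29]{Hai14} and \eqref{521}. Step~2 then performs the absorption with $\tilde\delta_\beta$, the discrete Gr\"onwall recursion on a grid of mesh $\tilde\delta_\beta$, the bound for arbitrary windows, and finally Lemma~\ref{global}.

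One small correction. Comparing the base-$b$ norms of $\mathbf{R}^+_b\mathcal{V}$ and $\mathbf{R}^+_b\mathcal{W}$ with the base-$s$ norms costs no power of $\|\Gamma\|$. This is because $\|z,w\|_{P_b}\le\|z,w\|_{P_s}$, every admissible pair at base $b$ is admissible at base $s$, and all weight exponents are nonnegative. Keeping $\|\Gamma\|$ out of $\tilde\delta_\beta$ is what gives the exponents in \eqref{Atyasww} exactly as stated.
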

	\begin{proof}
		For better tracking of the proof, we divide it into several steps.
		
		\textbf{Step 1.}\label{Step10}  For $r \in [s,1+s]$, set
		\begin{align}\label{P1a}
			\mathbf{R}^+_r\RR{\mathcal{U}^{r}} 
			:= & \mathcal{G}^{r}_{\gamma_{0},1-\kappa}
			\Big(
			(\mathbf{R}^+_r\RR{\mathcal{V}} \star \RR{\mathcal{U}}) \tilde{\star} \rb
			+ \mathbf{R}^+_r\RR{\mathcal{W}}
			\Big)+\mu\mathcal{G}^{r}_{\gamma_{1},2}\left(\mathbf{R}^+_r \RR{\mathcal{U}}\right)
			+ \RR{G^{r}} \big( \mathcal{R}\RR{\mathcal{U}} (r,.)\big).
		\end{align}
		First, we show that
		\begin{align}\label{JMssa}
			\mathbf{R}^+_r \RR{\mathcal{U}} = \mathbf{R}^+_r \RR{\mathcal{U}^{r}} 
			\quad \text{on } (r,1+s] \times \R^d.
		\end{align}
		To this end, since
		\[
		\mathbf{R}^+_r\RR{\mathcal{U}^{r}},\mathbf{R}^+_r\RR{\mathcal{U}}
		\in \mathcal{D}^{\gamma_1,0}_{P_r,0}\big|_{O_{1+s}^r},
		\]
		by \cite[Proposition 3.29]{Hai14} and \eqref{521}, it suffices to show that 
		\begin{align}\label{Jmma}
			\mathcal{R}(\mathbf{R}^+_r \RR{\mathcal{U}}) 
			= \mathcal{R}(\mathbf{R}^+_r \RR{\mathcal{U}^{r}}) 
			\quad \text{on } (r,1+s] \times \R^d.
		\end{align}
		From \eqref{A2222}, \eqref{A222233} and \eqref{IKLa}, we have
		\begin{align}\label{HYAs}
			\mathcal{R} \RR{\mathcal{U}}(r,.)
			=
			\Big(G * \mathcal{R}
			\big(
			(\mathbf{R}^+_{s}\RR{\mathcal{V}} \star \RR{\mathcal{U}}) \tilde{\star} \rb
			+ \mathbf{R}^+_{s}\RR{\mathcal{W}} \big)
			+\mu G * \mathcal{R}\big(\mathbf{R}^+_{s}\RR{\mathcal{U}}\big)
			\Big)(r,.) + G^{s}(u)(r,.).
		\end{align}
		Again, using \eqref{A2222}, \eqref{A222233} and \eqref{IKLa}, together with \eqref{P1a} and \eqref{HYAs}, for $(t,x)\in (r,1+s] \times \R^d$, we have 
		\begin{align*}
			&\mathcal{R}(\mathbf{R}^+_r \RR{\mathcal{U}^{r}})(t,x)
			=
			\Big(
			G * \mathcal{R}\big(
			(\mathbf{R}^+_{r}\RR{\mathcal{V}} \star \RR{\mathcal{U}}) \tilde{\star} \rb
			+ \mathbf{R}^+_{r}\RR{\mathcal{W}}
			\big)
			+\mu G * \mathcal{R}\big(\mathbf{R}^+_{r}\RR{\mathcal{U}}\big)
			\Big)(t,x)
			+ G^{r}\big( \mathcal{R}\RR{\mathcal{U}} (r,.)\big)(t,x)
			\\
			&=
			\Big(
			G * \mathcal{R}\big(
			(\mathbf{R}^+_{r}\RR{\mathcal{V}} \star \RR{\mathcal{U}}) \tilde{\star} \rb
			+ \mathbf{R}^+_{r}\RR{\mathcal{W}}
			\big)
			+\mu G * \mathcal{R}\big(\mathbf{R}^+_{r}\RR{\mathcal{U}}\big)
			\Big)(t,x)
			\\
			&\quad
			+ G^{r}\Bigg(
			\Big(
			G * \mathcal{R}\big(
			(\mathbf{R}^+_{s}\RR{\mathcal{V}} \star \RR{\mathcal{U}}) \tilde{\star} \rb
			+ \mathbf{R}^+_{s}\RR{\mathcal{W}}
			\big)
			+\mu G * \mathcal{R}\big(\mathbf{R}^+_{s}\RR{\mathcal{U}}\big)
			\Big)(r,\cdot)
			+ (G^{s}u)(r,\cdot)
			\Bigg)(t,x).
		\end{align*}
		Consequently, using \eqref{SEMIJ},
		\begin{align*}
			\mathcal{R}(\mathbf{R}^+_r \RR{\mathcal{U}^{r}})(t,x)
			&=
			\Big(
			G * \mathcal{R}\big(
			(\mathbf{R}^+_{s}\RR{\mathcal{V}} \star \RR{\mathcal{U}}) \tilde{\star} \rb
			+ \mathbf{R}^+_{s}\RR{\mathcal{W}}
			\big)
			+\mu G * \mathcal{R}\big(\mathbf{R}^+_{s}\RR{\mathcal{U}}\big)
			\Big)(t,x)
			+(G^{s}u)(t,x)
			\\
			&=
			\mathcal{R}(\mathbf{R}^+_s \RR{\mathcal{U}})(t,x)
			=
			\mathcal{R}(\mathbf{R}^+_r \RR{\mathcal{U}})(t,x).
		\end{align*}
		Therefore, \eqref{JMssa} is proven.
		.\\
		\textbf{Step 2.}\label{Step20}
		We now proceed to prove the claimed inequality. In particular, we aim to apply Lemma~\ref{global}. We first verify its conditions.
		From \eqref{multi}, \eqref{A2222}, \eqref{A222233}, \eqref{A-1}, and Lemma~\ref{stat}, there exists a constant $C \geq 1$ such that, for each $\delta \in (0,1]$ and $r\in[s,s+1-\delta]$,
		\begin{align}\label{D0}
			\begin{split}
				\triplenorm{\mathbf{R}^+_r\RR{\mathcal{U}^r}}_{\gamma_1,0,O_{\delta+r}^r}^{(r)}
				&\leq
				C\delta^{\frac{1-\kappa}{2}}
				\left(
				\|\mathcal{Z}\|_{\overline{O_{r+1}^{r-1}}}^2+1
				\right)
				\left(
				1+
				\triplenorm{\mathbf{R}^+_r\RR{\mathcal{V}}}_{\gamma_1,0,O_{\delta+r}^r}^{(r)}
				\right)
				\triplenorm{\mathbf{R}^+_r\RR{\mathcal{U}^r}}_{\gamma_1,0,O_{\delta+r}^r}^{(r)}
				\\
				&\quad+
				C\delta^{\frac{1-\kappa}{2}}
				\left(
				\|\mathcal{Z}\|_{\overline{O_{r+1}^{r-1}}}^2+1
				\right)
				\triplenorm{\mathbf{R}^+_r\RR{\mathcal{W}}}_{\gamma_0,-1-\kappa,O_{\delta+r}^r}^{(r)}+
				C\Vert \mathcal{R}\RR{\mathcal{U}}(r,\cdot)\Vert_{L^{\infty}(\T^{d})}.
			\end{split}
		\end{align}
		For an arbitrary $\beta \in (0,1)$, we define
		\begin{align}\label{kappa}
			\tilde{\delta}_{\beta}
			=
			\Bigg[
			\frac{\beta}{
				C\big(\|\mathcal{Z}\|_{\overline{O_{s+2}^{s-1}}}^2+1\big)
				\left(
				1+\triplenorm{\mathbf{R}^+_s\RR{\mathcal{V}}}_{\gamma_1,0,O_{s+1}^s}^{(s)}
				\right)}
			\Bigg]^{\frac{2}{1-\kappa}}.
		\end{align}
		We note that
		\begin{align*}
			\triplenorm{\mathbf{R}^+_r\RR{\mathcal{V}}}_{\gamma_1,0,O_{\tilde{\delta}_{\beta}+r}^r}^{(r)}
			&\leq
			\triplenorm{\mathbf{R}^+_s\RR{\mathcal{V}}}_{\gamma_1,0,O_{1+s}^s}^{(s)},
			\\
			\triplenorm{\mathbf{R}^+_r\RR{\mathcal{W}}}_{\gamma_0,-1-\kappa,O_{\tilde{\delta}_{\beta}+r}^r}^{(r)}
			&\leq
			\triplenorm{\mathbf{R}^+_s\RR{\mathcal{W}}}_{\gamma_0,-1-\kappa,O_{1+s}^s}^{(s)}
		\end{align*}
		for all
		\[
		r\in[s,s+1-\tilde{\delta}_{\beta}].
		\]
		Thus, from \eqref{kappa},
		\begin{align}\label{OMaslsa}
			\begin{split}
				&\sup_{r\in[s,s+1-\tilde{\delta}_{\beta}]}
				C\tilde{\delta}_{\beta}^{\frac{1-\kappa}{2}}
				\left(
				\|\mathcal{Z}\|_{\overline{O_{r+1}^{r-1}}}^2+1
				\right)
				\left(
				1+
				\triplenorm{\mathbf{R}^+_r\RR{\mathcal{V}}}_{\gamma_1,0,O_{\tilde{\delta}_{\beta}+r}^r}^{(r)}
				\right)
				\leq \beta,
				\\
				&\sup_{r\in[s,s+1-\tilde{\delta}_{\beta}]}
				C\tilde{\delta}_{\beta}^{\frac{1-\kappa}{2}}
				\left(
				\|\mathcal{Z}\|_{\overline{O_{r+1}^{r-1}}}^2+1
				\right)
				\triplenorm{\mathbf{R}^+_r\RR{\mathcal{W}}}_{\gamma_0,-1-\kappa,O_{\tilde{\delta}_{\beta}+r}^r}^{(r)}\leq
				\triplenorm{\mathbf{R}^+_s\RR{\mathcal{W}}}_{\gamma_0,-1-\kappa,O_{1+s}^s}^{(s)}.
			\end{split}
		\end{align}
		We choose a partition $\{r_i\}_{0\leq i\leq m}$ of $[s,s+1]$ such that
		\begin{align*}
			&r_0=s,\qquad r_m=s+1,\\
			&r_1-r_0\leq \tilde{\delta}_{\beta},\qquad
			r_m-r_{m-1}\leq \tilde{\delta}_{\beta},\\
			&r_{i+1}-r_i=\tilde{\delta}_{\beta},
			\qquad 1\leq i\leq m-2.
		\end{align*}
		Clearly, the number of subintervals satisfies
		\begin{align}\label{MAMSs}
			\frac{1}{\tilde{\delta}_{\beta}}
			\leq m
			\leq
			2+\frac{1}{\tilde{\delta}_{\beta}}.
		\end{align}
		Then from \eqref{D0} and \eqref{OMaslsa}, we obtain
		\begin{align}\label{D00}
			\begin{split}
				\triplenorm{\mathbf{R}^+_s\RR{\mathcal{U}}}_{\gamma_1,0,O_{r_1}^s}^{(s)}
				&\leq
				\beta
				\triplenorm{\mathbf{R}^+_s\RR{\mathcal{U}}}_{\gamma_1,0,O_{r_1}^s}^{(s)}+
				\triplenorm{\mathbf{R}^+_s\RR{\mathcal{W}}}_{\gamma_0,-1-\kappa,O_{1+s}^s}^{(s)}
				+
				C\Vert u\Vert_{L^{\infty}(\T^d)}.
			\end{split}
		\end{align}
		Thanks to \eqref{521}, we have $\mathcal{R}\RR{\mathcal{U}}(r_1,\cdot)
		=
		\mathrm{Pr}_{\gY}
		\big(
		\mathbf{R}^{+}_{s}\RR{\mathcal{U}}
		\big)(r_1,\cdot),$
		and therefore, by \eqref{D00},
		\begin{align}\label{IKmals}
			\begin{split}
				\Vert \mathcal{R}\RR{\mathcal{U}} (r_1,\cdot) \Vert_{L^{\infty}(\T^d)}
				&\leq
				\triplenorm{\mathbf{R}^+_s\RR{\mathcal{U}}}_{\gamma_1,0,O_{r_1}^s}^{(s)}
				\\
				&\leq
				\frac{
					\triplenorm{\mathbf{R}^+_s\RR{\mathcal{W}}}_{\gamma_0,-1-\kappa,O_{1+s}^s}^{(s)}
					+
					C\Vert u\Vert_{L^{\infty}(\T^d)}
				}{1-\beta}.
			\end{split}
		\end{align}
		It follows from \hyperref[Step10]{\textbf{Step 1}} that, for each
		$1\leq i\leq m-1$,
		\begin{align}\label{JMa}
			\mathbf{R}^+_{r_i}\RR{\mathcal{U}}
			&=
			\mathcal{G}^{r_i}_{\gamma_{0},1-\kappa}
			\Big(
			(\mathbf{R}^+_{r_i}\RR{\mathcal{V}}
			\star
			\RR{\mathcal{U}})
			\tilde{\star}\rb
			+
			\mathbf{R}^+_{r_i}\RR{\mathcal{W}}
			\Big)
			+\mu\mathcal{G}^{r_i}_{\gamma_{1},2}
			\left(
			\mathbf{R}^+_{r_i}\RR{\mathcal{U}}
			\right)
			+
			\RR{G^{r_i}}
			\big(
			\mathcal{R}\RR{\mathcal{U}}(r_i,\cdot)
			\big).
		\end{align}
		Again from \eqref{D0} and \eqref{OMaslsa}, we obtain
		\begin{align*}
			\triplenorm{\mathbf{R}^+_{r_i}\RR{\mathcal{U}}}_{\gamma_1,0,O_{r_{i+1}}^{r_i}}^{(r_i)}
			&\leq
			\beta
			\triplenorm{\mathbf{R}^+_{r_i}\RR{\mathcal{U}}}_{\gamma_1,0,O_{r_{i+1}}^{r_i}}^{(r_i)}
			+
			\triplenorm{\mathbf{R}^+_s\RR{\mathcal{W}}}_{\gamma_0,-1-\kappa,O_{1+s}^s}^{(s)}
			+
			C\Vert \mathcal{R}\RR{\mathcal{U}}(r_i,\cdot)\Vert_{L^{\infty}(\T^{d})}.
		\end{align*}
		Thus, since
		\[
		\mathcal{R}\RR{\mathcal{U}}(r_{i+1},\cdot)
		=
		\mathrm{Pr}_{\gY}
		\big(
		\mathbf{R}^+_{r_i}\RR{\mathcal{U}}
		\big)(r_{i+1},\cdot),
		\]
		it follows that
		\begin{align}\label{IK45}
			\begin{split}
				\Vert\mathcal{R}\RR{\mathcal{U}}(r_{i+1},\cdot)\Vert_{L^{\infty}(\T^d)}
				&\leq
				\triplenorm{\mathbf{R}^+_{r_i}\RR{\mathcal{U}}}_{\gamma_1,0,O_{r_{i+1}}^{r_i}}^{(r_i)}
				\\
				&\leq
				\frac{
					\triplenorm{\mathbf{R}^+_s\RR{\mathcal{W}}}_{\gamma_0,-1-\kappa,O_{1+s}^s}^{(s)}
					+
					C\Vert \mathcal{R}\RR{\mathcal{U}}(r_i,\cdot)\Vert_{L^{\infty}(\T^d)}
				}{1-\beta}.
			\end{split}
		\end{align}
		To summarize, from \eqref{IKmals} and \eqref{IK45}, for
		\[
		A_i
		:=
		\triplenorm{\mathbf{R}^+_{r_i}\RR{\mathcal{U}}}_{\gamma_1,0,O_{r_{i+1}}^{r_i}}^{(r_i)},
		\qquad
		L_0
		:=
		\frac{
			\triplenorm{\mathbf{R}^+_s\RR{\mathcal{W}}}_{\gamma_0,-1-\kappa,O_{1+s}^s}^{(s)}
		}{1-\beta},
		\qquad
		L_1
		:=
		\frac{C}{1-\beta},
		\]
		we have
		\begin{align*}
			A_i
			&\leq
			L_0+L_1A_{i-1},
			\qquad
			1\leq i\leq m-1,
			\\
			A_0
			&\leq
			L_0+L_1\Vert u\Vert_{L^\infty(\T^d)}.
		\end{align*}
		Thus, from the discrete Gr\"onwall lemma, we obtain
		\begin{align}\label{MAxAi}
			\max_{0\leq i\leq m-1}
			\left\{
			\triplenorm{\mathbf{R}^+_{r_i}\RR{\mathcal{U}}}_{\gamma_1,0,O_{r_{i+1}}^{r_i}}^{(r_i)}
			\right\}
			\leq
			\frac{L_1^m-1}{L_1-1}L_0
			+
			L_1^m
			\Vert u\Vert_{L^\infty(\T^d)}.
		\end{align}
		Therefore, for arbitrary $b,c\in[s,s+1]$ with $0<c-b\leq\tilde{\delta}_\beta$, applying
		\eqref{D0}, \eqref{OMaslsa}, \eqref{MAMSs}, and \eqref{MAxAi}, we obtain
		\begin{align}\label{Bnas}
			\triplenorm{\mathbf{R}^+_b\RR{\mathcal{U}}}_{\gamma_1,0,O_c^b}^{(b)}
			\lesssim
			L_1^{\frac{1}{\tilde{\delta}_\beta}+3}
			\left(
			\triplenorm{\mathbf{R}^+_s\RR{\mathcal{W}}}_{\gamma_0,-1-\kappa,O_{1+s}^s}^{(s)}
			+
			\Vert u\Vert_{L^\infty(\T^d)}
			\right).
		\end{align}
		Thanks to \eqref{Bnas}, we are now in a position to apply Lemma~\ref{global} with $\tilde{\delta}_{\beta}$ defined in \eqref{kappa}. In particular for $\beta=\frac{1}{2}$, we obtain
		\begin{align*}
			\triplenorm{\mathbf{R}^+_s \RR{\mathcal{U}}}_{\gamma_1,0,O_{1+s}^s}^{(s)}
			\lesssim
			\left(\frac{1}{\tilde{\delta}_{\frac{1}{2}}}\right)^{\frac{\gamma_1+2}{2}}
			\big(\|\Gamma\|_{\overline{O_{s+1}^s}}\big)^2
			L_{1}^{\frac{1}{\tilde{\delta}_{\frac{1}{2}}}+3}
			\left(
			\triplenorm{\mathbf{R}^+_s\RR{\mathcal{W}}}_{\gamma_0,-1-\kappa,O_{1+s}^s}^{(s)}
			+\Vert u \Vert_{L^{\infty}(\T^d)}
			\right).
		\end{align*}
		Then, using \eqref{kappa} and after some straightforward algebraic manipulations, we obtain \eqref{Atyasww}.
	\end{proof}
	\begin{remark}
		The identity \eqref{OL<45} is, in fact, equivalent to saying that 
		$\RR{\mathcal{U}}$ satisfies the following equation on $[s,1+s]$:
		\begin{align}\label{MAIN222}
			\begin{cases}
				(\partial_t-\Delta)\RR{\mathcal{U}}
				=
				\big(\RR{\mathcal{V}}\star\RR{\mathcal{U}}\big)
				\tilde{\star}\rb
				+\RR{\mathcal{W}}
				+\mu\RR{\mathcal{U}}, & t>s,\\
				\mathcal{R}\RR{\mathcal{U}}(s,\cdot)=u, & t=s.
			\end{cases}
		\end{align}
	\end{remark}
	
	\begin{remark}
		The previous proposition can potentially be modified using the greedy-point technique; see, e.g., \cite{CLL13,GVR25,BG26}. In the case where $\RR{\mathcal{V}}=\gY$ and $\RR{\mathcal{W}}=0$, for Gaussian noises satisfying the corresponding assumptions required for the greedy-point estimates, this technique can be used to prove that the solutions to the parabolic Anderson model have finite moments of all orders. This is typically achieved by applying Borell's inequality. Since this technique is well established, we do not pursue the details here. See also \cite{XDQT21} for related moment bounds obtained using Feynman--Kac functional representations.
	\end{remark}
	As a consequence of Proposition~\ref{onwall}, we establish the following a priori estimate for the linearized equation \eqref{MAIN2}. This corollary is one of the central ingredients of the paper, since it provides the fundamental estimate that enables us to apply the multiplicative ergodic theorem and, consequently, obtain the Lyapunov exponents.
	\begin{corollary}\label{UJU}
		Consider the setting of Proposition~\ref{kkkl} with $T=1$ and assume that
		$\Sigma\in C_b^3(\mathbb{R},\mathbb{R})$.
		Then there exist $\widetilde{C}_3,\widetilde{C}_4\geq 1$ such that, for any
		$u_1\in B_{L^{\infty}(\mathbb{T}^d)}(u_0,\sigma)$ and
		$\tilde{u}\in L^{\infty}(\mathbb{T}^d)$, we have
		\begin{align*}
			&\triplenorm{\mathbf{R}^+_s D_{u_1}\RR{\mathcal{J}_1}[\tilde{u}]}
			_{\gamma_1,\,0,\,O_{1+s}^s}^{(s)}
			\\
			&\qquad\leq
			\mathcal{E}_1\Big(
			\|\mathcal{Z}\|_{\overline{O_{s+2}^{s-1}}},
			[\ \gI\ ]_{\overline{O_{s+1}^s}},
			\|\Gamma\|_{\overline{O_{s+1}^s}},
			\sup_{w\in O_{s+1}^s}
			\|\mathcal{U}^{\gY}_{u_1}(w)\|,
			\|u_1\|_{L^{\infty}(\mathbb{T}^d)}
			\Big)
			\Vert \tilde{u} \Vert_{L^{\infty}(\T^d)},
		\end{align*}
		where
		\begin{align*}
			\mathcal{E}_1(b,c,d,f,g)
			&:=
			\widetilde{C}_3 P_2(b,c,d,f,g)
			\exp\Big(
			\widetilde{C}_4 P_3(b,c,d,f,g)
			\Big),
		\end{align*}
		with
		\begin{align*}
			P_{1}(b,c,d,f,g)
			:={}&
			d^2
			(1+b^2)^{\frac{\gamma_1+2}{1-\kappa}}
			\big(1+|\mu|+c+f^{\gamma_1-1}\big)^{\frac{\gamma_1+2}{1-\kappa}}
			\\
			&\times
			\big(
			f+g+(1+b^2)(1+c^2+f^{\gamma_1})
			\big),
		\end{align*}
		and
		\begin{align*}
			P_2(b,c,d,f,g)
			&:=
			d^2
			(1+b^2)^{\frac{\gamma_1+2}{1-\kappa}}
			\bigg(
			1+c^2+\big(P_1(b,c,d,f,g)\big)^2
			\bigg)^{\frac{\gamma_1+2}{1-\kappa}},
			\\
			P_3(b,c,d,f,g)
			&:=
			(1+b^2)^{\frac{2}{1-\kappa}}
			\bigg(
			1+c^2+\big(P_1(b,c,d,f,g)\big)^2
			\bigg)^{\frac{2}{1-\kappa}},
			\\
			&\qquad \text{for } b,c,d,f,g \geq 0.
		\end{align*}
	\end{corollary}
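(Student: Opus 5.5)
The plan is to derive the bound as a direct application of Proposition~\ref{onwall} to the mild linearized equation \eqref{UIO} of Proposition~\ref{kkkl} with $T=1$. We set
\[
\RR{\mathcal{U}}:=D_{u_1}\RR{\mathcal{J}_1}[\tilde{u}],\qquad
\RR{\mathcal{V}}:=\Sigma'(\RR{\mathcal{J}_1}(u_1)),\qquad
\RR{\mathcal{W}}:=0,\qquad u:=\tilde{u}.
\]
With these choices, \eqref{UIO} is exactly \eqref{OL<45}, and the initial condition $\mathcal{R}\RR{\mathcal{U}}(s,\cdot)=\tilde u$ is supplied by \eqref{MAIN2}. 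Because $\RR{\mathcal{W}}=0$, the estimate \eqref{Atyasww} collapses to
\[
\triplenorm{\mathbf{R}^+_s\RR{\mathcal{U}}}^{(s)}_{\gamma_1,0,O^s_{1+s}}
\le \widetilde C_1\|\Gamma\|^2_{\overline{O^s_{s+1}}}\,(1+\|\mathcal Z\|^2)^{\frac{\gamma_1+2}{1-\kappa}}\,(1+N)^{\frac{\gamma_1+2}{1-\kappa}}\,\|\tilde u\|_{L^\infty}\,\exp\!\Big(\widetilde C_2(1+\|\mathcal Z\|^2)^{\frac{2}{1-\kappa}}(1+N)^{\frac{2}{1-\kappa}}\Big),
\]
where $N:=\triplenorm{\mathbf{R}^+_s\Sigma'(\RR{\mathcal{J}_1}(u_1))}^{(s)}_{\gamma_1,0,O^s_{1+s}}$. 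What remains is to control $N$ by the quantities listed in $\mathcal E_1$.

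First we bound $N$. Since $\Sigma\in C^3_b$, we have $\Sigma'\in C^2_b$, so Remark~\ref{BOUNDED} applies with $F=\Sigma'$ and gives
\[
N\lesssim 1+[\ \gI\ ]^2_{O^s_{s+1}}+\Big(\triplenorm{\mathbf{R}^+_s\RR{\mathcal{J}_1}(u_1)}^{(s)}_{\gamma_1,0,O^s_{s+1}}\Big)^2 .
\]
The last term is the norm of the solution $\RR{\mathcal{U}}_{u_1}$. Lemma~\ref{Alex} with $T=1$ bounds it by $P_1(b,c,d,f,g)$, with $b=\|\mathcal Z\|_{\overline{O^{s-1}_{s+2}}}$, $c=[\ \gI\ ]$, $d=\|\Gamma\|$, $f=\sup_{w\in O_{s+1}^s}\|\mathcal U^{\gY}_{u_1}(w)\|$ and $g=\|u_1\|_{L^\infty}$. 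Since $\|\mathcal Z\|_{\overline{O^{s-1}_{s+2}}}$ dominates the model norm on the smaller window, we obtain $1+N\lesssim 1+c^2+P_1^2$.

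Finally, substituting this into the displayed bound gives exactly $\widetilde C_3P_2\exp(\widetilde C_4P_3)\|\tilde u\|_{L^\infty}$, after absorbing constants. For the exponents to match the definitions of $P_2$ and $P_3$, the constant multiplying $1+c^2+P_1^2$ must be moved outside the powers, which only enlarges $\widetilde C_3$ and $\widetilde C_4$.

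The main obstacle is checking the hypotheses of Proposition~\ref{onwall}. One must confirm that $\RR{\mathcal{V}}$ lies in $\mathcal D^{\gamma_1,0}_{P_s,0}$; this is the composition result \eqref{Asasqqq}, applied to the $\mathcal T_{\ge0}$-valued solution. One must also confirm that the restart identity of Step~1 holds for the linearized solution on all of $[s,s+1]$, which is needed because Proposition~\ref{kkkl} was itself obtained via patching from short intervals. A further point is that Lemma~\ref{Alex} includes $\|\Gamma\|$ and the $\gI$-seminorm only through the windows stated there. Here I would simply note that all of these are monotone in the domain, so every quantity can be replaced by its value on the larger window appearing in $\mathcal E_1$.
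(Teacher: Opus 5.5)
Your proposal is correct and is the paper's own argument: the paper applies Proposition~\ref{onwall} to the mild form \eqref{UIO} with $\RR{\mathcal{V}}=\Sigma'(\RR{\mathcal{J}_1}(u_1))$, $\RR{\mathcal{W}}=0$, $u=\tilde u$, and then bounds $\triplenorm{\mathbf{R}^+_s\Sigma'(\RR{\mathcal{J}_1}(u_1))}^{(s)}_{\gamma_1,0,O^s_{1+s}}$ via Remark~\ref{BOUNDED} and Lemma~\ref{Alex}. One small note: with $T=1$, the windows in Lemma~\ref{Alex} already coincide with those appearing in $\mathcal{E}_1$, so your monotonicity-in-the-domain remark is not actually needed.
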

	\begin{comment}
		content...
		
		\begin{corollary}\label{UJU}
			Consider the setting of Proposition~\ref{kkkl} with $T=1$ and assume that
			$\Sigma\in C_b^3(\mathbb{R},\mathbb{R})$.
			Then there exist $\widetilde{C}_3,\widetilde{C}_4\geq 1$ such that, for any
			$u_1\in B_{L^{\infty}(\mathbb{T}^d)}(u_0,\sigma)$ and
			$\tilde{u}\in L^{\infty}(\mathbb{T}^d)$, we have
			\begin{align*}
				\triplenorm{D_{u_1}\RR{\mathcal{J}_1}[\tilde{u}] }_{\gamma_1,\,0,\,O_{1+s}}^{(s)}\leq \mathcal{E}\Big(\|\mathcal{Z}\|_{\overline{O_{s+1+T}^{s-1}}},[\ \gI\ ]_{O_{s+T}^s}, \|\Gamma\|_{\overline{O_{s+T}^s}},\sup_{w\in O_{s+T}^s}
				\|\mathcal{U}^{\gY}_{u_1}(w)\|,\|u_1\|_{L^{\infty}(\mathbb{R}^d)}\Big) \Vert \tilde{u} \Vert_{L^{\infty}(\T^d)}
			\end{align*}
			where
			\begin{align*}
				&\mathcal{E}(b,c,d,f,g)=\widetilde{C}_3 P_{2}(b,c,d,f,g)\exp\Big(\widetilde{C}_4 P_{3}(b,c,d,f,g)\Big)
			\end{align*}
			
			where for
			\begin{align*}
				P_{1}(b,c,d,f,g):=d^2(1+b)^{\frac{\gamma_{1}}{1-\kappa}}\big(1+c+f^{\gamma_1-1}\big)^{\frac{\gamma_{1}}{1-\kappa}}\big(f+g+(1+b)(1+c^2+f^{\gamma_{1}})\big)
			\end{align*}
			we have
			\begin{align*}
				\begin{split}
					P_{2}(b,c,d,f,g)=d^2(1+b)^{\frac{\gamma_{1}}{1-\kappa}}\bigg(1+c^2+\big(P_{1}(b,c,d,f,g)\big)^2\bigg)^{\frac{\gamma_{1}}{1-\kappa}}, \\
					P_{3}(b,c,d,f,g)=(1+b)^{\frac{2}{1-\kappa}}\bigg(1+c^2+\big(P_{1}(b,c,d,f,g)\big)^2\bigg)^{\frac{2}{1-\kappa}} .
				\end{split}
			\end{align*}
		\end{corollary}
	\end{comment}
	\begin{proof}
		Thanks to Proposition~\ref{kkkl}, we have
		\begin{align*}
			\mathbf{R}^+_{s}D_{u_1}\RR{\mathcal{J}_1}[\tilde{u}]
			&=
			\mathcal{G}^{s}_{\gamma_{0},1-\kappa}
			\Big(
			\mathbf{R}^+_{s}\Big(
			\Sigma'(\RR{\mathcal{J}_1}(u_1))
			\star
			D_{u_1}\RR{\mathcal{J}_1}[\tilde{u}]
			\Big)
			\tilde{\star}\rb
			\Big)
			+\mu\mathcal{G}^{s}_{\gamma_{1},2}
			\big(
			\mathbf{R}^+_{s}D_{u_1}\RR{\mathcal{J}_1}[\tilde{u}]
			\big)
			+\RR{G^{s}}(\tilde{u}).
		\end{align*}
		Then from proposition \ref{onwall}, for the constants $\widetilde{C}_1,\widetilde{C}_2\geq 1$, we have
		\begin{align}
			\begin{split}
				&\triplenorm{\mathbf{R}^+_s D_{u_1}\RR{\mathcal{J}_1}[\tilde{u}]}_{\gamma_1,0,O_{1+s}^s}^{(s)}\\&\leq \widetilde{C}_1
				\big(\|\Gamma\|_{\overline{O_{s+1}^s}}\big)^2
				\big(1+\|\mathcal{Z}\|_{\overline{O_{s+2}^{s-1}}}^2\big)^{
					\frac{\gamma_1+2}{1-\kappa}}\big(\triplenorm{\mathbf{R}^+_s \Sigma'(\RR{\mathcal{J}_1}(u_1))}_{\gamma_1,0,O_{1+s}^s}^{(s)}+1\big)^{\frac{\gamma_1+2}{1-\kappa}}	 \Vert \tilde{u} \Vert_{L^{\infty}(\T^d)}\\
				&\qquad\times\exp\Big(\widetilde{C}_2 \big(1+\|\mathcal{Z}\|_{\overline{O_{s+2}^{s-1}}}^2\big)^{\frac{2}{1-\kappa}}\big(\triplenorm{\mathbf{R}^+_s \Sigma'(\RR{\mathcal{J}_1}(u_1))}_{\gamma_1,0,O_{1+s}^s}^{(s)}+1\big)^{\frac{2}{1-\kappa}}	\Big).
			\end{split}
		\end{align} 
		The proof now follows from Remark~\ref{BOUNDED} and Lemma~\ref{Alex}.
	\end{proof}
	Let us now continue with the technical lemma.
	\begin{lemma}\label{DCXSZ}
		Suppose that $F \in C^{2,\nu}b(\mathbb{R},\mathbb{R})$ for some $\nu \in (0,1]$, $T\in (0,1]$, and that
		$\RR{\mathcal{U}}, \RR{\widetilde{\mathcal{U}}} \in \mathcal{D}^{\gamma_1,0}_{P_s,0}\big|_{O_{T+s}}$
		such that
		\begin{align*}
			\forall w \in O_{T+s}^{s}: \qquad
			\mathcal{U}^{\gI}(w) = \Sigma(\mathcal{U}^{\gY}(w))
			\quad \text{and} \quad
			\widetilde{\mathcal{U}}^{\gI}(w) = \Sigma(\widetilde{\mathcal{U}}^{\gY}(w)).
		\end{align*}
		Then
		\begin{align*}
			&\triplenorm{\mathbf{R}^+_{s}F\big(\RR{\mathcal{U}}\big) - \mathbf{R}^+_{s}F\big(\RR{\widetilde{\mathcal{U}}}\big)}_{\gamma_1,0,O_{T+s}^s}^{(s)}
			\\&\lesssim
			M_{\nu}\!\left(
			\triplenorm{\mathbf{R}^+_{s}\RR{\mathcal{U}} - \mathbf{R}^+_{s}\RR{\widetilde{\mathcal{U}}}}_{\gamma_1,0,O_{T+s}^s}^{(s)}
			\right)
			Q_1\!\left(
			[\ \gI \ ]_{\overline{{O}_{T+s}^s}},
			\triplenorm{\mathbf{R}^+_{s}\RR{\mathcal{U}}}_{\gamma_1,0,O_{T+s}^s}^{(s)},
			\triplenorm{\mathbf{R}^+_{s}\RR{\widetilde{\mathcal{U}}}}_{\gamma_1,0,O_{T+s}^s}^{(s)}
			\right),
		\end{align*}
		where
		\begin{align*}
			M_{\nu}(b) &:= \max\{b, b^{\nu}\},\\
			Q_1(b,c,d) &:= \max\Big\{
			(1+b)c+1, b(1+b)^\nu(1+b+c+d),c+c^2+d,(1+b+c)(1+d)
			\Big\},\\
			&\qquad \text{for } b,c,d \geq 0.
		\end{align*}
	\end{lemma}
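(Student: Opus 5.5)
Throughout, write $\RR{\mathcal{H}}:=\RR{\mathcal{U}}-\RR{\widetilde{\mathcal{U}}}$, $h:=\mathcal{U}^{\gY}-\widetilde{\mathcal{U}}^{\gY}$, and $\delta:=\triplenorm{\mathbf{R}^+_s\RR{\mathcal{H}}}_{\gamma_1,0,O_{T+s}^s}^{(s)}$. The plan is to estimate the difference $\RR{\mathcal{D}}:=F(\RR{\mathcal{U}})-F(\RR{\widetilde{\mathcal{U}}})$ componentwise, using the explicit formula \eqref{Hnasssss}. In our three-level setting a modelled distribution of regularity $0$ has the form $D^{\gY}\gY+D^{\gI}\gI+\sum_i D^{\gXXXX}\gXXXX$. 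By \eqref{B1478}, the only nontrivial quantities in $\triplenorm{\cdot}_{\gamma_1,0}$ are the following.
\begin{itemize}
\item The sup norms of the three components.
\item The $\gY$-increment $R^{\gY}(z,w)=D^{\gY}(z)-D^{\gY}(w)-D^{\gI}(w)\,\Pi_w(\gI)(z)-\sum_i D^{\gXXXX}(w)(x_i-y_i)$, to be bounded by $d(z,w)^{\gamma_1}$.
\item The increments $D^{\gI}(z)-D^{\gI}(w)$ and $D^{\gXXXX}(z)-D^{\gXXXX}(w)$, to be bounded by $d^{\gamma_1-1+\kappa}$ and $d^{\gamma_1-1}$, respectively.
\end{itemize}
All of these carry the weight $\Vert z,w\Vert_{P_s}^{\gamma_1}$. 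Here $D^{\gY}=F(\mathcal{U}^{\gY})-F(\widetilde{\mathcal{U}}^{\gY})$, $D^{\gI}=F'(\mathcal{U}^{\gY})\Sigma(\mathcal{U}^{\gY})-F'(\widetilde{\mathcal{U}}^{\gY})\Sigma(\widetilde{\mathcal{U}}^{\gY})$ (using the hypothesis $\mathcal{U}^{\gI}=\Sigma(\mathcal{U}^{\gY})$), and $D^{\gXXXX}=F'(\mathcal{U}^{\gY})\mathcal{U}^{\gXXXX}-F'(\widetilde{\mathcal{U}}^{\gY})\widetilde{\mathcal{U}}^{\gXXXX}$.

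The sup-norm bounds and the increments of $D^{\gI}$ and $D^{\gXXXX}$ are the easy part. First, $|h|\le\delta$. Next, I need $|h(z)-h(w)|\lesssim d^{1-\kappa}(1+[\gI])\,\delta$, which follows by expanding $h(z)-h(w)$ through the $\gY$-component of $\RR{\mathcal{H}}(z)-\Gamma_{z,w}\RR{\mathcal{H}}(w)$. For $\mathcal{U}$ itself, $|\mathcal{U}^{\gY}(z)-\mathcal{U}^{\gY}(w)|\lesssim d^{1-\kappa}(1+[\gI])(1+c)$ with $c=\triplenorm{\mathbf{R}^+_s\RR{\mathcal U}}$. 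With these, the increment of $D^{\gI}$ is a difference of differences of the $C^2$ function $G:=F'\Sigma$. I would write it as
\[
\int_0^1 G'(\widetilde{\mathcal{U}}^{\gY}+\theta h)\,d\theta\; h
\]
and take increments. This produces terms of the type $\|G''\|\,\delta\,(1+[\gI])(1+c+d)\,d^{1-\kappa}$, which matches $d^{\gamma_1-1+\kappa}$ because $\gamma_1<2-2\kappa$. The increment of $D^{\gXXXX}$ is treated in the same way, with one extra product factor coming from $\mathcal{U}^{\gXXXX}$. This accounts for the entries $(1+b+c)(1+d)$ and $c+c^2+d$ of $Q_1$.

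The main obstacle is the $\gY$-remainder $R^{\gY}$, which must be of order $d^{\gamma_1}$ with $\gamma_1>1+\kappa$. This is exactly where the $C^{2,\nu}$ hypothesis and the function $M_\nu$ enter. I would write $F(\mathcal{U}^{\gY}(z))$ as a second-order Taylor expansion around $\mathcal{U}^{\gY}(w)$ with increment $\Delta=\mathcal{U}^{\gY}(z)-\mathcal{U}^{\gY}(w)$, and do the same for $\widetilde{\mathcal U}$ with increment $\widetilde\Delta$. Subtracting, the first-order terms combine with $D^{\gI}\Pi_w(\gI)(z)+\sum_i D^{\gXXXX}(x_i-y_i)$ to give
\[
F'(\mathcal{U}^{\gY}(w))R^{\gY}_{\mathcal U}-F'(\widetilde{\mathcal U}^{\gY}(w))R^{\gY}_{\widetilde{\mathcal U}},
\]
which is of order $d^{\gamma_1}$ times $\delta$ plus $\|F''\|\,|h|\,c$. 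The quadratic terms give
\[
\tfrac12\Bigl(F''(\xi)\Delta^2-F''(\tilde\xi)\widetilde\Delta^2\Bigr),\qquad |\Delta|^2\lesssim d^{2-2\kappa}(1+b)^2c^2,
\]
and $2-2\kappa>\gamma_1$, so the size is fine. The difference, however, requires comparing $F''$ at two intermediate points. Here I would use $[F'']_\nu$: the difference $|F''(\xi)-F''(\tilde\xi)|\le[F'']_\nu(|h(w)|+|h(z)|)^\nu\lesssim\delta^\nu$. This is the source of $M_\nu(\delta)=\max\{\delta,\delta^\nu\}$ and of the factor $b(1+b)^\nu(1+b+c+d)$. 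The remaining piece, $\Delta^2-\widetilde\Delta^2=(\Delta-\widetilde\Delta)(\Delta+\widetilde\Delta)$, is linear in $\delta$. To avoid nonexplicit intermediate points, I would use the integral form of the Taylor remainder, $\int_0^1(1-\theta)F''(\cdot+\theta\Delta)\,d\theta$.

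Finally, the points near $P_s$ need separate care. The weight $\Vert z,w\Vert_{P_s}^{\gamma_1}$ is harmless, since $\eta=0$ and the constraint $d(z,w)\le\Vert z,w\Vert_{P_s}$ lets every local bound above be applied verbatim with the same weights. Collecting all the terms then gives the stated bound $M_\nu(\delta)\,Q_1(b,c,d)$.
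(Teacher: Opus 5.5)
Your plan follows the paper's proof. You estimate componentwise from \eqref{Hnasssss}, with a second-order Taylor expansion and the $\nu$-H\"older continuity of $F''$ controlling the $\gY$-remainder. The only structural difference is the centering: you expand at $\mathcal{U}^{\gY}(w)$ with the full increment $\Delta=R_{\mathcal U}+\eta_1+\eta_2$, while the paper expands at $\mathcal{U}^{\gY}(w)+\eta_1+\eta_2$ and splits into $\mathrm{I},\mathrm{II},\mathrm{III}$; both work.

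However, two steps are wrong as written. First, $G=F'\Sigma$ is not $C^2$. Its second derivative involves $F'''$, while the hypothesis is only $F\in C^{2,\nu}_b$; in the application $F=\Sigma'$ with $\Sigma\in C^{3,\nu}_b$, so $G=\Sigma''\Sigma$ is merely $C^{1,\nu}$. Hence the increments of $D^{\gI}$ are not linear in $\delta$. The same holds for the second difference $\bigl[F'(\mathcal U^{\gY}(z))-F'(\mathcal U^{\gY}(w))\bigr]-\bigl[F'(\widetilde{\mathcal U}^{\gY}(z))-F'(\widetilde{\mathcal U}^{\gY}(w))\bigr]$, which multiplies $\mathcal U^{\gXXXX}(z)$ in the increment of $D^{\gXXXX}$. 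The correct treatment writes the increment of $D^{\gI}$ as
\[
\int_0^1\bigl[G'(\mathcal U^{\gY}(w)+\theta\Delta)-G'(\widetilde{\mathcal U}^{\gY}(w)+\theta\widetilde\Delta)\bigr]\,\mathrm d\theta\,\Delta+\int_0^1G'(\widetilde{\mathcal U}^{\gY}(w)+\theta\widetilde\Delta)\,\mathrm d\theta\,\bigl(h(z)-h(w)\bigr).
\]
This gives $[G']_\nu\,\delta^\nu|\Delta|+\|G'\|_\infty|h(z)-h(w)|$, and the same with $F''$ in place of $G'$ for the $\gXXXX$-component. So $M_\nu$ enters all three components, not only the $\gY$-remainder, exactly as the paper records in \eqref{LLm} and \eqref{NNMass}. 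The final bound survives, since the resulting terms $\delta^\nu(b+c)$ and $\delta^\nu c(b+c)$ are dominated by $M_\nu(\delta)Q_1(b,c,d)$.

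Second, your unweighted bounds $|\mathcal U^{\gY}(z)-\mathcal U^{\gY}(w)|\lesssim d(z,w)^{1-\kappa}(1+b)(1+c)$ and $|h(z)-h(w)|\lesssim d(z,w)^{1-\kappa}(1+b)\delta$ are false near $P_s$. With $\eta=0$ the norm only yields $\Vert w\Vert_{P_s}|\mathcal U^{\gXXXX}(w)|\le c$ and $|R_{\mathcal U}(z,w)|\le c\,\bigl(d(z,w)/\Vert z,w\Vert_{P_s}\bigr)^{\gamma_1}$. For instance, for the solution near $t=s$, $\mathcal U^{\gY}$ behaves like $G^s u_0$ with $u_0$ only continuous. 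What does hold, using $d(z,w)\le\Vert z,w\Vert_{P_s}\le1$, is
\[
|\Delta|\lesssim (b+c)\Bigl(\frac{d(z,w)}{\Vert z,w\Vert_{P_s}}\Bigr)^{1-\kappa},\qquad |h(z)-h(w)|\lesssim (1+b)\,\delta\,\Bigl(\frac{d(z,w)}{\Vert z,w\Vert_{P_s}}\Bigr)^{1-\kappa}.
\]
Each quadratic term then carries the factor $\Vert z,w\Vert_{P_s}^{\gamma_1}d(z,w)^{-\gamma_1}\bigl(d(z,w)/\Vert z,w\Vert_{P_s}\bigr)^{2-2\kappa}\le 1$, because $\gamma_1<2-2\kappa$. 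The analogous exponent checks go through for the $\gI$- and $\gXXXX$-increments, using $|\mathcal U^{\gXXXX}(z)|\le c\,\Vert z,w\Vert_{P_s}^{-1}$. So the weight is indeed harmless, but only after every local bound is restated in this scale-invariant form; it cannot be applied ``verbatim'' to the bounds you wrote. With these two corrections your argument closes and yields $M_\nu(\delta)\,Q_1(b,c,d)$.
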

	\begin{proof}
		See Appendix \ref{AAzza1}.
	\end{proof}
	\begin{definition}
		For multivariable polynomials, saying that they are increasing in each argument means that, if all arguments except one are fixed, increasing that argument does not decrease the value of the polynomial.
	\end{definition}
	The preceding lemma yields the following result.
	\begin{corollary}\label{SASewwewewewew}
		Consider the setting of Proposition~\ref{kkkl} with $T=1$ and assume that
		$\Sigma\in C_b^{3,\nu}(\mathbb{R},\mathbb{R})$ for some $\nu\in(0,1]$.
		Then, for any
		$u_1,u_2\in B_{L^\infty(\mathbb{T}^d)}(u_0,\sigma)$,
		there exists a polynomial $Q_2$, increasing in each of its arguments, such that
		\begin{align}\label{UASJewesdf}
			\begin{split}
				&\triplenorm{
					\mathbf{R}^+_s \Sigma'(\RR{\mathcal{J}_1}(u_2))
					-\mathbf{R}^+_s \Sigma'(\RR{\mathcal{J}_1}(u_1))
				}_{\gamma_1,0,O_{1+s}^s}^{(s)}
				\\
				&\lesssim
				\max\Big\{
				\Vert u_2-u_1\Vert_{L^{\infty}(\T^d)},
				\Vert u_2-u_1\Vert_{L^{\infty}(\T^d)}^{\nu}
				\Big\}
				\\
				&\quad\times
				\mathcal{E}_2\Big(
				\|\mathcal{Z}\|_{\overline{O_{s+2}^{s-1}}},
				[\ \gI\ ]_{\overline{O_{s+1}^s}},
				\|\Gamma\|_{\overline{O_{s+1}^s}},
				\sup_{\substack{r\in[0,1]\\w\in O_{s+1}^s}}
				\left\|
				\mathcal{U}^{\gY}_{u_1+r(u_2-u_1)}(w)
				\right\|,
				\|u_1\|_{L^{\infty}(\mathbb{T}^d)},
				\|u_2\|_{L^{\infty}(\mathbb{T}^d)}
				\Big).
			\end{split}
		\end{align}
		where
		\begin{align*}
			\mathcal{E}_2(b,c,d,f,g,h)
			&:=
			\exp\Big(
			Q_2(b,c,d,f,g,h)
			\Big),
		\end{align*}
		for $b,c,d,f,g,h\geq 0$.
	\end{corollary}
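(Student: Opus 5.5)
The plan is to apply Lemma~\ref{DCXSZ} with $F=\Sigma'$. Since $\Sigma\in C_b^{3,\nu}$, we have $\Sigma'\in C_b^{2,\nu}$, so the lemma applies. We take $\RR{\mathcal{U}}=\RR{\mathcal{J}_1}(u_2)$ and $\RR{\widetilde{\mathcal{U}}}=\RR{\mathcal{J}_1}(u_1)$. By the remark after Definition~\ref{dfn} (which relies on \eqref{A1}), both satisfy $\mathcal{U}^{\gI}=\Sigma(\mathcal{U}^{\gY})$, which is exactly the structural hypothesis of Lemma~\ref{DCXSZ}. This gives
\[
\triplenorm{\mathbf{R}^+_s \Sigma'(\RR{\mathcal{J}_1}(u_2))-\mathbf{R}^+_s \Sigma'(\RR{\mathcal{J}_1}(u_1))}_{\gamma_1,0,O_{1+s}^s}^{(s)}
\lesssim M_\nu(\Delta)\,Q_1\big([\ \gI\ ],\,A_2,\,A_1\big).
\]
Here $\Delta:=\triplenorm{\mathbf{R}^+_s\RR{\mathcal{J}_1}(u_2)-\mathbf{R}^+_s\RR{\mathcal{J}_1}(u_1)}^{(s)}_{\gamma_1,0,O^s_{1+s}}$ and $A_i:=\triplenorm{\mathbf{R}^+_s\RR{\mathcal{J}_1}(u_i)}^{(s)}_{\gamma_1,0,O^s_{1+s}}$. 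The proof then reduces to bounding $A_1$, $A_2$ and $\Delta$.

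The quantities $A_i$ are handled directly by Lemma~\ref{Alex} with $T=1$. This bounds $A_i$ by the polynomial $P_1$ evaluated at $\|\mathcal{Z}\|_{\overline{O^{s-1}_{s+2}}}$, $[\ \gI\ ]$, $\|\Gamma\|$, $\sup_w\|\mathcal{U}^{\gY}_{u_i}(w)\|$ and $\|u_i\|_{L^\infty}$. The supremum over $w$ is dominated by the supremum over $r\in[0,1]$ in the statement, evaluated at $r=0,1$. Since $Q_1$ is polynomial and increasing, $Q_1([\ \gI\ ],A_2,A_1)$ is bounded by a polynomial in the six arguments, and hence by $\exp$ of that polynomial.

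For $\Delta$ I would use the mean value theorem in Banach spaces, which is available because $\RR{\mathcal{J}_1}$ is $C^1$ on the ball by Lemma~\ref{YHATSs}. Writing
\[
\RR{\mathcal{J}_1}(u_2)-\RR{\mathcal{J}_1}(u_1)=\int_0^1 D_{u_1+r(u_2-u_1)}\RR{\mathcal{J}_1}[u_2-u_1]\,\mathrm{d}r ,
\]
the segment stays in the ball by convexity, so Corollary~\ref{UJU} applied at each point $u_1+r(u_2-u_1)$ gives
\[
\Delta\le \sup_{r\in[0,1]}\mathcal{E}_1\big(\cdots\big)\,\|u_2-u_1\|_{L^\infty(\mathbb{T}^d)} .
\]
The arguments of $\mathcal{E}_1$ at an intermediate point involve $\sup_w\|\mathcal{U}^{\gY}_{u_1+r(u_2-u_1)}(w)\|$, which is bounded by the sup in the statement, and $\|u_1+r(u_2-u_1)\|\le \|u_1\|+\|u_2\|$. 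Using that $P_1,P_2,P_3$ are increasing in each argument, we get $\Delta\le \widetilde C_3P_2\exp(\widetilde C_4P_3)\,\|u_2-u_1\|$, with $P_2,P_3$ evaluated at the dominating arguments.

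The remaining step is to absorb $M_\nu(\Delta)$. For $K:=\widetilde C_3P_2e^{\widetilde C_4P_3}\ge1$ we have $M_\nu(K\|u_2-u_1\|)\le K\,M_\nu(\|u_2-u_1\|)$, since $K^\nu\le K$. Then $K\cdot Q_1$ is bounded by $\exp(Q_2)$, where $Q_2$ is a suitable increasing polynomial, for instance $\widetilde C_4P_3+P_2+Q_1$ plus a constant, using $x\le e^x$. The step I expect to need the most care is this bookkeeping: checking that every bound is monotone in the arguments, so that intermediate points can be replaced by the suprema in \eqref{UASJewesdf}, and that the exponent stays polynomial. Conceptually, the main ingredient is the mean-value step, which relies on the $C^1$ regularity from Lemma~\ref{YHATSs} holding on the whole segment.
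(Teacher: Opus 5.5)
Your proposal is correct and follows the paper's own argument: Lemma~\ref{DCXSZ} with $F=\Sigma'$, the integral representation $\RR{\mathcal{J}_1}(u_2)-\RR{\mathcal{J}_1}(u_1)=\int_0^1 D_{u_1+r(u_2-u_1)}\RR{\mathcal{J}_1}[u_2-u_1]\,\mathrm{d}r$ combined with Corollary~\ref{UJU}, and Lemma~\ref{Alex} for the norms entering $Q_1$. The paper leaves the remaining bookkeeping as \emph{tedious but straightforward}, and you carry it out correctly: monotonicity in the arguments, the bound $M_\nu(Kx)\le K\,M_\nu(x)$ for $K\ge1$, and absorbing everything into $\exp(Q_2)$.
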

	\begin{proof}
		Thanks to Lemma \ref{DCXSZ}, we have
		\begin{align*}
			&\triplenorm{\mathbf{R}^+_s \Sigma'( \RR{\mathcal{J}_1}(u_2) )-\mathbf{R}^+_s \Sigma'(\RR{\mathcal{J}_1}(u_1))}_{\gamma_1,0,O_{1+s}^s}^{(s)}
			\\&\lesssim
			M_{\nu}\!\left(
			\triplenorm{\mathbf{R}^+_{s}\RR{\mathcal{J}_1}(u_2) - \mathbf{R}^+_{s}\RR{\mathcal{J}_1}(u_1)}_{\gamma_1,0,O_{1+s}^s}^{(s)}
			\right)
			Q_1\!\left(
			[\ \gI \ ]_{\overline{{O}_{1+s}^s}},
			\triplenorm{\mathbf{R}^+_{s}\RR{\mathcal{J}_1}(u_1)}_{\gamma_1,0,O_{1+s}^s}^{(s)},
			\triplenorm{\mathbf{R}^+_{s}\RR{\mathcal{J}_1}(u_2)}_{\gamma_1,0,O_{1+s}^s}^{(s)}
			\right),
		\end{align*}
		Thanks to Proposition \ref{kkkl}, we have
		\begin{align*}
			\RR{\mathcal{J}_1}(u_2) - \RR{\mathcal{J}_1}(u_1)=\int_{0}^{1}D_{u_1+r(u_2-u_1)}\RR{\mathcal{J}_1}[u_2-u_1]\mathrm{d}r
		\end{align*}
		In particular, we have
		\begin{align*}
			\triplenorm{\mathbf{R}^+_s \RR{\mathcal{J}_1}(u_2)-\mathbf{R}^+_s \Sigma'(\RR{\mathcal{J}_1}(u_1))}_{\gamma_1,0,O_{1+s}^s}^{(s)}\leq \int_{0}^{1} \triplenorm{\mathbf{R}^+_s D_{u_1+r(u_2-u_1)}\RR{\mathcal{J}_1}[u_2-u_1]}_{\gamma_1,0,O_{1+s}^s}^{(s)}\mathrm{d}r
		\end{align*}
		The claim now follows from a tedious but straightforward application of Lemma~\ref{Alex}, Lemma~\ref{DCXSZ}, and Corollary~\ref{UJU}.
	\end{proof}
	The next proposition provides the final technical estimates needed to deduce the existence of invariant manifolds. It shows that, after linearization, the difference between the linearized solutions corresponding to two different initial conditions can be effectively controlled. Our goal is not only to establish this continuity property, but also to derive a priori bounds satisfying suitable integrability conditions for applications of our theory to stochastic models.
	\begin{proposition}\label{UJUJUJU}
		Consider the setting of Corollary \ref{SASewwewewewew}.
		Then there exists a polynomial $Q_3$, increasing in each of its arguments, such that
		\begin{align}\label{I789s}
			\begin{split}
				&\triplenorm{D_{u_2}\RR{\mathcal{J}_1}[\tilde{u}]
					-
					D_{u_1}\RR{\mathcal{J}_1}[\tilde{u}]
				}_{\gamma_1,0,O_{T+s}}^{(s)} \lesssim
				\max\big\{\Vert u_2-u_1\Vert_{L^{\infty}(\T^d)}, \Vert u_2-u_1\Vert_{L^{\infty}(\T^d)}^{\nu}\big\} \Vert \tilde{u}\Vert_{L^{\infty}(\T^d)}
				\\&\quad \times\mathcal{E}_3\Big(
				\|\mathcal{Z}\|_{\overline{O_{s+2}^{s-1}}},
				[\ \gI\ ]_{\overline{O_{s+1}^s}},
				\|\Gamma\|_{\overline{O_{s+1}^s}},
				\sup_{\substack{r\in[0,1]\\w\in O_{s+1}^s}}
				\left\|
				\mathcal{U}^{\gY}_{u_1+r(u_2-u_1)}(w)
				\right\|,
				\|u_1\|_{L^{\infty}(\mathbb{T}^d)},
				\|u_2\|_{L^{\infty}(\mathbb{T}^d)}
				\Big)
			\end{split}
		\end{align}
		where
		\begin{align*}
			\mathcal{E}_3(b,c,d,f,g,h)
			&:=
			\exp\Big(
			Q_3(b,c,d,f,g,h)
			\Big),
		\end{align*}
		for $b,c,d,f,g,h\geq 0$.
	\end{proposition}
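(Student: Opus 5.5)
We subtract the two linearized mild equations and view the difference as a solution of an equation of the type \eqref{OL<45}, so that Proposition~\ref{onwall} applies. Write $\RR{\mathcal{D}_i}:=D_{u_i}\RR{\mathcal{J}_1}[\tilde{u}]$ for $i=1,2$ and $\RR{\mathcal{X}}:=\RR{\mathcal{D}_2}-\RR{\mathcal{D}_1}$. By \eqref{UIO} applied at $u_1$ and $u_2$ with the same direction $\tilde u$, the $\RR{G^s}(\tilde u)$ terms cancel. Using the decomposition
\[
\Sigma'(\RR{\mathcal{J}_1}(u_2))\star\RR{\mathcal{D}_2}-\Sigma'(\RR{\mathcal{J}_1}(u_1))\star\RR{\mathcal{D}_1}
=\Sigma'(\RR{\mathcal{J}_1}(u_2))\star\RR{\mathcal{X}}
+\big(\Sigma'(\RR{\mathcal{J}_1}(u_2))-\Sigma'(\RR{\mathcal{J}_1}(u_1))\big)\star\RR{\mathcal{D}_1},
\]
we find that $\RR{\mathcal{X}}$ satisfies \eqref{OL<45} with
\[
\RR{\mathcal{V}}=\Sigma'(\RR{\mathcal{J}_1}(u_2)),\qquad
\RR{\mathcal{W}}=\Big(\big(\Sigma'(\RR{\mathcal{J}_1}(u_2))-\Sigma'(\RR{\mathcal{J}_1}(u_1))\big)\star\RR{\mathcal{D}_1}\Big)\tilde{\star}\rb,
\]
and initial datum $u=0$. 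Since the reconstruction of $\RR{\mathcal X}$ at time $s$ is $\tilde u-\tilde u=0$, this is consistent with Proposition~\ref{kkkl}.

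Proposition~\ref{onwall} then bounds $\triplenorm{\mathbf{R}^+_s\RR{\mathcal{X}}}$ by a polynomial prefactor times an exponential in $\triplenorm{\mathbf{R}^+_s\RR{\mathcal V}}$ and $\|\mathcal Z\|$, multiplied by $\triplenorm{\mathbf{R}^+_s\RR{\mathcal{W}}}_{\gamma_0,-1-\kappa}$. We handle the two factors as follows.

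The factor $\triplenorm{\mathbf{R}^+_s\Sigma'(\RR{\mathcal{J}_1}(u_2))}$ is controlled by Remark~\ref{BOUNDED}, which applies since $\Sigma'\in C^2_b$. Combined with Lemma~\ref{Alex} at $u_2$, this gives a polynomial in $(b,c,d,f,h)$. Here $f$ dominates $\sup_w\|\mathcal U^{\gY}_{u_2}(w)\|$, because the supremum in the statement is taken over the whole segment $u_1+r(u_2-u_1)$, $r\in[0,1]$.

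For $\RR{\mathcal W}$, Lemma~\ref{stat} removes the $\tilde\star\rb$, and \eqref{multi} splits the $\star$-product into two factors:
\[
\triplenorm{\mathbf{R}^+_s\big(\Sigma'(\RR{\mathcal{J}_1}(u_2))-\Sigma'(\RR{\mathcal{J}_1}(u_1))\big)}\cdot\triplenorm{\mathbf{R}^+_s\RR{\mathcal D_1}}.
\]
The first factor is bounded by Corollary~\ref{SASewwewewewew}, which produces $\max\{\|u_2-u_1\|,\|u_2-u_1\|^\nu\}\,\mathcal E_2$. The second factor is bounded by Corollary~\ref{UJU}, which gives $\mathcal E_1\|\tilde u\|_{L^\infty}$.

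It remains to collect all the factors into the form $\exp(Q_3)$. Every polynomial prefactor $p\geq 1$ can be absorbed through $p\le e^{p}$. The products of exponentials of polynomials (those from $\mathcal E_1$, $\mathcal E_2$, and the exponential term in \eqref{Atyasww}) are again exponentials of sums of polynomials. Since each ingredient is increasing in its arguments, we may replace $\|u_1\|$ and $\|u_2\|$, and the individual sup-norms of the solutions, by the common arguments $g,h,f$. The result is a single polynomial $Q_3$ that is increasing in each argument.

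The main obstacle is bookkeeping rather than a genuine analytic difficulty. We must check that the constants in Proposition~\ref{onwall} depend only on $\triplenorm{\mathbf{R}^+_s\RR{\mathcal V}}$ and not on $\RR{\mathcal X}$ itself; this holds because the equation for $\RR{\mathcal X}$ is linear. We must also check that the norms on $\overline{O_{s+2}^{s-1}}$ appearing in the three invoked results line up, so that one common list of arguments suffices. Finally, the left-hand side of \eqref{I789s} has no $\mathbf{R}^+_s$, but on $O_{1+s}^s$ it agrees with the restricted quantity that we actually estimate.
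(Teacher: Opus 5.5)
Your proposal is correct and follows the paper's proof. Both subtract the two linearized mild equations from Proposition~\ref{kkkl} and apply Proposition~\ref{onwall} with zero initial datum; $\mathcal{V}$ is controlled via Remark~\ref{BOUNDED} and Lemma~\ref{Alex}, and $\mathcal{W}$ via \eqref{multi}, Corollary~\ref{UJU} and Corollary~\ref{SASewwewewewew}. The only differences are cosmetic: the paper splits the product as $\Sigma'(\mathcal{J}_1(u_1))\star(D_{u_2}\mathcal{J}_1[\tilde u]-D_{u_1}\mathcal{J}_1[\tilde u])+\big(\Sigma'(\mathcal{J}_1(u_2))-\Sigma'(\mathcal{J}_1(u_1))\big)\star D_{u_2}\mathcal{J}_1[\tilde u]$, with the roles of $u_1$ and $u_2$ swapped relative to yours, and your explicit inclusion of the $\tilde{\star}$-multiplication by the noise symbol in $\mathcal{W}$, removed via Lemma~\ref{stat}, is slightly more careful than the paper's notation.
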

	\begin{proof}
		Thanks to Proposition~\ref{kkkl}, we have
		\begin{align*}
			& \mathbf{R}^+_{s}D_{u_2}\RR{\mathcal{J}_1}[\tilde{u}]-\mathbf{R}^+_{s}D_{u_1}\RR{\mathcal{J}_1}[\tilde{u}]
			\\&	=
			\mathcal{G}^{s}_{\gamma_{0},1-\kappa}
			\Big(
			\big(\mathbf{R}^+_{s}\Sigma'(\RR{\mathcal{J}_1}(u_2)) \star D_{u_2}\RR{\mathcal{J}_1}[\tilde{u}]\big) \tilde{\star}\rb-\big(\mathbf{R}^+_{s}\Sigma'(\RR{\mathcal{J}_1}(u_1)) \star D_{u_1}\RR{\mathcal{J}_1}[\tilde{u}]\big)\tilde{\star}\rb
			\Big)\\&+\mu\mathcal{G}^{s}_{\gamma_{1},2}\bigg( \mathbf{R}^+_{s}D_{u_2}\RR{\mathcal{J}_1}[\tilde{u}]-\mathbf{R}^+_{s}D_{u_1}\RR{\mathcal{J}_1}[\tilde{u}]\bigg)\\&=\mathcal{G}^{s}_{\gamma_{0},1-\kappa}\Bigg(\Bigg(\Big(\mathbf{R}^+_{s}\Sigma'(\RR{\mathcal{J}_1}(u_1))\Big)\star\Big( D_{u_2}\RR{\mathcal{J}_1}[\tilde{u}]-D_{u_1}\RR{\mathcal{J}_1}[\tilde{u}]\Big)\Bigg)\tilde{\star}\rb\Bigg)\\&+ \mathcal{G}^{s}_{\gamma_{0},1-\kappa}\Bigg(\Bigg(  \Big(\mathbf{R}^+_{s}\Sigma'(\RR{\mathcal{J}_1}(u_2))-\mathbf{R}^+_{s}\Sigma'(\RR{\mathcal{J}_1}(u_1))\Big)\star  D_{u_2}\RR{\mathcal{J}_1}[\tilde{u}]\Bigg)\tilde{\star}\rb \Bigg)+\mu\mathcal{G}^{s}_{\gamma_{1},2}\bigg( D_{u_2}\RR{\mathcal{J}_1}[\tilde{u}]-D_{u_1}\RR{\mathcal{J}_1}[\tilde{u}]\bigg).
		\end{align*}
		Now we can apply Proposition~\ref{onwall} with
		\begin{align*}
			&\RR{\mathcal{V}}=\Sigma'(\RR{\mathcal{J}_1}(u_1)),
			\\&\RR{\mathcal{W}}=\Big(\Sigma'(\RR{\mathcal{J}_1}(u_2))-\Sigma'(\RR{\mathcal{J}_1}(u_1))\Big)\star  D_{u_2}\RR{\mathcal{J}_1}[\tilde{u}]
			\\
			&u=0.
		\end{align*}
		First, note that from Remark~\ref{BOUNDED} and Lemma~\ref{Alex},
		\begin{align}\label{ASas7qwqwqwqc}
			\begin{split}
				&\triplenorm{\mathbf{R}^+_s \RR{\mathcal{V}}}_{\gamma_1,0,O_{1+s}^s}^{(s)}\\&\lesssim 	1+\big([ \ \gI \ ]_{\overline{O_{T+s}^{\,s}}}\big)^2
				+
				\bigg( P_{1}\Big(\|\mathcal{Z}\|_{\overline{O_{s+2}^{s-1}}},[\ \gI\ ]_{\overline{O_{s+1}^s}},\|\Gamma\|_{\overline{O_{s+1}^s}},\sup_{w\in O_{s+1}^s}
				\|\mathcal{U}^{\gY}_{u_1}(w)\|,\|u_1\|_{L^{\infty}(\mathbb{R}^d)}\Big)\bigg)^2.
			\end{split}
		\end{align}
		From \eqref{multi},
		\begin{align}\label{Ausjadfwe}
			\begin{split}
				&\triplenorm{\mathbf{R}^+_s \RR{\mathcal{W}}}_{\gamma_1,0,O_{1+s}^s}^{(s)}\lesssim \triplenorm{\mathbf{R}^+_s \Sigma'(\RR{\mathcal{J}_1}(u_2))-\mathbf{R}^+_s \Sigma'(\RR{\mathcal{J}_1}(u_1))}_{\gamma_1,0,O_{1+s}^s}^{(s)}\triplenorm{ \mathbf{R}^+_s D_{u_2}\RR{\mathcal{J}_1}[\tilde{u}]}_{\gamma_1,0,O_{1+s}^s}^{(s)} .
			\end{split}
		\end{align}
		Thanks to Corollary \ref{UJU} and Corollary \ref{SASewwewewewew}, one can obtain an analogous bound for the right-hand side of \eqref{Ausjadfwe} as claimed in \eqref{I789s}. Keeping this in mind, by using \eqref{ASas7qwqwqwqc} and Proposition~\ref{onwall}, together with some tedious but straightforward calculations, one can prove the claim.
	\end{proof}

	\begin{comment}
		content...
		
		\begin{remark}
			Let $\xi^\epsilon$ be a smooth aoprociatin of $\xi$ in $C^{-1-k}$ such that for caocntst $C_\epsilon$ wjich ius duverfing one have
			\begin{align*}
				sdsdsd
			\end{align*}
			\begin{align*}
				& \Pi_z(\rb) = \bullet, \quad
				\Pi_z(\gXXXX) = X^{i} - X^{i}(z), \quad
				\Pi_z(\gI) = \gII - \gII(z), \\
				& \Pi_z(\gIb) = (\gII - \gII(z))\,\bullet - C, \quad
				\Pi_z(\gY) = 1, \quad
				\Pi_z(\gXXXX\rb) = \Pi_z(\gXXXX)\,\bullet .
			\end{align*}
		\end{remark}
	\end{comment}
	\section{Probabilistic Preparations}\label{PRO}
	In this section, we collect the essential probabilistic tools needed to show that the solution of the SPDE \eqref{MAIN} naturally induces a random dynamical system. The results established in this section, together with the estimates obtained in the previous section, provide the foundational tools for establishing our main results, which will be presented in the next section.
	\subsection{Basic definitions}
	We begin by recalling the fundamental concepts from the theory of random dynamical systems for completeness. For further details, we refer the reader to the monograph~\cite{Arn98}.
	\begin{definition}
		Let $(\overline{\Omega},\overline{\mathcal{F}},\overline{\mathbb{P}})$ be a probability space and let $\mathbb{T}$ be either $\mathbb{Z}$ or $\mathbb{R}$. Assume that there exists a family of measurable maps $\{\overline{\theta}_t\}_{t\in\mathbb{T}}$ from $\overline{\Omega}$ to itself such that
		\begin{enumerate}
			\item $\overline{\theta}_{0}=\operatorname{id}$,
			\item for every $s,t\in\mathbb{T}$,
			\[
			\overline{\theta}_{t+s}
			=
			\overline{\theta}_{t}\circ\overline{\theta}_{s},
			\]
			\item if $\mathbb{T}=\mathbb{R}$, then the map
			\[
			(t,\omega)\mapsto\overline{\theta}_{t}\omega
			\]
			is $\mathcal{B}(\mathbb{R})\otimes\overline{\mathcal{F}}/\overline{\mathcal{F}}$-measurable,
			\item for every $t\in\mathbb{T}$,
			\[
			\overline{\mathbb{P}}\circ\overline{\theta}_t^{-1}
			=
			\overline{\mathbb{P}}.
			\]
		\end{enumerate}
		We then call $(\overline{\Omega},\overline{\mathcal{F}},\{\overline{\theta}_t\}_{t\in\mathbb{T}},\overline{\mathbb{P}})$ an
		\emph{invertible measurable metric dynamical system}.Moreover, it is called \emph{ergodic} if, for every
		$A\in\overline{\mathcal{F}}$ satisfying
		\[
		\overline{\theta}_t^{-1}A=A,
		\qquad \text{for every } t\in\mathbb{T},
		\]
		one has
		\[
		\overline{\mathbb{P}}(A)\in\{0,1\}.
		\]
	\end{definition}
	Another fundamental concept is the cocycle, which we define next.
	\begin{definition}\label{DEF_CO}
		Let $\mathcal{X}$ be a separable Banach space and let 
		$(\overline{\Omega},\overline{\mathcal{F}},\{\overline{\theta}_t\}_{t\in\mathbb{T}},\overline{\mathbb{P}})$ be an invertible 
		measure-preserving dynamical system. Denote by $\mathbb{T}^{+}$ the 
		non-negative part of $\mathbb{T}$. A jointly measurable map
		\begin{align*}
			\varphi \colon \mathbb{T}^{+}\times\overline{\Omega}\times\mathcal{X}
			\rightarrow \mathcal{X}
		\end{align*}
		satisfying
		\begin{align*}
			\varphi(0,\omega,x) &= x,\\
			\varphi(s+t,\omega,x)
			&=
			\varphi\bigl(s,\overline{\theta}_t\omega,\varphi(t,\omega,x)\bigr),
			\qquad s,t\in\mathbb{T}^{+},
		\end{align*}
		is called a \emph{measurable cocycle}. It is called a 
		\emph{$C^{k}$-cocycle} if, for every fixed 
		$(t,\omega)\in\mathbb{T}^{+}\times\overline{\Omega}$, the map
		\[
		\varphi(t,\omega,\cdot)\colon\mathcal{X}\rightarrow\mathcal{X}
		\]
		is of class $C^{k}$. If $\varphi(t,\omega,\cdot)$ is linear for every 
		$(t,\omega)$, then $\varphi$ is called a \emph{linear cocycle}.
		Moreover, if 
		$\varphi(t,\omega,\cdot)$ is compact for every 
		$t\in\mathbb{T}^{+}\setminus\{0\}$ and $\omega\in\overline{\Omega}$, then 
		$\varphi$ is called a \emph{compact cocycle}.
		
	\end{definition}
	In order to apply the theory of random dynamical systems, we need to provide a more precise description of the underlying probabilistic objects. Indeed, so far, all estimates and results have been deterministic and based on the assumption that a regularity structure is given. In applications, regularity structures are typically constructed using probabilistic tools. Therefore, before showing how one can associate a random dynamical system to the SPDE \eqref{MAIN}, we first describe more precisely the probabilistic construction of the regularity structure, which depends on the underlying noise. We begin by specifying the class of noises considered in this manuscript.
	Our choice is motivated by the desire to remain close to the classical
	setting of SPDEs. The framework, however, also allows for more general
	Gaussian noises, including noises with fractional temporal behaviour. We
	do not pursue this generality here, as its treatment would require
	additional technical developments and would lengthen the
	paper. Instead, at the end of this section, we briefly discuss how the
	arguments can be adapted to this more general setting.
	\begin{definition}\label{GGAHSs}
		Let
		\[
		Q:L^2(\mathbb{T}^d)\to L^2(\mathbb{T}^d)
		\]
		be bounded, self-adjoint, and nonnegative. Set
		\[
		\mathcal H_0:=L^2\bigl(\mathbb R;L^2(\mathbb T^d)\bigr)
		\]
		and define the positive semidefinite form
		\[
		\langle f,g\rangle_Q
		:=
		\int_{\mathbb R}
		\left\langle Q^{1/2}f(t,\cdot),
		Q^{1/2}g(t,\cdot)\right\rangle_{L^2(\mathbb T^d)}
		\,\mathrm dt
		=
		\int_{\mathbb R}
		\left\langle Qf(t,\cdot),g(t,\cdot)\right\rangle_{L^2(\mathbb T^d)}
		\,\mathrm dt .
		\]
		Let
		\[
		\mathcal N_Q
		:=
		\{f\in\mathcal H_0:\langle f,f\rangle_Q=0\}.
		\]
		We define \(\overline{\mathcal H}^{Q}\) as the completion of
		\(\mathcal H_0/\mathcal N_Q\) with respect to the inner product induced by
		\(\langle\cdot,\cdot\rangle_Q\). In particular,
		\[
		\|[f]\|_{\overline{\mathcal H}^{Q}}^2
		=
		\int_{\mathbb R}
		\|Q^{1/2}f(t,\cdot)\|_{L^2(\mathbb T^d)}^2\,\mathrm dt,
		\qquad
		[f]\in\mathcal H_0/\mathcal N_Q.
		\]
		Then \(\overline{\mathcal H}^{Q}\) is a separable Hilbert space.
	\end{definition}
	To define the noise considered in this manuscript, it is more convenient to work with an isonormal Gaussian process, which can be defined on any separable Hilbert space.
	
	\begin{definition}\label{UAsjwew}
		Let $(\Omega,\mathcal{F},\mathbb{P})$ be a probability space. We say that a
		centered Gaussian process
		\[
		W^{Q}=\{W^{Q}(f): f\in	\overline{\mathcal{H}}^{Q}\}
		\]
		is an \emph{isonormal Gaussian process over $	\overline{\mathcal{H}}^{Q}$} if
		\begin{align}\label{I4766sd}
			\mathbb{E}\bigl[W^{Q}(f)W^{Q}(g)\bigr]
			=
			\langle f,g\rangle_{\overline{\mathcal{H}}^{Q}},
			\qquad f,g\in\overline{\mathcal{H}}^{Q}.
		\end{align}
	\end{definition}
	\begin{comment}
		content...
		
		\begin{align}
			\int_{\mathbb{R}\times\mathbb{T}^d}
			\bigl(\mathcal{D}_{-}^{\alpha} f\bigr)\,
			\bigl(Q \mathcal{D}_{-}^{\alpha} g\bigr)\,dx\,dt
			=
			\int_{\mathbb{R}}
			\sum_{k\in\mathbb{Z}^d}
			|\tau|^{2\alpha}\, q_k\,
			\hat f(\tau,k)\,\overline{\hat g(\tau,k)}\,d\tau.
		\end{align}
		Since $\mathcal{W}^{H,Q}$ is separable.
	\end{comment}
	For the sake of completeness, we briefly describe the construction of an
	isonormal Gaussian process on $\overline{\mathcal{H}}^{Q}$.
	\begin{lemma}\label{AYsHSsqq}
		Let the space $\mathbb{R}^{\mathbb{N}}$ be endowed with the product topology.
		Since $\mathbb{R}^{\mathbb{N}}$ is a Polish space, we consider the probability
		space
		\[
		(\Omega,\mathcal{F},\mathbb{P})
		=
		\left(\mathbb{R}^{\mathbb{N}},
		\mathcal{B}(\mathbb{R}^{\mathbb{N}}),
		\gamma^{\otimes\mathbb{N}}\right),
		\]
		where $\gamma$ denotes the standard Gaussian measure on $\mathbb{R}$.
		Let $(f_n)_{n\geq1}$ be an orthonormal basis of
		$\overline{\mathcal{H}}^{Q}$. For $f\in\overline{\mathcal{H}}^{Q}$, define
		\[
		W^{Q}(f)
		:=
		L^2(\Omega)\text{-}\lim_{N\to\infty}
		\sum_{n=1}^{N}
		\omega_n
		\langle f,f_n\rangle_{\overline{\mathcal{H}}^{Q}}.
		\]
		Then
		\[
		W^{Q}
		=
		\{W^{Q}(f):f\in\overline{\mathcal{H}}^{Q}\}
		\]
		is an isonormal Gaussian process over $\overline{\mathcal{H}}^{Q}$.
	\end{lemma}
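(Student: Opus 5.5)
The claim is the standard construction of a Gaussian white-noise-type family from i.i.d. coordinates. Three things have to be checked: the $L^2(\Omega)$ limit exists, the limit is centered Gaussian with the covariance \eqref{I4766sd}, and the family is jointly Gaussian.

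\textbf{Step 1 (the limit exists).} Under $\mathbb{P}=\gamma^{\otimes\mathbb{N}}$ the coordinate maps $\omega\mapsto\omega_n$ are i.i.d.\ standard Gaussians. They therefore form an orthonormal system in $L^2(\Omega)$. Fix $f\in\overline{\mathcal H}^Q$ and write
\[
S_N(f):=\sum_{n=1}^N\omega_n\langle f,f_n\rangle_{\overline{\mathcal H}^Q}.
\]
For $M<N$, orthonormality gives
\[
\E|S_N(f)-S_M(f)|^2=\sum_{n=M+1}^N\langle f,f_n\rangle^2_{\overline{\mathcal H}^Q}.
\]
By Bessel/Parseval in $\overline{\mathcal H}^Q$, this tends to $0$ as $M\to\infty$. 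So $(S_N(f))_N$ is Cauchy in $L^2(\Omega)$, and $W^Q(f)$ is well defined. It is measurable with respect to $\mathcal{B}(\mathbb{R}^{\mathbb{N}})$, since it is an $L^2$ limit of continuous functions of finitely many coordinates. Separability of $\overline{\mathcal H}^Q$ (Definition~\ref{GGAHSs}) ensures the orthonormal basis is countable, which is what makes the indexing by $\mathbb N$ legitimate.

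\textbf{Step 2 (joint Gaussianity and covariance).} Take $g_1,\dots,g_k\in\overline{\mathcal H}^Q$ and real numbers $a_1,\dots,a_k$. By linearity of $S_N$,
\[
\sum_j a_jS_N(g_j)=S_N\Big(\sum_j a_jg_j\Big),
\]
and this is a finite linear combination of independent Gaussians, hence centered Gaussian. Its variance is $\sum_{n\le N}\langle \sum_j a_jg_j,f_n\rangle^2_{\overline{\mathcal H}^Q}$. Centered Gaussian laws are closed under $L^2$ limits (equivalently, characteristic functions $\exp(-\sigma_N^2t^2/2)$ converge with $\sigma_N^2\to\sigma^2$). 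Hence $\sum_ja_jW^Q(g_j)$ is centered Gaussian, and by Parseval its variance is $\|\sum_ja_jg_j\|^2_{\overline{\mathcal H}^Q}$. The Cramér--Wold device then shows that $(W^Q(g_1),\dots,W^Q(g_k))$ is a centered Gaussian vector, so $W^Q$ is a centered Gaussian process.

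For the covariance, $L^2$ convergence gives
\[
\E[W^Q(f)W^Q(g)]=\lim_N\E[S_N(f)S_N(g)]=\lim_N\sum_{n\le N}\langle f,f_n\rangle\langle g,f_n\rangle=\langle f,g\rangle_{\overline{\mathcal H}^Q},
\]
where the last equality is Parseval's identity. This is \eqref{I4766sd}.

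No step here is a genuine obstacle; the proof only combines Parseval's identity with the stability of Gaussian laws under $L^2$ limits. The one point to state explicitly is that the closedness is applied to finite linear combinations, not to individual variables, so that the conclusion is joint Gaussianity.
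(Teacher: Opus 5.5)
Your proof is correct and follows the paper's argument essentially verbatim. You show the partial sums are Cauchy in $L^2(\Omega)$ via Parseval, note that centered Gaussianity survives $L^2$ limits, and obtain the covariance from Parseval. The only cosmetic difference is that you pass to the limit in linear combinations (Cram\'er--Wold), whereas the paper passes to the limit directly in the finite-dimensional vectors $(W_N^{Q}(g_1),\dots,W_N^{Q}(g_m))$; the two amount to the same thing.
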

	
	\begin{proof}
		For $f\in\overline{\mathcal{H}}^{Q}$, define the partial sums
		\[
		W_N^{Q}(f)
		:=
		\sum_{n=1}^{N}
		\omega_n
		\langle f,f_n\rangle_{\overline{\mathcal{H}}^{Q}}.
		\]
		Since $(\omega_n)_{n\geq1}$ are independent standard Gaussian random
		variables, for $M<N$ we have
		\[
		\mathbb{E}\left[
		\left|W_N^{Q}(f)-W_M^{Q}(f)\right|^2
		\right]
		=
		\sum_{n=M+1}^{N}
		\left|\langle f,f_n\rangle_{\overline{\mathcal{H}}^{Q}}\right|^2.
		\]
		By Parseval's identity,
		\[
		\sum_{n=1}^{\infty}
		\left|\langle f,f_n\rangle_{\overline{\mathcal{H}}^{Q}}\right|^2
		=
		\|f\|_{\overline{\mathcal{H}}^{Q}}^2,
		\]
		and hence
		\[
		\mathbb{E}\left[
		\left|W_N^{Q}(f)-W_M^{Q}(f)\right|^2
		\right]
		\longrightarrow 0
		\]
		as $M,N\to\infty$. Thus, $(W_N^{Q}(f))_{N\geq1}$ converges in
		$L^2(\Omega)$, which defines $W^{Q}(f)$.
		For any $g_1,\ldots,g_m\in\overline{\mathcal{H}}^{Q}$, the vector
		\[
		\bigl(W_N^{Q}(g_1),\ldots,W_N^{Q}(g_m)\bigr)
		\]
		is centered Gaussian for every $N$. Passing to the limit in $L^2(\Omega)$,
		we conclude that
		\[
		\bigl(W^{Q}(g_1),\ldots,W^{Q}(g_m)\bigr)
		\]
		is also centered Gaussian. Moreover, for $f,g\in\overline{\mathcal{H}}^{Q}$,
		\[
		\begin{aligned}
			\mathbb{E}\left[W^{Q}(f)W^{Q}(g)\right]
			&=
			\sum_{n=1}^{\infty}
			\langle f,f_n\rangle_{\overline{\mathcal{H}}^{Q}}
			\langle g,f_n\rangle_{\overline{\mathcal{H}}^{Q}}=
			\langle f,g\rangle_{\overline{\mathcal{H}}^{Q}},
		\end{aligned}
		\]
		where the last equality follows from Parseval's identity. Therefore,
		$W^{Q}$ is an isonormal Gaussian process over $\overline{\mathcal{H}}^{Q}$.
	\end{proof}
	The role of the operator $Q$ is to regularize the space-time white noise sufficiently so that it fits into our framework. Throughout this section, we assume that $Q$ satisfies the following assumption.
	\begin{assumption}\label{asasww}
		Let
		\[
		\mathbb{T}^d=(\mathbb{R}/2\pi\mathbb{Z})^d,
		\]
		and let
		\[
		Q:L^2(\mathbb{T}^d)\to L^2(\mathbb{T}^d)
		\]
		be a bounded, self-adjoint, and nonnegative operator. We assume that,
		on the complexification of \(L^2(\mathbb{T}^d)\), \(Q\) is a Fourier
		multiplier, namely
		\[
		Qe_k=q_ke_k,
		\qquad
		e_k(x)=e^{ik\cdot x},
		\qquad
		k\in\mathbb{Z}^d.
		\]
		The Fourier multipliers satisfy
		\[
		q_k=q_{-k}\geq 0,
		\qquad
		k\in\mathbb{Z}^d,
		\]
		and there exist constants \(C>0\) and
		\[
		\beta\in\left(0,\frac d2\right)
		\]
		such that
		\[
		q_k\leq C(1+|k|^2)^{-\beta},
		\qquad
		k\in\mathbb{Z}^d.
		\]
		Here
		\[
		|k|=\sqrt{k_1^2+\cdots+k_d^2}.
		\]
	\end{assumption}

	\begin{definition}\label{Olasmudfa}
		Let $\xi^{Q}\in\mathcal{S}'(\mathbb{R}\times\mathbb{T}^d)$ be the random distribution defined by
		\[
		\langle \xi^{Q},\psi\rangle
		=
		W^{Q}(\psi),
		\qquad
		\psi\in C_c^\infty(\mathbb{R}\times\mathbb{T}^d).
		\]
	\end{definition}
	
	\begin{remark}\label{BNASSSSS}
		Using standard techniques from Fourier analysis, one can show that, almost
		surely,
		\[
		\xi^{Q}\in
		C^{\,\beta-\frac d2-1-\varepsilon}_{(2,1)},
		\qquad
		\forall\,\varepsilon>0.
		\]
	\end{remark}
	With these preparations, we now fix our assumptions.
	\begin{assumption}\label{ASAS78asaaa}
		We assume that :
		\begin{enumerate}
			\item Assumption~\ref{asasww} holds.
			
			\item We suppose that $W^{Q}$ is an isonormal Gaussian process over
			$\overline{\mathcal{H}}^{Q}$. Moreover, $\xi^Q$ denotes the random
			distribution defined in Definition~\ref{Olasmudfa}.
			
			\item For $\beta\in(0,\frac{d}{2})$ and $\kappa \in (0,\frac{1}{3})$, we assume that
			\begin{align}\label{asasaiasa}
				\beta-\frac{d}{2}>-\kappa .
			\end{align}
			
			\item We consider the equation
			\begin{align}\label{MAINAA}
				\begin{cases}
					(\partial_t-\Delta)u=\mu u+\Sigma(u)\,\xi^Q,
					& t>0,\\
					u(0,\cdot)=u_0,\qquad u_0\in L^\infty(\mathbb T^d).
				\end{cases}
			\end{align}
			where $\mu\leq0$ and $\Sigma\in C^2(\R,\R)$. We assume that \eqref{MAINAA} admits a unique
			global solution for every $u_0\in L^\infty(\mathbb T^d)$.
			
			\item We assume that
			\[
			(\Omega,\mathcal F,\mathbb P)
			=
			\left(
			\mathbb R^{\mathbb N},
			\mathcal B(\mathbb R^{\mathbb N}),
			\gamma^{\otimes\mathbb N}
			\right),
			\]
			as introduced in Lemma~\ref{AYsHSsqq}.
			
			\item The sequence $(f_n)_{n\geq1}$ is an orthonormal basis of
			$\overline{\mathcal{H}}^{Q}$.
		\end{enumerate}
	\end{assumption}
	\begin{remark}
		The global well-posedness assumed above is known under suitable
		additional assumptions on $\Sigma$. For sufficiently small $\kappa$,
		it holds for $\Sigma\in C_b^2(\mathbb R,\mathbb R)$ under the conditions
		considered in \cite{CFW26,SZZ26}. Moreover, an inspection of the proof
		in \cite{ES26} shows that, in the regime $\kappa<1/3$, the corresponding
		a priori bounds hold for $\Sigma\in C_b^3(\mathbb R,\mathbb R)$.
	\end{remark}
	To formulate the solution theory for the singular SPDE \eqref{MAINAA} and to
	construct a metric random dynamical system, we make use of the Wiener chaos
	decomposition associated with the underlying Gaussian probability space. We
	briefly recall the following definition.
	\begin{definition}\label{BASsdswsw}
		Let $H_n$ denote the $n$-th Hermite polynomial and define
		$H_{e_i}^{n}(\omega):=H_n(\omega_i)$. Let $\Lambda$ be the set of all
		multi-indices
		\[
		\alpha=(\alpha_1,\alpha_2,\dots),\qquad \alpha_i\in\mathbb N_0,
		\]
		such that only finitely many components are non-zero. For
		$\alpha\in\Lambda$, we define
		\[
		\alpha!:=\prod_{i=1}^{\infty}\alpha_i!,
		\qquad
		|\alpha|:=\sum_{i=1}^{\infty}\alpha_i .
		\]
		The generalized Hermite polynomial associated with $\alpha$ is defined by
		\[
		H_\alpha(\omega)
		:=
		\prod_{i=1}^{\infty}H_{\alpha_i}(\omega_i),
		\qquad \omega\in\mathbb R^{\mathbb N},
		\]
		where the product is finite since $H_0=1$ and only finitely many
		$\alpha_i$ are non-zero.
		Let $\mathcal C_0$ denote the space of constant functions. For $n\ge1$, set
		\[
		\Lambda_n:=\{\alpha\in\Lambda:|\alpha|=n\},
		\]
		and define the $n$-th Wiener chaos by
		\[
		\mathcal H_n
		:=
		\overline{\mathrm{span}}
		\{H_\alpha(\omega):\alpha\in\Lambda_n\}
		\subset L^2(\Omega,\mathcal F,\mathbb P).
		\]
		Then $\mathcal H_n$ is a closed Hilbert subspace of $L^2(\Omega)$, and
		$\{H_\alpha:\alpha\in\Lambda_n\}$ forms an orthogonal basis of
		$\mathcal H_n$. Moreover,
		\[
		L^2(\Omega,\mathcal F,\mathbb P)
		=
		\mathcal C_0
		\oplus
		\bigoplus_{n=1}^{\infty}\mathcal H_n,
		\]
		where the sum is orthogonal in $L^2(\Omega)$. We refer to
		\cite[Chapter~1]{Nua05} for further details.
	\end{definition}
	Let us now explain how the probabilistic objects introduced above allow us to
	give a rigorous meaning to the SPDE \eqref{MAINAA} and to associate a random
	dynamical system with it.
	Before going further, we briefly explain the main strategy as follows.
	\begin{enumerate}
		\item We exploit well-established results from regularity structures theory:
		by regularizing the noise and introducing the corresponding renormalization
		counter terms, one can give a rigorous meaning to the SPDE \eqref{MAINAA}.
		In particular, this allows us to construct a convergent sequence of
		approximations to the SPDE \eqref{MAINAA}.	
		\item We construct a suitable Polish subspace $\mathcal{M}_0$ of
		$(\mathcal{M},d_{\mathcal M})$ and push forward the probability measure onto
		this space.
		\item Using the topological properties of $\mathcal{M}_0$, we are able to
		define an ergodic metric dynamical system and associate it with the solution
		of the SPDE \eqref{MAINAA}. The flow property of the solution and the
		invariance of the law of the noise under time translations then allow us to
		establish the cocycle property. 
	\end{enumerate}
	\begin{definition}
		Let $\rho:\mathbb{R}\times\mathbb{R}^d \to [0,2]$ be a smooth, compactly supported function, integrating to one and even in both variables. For each $\epsilon>0$ and $z=(t,x)\in \R\times\T^d$, we define
		\begin{align}
			\xi^{\epsilon}(z)=W^Q\left(\rho^{\epsilon}_{z}\right)
			:=W^Q\left((\rho^{\epsilon}_{z})^{\mathrm{per}}\right),
		\end{align}
		where $\rho^{\epsilon}_{z}$ is defined in \eqref{AAASSsssww}, and $(\rho^{\epsilon}_{z})^{\mathrm{per}}$ denotes the periodicisation of $\rho^{\epsilon}_{z}$ defined by
		\begin{align*}
			(\rho_z^\varepsilon)^{\mathrm{per}}
			:\mathbb{R}\times\mathbb{T}^d &\longrightarrow \mathbb{R},\\
			(\rho_z^\varepsilon)^{\mathrm{per}}(t,x)
			&:=
			\sum_{k\in\mathbb{Z}^d}
			\rho_z^\varepsilon\bigl(t,x+2\pi k\bigr).
		\end{align*}
		We further set
		\begin{align*}
			\mathcal{I}[\xi^{\epsilon}](z):=(K* \xi^{\epsilon})(z).
		\end{align*}
	\end{definition}
	Clearly, $\xi^{\epsilon}$ and $\mathcal{I}[\xi^{\epsilon}]$ belong to $\mathcal{H}_1$, cf.\ Definition \ref{BASsdswsw}. From the basic properties of Gaussian analysis, the Wick product belongs to $\mathcal{H}_2$ and is given by
	\begin{align*}
		\xi^{\epsilon}\diamond \mathcal{I}[\xi^{\epsilon}](z)
		:=
		\xi^{\epsilon}(z)\, \mathcal{I}[\xi^{\epsilon}](z)
		-
		\mathbb{E}\big[\xi^{\epsilon}(z)\, \mathcal{I}[\xi^{\epsilon}](z)\big].
	\end{align*}
	\begin{remark}
		By a standard calculation, one can see that
		\begin{align}\label{AISmfdsd}
			\begin{split}
				&\mathbb{E}\big[\xi^{\epsilon}(z)\,\mathcal{I}[\xi^{\epsilon}](z)\big] = \mathbb{E}\left[ \int_{\mathbb{R}\times\mathbb{R}^d} K(z-z_1)\, W^Q(\rho_{z_1}^\epsilon)\, W^Q(\rho_z^\epsilon)\, \mathrm{d}z_1 \right] \\ &= \int_{\mathbb{R}\times\mathbb{R}^d} K(z-z_1)\, \mathbb{E}\big[ W^Q(\rho_{z_1}^\epsilon)\, W^Q(\rho_z^\epsilon) \big]\, \mathrm{d}z_1 = \int_{\mathbb{R}\times\mathbb{R}^d} K(z-z_1)\, \langle (\rho^{\epsilon}_{z})^{\mathrm{per}},(\rho^{\epsilon}_{z_1})^{\mathrm{per}}\rangle_{\overline{\mathcal{H}}^{Q}}\, \mathrm{d}z_1 \\ &= \int_{\mathbb{R}\times\mathbb{R}^d} K(z-z_1)\, \langle (\rho^{\epsilon}_{z-z_1})^{\mathrm{per}},(\rho^{\epsilon})^{\mathrm{per}}\rangle_{\overline{\mathcal{H}}^{Q}}\,\mathrm{d}z_1 =:C_\epsilon .
			\end{split}
		\end{align}
		The constant $C_\epsilon$ is independent of $z$. Depending on the regularity of $Q$, the family $(C_\epsilon)_{\epsilon>0}$ may remain bounded or diverge as $\epsilon\to0$.
	\end{remark}
	\begin{definition}
		For $t\in\mathbb{R}$, the translation operator
		\[
		U_t:\mathcal{D}'(\R \times \mathbb{T}^{d})
		\longrightarrow
		\mathcal{D}'\R \times \mathbb{T}^{d})
		\]
		is defined by
		\[
		\langle U_t\Upsilon,\varphi\rangle
		:=
		\langle\Upsilon,\varphi(\,\cdot-(t,0)\,)\rangle,
		\qquad
		\varphi\in C_c^\infty(\mathbb{R}^{d+1}),
		\]
		for every $\Upsilon\in\mathcal{D}'(\mathbb{R}^{d+1})$. In particular, if
		$\Upsilon=f$ is a sufficiently regular function, then
		\[
		(U_tf)(s,x)=f(s+t,x),
		\qquad (s,x)\in\mathbb{R}\times\mathbb{R}^d.
		\]
	\end{definition}
	A fundamental property of the isonormal Gaussian process $W^Q$ on
	$\overline{\mathcal H}^{Q}$ is its invariance under time translations.
	More precisely, the law of the Gaussian family associated with $W^Q$ is
	invariant under the action of the translation operator $U_t$. The following
	lemma makes this property precise.
	\begin{lemma}\label{UYHNAss}
		Let $f,g\in \overline{\mathcal H}^{Q}$, and let
		$F:\mathbb{R}^2\to\mathbb{R}$ be a bounded Borel measurable function.
		Then, for every $t\in\mathbb{R}$,
		\[
		\mathbb{E}\Big[F\big(W^Q(U_t f),W^Q(U_t g)\big)\Big]
		=
		\mathbb{E}\Big[F\big(W^Q(f),W^Q(g)\big)\Big].
		\]
	\end{lemma}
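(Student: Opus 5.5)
The plan is to reduce the claim to an equality of joint laws of two centered Gaussian vectors and then use the translation invariance of the inner product $\langle\cdot,\cdot\rangle_{\overline{\mathcal H}^{Q}}$. By Definition~\ref{UAsjwew} (equivalently Lemma~\ref{AYsHSsqq}), for any $h_1,h_2\in\overline{\mathcal H}^{Q}$ the vector $(W^Q(h_1),W^Q(h_2))$ is centered Gaussian in $\mathbb R^2$. Its law is therefore completely determined by its covariance matrix, whose entries are $\langle h_i,h_j\rangle_{\overline{\mathcal H}^{Q}}$ by \eqref{I4766sd}. Consequently, it suffices to show that $U_t$ acts as an isometry of $\overline{\mathcal H}^{Q}$:
\[
\langle U_tf,U_tg\rangle_{\overline{\mathcal H}^{Q}}=\langle f,g\rangle_{\overline{\mathcal H}^{Q}},\qquad f,g\in\overline{\mathcal H}^{Q}.
\]
Once this holds, the two vectors $(W^Q(U_tf),W^Q(U_tg))$ and $(W^Q(f),W^Q(g))$ have the same law on $\mathcal B(\mathbb R^2)$. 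Integrating the bounded Borel function $F$ against this common law then gives the claim.

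\textbf{Step 1: $U_t$ on $\mathcal H_0$.} Take $f\in\mathcal H_0=L^2(\mathbb R;L^2(\mathbb T^d))$, so that $(U_tf)(s,\cdot)=f(s+t,\cdot)$. Since $Q$ acts only in the spatial variable,
\[
\langle U_tf,U_tg\rangle_Q=\int_{\mathbb R}\langle Qf(s+t,\cdot),g(s+t,\cdot)\rangle_{L^2(\mathbb T^d)}\,\mathrm ds=\langle f,g\rangle_Q,
\]
by the change of variables $s\mapsto s-t$ (Lebesgue measure is translation invariant).

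\textbf{Step 2: passage to $\overline{\mathcal H}^{Q}$.} By Step 1, $U_t$ maps $\mathcal N_Q$ into itself, so it descends to a well-defined linear isometry on $\mathcal H_0/\mathcal N_Q$. It is invertible there, with inverse $U_{-t}$. Hence it extends uniquely by density to a unitary operator on the completion $\overline{\mathcal H}^{Q}$. This extension is what $W^Q(U_tf)$ means for a general $f\in\overline{\mathcal H}^{Q}$. For test functions $\psi$ it agrees with the distributional translation in the definition of $U_t$ (up to the sign convention of the shift, which is irrelevant since both $t$ and $-t$ give isometries).

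\textbf{Main obstacle.} The only delicate point is this bookkeeping: making sure $U_t$ is well defined on equivalence classes and on the abstract completion, not just on $\mathcal H_0$. Step 2 handles it. Everything else is the standard fact that a centered Gaussian vector's law is determined by its covariance, which one can confirm via characteristic functions.
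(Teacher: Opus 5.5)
Your proposal is correct and follows the paper's proof: you reduce to equality of the covariance matrices of the two centered Gaussian vectors and then use that $U_t$ preserves $\langle\cdot,\cdot\rangle_{\overline{\mathcal H}^{Q}}$. The only difference is that you verify this isometry (by a change of variables in time on $\mathcal H_0$, then extension to the quotient and the completion), whereas the paper simply asserts it.
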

	\begin{proof}
		Observe that both
		\[
		\bigl(W^Q(f),W^Q(g)\bigr)
		\qquad\text{and}\qquad
		\bigl(W^Q(U_t f),W^Q(U_t g)\bigr)
		\]
		are centered Gaussian vectors. Hence, it is enough to show that their
		covariance matrices coincide.
		Since $W^Q$ is an isonormal Gaussian process, we have
		\[
		\mathbb{E}\left[\left(W^Q(f)\right)^2\right]
		=
		\|f\|_{\overline{\mathcal H}^{Q}}^2,
		\qquad
		\mathbb{E}\left[\left(W^Q(g)\right)^2\right]
		=
		\|g\|_{\overline{\mathcal H}^{Q}}^2,
		\]
		and
		\[
		\mathbb{E}\bigl[W^Q(f)W^Q(g)\bigr]
		=
		\langle f,g\rangle_{\overline{\mathcal H}^{Q}}.
		\]
		Moreover, the translation operator $U_t$ preserves the inner product on
		$\overline{\mathcal H}^{Q}$, that is,
		\[
		\langle U_tf,U_tg\rangle_{\overline{\mathcal H}^{Q}}
		=
		\langle f,g\rangle_{\overline{\mathcal H}^{Q}}.
		\]
		Therefore,
		\[
		\mathbb{E}\left[\left(W^Q(U_tf)\right)^2\right]
		=
		\|U_tf\|_{\overline{\mathcal H}^{Q}}^2
		=
		\|f\|_{\overline{\mathcal H}^{Q}}^2,
		\]
		and similarly,
		\[
		\mathbb{E}\left[\left(W^Q(U_tg)\right)^2\right]
		=
		\|g\|_{\overline{\mathcal H}^{Q}}^2.
		\]
		Furthermore,
		\[
		\mathbb{E}\bigl[W^Q(U_tf)W^Q(U_tg)\bigr]
		=
		\langle U_tf,U_tg\rangle_{\overline{\mathcal H}^{Q}}
		=
		\langle f,g\rangle_{\overline{\mathcal H}^{Q}}.
		\]
		Hence, the Gaussian vectors
		\[
		\bigl(W^Q(f),W^Q(g)\bigr)
		\quad\text{and}\quad
		\bigl(W^Q(U_tf),W^Q(U_tg)\bigr)
		\]
		have the same mean and covariance matrix, and therefore the same
		distribution. The desired identity follows.
	\end{proof}
	\subsection{The metric dynamical system associated with the noise}
	While $\mathcal{M}$ is a complete metric space, it is not necessarily separable. Moreover, it is rather large. On the other hand, Polish spaces play a central role in the theory of random dynamical systems. This motivates us to restrict the model space to a separable and complete subspace.
	\begin{lemma}\label{polikks}
		We equip $C(\mathbb{R}\times\mathbb{T}^d)$ with the topology generated by the
		seminorms
		\[
		p_n(f)
		:=
		\sup_{(t,x)\in[-n,n]\times\mathbb{T}^d}|f(t,x)|,
		\qquad n\in\mathbb{N},
		\]
		and equip $C^{1}(\mathbb{R}\times\mathbb{T}^d)$ with the topology generated by
		the seminorms
		\[
		\tilde p_n(f)
		:=
		\max_{|\alpha|\leq1}
		\sup_{(t,x)\in[-n,n]\times\mathbb{T}^d}
		|D^\alpha f(t,x)|,
		\qquad n\in\mathbb{N}.
		\]
		We define the map
		\[
		\mathbf{M}:
		C(\mathbb{R}\times\mathbb{T}^d)
		\times
		C^{1}(\mathbb{R}\times\mathbb{T}^d)
		\times
		\mathbb{R}
		\longrightarrow
		\mathcal M
		\]
		by
		\[
		\mathbf{M}(f,g,c)
		=
		(\Pi^{(f,g,c)},\Gamma^{(f,g,c)}).
		\]
		with $(\Pi^{(f,g,c)}, \Gamma^{(f,g,c)})$ defined as follows.
		The map
		\[
		\Pi^{(f,g,c)} : \mathbb{R}^{d+1} \to \mathcal{L}(\mathcal{T},\mathcal{D}'(\mathbb{R}^{d+1}))
		\]
		is defined by
		\begin{align}\label{Ik78965}
			\begin{split}
				\Pi^{(f,g,c)}_z(\rb) &= f, \quad
				\Pi^{(f,g,c)}_z(\gXXXX) = X^{i} - X^{i}(z), \quad
				\Pi^{(f,g,c)}_z(\gI) = g - g(z), \\
				\Pi^{(f,g,c)}_z(\gIb) &= (g - g(z)) f - c, \quad
				\Pi^{(f,g,c)}_z(\gY) = 1, \quad
				\Pi^{(f,g,c)}_z(\gXXXX\rb) = \Pi^{(f,g,c)}_z(\rb) \Pi^{(f,g,c)}_z(\gXXXX).
			\end{split}
		\end{align}
		Also, $\Gamma^{(f,g,c)}:\mathbb{R}^{d+1}\times\mathbb{R}^{d+1}\rightarrow \mathcal{G} $
		is defined by
		\[
		\Gamma^{(f,g,c)}(z_1,z_2)
		=
		\Gamma^{(f,g,c)}_{z_1,z_2},
		\]
		where $\Gamma^{(f,g,c)}_{z_1,z_2}$ is the linear map given by
		\begin{align}\label{Ik789655}
			\begin{split}
				\Gamma^{(f,g,c)}_{z_1,z_2}(\rb) &= \rb, \quad
				\Gamma^{(f,g,c)}_{z_1,z_2}(\gI) = \gI - (\Pi^{(f,g,c)}_{z_1} \gI)(z_2)\, \gY, \quad
				\Gamma^{(f,g,c)}_{z_1,z_2}(\gY) = \gY, \\
				\Gamma^{(f,g,c)}_{z_1,z_2}(\gXXXX) &= \gXXXX - (\Pi^{(f,g,c)}_{z_1}\gXXXX)(z_2)\, \gY, \quad
				\Gamma^{(f,g,c)}_{z_1,z_2}(\gIb) = \gIb - (\Pi^{(f,g,c)}_{z_1} \gI)(z_2)\, \rb, \\
				\Gamma^{(f,g,c)}_{z_1,z_2}(\gXXXX\rb) &= \gXXXX\rb - (\Pi^{(f,g,c)}_{z_1} \gXXXX)(z_2)\, \rb.
			\end{split}
		\end{align}
		We set
		\[
		\mathcal{M}_0 :=
		\overline{\mathbf{M}\left(C(\mathbb{R}\times\mathbb{T}^d)\times C^{1}(\mathbb{R}\times\mathbb{T}^d)\times \mathbb{R}\right)}^{\,d_{\mathcal{M}}}.
		\]
		Then $\mathcal{M}_0$ is a Polish space.
	\end{lemma}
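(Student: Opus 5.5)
The plan is to show that $(\mathcal{M},d_{\mathcal M})$ is a complete metric space and that $\mathcal{M}_0$ is the closure of a separable set. Completeness of $\mathcal{M}_0$ is then inherited from $\mathcal{M}$, since a closed subset of a complete metric space is complete. Completeness of $\mathcal{M}$ under $d_{\mathcal M}$ is recorded in the preliminaries (the seminorms $\|\cdot;\cdot\|_{\mathcal{O}_n}$ induce a complete metrizable topology, cf. \cite{Hai14}), so it only remains to prove separability. For this it suffices to exhibit a countable subset of $C(\mathbb{R}\times\mathbb{T}^d)\times C^{1}(\mathbb{R}\times\mathbb{T}^d)\times\mathbb{R}$ whose image under $\mathbf{M}$ is $d_{\mathcal M}$-dense in $\mathbf{M}(C\times C^1\times\mathbb{R})$, and hence in $\mathcal{M}_0$.

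\textbf{Step 1: separability of the domain.} The spaces $C(\mathbb{R}\times\mathbb{T}^d)$ and $C^1(\mathbb{R}\times\mathbb{T}^d)$, with the seminorms $p_n$ and $\tilde p_n$, are Fréchet spaces. Each is separable: polynomials in $t$ times trigonometric polynomials in $x$ with rational coefficients are dense in $C^1([-n,n]\times\mathbb{T}^d)$ for every $n$, by Stone--Weierstrass applied to functions and their first derivatives. Together with $\mathbb{Q}\subset\mathbb{R}$, this gives a countable dense set $D$ in the product.

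\textbf{Step 2: local Lipschitz continuity of $\mathbf{M}$.} Fix $n$ and consider $(f,g,c)$ and $(\tilde f,\tilde g,\tilde c)$ bounded in the seminorms on a slightly larger set $\mathcal{O}_{n+1}$. We estimate $\|\mathbf{M}(f,g,c);\mathbf{M}(\tilde f,\tilde g,\tilde c)\|_{\mathcal{O}_n}$ symbol by symbol from \eqref{Ik78965}.
\begin{enumerate}
\item For $\rb$: $|\langle f-\tilde f,Q^l_z\rangle|\lesssim p_{n+1}(f-\tilde f)$. This gains the factor $l^{1+\kappa}\le 1$ for $l\le1$.
\item For $\gI$: $|g-\tilde g-(g-\tilde g)(z)|\lesssim \tilde p_{n+1}(g-\tilde g)\,d(\cdot,z)$. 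Here the spatial Lipschitz bound follows from $\nabla_x$. The time increment is controlled by $\partial_t$ as $|t-s|\le d^2\le d$ on the unit scale, which is enough since $1\ge 1-\kappa$.
\item For $\gIb$: the difference $(g-g(z))f-(\tilde g-\tilde g(z))\tilde f-(c-\tilde c)$ is handled by a bilinear splitting, producing $l^{-|\gIb|_h}=l^{2\kappa}$ times a quantity bounded by $\tilde p(g-\tilde g)p(f)+\tilde p(\tilde g)p(f-\tilde f)+|c-\tilde c|$. The factor $l^{2\kappa}$ is bounded on $l\le1$, so the constant term causes no harm.
\item For $\gXXXX\rb$: treated in the same way as $\rb$, using the bound on $X^i-X^i(z)$.
\item For $\Gamma$: $\Gamma-\tilde\Gamma$ involves only $(g-\tilde g)(z_2)-(g-\tilde g)(z_1)$, divided by $d(z_1,z_2)^{1-\kappa}$ or $d^{1+\kappa}$ for $\gIb\to\rb$ (gap $1-\kappa$). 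These are again controlled by $\tilde p_{n+1}(g-\tilde g)$, using $d\lesssim 1$ on compacts for the large-distance regime.
\end{enumerate}
Hence each seminorm of the difference tends to zero as $(\tilde f,\tilde g,\tilde c)\to(f,g,c)$, and therefore $d_{\mathcal M}\to0$ by dominated summation of the series \eqref{NHMAsi78}. The points to watch are periodicity (all objects here are periodic, and $\mathcal{O}_n$ covers a period) and the fact that $\mathbf{M}(f,g,c)$ is actually a model. The latter needs the algebraic identities $\Pi_z\Gamma_{z,w}=\Pi_w$ and $\Gamma\Gamma=\Gamma$, which are direct computations from \eqref{Ik78965}--\eqref{Ik789655}, plus the analytic bounds, which are exactly those estimated above.

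\textbf{Step 3: conclusion.} By continuity, $\mathbf{M}(D)$ is dense in $\mathbf{M}(C\times C^1\times\mathbb{R})$ and hence in its closure $\mathcal{M}_0$. Thus $\mathcal{M}_0$ is a separable closed subset of a complete metric space, so it is Polish.

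I expect the main obstacle to be the model-norm bounds for $\gI$ and $\Gamma$. One must check carefully that only $C^1$ regularity of $g$ is needed to control the parabolic $(1-\kappa)$-Hölder seminorms in time. This works because $|t-s|^{1/2(1-\kappa)}\ge|t-s|$ for $|t-s|\le1$, and for distances larger than $1$ on a compact set one can bound crudely by sup norms.
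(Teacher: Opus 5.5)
Your proposal is correct and follows essentially the same route as the paper. The paper's argument is: the domain $C(\mathbb{R}\times\mathbb{T}^d)\times C^1(\mathbb{R}\times\mathbb{T}^d)\times\mathbb{R}$ is separable, $\mathbf{M}$ is continuous into $(\mathcal{M},d_{\mathcal M})$, so its image is separable, and its closure in the complete space $\mathcal{M}$ is therefore Polish. The main difference is that you verify the continuity of $\mathbf{M}$ with symbol-by-symbol estimates, where the paper only asserts it ``by construction''; note also that both nonzero entries of $\Gamma-\tilde\Gamma$ have homogeneity gap $1-\kappa$, so the exponent $1+\kappa$ you mention is a harmless slip.
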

	
	\begin{proof}
		The space
		\[
		C(\mathbb{R}\times\mathbb{T}^d)
		\times
		C^{1}(\mathbb{R}\times\mathbb{T}^d)
		\times
		\mathbb{R}
		\]
		equipped with the locally uniform topologies generated by the above
		seminorms is a Polish
		space.
		By construction, the map $\mathbf{M}$ is continuous.
		Therefore,
		\[
		\mathbf{M}\left(
		C(\mathbb{R}\times\mathbb{T}^d)
		\times
		C^{1}(\mathbb{R}\times\mathbb{T}^d)
		\times
		\mathbb{R}
		\right)
		\]
		is a separable subset of $\mathcal{M}$.
		Since $\mathcal{M}_0$ is the closure of a separable set in the complete metric
		space $(\mathcal M,d_{\mathcal M})$, it is a complete and separable metric
		space. Hence, $\mathcal{M}_0$ is Polish.
	\end{proof}
	
	\begin{lemma}\label{JMasss}
		Let $(z_j)_{j\ge1}\subset \mathbb{R}\times\mathbb{T}^d$ be a fixed dense subset of $\mathbb{R}\times\mathbb{T}^d$. For every $\epsilon>0$, there exists a full-measure Borel set $\Omega^{\epsilon}\subset\Omega$ such that for every $\omega\in\Omega^{\epsilon}$, the following assertions hold:
		\begin{enumerate}
			\item For every $j\ge1$, the random variable
			\begin{align}\label{I1479AS}
				\begin{split}
					\mathcal{P}^{\epsilon}(\omega,z_j)
					&:=W^Q\bigl((\rho^{\epsilon}_{z_j})^{\mathrm{per}}\bigr)(\omega)\\
					&=\sum_{i\ge1}\omega_i
					\big\langle (\rho^{\epsilon}_{z_j})^{\mathrm{per}},
					f_i\big\rangle_{\overline{\mathcal{H}}^{Q}}
				\end{split}
			\end{align}
			is well defined (equivalently, the series converges).
			
			\item There exists a continuous function
			\[
			\mathcal{P}^{\epsilon}(\omega,\cdot)
			\in C(\mathbb{R}\times\mathbb{T}^d)
			\]
			such that
			\[
			\mathcal{P}^{\epsilon}(\omega,z_j)
			=
			W^Q\bigl((\rho^{\epsilon}_{z_j})^{\mathrm{per}}\bigr)(\omega)
			\]
			for every $j\ge1$, where the right-hand side is given by
			\eqref{I1479AS}.
		\end{enumerate}
	\end{lemma}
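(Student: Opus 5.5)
The plan is a Kolmogorov--Chentsov argument, applied separately to each fixed $\epsilon>0$. The random field is $z\mapsto W^Q((\rho^\epsilon_z)^{\mathrm{per}})$ on $\mathbb{R}\times\mathbb{T}^d$. Each $(\rho^\epsilon_{z})^{\mathrm{per}}$ is a smooth, compactly supported in time, spatially periodic function, so it lies in $\mathcal H_0$ and defines an element of $\overline{\mathcal H}^{Q}$.

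\textbf{Step 1 (pointwise convergence).} The partial sums $\sum_{i\le N}\omega_i\langle (\rho^\epsilon_{z_j})^{\mathrm{per}},f_i\rangle_{\overline{\mathcal H}^{Q}}$ are sums of independent centered Gaussians with summable variances, by Parseval. By Kolmogorov's two-series theorem, or by the martingale convergence theorem in $L^2$, they converge almost surely, and the limit agrees a.s.\ with the $L^2$-limit of Lemma~\ref{AYsHSsqq}. The convergence set is Borel, since it is defined by countably many conditions on finitely many coordinates. Intersecting over the countable family $(z_j)_j$ gives a full-measure Borel set on which (1) holds.

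\textbf{Step 2 (increment bound).} For $z,z'\in\mathbb{R}\times\mathbb{T}^d$ we have
\[
\mathbb{E}\bigl|W^Q((\rho^\epsilon_z)^{\mathrm{per}})-W^Q((\rho^\epsilon_{z'})^{\mathrm{per}})\bigr|^2=\|(\rho^\epsilon_z)^{\mathrm{per}}-(\rho^\epsilon_{z'})^{\mathrm{per}}\|^2_{\overline{\mathcal H}^{Q}}\le \|Q\|\,\|(\rho^\epsilon_z)^{\mathrm{per}}-(\rho^\epsilon_{z'})^{\mathrm{per}}\|^2_{L^2(\mathbb R\times\mathbb T^d)}.
\]
Since $\rho$ is smooth and compactly supported, the last quantity is at most $C_\epsilon|z-z'|^2$ (Euclidean distance, with the periodic distance in $x$). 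Gaussianity upgrades this to all moments: $\mathbb E|\cdot|^{p}\le C_{\epsilon,p}|z-z'|^{p}$. Taking $p>d+1$, the Kolmogorov--Chentsov theorem on each compact box $[-n,n]\times\mathbb{T}^d$ yields a locally H\"older modification. The modification is unique up to indistinguishability, so the versions for different $n$ are compatible, and we obtain a continuous function $\mathcal P^\epsilon(\omega,\cdot)$.

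\textbf{Step 3 (full-measure Borel set where $\mathcal P^\epsilon$ extends continuously from the dense set).} Rather than invoke an abstract modification, I would define $\Omega^\epsilon$ directly: it is the set of $\omega$ in the Step~1 set such that, for every $n$, the map $z_j\mapsto\mathcal P^\epsilon(\omega,z_j)$ restricted to $\{z_j\}\cap[-n,n]\times\mathbb{T}^d$ is uniformly continuous. This set is Borel, being expressed through countably many conditions on the measurable functions $\mathcal P^\epsilon(\cdot,z_j)$. The Kolmogorov chaining estimate on dyadic points, refined to the countable dense set, shows that it has full measure. On $\Omega^\epsilon$ the function has a unique continuous extension to $\mathbb{R}\times\mathbb{T}^d$, which gives (2).

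The main obstacle is this last step. One must check that the chaining argument of Kolmogorov--Chentsov can be carried out on an arbitrary countable dense set $(z_j)$ rather than on dyadic points. The simplest fix I would try is to note that the proof of Kolmogorov's theorem really only uses a countable dense set together with the moment bound. Alternatively, one can enlarge the index set to the union of $(z_j)$ and the dyadic points, apply Step~1 to this still countable set, and use the fact that uniform continuity on the union implies uniform continuity on $(z_j)$.
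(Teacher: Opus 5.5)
Your proposal is correct and follows essentially the paper's route. You bound $\|(\rho^\epsilon_z)^{\mathrm{per}}-(\rho^\epsilon_{z'})^{\mathrm{per}}\|_{\overline{\mathcal H}^Q}\lesssim_\epsilon |z-z'|$ and upgrade it to all moments (the paper cites equivalence of moments in a fixed Wiener chaos, while you use Gaussianity of first-chaos variables directly), then apply Kolmogorov--Chentsov on each $[-n,n]\times\mathbb{T}^d$ and intersect countably many full-measure sets.

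The step you flag as the main obstacle is not actually one. The continuous modification from Step~2 agrees with the series at each $z_j$ almost surely, hence at all $z_j$ simultaneously off a single null set. This immediately shows that your explicitly Borel set in Step~3 has full measure, so there is no need to rerun the chaining on $(z_j)$.
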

	\begin{proof}
		For each $z\in \mathbb{R}\times\mathbb{T}^{d}$, we have
		$W^Q((\rho^{\epsilon}_{z})^{\mathrm{per}})\in\mathcal{H}_{1}$.
		Hence, by \cite[Lemma 10.5]{Hai14} and \eqref{I4766sd}, for every $z,w\in\mathbb{R}\times\mathbb{T}^{d}$,
		\[
		\mathbb{E}\Big[
		\big|
		W^Q((\rho^{\epsilon}_{z})^{\mathrm{per}})
		\big|^{2p}
		\Big]
		\lesssim
		\left(
		\sum_{i\ge1}
		\big\langle
		(\rho^{\epsilon}_{z})^{\mathrm{per}},
		f_i
		\big\rangle_{\overline{\mathcal{H}}^{Q}}^{2}
		\right)^p
		=
		\|
		(\rho^{\epsilon}_{z})^{\mathrm{per}}
		\|_{\overline{\mathcal{H}}^{Q}}^{2p}
		<\infty.
		\]
		
		Similarly,
		\begin{align*}
			&\mathbb{E}\Big[
			\big|
			W^Q((\rho^{\epsilon}_{w})^{\mathrm{per}})
			-
			W^Q((\rho^{\epsilon}_{z})^{\mathrm{per}})
			\big|^{2p}
			\Big]
			\lesssim
			\left(
			\mathbb{E}\Big[
			\big|
			W^Q((\rho^{\epsilon}_{w})^{\mathrm{per}})
			-
			W^Q((\rho^{\epsilon}_{z})^{\mathrm{per}})
			\big|^{2}
			\Big]
			\right)^{p} \\&=
			\left(
			\sum_{i\ge1}
			\left(
			\big\langle
			(\rho^{\epsilon}_{w})^{\mathrm{per}}
			-
			(\rho^{\epsilon}_{z})^{\mathrm{per}},
			f_i
			\big\rangle_{\overline{\mathcal{H}}^{Q}}
			\right)^2
			\right)^p =
			\|
			(\rho^{\epsilon}_{w})^{\mathrm{per}}
			-
			(\rho^{\epsilon}_{z})^{\mathrm{per}}
			\|_{\overline{\mathcal{H}}^{Q}}^{2p} \\
			&\le
			R_{\epsilon}\,
			\|w-z\|^{2p}.
		\end{align*}
		For every $z_j$ in the prescribed countable dense subset, the series in
		\eqref{I1479AS} converges almost surely. Choosing $p$ sufficiently large, the
		above increment estimate and the Kolmogorov--Chentsov theorem, applied on
		$[-n,n]\times\mathbb T^d$ for every $n\in\mathbb N$, imply that the process
		admits a continuous modification. Taking the intersection of the corresponding
		countably many full-measure sets, we obtain a full-measure Borel set
		$\Omega^\epsilon$ on which both assertions hold.
	\end{proof}
	\begin{remark}\label{Asaisada}
		For technical reasons in Lemma~\ref{JMasss}, we fixed a dense sequence $(z_j)_{j\geq 1}$.
		However, the construction works for any countable dense subset.
		Note that any two continuous versions are indistinguishable in the sense that if we consider two versions
		$\mathcal{P}^{\epsilon}(\omega,\cdot)$ and $\widetilde{\mathcal{P}}^{\epsilon}(\omega,\cdot)$ defined on sets of full measure and continuous, then
		\[
		\mathbb{P}\Big(
		\omega :
		\mathcal{P}^{\epsilon}(\omega,\cdot)
		=
		\widetilde{\mathcal{P}}^{\epsilon}(\omega,\cdot)
		\Big)
		=1.
		\]
	\end{remark}
	\begin{lemma}\label{ASHyasas}
		Let $\epsilon>0$, and consider the setting of Lemma~\ref{JMasss}.
		Assume also that $C_\epsilon$ is the constant defined in
		\eqref{AISmfdsd}. Then the map
		\[
		\mathcal{Z}^{\epsilon}:
		(\Omega,\mathcal{B}(\Omega))
		\longrightarrow
		(\mathcal{M}_0,\mathcal{B}(\mathcal{M}_0)),
		\]
		defined by
		\begin{align}\label{Baxczxx}
			\mathcal{Z}^{\epsilon}(\omega)
			:=
			\begin{cases}
				\mathbf{M}\bigl(
				\mathcal{P}^{\epsilon}(\omega),
				K*\mathcal{P}^{\epsilon}(\omega),
				C_\epsilon
				\bigr),
				& \omega\in\Omega^\epsilon,
				\\[1mm]
				\mathbf{M}(0,0,0),
				& \omega\notin\Omega^\epsilon,
			\end{cases}
		\end{align}
		is Borel measurable, where $\mathcal{P}^{\epsilon}(\omega)(z):=\mathcal{P}^{\epsilon}(\omega, z)$
	\end{lemma}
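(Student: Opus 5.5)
The plan is to write $\mathcal{Z}^{\epsilon}$ as a composition of measurable maps. Define
\[
\Phi^{\epsilon}:\Omega\to C(\mathbb{R}\times\mathbb{T}^d)\times C^{1}(\mathbb{R}\times\mathbb{T}^d)\times\mathbb{R},\qquad
\Phi^{\epsilon}(\omega):=\bigl(\mathcal{P}^{\epsilon}(\omega),\,K*\mathcal{P}^{\epsilon}(\omega),\,C_\epsilon\bigr)\mathbf{1}_{\Omega^\epsilon}(\omega).
\]
Then $\mathcal{Z}^{\epsilon}=\mathbf{M}\circ\Phi^{\epsilon}$. By Lemma~\ref{polikks}, $\mathbf{M}$ is continuous with values in $\mathcal{M}_0$, so it is Borel. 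It therefore suffices to prove that $\Phi^{\epsilon}$ is Borel measurable, and the argument splits into its three components.

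\textbf{The component $\omega\mapsto\mathcal{P}^\epsilon(\omega)$.} Since $C(\mathbb{R}\times\mathbb{T}^d)$ with the seminorms $p_n$ is Polish, its Borel $\sigma$-algebra is generated by the evaluation maps $f\mapsto f(z_j)$ along the fixed dense sequence $(z_j)$. This holds because each $p_n(f)=\sup_{z_j\in[-n,n]\times\mathbb{T}^d}|f(z_j)|$ is measurable for the cylinder $\sigma$-algebra, so open balls, and hence all open sets by separability, are cylinder-measurable. It is then enough to check that $\omega\mapsto\mathcal{P}^\epsilon(\omega,z_j)\mathbf{1}_{\Omega^\epsilon}(\omega)$ is measurable for each $j$. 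On $\Omega^\epsilon$ this function is the limit of the partial sums $\sum_{i\le N}\omega_i\langle(\rho^\epsilon_{z_j})^{\mathrm{per}},f_i\rangle_{\overline{\mathcal H}^Q}$ from \eqref{I1479AS}. Each partial sum is continuous in $\omega$ for the product topology, so the limit is measurable. Here we use that $\Omega^\epsilon$ is Borel, as stated in Lemma~\ref{JMasss}.

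\textbf{The component $\omega\mapsto K*\mathcal{P}^\epsilon(\omega)$.} I would show that $f\mapsto K*f$ is continuous from $C(\mathbb{R}\times\mathbb{T}^d)$ into $C^1(\mathbb{R}\times\mathbb{T}^d)$. Composing with the previous step then gives measurability. I expect this to be the main obstacle. The kernel $K$ is compactly supported and integrable, but its first derivatives are only borderline: $\partial_{x_i}K$ is integrable, while $\partial_t K$ is not absolutely integrable. So one cannot differentiate under the integral using only continuity of $f$.

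I would try first to use the extra regularity of $\mathcal{P}^\epsilon(\omega)=\rho^\epsilon*\xi^Q$. For fixed $\epsilon$, the Kolmogorov estimate in Lemma~\ref{JMasss}, run on derivatives, shows $\mathcal{P}^\epsilon$ is actually $C^1$ (indeed smooth), with derivatives obtained by differentiating $\rho^\epsilon$. Then $\partial(K*\mathcal{P}^\epsilon)=K*\partial\mathcal{P}^\epsilon$. On the full-measure set one may thus regard $\mathcal{P}^\epsilon(\omega)$ as an element of $C^1$, with measurability of $\omega\mapsto\mathcal{P}^\epsilon(\omega)\in C^1$ proved by the same evaluation argument applied to derivatives at the points $z_j$. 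The map $f\mapsto K*f$ from $C^1$ to $C^1$ is continuous for the local seminorms $\tilde p_n$ because $K\in L^1$ with compact support. If enlarging $\Omega^\epsilon$ this way is undesirable, the alternative is to write $K*f$ locally via the heat semigroup, placing the time derivative on $f$ through $(\partial_t-\Delta)K=\delta+$ a smooth term, so that the same continuity is obtained.

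\textbf{The constant component and conclusion.} The constant $C_\epsilon$ is trivially measurable, and off $\Omega^\epsilon$ the map $\Phi^\epsilon$ is constant. Hence $\Phi^\epsilon$ is Borel, and so is $\mathcal{Z}^\epsilon=\mathbf{M}\circ\Phi^\epsilon$.
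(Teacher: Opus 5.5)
Your argument is correct, but it is organized differently from the paper's.

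\textbf{Your route.} You factor $\mathcal{Z}^{\epsilon}=\mathbf{M}\circ\Phi^{\epsilon}$ through the parameter space $C(\mathbb{R}\times\mathbb{T}^d)\times C^{1}(\mathbb{R}\times\mathbb{T}^d)\times\mathbb{R}$ and use continuity of $\mathbf{M}$. This obliges you to prove that $\omega\mapsto K*\mathcal{P}^\epsilon(\omega)$ is measurable as a $C^1$-valued map. Since $f\mapsto K*f$ is not continuous from $C$ into $C^1$ (your remark on $\partial_tK$ is right), you must pass to a $C^1$ version of $\mathcal{P}^\epsilon$, possibly on a smaller full-measure Borel set. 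Lemma~\ref{JMasss} allows this, since it only asserts the existence of some $\Omega^\epsilon$.

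\textbf{The paper's route.} The paper works directly in the target space. Since $\mathcal{M}_0$ is Polish, it suffices that $\omega\mapsto\|\mathcal{Z}^\epsilon(\omega);\mathbf{M}(f_j,g_j,c_j)\|_{\mathcal{O}_n}$ is Borel for a countable dense family $(f_j,g_j,c_j)$. These model seminorms are suprema of pairings $\langle\Pi_z\tau,Q^l_z\rangle$ and of $\Gamma$-coefficients. Each of these depends continuously on $\mathcal{P}^\epsilon(\omega)$ in the local uniform topology, because $f\mapsto K*f$ is continuous from $C$ to $C$. A supremum of continuous functions is lower semicontinuous, hence Borel. So only your first step, the $C$-valued measurability of $\mathcal{P}^\epsilon$, is needed, and $C^1$ never enters.

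\textbf{Comparison.} Your route buys modularity. It also makes explicit a well-definedness point the paper passes over: $\mathbf{M}$ requires $g\in C^1$, while Lemma~\ref{JMasss} only yields a continuous version of $\mathcal{P}^\epsilon$. Its cost is a standard but unwritten step: you must identify the continuous version of the derivative field with the derivative of the continuous version of $\mathcal{P}^\epsilon$. One way is to use the almost sure identity $\mathcal{P}^\epsilon(z+he)-\mathcal{P}^\epsilon(z)=\int_0^h W^Q\bigl(\tfrac{\mathrm{d}}{\mathrm{d}s}(\rho^\epsilon_{z+se})^{\mathrm{per}}\bigr)\,\mathrm{d}s$ on a countable dense set of $(z,h)$, then extend it by continuity.

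\textbf{Drop the fallback.} Your alternative via $(\partial_t-\Delta)K=\delta+\text{smooth}$ does not work. For merely continuous $f$, $\Delta(K*f)$ need not be continuous, so it does not give continuity from $C$ into $C^1$. Keep the smoothness route, or switch to the paper's.
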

	\begin{proof}
		First recall that, by Lemma \ref{polikks}, the space $\mathcal{M}_0$ endowed with the subspace topology inherited from $\mathcal{M}$ is a Polish space.
		Moreover, the topology of $\mathcal{M}$ is generated by the metric defined in \eqref{NHMAsi78}. Hence, the induced topology on $\mathcal{M}_0$ is also metrizable and generated by the corresponding restricted seminorms. Assume that $(f_j,g_j,c_j)_{j\geq 1}$ is a dense sequence in
		\[
		C(\mathbb{R}\times\mathbb{T}^d)\times C^{1}(\mathbb{R}\times\mathbb{T}^d)\times \mathbb{R}.
		\]
		In particular, $\bigl(\mathbf{M}(f_j,g_j,c_j)\bigr)_{j\geq 1}$
		is dense in $\mathcal{M}_0$. Since $\Omega^\epsilon$ is Borel, it suffices to verify the measurability on $\Omega^\epsilon$.
		With these preliminaries, it is clear that $\mathcal{Z}^{\epsilon}$ is measurable if for all $n,j \geq 1$ and $r>0$ we have
		\begin{align*}
			\Bigl\{ \omega \in \Omega^\epsilon :
			\bigl\|
			\mathbf{M}\bigl(\mathcal{P}^{\epsilon}(\omega),
			K * \mathcal{P}^{\epsilon}(\omega),
			C_\epsilon\bigr)
			;
			\mathbf{M}(f_j,g_j,c_j)
			\bigr\|_{B(0,n)} \leq r
			\Bigr\}
			\in \mathcal{B}(\Omega).
		\end{align*}
		The latter claim follows from \eqref{NBAVsss}, \eqref{NBA1}, and \eqref{NBA2}. Indeed, by Lemma~\ref{JMasss}, after setting
		$\mathcal{P}^{\epsilon}(\omega,\cdot)=0$ on $(\Omega^\epsilon)^c$, the maps
		
		$$
		\omega\longmapsto \mathcal{P}^{\epsilon}(\omega,z_j),
		\qquad j\geq1,
		$$
		
		are Borel measurable. Since $(z_j)_{j\geq1}$ is dense and
		$\mathcal{P}^{\epsilon}(\omega,\cdot)$ is continuous, this implies that
		
		$$
		\omega\longmapsto \mathcal{P}^{\epsilon}(\omega,\cdot)
		$$
		is Borel measurable as a map into
		$C(\mathbb{R}\times\mathbb{T}^d)$. Hence $\mathcal{Z}^{\epsilon}$ is Borel measurable.
	\end{proof}
	The latter lemma is a crucial point that allows us to explicitly construct our metric dynamical system. This is the topic of the next result.
	\begin{lemma}\label{Aaasqwqwa}
		Consider the setting of Lemma \ref{ASHyasas} with the probability space
		\[
		\left(\mathcal{M}_0, \mathcal{B}(\mathcal{M}_0), (\mathcal{Z}^{\epsilon})_{\#}\mathbb{P}\right),
		\]
		where $(\mathcal{Z}^{\epsilon})_{\#}\mathbb{P}$ denotes the pushforward measure of $\mathbb{P}$ under $\mathcal{Z}^{\epsilon}$. Then the following statements hold:
		
		\begin{enumerate}
			\item One can define a family of continuous maps
			\[
			\Theta=(\Theta_t)_{t\in \mathbb{R}}
			\]
			such that for each $t\in\mathbb{R}$ (with a slight abuse of notation, we still write $U_t$), we have
			\begin{align*}
				&\Theta_t:\mathcal{M}_0\to\mathcal{M}_0,\\
				&\Theta_t(\Pi,\Gamma)=\bigl(U_t\Pi,U_t\Gamma\bigr),
			\end{align*}
			where, for $		\tau\in\{\rb, \gIb, \gXXXX\rb, \gY, \gI, \gXXXX:\, i=1,\ldots,d\}$
			and $z\in\mathbb{R}^{d+1}$, we define
			\[
			(U_t\Pi)(\tau)(z):=U_t\bigl(\Pi_{z+(t,0)}\tau\bigr).
			\]
			Moreover, $\Theta_t\Gamma$ is defined by \eqref{B1478} using $\Theta_t\Pi$.
			
			\item For each $t,s\in\mathbb{R}$,
			\begin{align}\label{UAJsss}
				\Theta_{t+s}=\Theta_t\circ\Theta_s.
			\end{align}
			Moreover,
			\begin{align}\label{UAJsss1}
				(\Theta_t)_{\#}(\mathcal{Z}^{\epsilon})_{\#}\mathbb{P}
				=
				(\mathcal{Z}^{\epsilon})_{\#}\mathbb{P},
			\end{align}
			and hence $\Theta_t$ is measure-preserving.	
		\end{enumerate}
	\end{lemma}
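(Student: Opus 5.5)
We will prove both parts by working on the dense image of $\mathbf{M}$ and then passing to the closure. For part (1), fix $t\in\mathbb{R}$. On elements of the form $\mathbf{M}(f,g,c)$ a direct computation from \eqref{Ik78965} and \eqref{Ik789655} gives
\[
\Theta_t\mathbf{M}(f,g,c)=\mathbf{M}\bigl(U_tf,U_tg,c\bigr).
\]
For instance, $(U_t\Pi)_z(\gIb)=U_t\bigl((g-g(z+(t,0)))f-c\bigr)=(U_tg-(U_tg)(z))U_tf-c$, and the remaining symbols are handled the same way, including the polynomial symbols $\gXXXX$ (the shift is purely temporal). Hence $\Theta_t$ maps $\mathbf{M}(C\times C^1\times\mathbb{R})$ into itself.

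Next we check that $\Theta_t$ is uniformly continuous with respect to $d_{\mathcal M}$ on all of $\mathcal{M}$. The key observation is that the test functions satisfy $U_tQ^l_z$-pairings of the form $\langle U_t\Pi_{z+(t,0)}a,Q^l_z\rangle=\langle\Pi_{z+(t,0)}a,Q^l_{z+(t,0)}\rangle$. Therefore
\[
\|\Theta_t\mathcal{Z};\Theta_t\widetilde{\mathcal{Z}}\|_{\mathcal{O}_n}=\|\mathcal{Z};\widetilde{\mathcal{Z}}\|_{\mathcal{O}_n+(t,0)}\le\|\mathcal{Z};\widetilde{\mathcal{Z}}\|_{\mathcal{O}_{n+\lceil|t|\rceil}},
\]
and the same holds for the $\Gamma$-part, since $d$ is translation invariant. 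Summing with the weights $2^{-n}$ then gives
\[
d_{\mathcal M}(\Theta_t\mathcal Z,\Theta_t\widetilde{\mathcal Z})\le 2^{\lceil|t|\rceil}d_{\mathcal M}(\mathcal Z,\widetilde{\mathcal Z}),
\]
using the monotonicity of $x\mapsto x/(1+x)$. Thus $\Theta_t$ is Lipschitz. It maps the dense subset into $\mathcal M_0$, so by continuity it maps the closure $\mathcal{M}_0$ into $\mathcal{M}_0$. We also verify that $\Theta_t$ preserves the consistency relations and periodicity, which is immediate because it is a relabelling of base points.

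For part (2), the group law \eqref{UAJsss} follows from $U_{t+s}=U_tU_s$ on distributions together with the definition $(U_t\Pi)(\tau)(z)=U_t(\Pi_{z+(t,0)}\tau)$, giving $\Theta_t\Theta_s\Pi_z=U_tU_s\Pi_{z+(t+s,0)}$. The identity for $\Gamma$ is inherited because $\Gamma$ is determined by $\Pi$ via \eqref{B1478}.

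The main step is \eqref{UAJsss1}. By the first part of the proof, on $\Omega^\epsilon$ we have
\[
\Theta_t\mathcal{Z}^\epsilon(\omega)=\mathbf{M}\bigl(U_t\mathcal P^\epsilon(\omega),K*U_t\mathcal P^\epsilon(\omega),C_\epsilon\bigr),
\]
since $U_t$ commutes with convolution by $K$. Because $\mathbf{M}$ is continuous and $C_\epsilon$ is deterministic, it suffices to show that the $C(\mathbb R\times\mathbb T^d)$-valued random variables $U_t\mathcal P^\epsilon$ and $\mathcal P^\epsilon$ have the same law. By continuity of paths, the Borel $\sigma$-algebra is generated by point evaluations, so we reduce to finite-dimensional distributions at points $z_1,\dots,z_m$. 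These are centred Gaussian vectors with entries $W^Q((\rho^\epsilon_{z_i+(t,0)})^{\rm per})=W^Q(U_{-t}(\rho^\epsilon_{z_i})^{\rm per})$ (up to the sign convention of $U$). Their covariances coincide with those at $t=0$ because $U_t$ is an isometry of $\overline{\mathcal H}^Q$, exactly as in Lemma \ref{UYHNAss}; its proof extends verbatim to $m$-tuples.

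The anticipated obstacle is the almost-sure qualification. $U_t\mathcal P^\epsilon$ is a continuous version of the shifted field only on $\Omega^\epsilon$, so the identification of $\Theta_t\mathcal Z^\epsilon$ with $\mathcal Z^\epsilon$ holds in law rather than pathwise. Remark \ref{Asaisada} handles this: modifying on null sets does not change the pushforward measure, since the uniqueness of continuous versions ensures the two laws agree on all cylinder sets and hence everywhere.
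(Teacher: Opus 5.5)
Your proposal is correct and follows the paper's argument. For item (1) you use the identity $\Theta_t\mathbf{M}(f,g,c)=\mathbf{M}(U_tf,U_tg,c)$, continuity of $\Theta_t$, and a closure argument; for \eqref{UAJsss1} you reduce to equality in law of $\mathcal{P}^\epsilon$ and its time shift, using invariance of Gaussian covariances under the isometry $U_t$ of $\overline{\mathcal H}^{Q}$ (as in Lemma \ref{UYHNAss}) together with Remark \ref{Asaisada}. Your explicit bound $d_{\mathcal M}(\Theta_t\mathcal Z,\Theta_t\widetilde{\mathcal Z})\le 2^{\lceil|t|\rceil}d_{\mathcal M}(\mathcal Z,\widetilde{\mathcal Z})$ and the reduction to finite-dimensional distributions just make precise what the paper asserts in one line.
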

	\begin{proof}
		It is clear that for each $t\in\mathbb{R}$ one has
		$\Theta_t(\mathcal{M}_0)\subseteq\mathcal{M}$. We prove that, in fact,
		$\Theta_t(\mathcal{M}_0)\subseteq\mathcal{M}_0$. First note that, by definition,
		$\Theta_t:\mathcal{M}_0\to\mathcal{M}$ is continuous. Moreover,
		$$
		\Theta_t\mathbf{M}(f,g,c)
		=
		\mathbf{M}(U_tf,U_tg,c).
		$$
		Since $U_t$ is a bijection on both
		$C(\mathbb{R}\times\mathbb{T}^d)$ and
		$C^1(\mathbb{R}\times\mathbb{T}^d)$, it follows that
		\begin{align*}
			\Theta_t\bigl(\mathbf{M}(C(\mathbb{R}\times\mathbb{T}^d)
			\times C^{1}(\mathbb{R}\times\mathbb{T}^d)\times\mathbb{R})\bigr)
			=
			\mathbf{M}(C(\mathbb{R}\times\mathbb{T}^d)
			\times C^{1}(\mathbb{R}\times\mathbb{T}^d)\times\mathbb{R}).
		\end{align*}
		Thus, by continuity of $\Theta_t$ and the definition of $\mathcal{M}_0$,
		
		$$
		\Theta_t(\mathcal{M}_0)\subseteq\mathcal{M}_0.
		$$
		This proves the first item.
		The proof of \eqref{UAJsss} is straightforward. We now turn to the proof of \eqref{UAJsss1}. To this end, it suffices to show that
		\begin{align*}
			\int_{\Omega} \overline{F}\bigl(\mathcal{Z}^{\epsilon}(\omega)\bigr)\,\mathbb{P}(\mathrm{d}\omega)
			=
			\int_{\Omega}  \overline{F}\bigl(\Theta_t \mathcal{Z}^{\epsilon}(\omega)\bigr)\,\mathbb{P}(\mathrm{d}\omega),
		\end{align*}
		for every bounded measurable function $ \overline{F}:\mathcal{M}_0\to\mathbb{R}$.
		Equivalently, by \eqref{Baxczxx}, it suffices to show that
		\begin{align*}
			\int_{\Omega}
			\overline{F}\left(
			\mathbf{M}\bigl(
			\mathcal{P}^{\epsilon}(\omega),
			K * \mathcal{P}^{\epsilon}(\omega),
			C_\epsilon
			\bigr)
			\right)
			\mathbb{P}(\mathrm{d}\omega)
			=
			\int_{\Omega}
			\overline{F}\left(
			\Theta_t
			\mathbf{M}\bigl(
			\mathcal{P}^{\epsilon}(\omega),
			K * \mathcal{P}^{\epsilon}(\omega),
			C_\epsilon
			\bigr)
			\right)
			\mathbb{P}(\mathrm{d}\omega).
		\end{align*}
		This follows from Lemma \ref{UYHNAss}, Lemma~\ref{JMasss} and Remark~\ref{Asaisada}.
	\end{proof}
	\begin{remark}\label{REFRass}
		For each $\epsilon>0$, let
		\[
		\xi^\epsilon(\omega,z):=\mathcal{P}^\epsilon(\omega,z).
		\]
		Since $\xi^\epsilon$ is smooth, one may consider the classical random
		parabolic PDE
		\[
		\begin{cases}
			(\partial_t-\Delta)u_\epsilon
			=
			\mu u_\epsilon
			+
			\Sigma(u_\epsilon)\xi^\epsilon
			-
			C_\epsilon\Sigma'(u_\epsilon)\Sigma(u_\epsilon),
			& t>0,\\
			u_\epsilon(0,\cdot)=u_0\in L^\infty(\mathbb T^d).
		\end{cases}
		\]
		On the other hand, let $\RR{\mathcal{U}}_{u_0}$ denote the solution of the equation
		\begin{align*}
			\begin{cases}
				(\partial_t - \Delta) \RR{\mathcal{U}}
				=\mu  \RR{\mathcal{U}}+ \Sigma(\RR{\mathcal{U}})\tilde{\star} \rb,
				& t > 0, \\
				\mathcal{R}(\RR{\mathcal{U}})(0,\cdot) = {u_0},
				& t=0,
			\end{cases}
		\end{align*}
		with respect to $\mathcal{Z}^{\epsilon}$, cf. Definition~\ref{dfn}.
		Since
		\begin{align}
			\begin{split}
				&\Pi_z^{\bigl(
					\mathcal{P}^{\epsilon}(\omega),
					K*\mathcal{P}^{\epsilon}(\omega),
					C_\epsilon
					\bigr)}(\rb)
				=\mathcal{P}^{\epsilon}(\omega,\cdot)=	\xi^\epsilon(\omega,\cdot),\\
				&\Pi_z^{\bigl(
					\mathcal{P}^{\epsilon}(\omega),
					K*\mathcal{P}^{\epsilon}(\omega),
					C_\epsilon
					\bigr)}(\gIb)
				=
				\bigl(K*\mathcal P^\epsilon
				-(K*\mathcal P^\epsilon)(z)\bigr)
				\mathcal P^\epsilon
				-C_\epsilon,
			\end{split}
		\end{align}
		and $\mathcal P^\epsilon$ is function, the reconstruction operator is simply
		evaluation. Consequently,
		\begin{align*} 
			&\mathcal{R}\left(\Sigma(\RR{\mathcal{U}}_{u_0})\tilde{\star} \rb\right)(z)=\Pi^{(\mathcal{P}^{\epsilon}(\omega), K * \mathcal{P}^{\epsilon}(\omega), C_\epsilon)}_z\left(\Sigma(\RR{\mathcal{U}}_{u_0})\tilde{\star} \rb\right)(z)\\&=\Sigma\left({\mathcal{U}}^{\gY}(z)\right)\xi^\epsilon(\omega,z)-C_{\epsilon}\Sigma^{\prime}\left({\mathcal{U}}^{\gY}(z)\right)\Sigma\left({\mathcal{U}}^{\gY}(z)\right)
		\end{align*}
		Therefore, $\mathcal R(\RR{\mathcal{U}}_{u_0})$ satisfies the same classical PDE as
		$u_\epsilon$. By uniqueness of classical solutions, we conclude that
		\[
		\mathcal R(\RR{\mathcal{U}}_{u_0})=u_\epsilon.
		\]
	\end{remark}
	The final ingredient in the construction of the solution theory for
	\eqref{MAIN2} is to show that the family of models $\left(\mathcal{Z}^{\epsilon}\right)_{\epsilon>0}$
	converges to a limiting model, which we denote by $\mathcal{Z}$. This, in
	turn, provides the solution theory for \eqref{MAIN2} by passing to the limit
	in the corresponding sequence of renormalized equations. Moreover, we prove
	that the limiting model satisfies the same properties as those established in
	Lemma~\ref{Aaasqwqwa}. 
	\begin{proposition}\label{Ujasn785sw}
		Consider the situation of Lemma~\ref{Aaasqwqwa}. Then there exists a limiting random model
		${\mathcal Z}:\Omega\rightarrow\mathcal{M}_0$, independent of the choice of the mollifier $\rho$,
		such that, for each compact set $\mathcal{O}\subset\R^{d+1}$,
		\begin{align}\label{Oaslnr}
			\lim_{\epsilon\rightarrow 0}
			\E\big[\Vert\mathcal{Z}^{\epsilon};{\mathcal Z}\Vert_{\mathcal{O}}\big]
			=0.
		\end{align}
		Moreover, for every $p\geq1$,
		\begin{align}\label{uniform-model-moments}
			\sup_{\epsilon\in(0,1)}
			\E\big[
			\Vert\mathcal{Z}^{\epsilon}\Vert_{\mathcal{O}}^{p}
			\big]
			<\infty.
		\end{align}
		Finally for each $t\in\R$
		\begin{align}\label{sdKisd4ws}
			(\Theta_t)_{\#}(\mathcal{Z})_{\#}\mathbb{P}
			=
			(\mathcal{Z})_{\#}\mathbb{P},
		\end{align}
	\end{proposition}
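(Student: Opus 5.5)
The argument has three parts: convergence of the mollified models, uniform moment bounds, and translation invariance of the limit law. For the first two, the plan is to adapt Hairer's convergence criterion for Gaussian models \cite[Theorem~10.7]{Hai14}. The model $\mathcal Z^\epsilon$ is built from finitely many objects: $\xi^\epsilon$ in the first chaos, $K*\xi^\epsilon$, the products with $X^i$, and the Wick product $\xi^\epsilon\diamond\mathcal I[\xi^\epsilon]$ in the second chaos. By Assumption~\ref{asasww} and \eqref{asasaiasa}, each is controlled through its covariance. The spectral density of $\xi^Q$ is $q_k$, flat in time frequency and bounded by $(1+|k|^2)^{-\beta}$ in space. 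For each homogeneous symbol $\tau\in\mathcal A$ with $|\tau|_h<0$, I would prove the following for some $\delta>0$, uniformly in $z\in\mathcal O$, $Q\in\mathcal B^2_0$, $l\in(0,1]$:
\begin{align*}
\E\big[|\langle\Pi^\epsilon_z\tau,Q^l_z\rangle|^2\big]\lesssim l^{2|\tau|_h+\delta},\qquad
\E\big[|\langle(\Pi^\epsilon_z-\Pi^{\bar\epsilon}_z)\tau,Q^l_z\rangle|^2\big]\lesssim (\epsilon\vee\bar\epsilon)^{\delta}\, l^{2|\tau|_h+\delta}.
\end{align*}
For $\rb$ and $\gXXXX\rb$ this follows from Fourier analysis, since $\beta-\tfrac d2>-\kappa$ gives effective regularity strictly better than $-1-\kappa$. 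For $\gIb$, the term $\mathbb E[\xi^\epsilon\,\mathcal I[\xi^\epsilon]]=C_\epsilon$ is subtracted by construction, so only the second chaos remains. Its covariance is a product of two first-order covariances integrated against $K$, and the power counting closes because $|\gIb|_h=-2\kappa>-1$ and $\kappa<1/3$.

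Next, Gaussian hypercontractivity (equivalence of moments in fixed chaos, \cite[Lemma~10.5]{Hai14}) upgrades these bounds to all $p$-th moments. A Kolmogorov-type argument over a wavelet or dyadic test-function family, as in \cite[Proposition~3.32, Theorem~10.7]{Hai14}, then yields \eqref{uniform-model-moments}. It also shows that $(\mathcal Z^\epsilon)$ is Cauchy in $L^p(\Omega;\|\cdot\|_{\mathcal O})$ for every compact $\mathcal O$. Bounds on $\Gamma$ reduce to those on $\gI$ via $\|\Gamma\|_{\mathcal O}\lesssim 1+[\gI]_{\mathcal O}$.

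I take the limit $\mathcal Z$ along a sequence $\epsilon_n=2^{-n}$, using a subsequence converging a.s. in $d_{\mathcal M}$. Since each $\mathcal Z^\epsilon$ lies in $\mathcal M_0$ and $\mathcal M_0$ is closed, $\mathcal Z\in\mathcal M_0$. Measurability follows as an a.s. limit of the measurable maps from Lemma~\ref{ASHyasas}, after redefining $\mathcal Z$ on a null set. Independence of the mollifier $\rho$ follows from running the same second-moment difference estimate for two mollifiers $\rho,\tilde\rho$. The difference of the renormalization constants $C_\epsilon-\tilde C_\epsilon$ is absorbed because each model is Wick-renormalized with its own constant, and only centered chaos terms remain.

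Finally, \eqref{sdKisd4ws}. For bounded continuous $\overline F$ on $\mathcal M_0$, $\Theta_t$ is continuous by Lemma~\ref{Aaasqwqwa}, and $\mathcal Z^{\epsilon_n}\to\mathcal Z$ in probability in $d_{\mathcal M}$ by \eqref{Oaslnr} and dominated convergence on the series. Hence $\overline F(\Theta_t\mathcal Z^{\epsilon_n})\to\overline F(\Theta_t\mathcal Z)$ in probability. Passing to the limit in \eqref{UAJsss1} gives $\E\overline F(\Theta_t\mathcal Z)=\E\overline F(\mathcal Z)$, and since $\mathcal M_0$ is Polish, bounded continuous functions determine the law.

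The main obstacle is the second-chaos estimate for $\gIb$ with colored noise on the torus. It requires verifying that the kernel $\sum_k q_k e^{ik\cdot x}\delta(t)$ has the required singularity bound $|z|^{-(d+2)+2\beta'}$ with $2\beta'$ close to $2\beta$, so that Hairer's graph-integral bounds \cite[Section~10.3]{Hai14} apply. I would first try a Fourier-side bound on $\mathbb T^d\times\mathbb R$ directly, summing over $k$ with the decay $(1+|k|^2)^{-\beta}$. A further check is that the periodicisation does not spoil the uniformity in $z$.
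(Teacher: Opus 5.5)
Your proposal follows essentially the same route as the paper, which invokes an adaptation of Hairer's convergence result \cite[Theorem~10.19]{Hai14}, proved through this kind of second-moment, hypercontractivity and Kolmogorov criterion, and which obtains \eqref{sdKisd4ws} by passing to the limit in \eqref{UAJsss1} using \eqref{Oaslnr}. One correction: subtracting $C_\epsilon$ does not remove the whole zeroth chaos of $\Pi^\epsilon_z(\gIb)$, because the deterministic term $-\E\big[\xi^\epsilon(\cdot)\,\mathcal{I}[\xi^\epsilon](z)\big]$ survives; however, a Fourier-side bound using only $0\le q_k\lesssim(1+|k|^2)^{-\beta}$ bounds its pairing with $Q^l_z$ by $l^{2\beta-d}\lesssim l^{-2\kappa}$, with a surplus exponent since $\beta-\frac d2>-\kappa$, so your argument still goes through.
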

	\begin{proof}
		The first two claims follows by adapting the proof of \cite[Theorem~10.19]{Hai14} to the present setting. Finally, \eqref{sdKisd4ws} follows immediately from \eqref{UAJsss1} and \eqref{Oaslnr}.
	\end{proof}
	\begin{corollary}\label{HAYSda}
		Consider the setting of Proposition~\ref{Ujasn785sw}. Then
		\[
		\left(\mathcal{M}_0, \mathcal{B}(\mathcal{M}_0),\{\Theta_t\}_{t\in\mathbb{R}},(\mathcal{Z})_{\#}\mathbb{P}\right)
		\]
		is an {invertible measurable metric dynamical system}.
	\end{corollary}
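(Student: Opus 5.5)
We verify the four items in the definition of an invertible measurable metric dynamical system for $\overline{\Omega}=\mathcal{M}_0$, $\overline{\mathcal F}=\mathcal B(\mathcal M_0)$, $\overline{\theta}_t=\Theta_t$, $\overline{\mathbb P}=\mathcal Z_{\#}\mathbb P$. Lemma~\ref{polikks} gives that $\mathcal M_0$ is Polish. Lemma~\ref{Aaasqwqwa} gives that each $\Theta_t$ maps $\mathcal M_0$ into itself. It also gives that $\Theta_t$ is continuous, hence Borel measurable.

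\emph{The group property.} Directly from the definition $(U_t\Pi)(\tau)(z)=U_t(\Pi_{z+(t,0)}\tau)$, the map $\Theta_0$ is the identity. Identity \eqref{UAJsss} gives $\Theta_{t+s}=\Theta_t\circ\Theta_s$. In particular $\Theta_{-t}$ is the inverse of $\Theta_t$, which gives invertibility.

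\emph{Measure preservation.} This is exactly \eqref{sdKisd4ws} from Proposition~\ref{Ujasn785sw}. There is one point to make explicit. $\mathcal Z$ is the $L^1$-limit of the $\mathcal Z^\epsilon$ in each seminorm $\|\cdot;\cdot\|_{\mathcal O_n}$. Hence, along a subsequence, $d_{\mathcal M}(\mathcal Z^{\epsilon_k},\mathcal Z)\to0$ almost surely, so $(\mathcal Z^{\epsilon_k})_{\#}\mathbb P\to\mathcal Z_{\#}\mathbb P$ weakly. Since each $\Theta_t$ is continuous, pushforward by $\Theta_t$ commutes with these weak limits. Combined with \eqref{UAJsss1}, this gives $(\Theta_t)_{\#}\mathcal Z_{\#}\mathbb P=\mathcal Z_{\#}\mathbb P$. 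Measurability of $\mathcal Z$ as a map $\Omega\to\mathcal M_0$ follows because it is an a.s.\ limit of measurable maps into a Polish space (Lemma~\ref{ASHyasas}); $\mathcal Z$ is redefined on a null set if needed.

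\emph{Joint measurability (main obstacle).} We must show that $(t,\mathcal Z)\mapsto\Theta_t\mathcal Z$ is $\mathcal B(\mathbb R)\otimes\mathcal B(\mathcal M_0)$-measurable. Since $\mathbb R\times\mathcal M_0$ is a product of separable metric spaces, its Borel $\sigma$-algebra equals the product $\sigma$-algebra. So it suffices to show the map is jointly continuous, or at least continuous in $t$ for fixed $\mathcal Z$ and continuous in $\mathcal Z$ for fixed $t$ (a Carathéodory map, which is jointly measurable).

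Continuity in $\mathcal Z$ is already known. For continuity in $t$, first take $\mathcal Z=\mathbf M(f,g,c)$. Then $\Theta_t\mathcal Z=\mathbf M(U_tf,U_tg,c)$, and $t\mapsto U_tf$ is continuous in the locally uniform topologies because $f$ and $\nabla g$ are uniformly continuous on compacts. By continuity of $\mathbf M$, the map $t\mapsto\Theta_t\mathcal Z$ is continuous.

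For general $\mathcal Z\in\mathcal M_0$, approximate by such $\mathbf M(f_j,g_j,c_j)$. We then need equicontinuity: on $\mathcal O_n$, translation by $t\in[-1,1]$ is controlled by the seminorms on $\mathcal O_{n+1}$, that is,
\[
\|\Theta_t\mathcal Z;\Theta_t\widetilde{\mathcal Z}\|_{\mathcal O_n}\le\|\mathcal Z;\widetilde{\mathcal Z}\|_{\mathcal O_{n+1}},
\]
which holds since the seminorms are suprema over base points in the domain. An $\varepsilon/3$ argument then yields continuity of $t\mapsto\Theta_t\mathcal Z$ on all of $\mathcal M_0$. The same uniform bound combined with continuity in $t$ gives joint continuity directly. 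This completes the verification.
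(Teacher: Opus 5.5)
Your proposal is correct and follows the same route as the paper: the paper's proof just combines the flow property and invariance of $\mathcal M_0$ from Lemma~\ref{Aaasqwqwa}, the invariance \eqref{sdKisd4ws} from Proposition~\ref{Ujasn785sw}, and joint continuity (hence measurability) of $(t,\mathcal Z)\mapsto\Theta_t\mathcal Z$. The paper only asserts that joint continuity, whereas your density argument together with the shift bound $\|\Theta_t\mathcal Z;\Theta_t\widetilde{\mathcal Z}\|_{\mathcal O_n}\le\|\mathcal Z;\widetilde{\mathcal Z}\|_{\mathcal O_{n+1}}$ for $|t|\le 1$ actually proves it.
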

	\begin{proof}
		The map $(t,M)\mapsto\Theta_tM$ is continuous and hence measurable. Therefore, the claim follows immediately from Lemma~\ref{Aaasqwqwa} and Proposition~\ref{Ujasn785sw}.
	\end{proof}
	\subsection{Ergodicity}
	The final ingredient is to establish the ergodicity of
	
	$$
	\Theta_t:\mathcal{M}_0\to\mathcal{M}_0,
	\qquad t\neq0.
	$$
	
	We now turn to this objective. The main idea is to first define a shift map on
	$\Omega$ and prove its ergodicity. The ergodicity of $\Theta_t$ then follows by
	transporting this property through the map $\mathcal Z$.
	\begin{lemma}\label{BGAssss}
		Let
		\begin{align}\label{Imasko74}
			C_{i,j}(t):=\langle U_t f_i,f_j\rangle_{\overline{\mathcal{H}}^{Q}},
			\qquad i,j\geq1 .
		\end{align}
		Then there exists a measurable subset $\Omega_t \subseteq \Omega$ of full measure such that, for each $\omega \in \Omega_t$, the series
		\[
		\sum_{j\ge1} C_{i j}(t)\,\omega_j
		\]
		converges for every $i\ge1$.
		In particular, the map
		\begin{align}\label{asaas26}
			\hat{\theta}_t:\Omega \to \Omega
		\end{align}
		defined by
		\begin{align}\label{HYjas}
			\hat{\theta}_t(\omega):=
			\begin{cases}
				\left( \sum_{j\ge1} C_{i j}(t)\,\omega_j \right)_{i\ge1},
				& \text{if } \omega \in \Omega_t,\\[2mm]
				0=(0)_{i\ge1},
				& \text{otherwise},
			\end{cases}
		\end{align}
		is Borel measurable and satisfies
		\[
		(\hat{\theta}_t)_\# \mathbb{P} = \mathbb{P}.
		\]
	\end{lemma}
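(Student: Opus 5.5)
The plan is to view $\hat\theta_t$ as the coordinate representation of the unitary operator $U_t$ on $\overline{\mathcal H}^{Q}$, written in the orthonormal basis $(f_n)_{n\ge1}$. First, $U_t$ maps $\overline{\mathcal H}^{Q}$ to itself and preserves $\langle\cdot,\cdot\rangle_{\overline{\mathcal H}^{Q}}$, as noted in the proof of Lemma~\ref{UYHNAss}. It is invertible with inverse $U_{-t}$, so it is unitary. Consequently, for each $i$,
\[
\sum_{j\ge1}C_{ij}(t)^2=\sum_{j\ge1}\langle U_tf_i,f_j\rangle_{\overline{\mathcal H}^{Q}}^2=\|U_tf_i\|_{\overline{\mathcal H}^{Q}}^2=1 .
\]
The partial sums $S_N^i(\omega)=\sum_{j\le N}C_{ij}(t)\omega_j$ are therefore sums of independent centered Gaussians with summable variances. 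By Kolmogorov's two-series theorem, or the martingale convergence theorem in $L^2$, they converge almost surely.

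Next I would set $\Omega_t:=\bigcap_{i\ge1}\{\omega:\ (S_N^i(\omega))_N \text{ converges}\}$. Each of these sets is Borel, since it can be written with countably many quantifiers over rational tolerances of the measurable maps $S_N^i$. Each has full measure, so $\Omega_t$ is a countable intersection of Borel full-measure sets and is itself Borel and of full measure. On $\Omega_t$, every coordinate $\omega\mapsto\lim_N S_N^i(\omega)$ is a pointwise limit of measurable functions. Off $\Omega_t$ it is the constant $0$. Since $\mathcal B(\mathbb R^{\mathbb N})$ is generated by the coordinate projections, $\hat\theta_t$ is Borel measurable.

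For invariance of the measure, I would show that the coordinates $\eta_i:=(\hat\theta_t\omega)_i$ are i.i.d.\ standard Gaussians. The finite-dimensional distributions determine the product measure $\gamma^{\otimes\mathbb N}$ on the cylinder $\pi$-system, so this suffices. Any finite vector $(\eta_{i_1},\dots,\eta_{i_m})$ is an almost-sure limit, and also an $L^2$ limit, of centered Gaussian vectors, and is therefore centered Gaussian. Its covariance is
\[
\mathbb E[\eta_i\eta_k]=\sum_{j}C_{ij}(t)C_{kj}(t)=\langle U_tf_i,U_tf_k\rangle_{\overline{\mathcal H}^{Q}}=\delta_{ik},
\]
by Parseval and unitarity. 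Hence $(\hat\theta_t)_\#\mathbb P=\mathbb P$. One can also note that $\eta_i=W^Q(U_{t}^{*}f_i)$ or $W^Q(U_tf_i)$ up to the chosen convention, which is consistent with Lemma~\ref{UYHNAss}.

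The main obstacle is the first step: checking carefully that $U_t$ is well defined and isometric on the completed quotient space $\overline{\mathcal H}^{Q}$. This needs $U_t$ to preserve $\mathcal N_Q$ and to commute with $Q^{1/2}$. Both follow because $Q$ acts only in space and the $\langle\cdot,\cdot\rangle_Q$ form integrates over time, which is translation invariant. Once this is established, $U_t$ extends by density, and the remaining steps are standard Gaussian arguments.
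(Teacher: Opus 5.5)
Your proposal is correct and follows the same route as the paper. The normalization $\sum_j C_{ij}(t)^2=\|U_tf_i\|^2_{\overline{\mathcal H}^{Q}}=1$ gives almost-sure convergence via an $L^2$-bounded martingale (or two-series) argument, and the limit family is centered Gaussian with covariance $C(t)C(t)^{\top}=I$ by Parseval and the isometry of $U_t$, hence has law $\gamma^{\otimes\mathbb N}$. You also spell out details the paper leaves implicit: Borel measurability of $\Omega_t$ and $\hat\theta_t$, identification of the product measure through its finite-dimensional distributions, and that $U_t$ is a well-defined isometry on the completed quotient $\overline{\mathcal H}^{Q}$.
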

	\begin{proof}
		Let $i\ge1$. Since $(\omega_j)_{j\ge1}$ are i.i.d. standard Gaussian random variables and
		
		$$
		\sum_{j\ge1}\left(C_{ij}(t)\right)^2
		=
		\|U_tf_i\|_{\overline{\mathcal H}^Q}^2
		=
		1,
		$$
		
		the partial sums
		$$
		\sum_{j=1}^N C_{ij}(t)\omega_j
		$$
		form an $L^2$-bounded martingale. Hence, by the martingale convergence theorem, the series
		\begin{align}\label{Oams478}
			\sum_{j\ge1} C_{ij}(t)\omega_j
		\end{align}
		converges almost surely and in $L^2(\Omega)$. Since $i\geq 1$ is countable, this implies the existence of a measurable set $\Omega_t \subset \Omega$ of full measure such that, for every $\omega \in \Omega_t$, all the series in \eqref{Oams478} are well-defined and converge simultaneously for all $i\ge1$.
		For $\omega \in \Omega_t$, define
		\begin{align}\label{Km4785as}
			(\hat{\theta}_t \omega)_i := \sum_{j\ge1} C_{ij}(t)\,\omega_j,
			\qquad i\ge1.
		\end{align}
		By its definition, $\hat{\theta}_t:\Omega\to\Omega$ is Borel measurable.
		Now let $i,k\ge1$. By independence of $(\omega_j)_{j\ge1}$ and standard Gaussian computations,
		\[
		\mathbb{E}\big[(\hat{\theta}_t\omega)_i(\hat{\theta}_t\omega)_k\big]
		=
		\sum_{j\ge1} C_{ij}(t)\,C_{kj}(t).
		\]
		Hence the covariance matrix of $\hat{\theta}_t\omega$ is
		\begin{align}\label{BBB47}
			\mathrm{Cov}(\hat{\theta}_t\omega)
			=
			C(t)C(t)^\top.
		\end{align}
		Here $C(t)^\top$ denotes the transpose (equivalently, the adjoint on $\ell^2$) of the operator $C(t)$. Since $\langle U_t f_i,U_tf_j\rangle_{\overline{\mathcal{H}}^{Q}}=\langle  f_i,f_j\rangle_{\overline{\mathcal{H}}^{Q}}$, we conclude that
		\begin{align}\label{UJMa}
			C(t)C(t)^\top = I.
		\end{align}
		Consequently, $(\hat{\theta}_t\omega)_{i\ge1}$ is a centered Gaussian family with covariance identity, hence has the same law as $(\omega_i)_{i\ge1}$. Therefore,
		\[
		(\hat{\theta}_t)_\# \P = \P.
		\]
	\end{proof}
	Now that we have established that, for each \(t \in \mathbb{R}\), the map \(\hat{\theta}_t\) is measure-preserving, a natural question is whether, for \(t \neq 0\), this map is ergodic. In the next result, we address this question.
	
	\begin{proposition}\label{Ik78assq}
		Consider the setting of Lemma \ref{BGAssss}. Then, for each \(t \neq 0\), the map \(\hat{\theta}_t : \Omega \to \Omega\) is ergodic.
	\end{proposition}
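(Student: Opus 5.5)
The plan is to identify $\hat\theta_t$ with a Gaussian measure-preserving transformation induced by an orthogonal operator on the Gaussian Hilbert space $\mathcal H_1$, and then to use the classical criterion: such a transformation is ergodic (indeed mixing) as soon as the underlying orthogonal operator is weakly mixing, i.e.\ $\langle U_t^n f,g\rangle_{\overline{\mathcal H}^Q}\to0$ as $n\to\infty$.

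First, I will show that $\hat\theta_t$ realizes the shift on the noise. Via Lemma~\ref{AYsHSsqq} and \eqref{HYjas}, for $\omega\in\Omega_t$ one has $(\hat\theta_t\omega)_i=W^Q(U_t^{*}f_i)(\omega)$ up to the transpose convention in $C_{ij}(t)$. Hence $W^Q(f)\circ\hat\theta_t=W^Q(V_tf)$ almost surely, where $V_t$ is $U_t$ or $U_{-t}$ (whichever the indexing produces) and is unitary on $\overline{\mathcal H}^Q$. Here the identity $C(t)C(t)^\top=I$ from \eqref{UJMa}, together with its analogue $C(t)^\top C(t)=I$ coming from surjectivity of $U_t$, gives invertibility. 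Extending this through products and limits, for every Hermite polynomial $H_\alpha$, composition with $\hat\theta_t$ acts on the $n$-th chaos $\mathcal H_n$ as the second-quantization operator $\Gamma(V_t)$. In particular it preserves each $\mathcal H_n$, and on $\mathcal H_n$ it acts as $V_t^{\otimes n}$ on symmetric tensors.

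Second, I will reduce ergodicity to a mixing statement. It suffices to show that for all $F,G\in L^2(\Omega)$,
\[
\mathbb E[F\circ\hat\theta_t^{\,n}\,G]\to\mathbb E[F]\,\mathbb E[G] \qquad (n\to\infty).
\]
By density and the orthogonal chaos decomposition of Definition~\ref{BASsdswsw}, it is enough to treat $F\in\mathcal H_n$ and $G\in\mathcal H_m$. For $n\ne m$ the expectation vanishes. For $n=m\ge1$ it equals $n!\,\langle V_t^{\otimes n}\cdots f,g\rangle$, which is a finite sum of products of $\langle V_t^{k}f_i,f_j\rangle$ for finitely supported tensors, and then extends by density. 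So everything reduces to
\[
\langle U_{kt}f,g\rangle_{\overline{\mathcal H}^Q}\to0 \qquad (|k|\to\infty)
\]
for $f,g$ in a dense set. Mixing then implies ergodicity of $\hat\theta_t$.

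Third, I will verify this decay, which is the main obstacle. I will take $f,g$ in the dense class $L^2(\mathbb R;L^2(\mathbb T^d))$ with compact support in time, say in $[-R,R]$. Then
\[
\langle U_{s}f,g\rangle_Q=\int\langle Qf(r+s),g(r)\rangle\,\mathrm dr,
\]
which is identically zero once $|s|>2R$. Density of such classes in $\overline{\mathcal H}^Q$ holds by construction as a completion, and uniform boundedness $|\langle U_sf,g\rangle|\le\|f\|\|g\|$ extends the decay to all $f,g$. The delicate points I expect to spend care on are:
\begin{itemize}
\item the almost-sure identification $W^Q(f)\circ\hat\theta_t=W^Q(V_tf)$, which requires care with the $L^2$-limits and the null sets $\Omega_t$;
\item the fact that $\hat\theta_t^{\,n}$ agrees a.s. with the map built from $C(nt)$.
\end{itemize}
Both follow from $C(t)C(s)=C(t+s)$, obtained by expanding $U_sf_i$ in the basis.
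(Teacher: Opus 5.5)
Your argument is correct, but it takes a different route from the paper.

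\textbf{The paper's route.} The paper proves ergodicity directly. It starts from an invariant $\zeta\in L^2(\Omega)$ and uses the Hermite generating function to write $H_J(\hat\theta_{-t}\omega)$ through symmetrized products of the $C_{ij}(t)$. This is exactly the coordinate form of the second-quantization identity you invoke. It then realizes the $n$-th chaos kernel of $\zeta$, via $Q^{1/2}$, as a function $F\in L^2(\mathbb R^n\times\mathbb T^{nd})$ and shows $F(t_1+t,\dots,t_n+t)=F(t_1,\dots,t_n)$ almost everywhere. Finally it concludes $F=0$, because an integrable function on $\mathbb R^n$ that is periodic along the diagonal direction must vanish. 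So the paper only needs that $U_t^{\otimes n}$ has no nonzero invariant symmetric tensors for $n\ge1$, and it never passes to a limit in the iterates.

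\textbf{Your route.} You prove the stronger statement that $\hat\theta_t$ is mixing. You replace the periodicity/integrability step by decay of correlations, $\langle U_{kt}f,g\rangle_{\overline{\mathcal H}^Q}\to0$. This decay is immediate for $f,g$ with compact time support and extends by density and $\|U_s\|=1$.

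\textbf{What each buys.} Your version is more conceptual, gives mixing for free, and isolates the single analytic input, namely decay of $\langle U_sf,g\rangle_{\overline{\mathcal H}^Q}$. The paper's version stays entirely in the explicit basis coordinates, which it also uses in Lemma~\ref{IAskca1} and Theorem~\ref{Ujasm}.

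\textbf{Two small simplifications of your write-up.}
\begin{itemize}
\item From $C_{ij}(t)=\langle U_tf_i,f_j\rangle_{\overline{\mathcal H}^Q}$ one gets $(\hat\theta_t\omega)_i=W^Q(U_tf_i)(\omega)$, so $V_t=U_t$ with no transpose ambiguity.
\item You do not need $C(t)C(s)=C(t+s)$ or any comparison of $\hat\theta_t^{\,n}$ with the map built from $C(nt)$. The Koopman operator of $\hat\theta_t^{\,n}$ is the $n$-th power of that of $\hat\theta_t$, and on the first chaos it sends $W^Q(f)$ to $W^Q(U_t^nf)=W^Q(U_{nt}f)$.
\end{itemize}
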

	\begin{proof}
		\textbf{Step 1.}\label{Step001} Recall that
		\[
		(\Omega,\mathcal{F},\mathbb{P}) = \left(\mathbb{R}^{\mathbb{N}}, \mathcal{B}(\mathbb{R}^{\mathbb{N}}), \gamma^{\otimes \mathbb{N}}\right).
		\]
		In this Wiener space, $\ell^{2}$ is the corresponding Cameron--Martin space.
		To prove ergodicity, it is sufficient to show that if \(\zeta \in L^{2}(\Omega)\) satisfies
		\begin{align}\label{Y11}
			\zeta(\hat{\theta}_t \omega) = \zeta(\omega), \quad \mathbb{P}\text{-a.s.},
		\end{align}
		then \(\zeta\) is almost surely constant.
		To this end, we use the Wiener chaos expansion (cf. Definition \ref{BASsdswsw}). From \cite[Proposition 1.1.1]{Nua05}, the projection of \(\zeta\) onto \(\mathcal{H}_n\) is given by
		\[
		I_0(\zeta) := \mathbb{E}[\zeta(\omega)],
		\qquad n = 0,
		\]
		and for \(n \ge 1\) ,
		\begin{align}\label{ASasasasas}
			I_n(\zeta)(\overline{\omega}) := \sum_{\alpha \in \Lambda_n} \ \mathbb{E}\!\left[\zeta(\omega) \, \sqrt{\alpha!}H_{\alpha}(\omega)\right]\sqrt{\alpha!} H_{\alpha}(\overline{\omega}).
		\end{align}
		We show that for each \(n \ge 1\), one has \(I_n(\zeta) = 0\). Then, it follows that
		\(\zeta(\omega) = \mathbb{E}[\zeta(\omega)]\) almost surely, and hence the claim follows. For $n\geq 1$, first note that assumption on $\zeta$ and lemma \ref{BGAssss}
		\begin{align}\label{Y22}
			\mathbb{E}\!\left[\zeta(\omega) \, H_{\alpha}(\omega)\right]=\mathbb{E}\!\left[	\zeta(\hat{\theta}_t \omega) \, H_{\alpha}(\omega)\right]=\mathbb{E}\!\left[	\zeta(\omega) \, H_{\alpha}(\hat{\theta}_{-t} \omega)\right]
		\end{align}
		From \eqref{Imasko74} and \eqref{BBB47}, one can associate with \(\hat{\theta}_{-t}\) the orthogonal operator
		\[
		C(-t): \ell^2 \to \ell^2,
		\]
		defined by
		\[
		\bigl(C(-t)v\bigr)_i
		:=
		\sum_{j\geq 1} C_{ij}(-t)\,v_j,
		\qquad v=(v_j)_{j\geq1}\in \ell^2.
		\]
		\begin{comment}
			content...
			
			Moreover, there exists a set of full measure such that, for every \(\omega\) in this set, the series
			\[
			X_i(-t)
			:=
			\sum_{j\geq 1} C_{ij}(-t)\,\omega_j
			\]
			converges for each \(i\geq 1\).
		\end{comment}
		From \cite[Equation~(1.1)]{Nua05}, for each \(v\in \ell^2\), one has almost surely
		\begin{align}\label{JMasssa}
			\exp\left( \langle v,\omega\rangle-\frac{1}{2}\|v\|^2\right)
			=
			\sum_{\alpha\in\Lambda} v^{\alpha} H_{\alpha}(\omega),
		\end{align}
		where for \(\alpha = (\alpha_1,\alpha_2,\dots)\in\Lambda\),
		\begin{align*}
			\|v\|^2 := \sum_{i\geq 1}(v_i)^2, 
			\qquad 
			v^{\alpha} = \prod_{i\geq 1} (v_i)^{\alpha_i},
			\qquad 
			\langle v,\omega\rangle = \sum_{i\geq 1} v_i \omega_i.
		\end{align*}
		Note that \(\langle v,\omega\rangle\) is well defined on a set of full measure. Recall that \(C(-t)^{\top}=C(t)\) and $\|C(t)v\|^2=\Vert v\Vert^2$, so we have
		\begin{align*}
			\exp\left( \langle C(t) v,\omega\rangle-\frac{1}{2}\|C(t)v\|^2\right)
			=
			\exp\left( \langle v, C(-t)\omega\rangle-\frac{1}{2}\|v\|^2\right).
		\end{align*}
		Consequently, together with \eqref{Km4785as} and \eqref{JMasssa}, we conclude that for each \(v\in \ell^2\) (on a set of full measure)
		\begin{align}\label{Assawqqw}
			\sum_{\alpha\in\Lambda}\left(C(t)v\right)^{\alpha}H_{\alpha}(\omega)
			=
			\sum_{\alpha\in\Lambda}v^{\alpha}H_{\alpha}(\hat{\theta}_{-t}\omega).
		\end{align}
		Taking the homogeneous component of degree $n$ in \eqref{Assawqqw}, we obtain
		\begin{align*}
			\sum_{\alpha\in\Lambda_n}\left(C(t)v\right)^{\alpha}H_{\alpha}(\omega)
			=
			\sum_{\alpha\in\Lambda_n}v^{\alpha}H_{\alpha}(\hat{\theta}_{-t}\omega).
		\end{align*}
		\textbf{Step 2.}\label{Step002}
		Let $\alpha \in \Lambda_n$, the aim is to obtain an identity like
		\[
		H_{\alpha}(\hat{\theta}_{-t}\omega)=\sum_{\overline{\alpha}\in\Lambda_n} K({\overline{\alpha}}) H_{\overline{\alpha}}(\omega)
		\]
		where the constants $K({\overline{\alpha}})$ depends on the $C(t)$. For this purpose, we first introduce some auxiliary notation. Let \((\tilde{e}_i)_{i\geq1}\) denote the canonical basis of \(\mathbb{R}^{\mathbb{N}}\).
		For \(n \ge 1\), we consider elementary tensors of the form
		\[
		\tilde{e}_{i_1} \otimes \cdots \otimes \tilde{e}_{i_n}, 
		\qquad (i_1,\dots,i_n) \in \mathbb{N}^n.
		\]
		We denote by \(S_n\), the set of all permutations of \(\{1,2,\ldots,n\}\).
		We define an equivalence relation on such tensors by declaring that
		\[
		\tilde{e}_{i_1} \otimes \cdots \otimes \tilde{e}_{i_n}
		\sim
		\tilde{e}_{j_1} \otimes \cdots \otimes \tilde{e}_{j_n}
		\]
		if there exists \(\sigma \in S_n\) such that
		\[
		j_k = i_{\sigma(k)},
		\qquad k=1,\ldots,n.
		\]
		
		For \(\alpha=(\alpha_i)_{i\geq 1} \in \Lambda_n\),
		\begin{itemize}
			\item We associate to each \(\alpha \in \Lambda_n\) the ordered representation \(I=(i_1,\dots,i_n)\) satisfying
			\begin{align}\label{Y33}
				1\leq	i_1 \le \cdots \le i_n,
			\end{align}
			and
			\[
			\alpha_i = \#\{k : i_k = i\},
			\qquad i\geq 1.
			\]
			
			Conversely, for each \(I=(i_1,\dots,i_n)\) satisfying \eqref{Y33}, we associate the multi-index
			\(\alpha^{I}=(\alpha^{I}_{i})_{i\geq 1}\in\Lambda\) defined by
			\begin{align}\label{X1}
				\alpha^{I}_{i}
				=
				\#\{k : i_k = i\},
				\qquad i\geq 1.
			\end{align}
			\item We then define
			\[
			\tilde{e}_\alpha := \bigotimes_{k=1}^n \tilde{e}_{i_k}.
			\]
			Also
			\begin{align}\label{X2}
				H_{(i_1,...,i_n)}(\omega):=H_{\alpha}(\omega).
			\end{align}
		\end{itemize}
		With these preliminaries let ${\alpha}\in\Lambda_{n}$ and $\tilde{e}_{{\alpha}} := \bigotimes_{k=1}^n \tilde{e}_{i_k}$, then
		\begin{align*}
			&\left(C(t)v\right)^{{\alpha}}=\prod_{k=1}^{n}\left(C(t)v\right)_{i_k}=\prod_{k=1}^{n}\left(\sum_{j_{k}=1}^{\infty}C_{i_k,j_k}(t)v_{j_k}\right)\\&=\sum_{j_1,...j_n}\left(\prod_{k=1}^{n}C_{i_k,j_k}(t)\right)\left(\prod_{k=1}^{n}v_{j_k}\right)=\sum_{j_1 \le \cdots \le j_n}\left(\prod_{k=1}^{n}v_{j_k}\right)\left(\frac{1}{\alpha^{J}!}\sum_{\sigma \in S_{n}}\prod_{k=1}^{n}C_{i_k,j_{\sigma(k)}}(t)\right).
		\end{align*}
		Consequently
		\begin{align*}
			&\sum_{{\alpha}\in\Lambda_n}\left(C(t)v\right)^{{\alpha}}H_{{\alpha}}(\omega)=\sum_{i_1 \le \cdots \le i_n}H_{(i_1,...,i_n)}(\omega)\sum_{j_1 \le \cdots \le j_n}\left(\left(\prod_{k=1}^{n}v_{j_k}\right)\left(\frac{1}{\alpha^{J}!}\sum_{\sigma \in S_{n}}\prod_{k=1}^{n}C_{i_k,j_{\sigma(k)}}(t)\right)\right)\\&=\sum_{j_1 \le \cdots \le j_n}\left(\prod_{k=1}^{n}v_{j_k}\right)\sum_{i_1 \le \cdots \le i_n}H_{(i_1,...,i_n)}(\omega)\left(\frac{1}{\alpha^{J}!}\sum_{\sigma \in S_{n}}\prod_{k=1}^{n}C_{i_k,j_{\sigma(k)}}(t)\right).
		\end{align*}
		On the other hand,
		\begin{align*}
			\sum_{\alpha\in\Lambda_n}v^{\alpha}H_{\alpha}(\hat{\theta}_{-t}\omega)
			=
			\sum_{j_1 \le \cdots \le j_n}
			\left(\prod_{k=1}^{n}v_{j_k}\right)
			H_{(j_1,\ldots,j_n)}(\hat{\theta}_{-t}\omega).
		\end{align*}
		Thus, by \eqref{Assawqqw}, for every \((j_1,\dots,j_n)\in\mathbb{N}^n\) satisfying
		\(j_1 \le \cdots \le j_n\), we obtain
		\begin{align}\label{NN12}
			H_{(j_1,\ldots,j_n)}(\hat{\theta}_{-t}\omega)
			=
			\sum_{i_1 \le \cdots \le i_n}
			H_{(i_1,\ldots,i_n)}(\omega)
			\left(\frac{1}{\alpha^{J}!}
			\sum_{\sigma\in S_n}
			\prod_{k=1}^{n}
			C_{i_k,j_{\sigma(k)}}(t)
			\right).
		\end{align}
		Thanks to \eqref{Y22} and \eqref{NN12}, for each $(j_1,\dots,j_n)\in\mathbb{N}^n$ with $j_1 \le \cdots \le j_n$
		\begin{align}\label{LPoa}
			\begin{split}
				& \mathbb{E}\!\left[\zeta(\omega) \, H_{(j_1,\ldots,j_n)}(\omega)\right]= \mathbb{E}\!\left[\zeta(\omega) \, H_{(j_1,\ldots,j_n)}(\hat{\theta}_{-t} \omega)\right] \\&= \sum_{i_1 \le \cdots \le i_n}
				\mathbb{E}\!\left[\zeta(\omega) \, H_{(i_1,\ldots,i_n)}( \omega)\right]
				\left(\frac{1}{\alpha^{J}!}
				\sum_{\sigma\in S_n}
				\prod_{k=1}^{n}
				C_{i_k,j_{\sigma(k)}}(t)
				\right).
			\end{split}
		\end{align}
		\textbf{Step 3.}\label{Step003}
		Recall that for every \(i,j\ge1\),
		\begin{align}\label{Yhanss}
			\begin{split}
				\langle f_i,f_j\rangle_{\overline{\mathcal{H}}^{Q}}
				&=
				\int_{\mathbb{R}}
				\left\langle
				Q^{\frac12}f_i(t,\cdot),
				Q^{\frac12}f_j(t,\cdot)
				\right\rangle_{L^2(\mathbb{T}^d)}
				\,\mathrm{d}t  \\
				&=
				\int_{\mathbb{R}}
				\int_{\mathbb{T}^d}
				\bigl(Q^{\frac12}f_i(t,\cdot)\bigr)(x)\,
				\bigl(Q^{\frac12}f_j(t,\cdot)\bigr)(x)
				\,\mathrm{d}x\,\mathrm{d}t
				=
				\delta_{ij}.
			\end{split}
		\end{align}
		We define 
		\begin{align}\label{Iias78as}
			F(t_1,\ldots,t_n)(x_1,\ldots,x_n)
			:=
			\sum_{i_1\le\cdots\le i_n}\mathbb{E}\!\left[\zeta\,H_{(i_1,\ldots,i_n)}\right]
			\sum_{\sigma\in S_n}
			\prod_{k=1}^{n}
			\bigl(Q^{\frac12}f_{i_{\sigma(k)}}(t_k,\cdot)\bigr)(x_k)
			.
		\end{align}
		Thanks to \eqref{Yhanss}, for every
		\(I=(i_1,\ldots,i_n)\) and
		\(J=(j_1,\ldots,j_n)\) satisfying
		\(i_1\le\cdots\le i_n\) and
		\(j_1\le\cdots\le j_n\), we have
		\begin{align}\label{orthogonality}
			\begin{split}
				&\int_{\mathbb{R}^n}
				\int_{\mathbb{T}^{nd}}
				\left(
				\sum_{\sigma\in S_n}
				\prod_{k=1}^{n}
				\bigl(Q^{\frac12}f_{i_{\sigma(k)}}(t_k,\cdot)\bigr)(x_k)
				\right)
				\left(
				\sum_{\tau\in S_n}
				\prod_{k=1}^{n}
				\bigl(Q^{\frac12}f_{j_{\tau(k)}}(t_k,\cdot)\bigr)(x_k)
				\right)
				\,\mathrm{d}x_1\cdots\mathrm{d}x_n\,
				\mathrm{d}t_1\cdots\mathrm{d}t_n
				\\
				&\qquad
				=
				n!\sqrt{\alpha^I!}\,\sqrt{\alpha^J!}\,\delta_{I,J},
			\end{split}
		\end{align}
		Thus, by \eqref{ASasasasas}, \cite[Proposition~1.1.1]{Nua05} and \eqref{orthogonality}, we have
		\begin{align}\label{Im784aw}
			\begin{split}
				\int_{\R^n} 	\Vert F(t_1,...,t_n)\Vert_{L^{2}(\T^{nd})}^2\mathrm{d}t_1\cdots\mathrm{d}t_n&=n!\Vert I_n(\zeta)\Vert_{L^{2}(\Omega)}^2\\&=n!\sum_{\substack{i_1 \le \cdots \le i_n}} \alpha^{I}!\ \left(\mathbb{E}\!\left[\zeta\,H_{(i_1,\ldots,i_n)}\right]\right)^2\leq n!\Vert\zeta\Vert_{L^{2}(\Omega)}^2.
			\end{split}
		\end{align}
		We show that
		\[
		F(t_1,\ldots,t_n)
		=
		F(t_1+t,\ldots,t_n+t)
		\quad
		\text{for Lebesgue-a.e. } (t_1,\ldots,t_n)\in\mathbb{R}^n.
		\]
		First, note that by \eqref{Yhanss},
		\[
		Q^{\frac12}f_i(s,\cdot)\in L^2(\mathbb T^d)
		\quad\text{for Lebesgue-a.e. }s\in\mathbb R,
		\qquad i\ge1.
		\]
		Also, by definition,
		\begin{align}\label{N1236as9}
			\begin{split}
				&F(t_1+t,\ldots,t_n+t)(x_1,\ldots,x_n) \\
				&\qquad=
				\sum_{i_1\le\cdots\le i_n}\mathbb{E}\!\left[\zeta\,H_{(i_1,\ldots,i_n)}\right]
				\sum_{\sigma\in S_n}
				\prod_{k=1}^{n}
				\bigl(Q^{\frac12}f_{i_{\sigma(k)}}(t_k+t,\cdot)\bigr)(x_k)
				.
			\end{split}
		\end{align}
		Furthermore, recall that
		\[
		C_{i,j}(t):=\langle U_t f_i,f_j\rangle_{\overline{\mathcal{H}}^{Q}}.
		\]
		Expanding $U_t f_{i_{\sigma(k)}}$ with respect to the orthonormal basis
		$(f_j)_{j\geq1}$, for each $1\leq k\leq n$ and $\sigma\in S_n$, we have
		\begin{align*}
			\bigl(Q^{\frac12}f_{i_{\sigma(k)}}(t_k+t,\cdot)\bigr)(x_k)
			&=
			\left(
			Q^{\frac12}
			\left(
			U_t f_{i_{\sigma(k)}}(t_k,\cdot)
			\right)
			\right)(x_k)\
			\\	&=
			\sum_{j_k\geq1}
			\langle U_t f_{i_{\sigma(k)}},f_{j_k}\rangle_{\overline{\mathcal H}^{Q}}
			\bigl(Q^{\frac12}f_{j_k}(t_k,\cdot)\bigr)(x_k)\
			&=
			\sum_{j_k\geq1}
			C_{i_{\sigma(k)},j_k}(t)
			\bigl(Q^{\frac12}f_{j_k}(t_k,\cdot)\bigr)(x_k),
		\end{align*}
		for Lebesgue-a.e. $(t_k,x_k)\in\mathbb R\times\mathbb T^d$.
		Thus, 
		\begin{align*}
			&		\sum_{\sigma\in S_n}\prod_{k=1}^{n}
			\bigl(Q^{\frac12}f_{i_{\sigma(k)}}(t_k+t,\cdot)\bigr)(x_k)=\sum_{\sigma\in S_n}\sum_{j_1,...,j_n\geq 1}\left(\prod_{k=1}^{n}C_{i_{\sigma(k)},j_k}(t)\right)\left(\prod_{k=1}^{n}\left(Q^{\frac12}f_{j_{k}}(t_{k},.)\right)(x_k)\right)\\&=\sum_{j_1,...,j_n\geq 1}\left(\sum_{\sigma\in S_n}\prod_{k=1}^{n}C_{i_{\sigma(k)},j_k}(t)\right)\left(\prod_{k=1}^{n}\left(Q^{\frac12}f_{j_{k}}(t_{k},.)\right)(x_k)\right),\\&=\sum_{j_1 \le \cdots \le j_n}\left(\sum_{\sigma\in S_n}\frac{1}{\alpha^{J}!}\prod_{k=1}^{n}C_{i_{\sigma(k)},j_k}(t)\right)\left(\sum_{\overline{\sigma}\in S_n}\prod_{k=1}^{n}\left(Q^{\frac12}f_{j_{\overline{\sigma}(k)}}(t_{k},.)\right)(x_k)\right)\\& \quad\text{for Lebesgue-a.e. } (t_1,\ldots,t_n,x_1,\ldots,x_n)
			\in \mathbb R^n\times\mathbb{T}^{nd}.
		\end{align*} 
		By substituting the above identity into the right-hand side of \eqref{N1236as9}, we obtain, for Lebesgue-a.e. \((t_1,\ldots,t_n,x_1,\ldots,x_n)
		\in \mathbb R^n\times\mathbb{T}^{nd}\),
		\begin{align}\label{M1236}
			\begin{split}
				&F(t_1+t,\ldots,t_n+t)(x_1,\ldots,x_n)\\&=\sum_{\substack{ i_1 \le \cdots \le i_n}}	\mathbb{E}\!\left[\zeta(\omega)\, H_{(i_1,\ldots,i_n)}(\omega)\right]
				\left[\sum_{j_1 \le \cdots \le j_n}\left(\sum_{\sigma\in S_n}\frac{1}{\alpha^{J}!}\prod_{k=1}^{n}C_{i_{\sigma(k)},j_k}(t)\right)\left(\sum_{\overline{\sigma}\in S_n}\prod_{k=1}^{n}\left(Q^{\frac12}f_{j_{\overline{\sigma}(k)}}(t_{k},.)\right)(x_k)\right)\right]
				\\&=\sum_{j_1 \le \cdots \le j_n}\left(\sum_{\overline{\sigma}\in S_n}\prod_{k=1}^{n}\left(Q^{\frac12}f_{j_{\overline{\sigma}(k)}}(t_{k},.)\right)(x_k)\right)\left[\sum_{\substack{i_1 \le \cdots \le i_n}}\mathbb{E}\!\left[\zeta(\omega)\, H_{(i_1,\ldots,i_n)}(\omega)\right]\left(\frac{1}{\alpha^{J}!}\sum_{\sigma\in S_n}\prod_{k=1}^{n}C_{i_{\sigma(k)},j_k}(t)\right)\right].
			\end{split}
		\end{align}
		Thanks to \eqref{LPoa}, we have
		\begin{align}\label{B12369}
			\begin{split}
				&\sum_{\substack{i_1 \le \cdots \le i_n}}\mathbb{E}\!\left[\zeta(\omega)\, H_{(i_1,\ldots,i_n)}(\omega)\right]\left(\frac{1}{\alpha^{J}!}\sum_{\sigma\in S_n}\prod_{k=1}^{n}C_{i_{\sigma(k)},j_k}(t)\right)\\&=\sum_{\substack{i_1 \le \cdots \le i_n}}\mathbb{E}\!\left[\zeta(\omega)\, H_{(i_1,\ldots,i_n)}(\omega)\right]\left(\frac{1}{\alpha^{J}!}\sum_{\sigma\in S_n}\prod_{k=1}^{n}C_{i_{k},j_{\sigma(k)}}(t)\right)=\mathbb{E}\!\left[\zeta(\omega) \, H_{(j_1,\ldots,j_n)}(\omega)\right].
			\end{split}
		\end{align}
		Thus, from \eqref{N1236as9}, \eqref{M1236}, and \eqref{B12369}, we obtain that
		\begin{align}\label{Ikassqq}
			\begin{split}
				&	F(t_1,\ldots,t_n)(x_1,\ldots,x_n)
				=
				F(t_1+t,\ldots,t_n+t)(x_1,\ldots,x_n)
				\\& \quad\text{for Lebesgue-a.e. } (t_1,\ldots,t_n,x_1,\ldots,x_n)
				\in \mathbb R^n\times\mathbb{T}^{nd}.
			\end{split}
		\end{align}
		Recall from \eqref{Im784aw} that
		$$
		\int_{\mathbb{R}^n}
		\|F(t_1,\ldots,t_n)\|_{L^{2}(\mathbb{T}^{nd})}^2
		\,\mathrm{d}t_1\cdots \mathrm{d}t_n
		<\infty.
		$$
		By \eqref{Ikassqq}, the integrand is invariant under translation by
		\((t,\ldots,t)\). Since \(t\neq0\), a nonzero invariant function of this form cannot belong to \(L^1(\mathbb R^n)\). Hence,
		$$
		F=0.
		$$
		In particular, from \eqref{Iias78as} and \eqref{orthogonality} we conclude that
		$$
		\mathbb{E}\!\left[\zeta\,H_{(i_1,\ldots,i_n)}\right]=0
		$$
		for all \(I=(i_1,\ldots,i_n)\) with \(i_1\le\cdots\le i_n\). Therefore, by \eqref{ASasasasas},
		$$
		I_n(\zeta)=0,
		\qquad n\ge1.
		$$
		Consequently,
		$$
		\zeta=I_0(\zeta)=\mathbb{E}[\zeta]
		\quad\text{a.s.},
		$$
		which proves that \(\hat{\theta}_t\) is ergodic for every \(t\neq0\).
	\end{proof}
	The previous result is the main ingredient in proving the ergodicity of
	\[
	\Theta_t:\mathcal{M}_0\to\mathcal{M}_0,
	\qquad t\neq0.
	\]
	Before stating the ergodicity result, we first establish the following
	technical lemma.
	\begin{lemma}\label{IAskca1}
		Let $(\overline{z}_j)_{j\ge1}\subset \mathbb{R}\times\mathbb{T}^d$ be sequence in $\mathbb{R}\times\mathbb{T}^d$. Then, for each $t\in\mathbb{R}$, there exists a full-measure set  such that
		\begin{align}\label{IAskca}
			\sum_{i\ge1}
			\bigl\langle (\rho^{\epsilon}_{\overline{z}_k})^{\mathrm{per}}, f_i \bigr\rangle_{\overline{\mathcal{H}}^{Q}}
			\left(\sum_{j\ge1} C_{i,j}(t)\omega_j\right)
			=
			\sum_{j\ge1}
			\omega_j	\left(
			\sum_{i\ge1} C_{i,j}(t)
			\bigl\langle (\rho^{\epsilon}_{\overline{z}_k})^{\mathrm{per}}, f_i \bigr\rangle_{\overline{\mathcal{H}}^{Q}}
			\right),
			\quad \forall k\ge1.
		\end{align}
	\end{lemma}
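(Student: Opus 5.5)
The plan is to show that both sides of \eqref{IAskca} are almost-sure limits of square-integrable Gaussian series, and that they coincide as elements of $L^2(\Omega)$. Almost-sure equality for each fixed $k$ then follows, and intersecting over the countably many $k\ge1$ gives one full-measure set.

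Fix $k$ and write $a_i:=\langle (\rho^{\epsilon}_{\overline{z}_k})^{\mathrm{per}},f_i\rangle_{\overline{\mathcal H}^Q}$. Since $(\rho^{\epsilon}_{\overline{z}_k})^{\mathrm{per}}$ is a smooth compactly supported function, it defines an element $h_k$ of $\overline{\mathcal H}^Q$. By Parseval, $(a_i)\in\ell^2$ with $\|a\|_{\ell^2}=\|h_k\|_{\overline{\mathcal H}^Q}$.

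\textbf{Right-hand side.} Set $b_j:=\sum_i C_{i,j}(t)a_i=\langle \sum_i a_i U_tf_i,f_j\rangle=\langle U_th_k,f_j\rangle_{\overline{\mathcal H}^Q}$. The interchange of the sum with the inner product is justified because $U_t$ is an isometry of $\overline{\mathcal H}^Q$, as in Lemma~\ref{UYHNAss}. Hence $b\in\ell^2$ with $\|b\|=\|h_k\|$. The series $\sum_j b_j\omega_j$ is then an $L^2$-bounded martingale with independent Gaussian increments, so it converges almost surely and in $L^2$ to $W^Q(U_th_k)$.

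\textbf{Left-hand side.} On $\Omega_t$ from Lemma~\ref{BGAssss}, $X_i:=\sum_j C_{i,j}(t)\omega_j$ are i.i.d.\ standard Gaussians. Therefore $\sum_i a_iX_i$ converges almost surely and in $L^2$, by the same martingale argument applied with respect to the filtration generated by $(X_i)$. It remains to identify the two $L^2$ limits. Compute $\E\big[(\sum_{i\le N}a_iX_i)\,\omega_j\big]=\sum_{i\le N}a_iC_{i,j}(t)\to b_j$. Since the left limit $L$ lies in the first Wiener chaos $\mathcal H_1$, it is determined by its coefficients $\E[L\omega_j]$. We get $L=\sum_j b_j\omega_j$ in $L^2$, and hence almost surely.

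\textbf{Main obstacle.} The subtle point is the interchange of summation order. I would avoid a direct Fubini argument, since the double series is not absolutely convergent. Instead I would use the $L^2$-projection argument above: the coefficient limit $\sum_{i\le N}a_iC_{ij}\to b_j$ holds because $\sum_i a_iU_tf_i$ converges in $\overline{\mathcal H}^Q$. Both sides converge almost surely individually, so equality in $L^2$ upgrades to almost-sure equality.
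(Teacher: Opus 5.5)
Your proposal is correct and follows essentially the paper's route: both sides are identified as the same element of the first Wiener chaos in $L^2(\Omega)$. The paper gets there through the Doob martingale $\mathbb{E}[Y^{(N)}\mid\sigma(\omega_1,\dots,\omega_n)]$ and then $\ell^2$-continuity of $C(t)^\top$; you instead take $N\to\infty$ first and read off the coefficients $\mathbb{E}[L\,\omega_j]=b_j$, which is the same coordinate projection with the limits in the other order. You also use the independence of the $X_i$ to show explicitly that the left-hand series converges almost surely, which the paper leaves implicit, and this is what justifies passing from the $L^2$ identity to an almost-sure one.
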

	
	\begin{proof}
		Since we are considering a countable sequence $(\overline{z}_j)_{j\ge1}$, it is enough to show that the identity \eqref{IAskca} holds on a set of full measure for each $\overline{z}_k$.
		Recall that $(f_i)_{i\ge1}$ is an orthonormal basis of $\overline{\mathcal{H}}^{Q}$. By Parseval's identity, we obtain
		\begin{align}\label{Ujkasawe}
			\sum_{j\ge1}
			\left|
			\bigl\langle(\rho_{\overline{z}_k}^\epsilon)^{\mathrm{per}},f_j\bigr\rangle_{\overline{\mathcal{H}}^{Q}}
			\right|^2
			=
			\|(\rho_{\overline{z}_k}^\epsilon)^{\mathrm{per}}\|_{\overline{\mathcal{H}}^{Q}}^2
			<\infty.
		\end{align}	
		For $N\ge1$, define
		\begin{align}\label{Ikasgfa}
			Y^{(N)} := \sum_{i=1}^N \bigl\langle(\rho_{\overline{z}_k}^\epsilon)^{\mathrm{per}},f_i\bigr\rangle_{\overline{\mathcal{H}}^{Q}} \left(\sum_{j\ge1} C_{i,j}(t)\omega_j\right),
			\qquad
			\mathcal F_n := \sigma(\omega_1,\dots,\omega_n).
		\end{align}
		Note the form \eqref{UJMa} and \eqref{Ujkasawe}
		\begin{align}\label{aksasq}
			\E\left[	(Y^{(N)})^2 \right]=\sum_{i=1}^{N}\left( \bigl\langle(\rho_{\overline{z}_k}^\epsilon)^{\mathrm{per}},f_i\bigr\rangle_{\overline{\mathcal{H}}^{Q}}\right)^{2}<\infty
		\end{align}
		Set
		\[
		Y^{(N)}_n := \mathbb E\!\left[Y^{(N)} \mid \mathcal F_n\right].
		\]
		Then, form \eqref{aksasq}, one can easly see that $(Y^{(N)}_n)_{n\ge1}$ is an $L^2$-martingale. Thanks to the Doob's theorem,
		\[
		Y^{(N)}_n \to Y^{(N)} \quad \text{in } L^2(\Omega).
		\]
		Since $\mathbb E[\omega_j|\mathcal F_n]=\omega_j$ for $j\le n$ and $0$ otherwise, we obtain
		\[
		Y^{(N)}_n
		=
		\sum_{i=1}^N 	\bigl\langle (\rho^{\epsilon}_{{\overline{z}_k}})^{\mathrm{per}}, f_i \bigr\rangle_{\overline{\mathcal{H}}^{Q}} \sum_{j=1}^n C_{i,j}(t)\omega_j
		=
		\sum_{j=1}^n \omega_j\left(\sum_{i=1}^N C_{i,j}(t)	\bigl\langle (\rho^{\epsilon}_{\overline{z}_k})^{\mathrm{per}}, f_i \bigr\rangle_{\overline{\mathcal{H}}^{Q}}\right).
		\]
		Hence,
		\begin{align}\label{Haks85a}
			Y^{(N)}
			=
			\sum_{j\ge1}\omega_j\left(\sum_{i=1}^N C_{i,j}(t)	\bigl\langle (\rho^{\epsilon}_{\overline{z}_k})^{\mathrm{per}}, f_i \bigr\rangle_{\overline{\mathcal{H}}^{Q}}\right)
			\quad \text{in } L^2(\Omega).
		\end{align}
		Now define $B=(B_j)_{j\geq 1}$ and $B^N=\left(B_j^{(N)}\right)_{j\geq 1}\in\ell^2$ by
		\[
		B_j^{(N)} := \sum_{i=1}^N C_{i,j}(t)	\bigl\langle (\rho^{\epsilon}_{\overline{z}_k})^{\mathrm{per}}, f_i \bigr\rangle_{\overline{\mathcal{H}}^{Q}},
		\qquad
		B_j := \sum_{i\ge1} C_{i,j}(t)	\bigl\langle (\rho^{\epsilon}_{\overline{z}_k})^{\mathrm{per}}, f_i \bigr\rangle_{\overline{\mathcal{H}}^{Q}}.
		\]	
		Since $C(t)^\top:\ell^2\to\ell^2$ is continuous, it follows from \eqref{Ujkasawe} that $B^{(N)}\to B$ in $\ell^2$.
		By the Gaussian isometry,
		\[
		\mathbb E\Big|\sum_{j\ge1} (B_j^{(N)}-B_j)\omega_j\Big|^2
		=
		\sum_{j\ge1} |B_j^{(N)}-B_j|^2
		\to 0,
		\]
		so by \eqref{Haks85a}
		\begin{align}\label{T11}
			Y^{(N)}=	\sum_{j\ge1} B_j^{(N)}\omega_j
			\to
			\sum_{j\ge1} B_j\omega_j
			\quad \text{in } L^2(\Omega).
		\end{align}
		On the other hand, by \eqref{Ikasgfa},
		\begin{align}\label{T111}
			Y^{(N)} \to \sum_{i\ge1} 	\bigl\langle (\rho^{\epsilon}_{\overline{z}_k})^{\mathrm{per}}, f_i \bigr\rangle_{\overline{\mathcal{H}}^{Q}} \left(\sum_{j\ge1} C_{i,j}(t)\omega_j\right)
			\quad \text{in } L^2(\Omega).
		\end{align}
		By uniqueness of limits in $L^2(\Omega)$, we conclude form \eqref{T11} and \eqref{T111} that on set of full measure
		\begin{align}\label{JAsuas}
			\sum_{i\ge1} 	\bigl\langle (\rho^{\epsilon}_{\overline{z}_k})^{\mathrm{per}}, f_i \bigr\rangle_{\overline{\mathcal{H}}^{Q}} \left(\sum_{j\ge1} C_{i,j}(t)\omega_j\right)
			=
			\sum_{j\ge1}\omega_j\left(\sum_{i\ge1} C_{i,j}(t)	\bigl\langle (\rho^{\epsilon}_{\overline{z}_k})^{\mathrm{per}}, f_i \bigr\rangle_{\overline{\mathcal{H}}^{Q}}\right).
		\end{align}
	\end{proof}
	Now we are ready to state the main result of this section.
	\begin{theorem}\label{Ujasm}
		Under the assumptions of Corollary~\ref{HAYSda}, the quadruple
		\[
		(\mathcal{M}_0,\mathcal{B}(\mathcal{M}_0),(\mathcal{Z})_{\#}\mathbb{P},
		\{\Theta_t\}_{t\in\mathbb R})
		\]
		is an ergodic, invertible measurable metric dynamical system.
	\end{theorem}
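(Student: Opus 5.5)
By Corollary~\ref{HAYSda}, the quadruple is already an invertible measurable metric dynamical system, so only ergodicity remains. The plan is to show that $\mathcal Z$ intertwines the shifts: for each $t$,
\begin{align}\label{intertwine-plan}
\Theta_t\circ\mathcal Z=\mathcal Z\circ\hat\theta_t\qquad \mathbb P\text{-a.s.}
\end{align}
Ergodicity of $\Theta_t$ under $\mathcal Z_\#\mathbb P$ then follows from Proposition~\ref{Ik78assq}. Indeed, if $A\in\mathcal B(\mathcal M_0)$ satisfies $\Theta_t^{-1}A=A$ for all $t$, fix some $t\neq0$. By \eqref{intertwine-plan}, the set $\mathcal Z^{-1}(A)$ coincides $\mathbb P$-a.s. with $\hat\theta_t^{-1}\mathcal Z^{-1}(A)$. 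Applying Proposition~\ref{Ik78assq} to the invariant function $\zeta=\mathbf 1_{\mathcal Z^{-1}(A)}$ (the proof there only uses a.s. invariance as in \eqref{Y11}) gives $\mathbb P(\mathcal Z^{-1}A)\in\{0,1\}$, that is, $\mathcal Z_\#\mathbb P(A)\in\{0,1\}$.

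To prove \eqref{intertwine-plan}, I would first establish it at the level of the regularized models $\mathcal Z^\epsilon$ and then pass to the limit $\epsilon\to0$.

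\textbf{Step 1: regularized models.} Fix $\epsilon$ and a countable dense set $(\overline z_k)$ that is stable under the time shifts by $t$, for instance $(z_j)\cup(z_j+(t,0))$. For $\omega$ in a full-measure set, Lemma~\ref{IAskca1} gives
\[
\mathcal P^\epsilon(\hat\theta_t\omega,\overline z_k)=\sum_j\omega_j\sum_i C_{ij}(t)\langle(\rho^\epsilon_{\overline z_k})^{\mathrm{per}},f_i\rangle .
\]
Since $\sum_i C_{ij}(t)f_i=U_{-t}f_j$ (or $U_t$, depending on the sign convention, which I would fix once), the inner sum equals $\langle U_{\pm t}(\rho^\epsilon_{\overline z_k})^{\mathrm{per}},f_j\rangle=\langle(\rho^\epsilon_{\overline z_k+(t,0)})^{\mathrm{per}},f_j\rangle$. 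Hence
\[
\mathcal P^\epsilon(\hat\theta_t\omega,\overline z_k)=\mathcal P^\epsilon(\omega,\overline z_k+(t,0)).
\]
By continuity (Lemma~\ref{JMasss}) and density, this yields $\mathcal P^\epsilon(\hat\theta_t\omega)=U_t\mathcal P^\epsilon(\omega)$ as continuous functions. Convolution with $K$ commutes with $U_t$, and $C_\epsilon$ is translation invariant. Therefore $\mathcal Z^\epsilon(\hat\theta_t\omega)=\mathbf M(U_t\mathcal P^\epsilon,U_tK*\mathcal P^\epsilon,C_\epsilon)=\Theta_t\mathcal Z^\epsilon(\omega)$ a.s., using the identity $\Theta_t\mathbf M(f,g,c)=\mathbf M(U_tf,U_tg,c)$ from the proof of Lemma~\ref{Aaasqwqwa}. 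If the sign convention produces $\hat\theta_{-t}$ instead, that is harmless for ergodicity, since $\hat\theta_{-t}$ is ergodic as well.

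\textbf{Step 2: passage to the limit.} By \eqref{Oaslnr}, along a subsequence $\epsilon_n\to0$ we have $\mathcal Z^{\epsilon_n}\to\mathcal Z$ in $d_{\mathcal M}$ a.s. (using a diagonal argument over the compacts $\mathcal O_m$). Since $(\hat\theta_t)_\#\mathbb P=\mathbb P$ by Lemma~\ref{BGAssss}, the same convergence holds a.s. after composing with $\hat\theta_t$: $\mathcal Z^{\epsilon_n}\circ\hat\theta_t\to\mathcal Z\circ\hat\theta_t$. Continuity of $\Theta_t$ on $\mathcal M_0$ then gives $\Theta_t\mathcal Z^{\epsilon_n}\to\Theta_t\mathcal Z$. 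Comparing the two limits yields \eqref{intertwine-plan}.

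The main obstacle is Step~1: the careful interchange of the two infinite sums, which is exactly the content of Lemma~\ref{IAskca1}, together with correctly identifying $\sum_iC_{ij}(t)f_i$ with the shifted basis vector in $\overline{\mathcal H}^Q$. This identification uses the fact that $U_t$ is unitary on $\overline{\mathcal H}^Q$, so that $C(t)^\top$ is the matrix of $U_t^{-1}$; the remaining care is to track the sign of $t$ consistently.
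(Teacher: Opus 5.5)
Your proposal is correct and follows the paper's proof essentially step for step: the $\epsilon$-level intertwining via Lemma~\ref{IAskca1} together with continuity and density, then passage to an a.s.\ convergent subsequence using continuity of the shift, then transfer of ergodicity from Proposition~\ref{Ik78assq}. Your argument only uses invariance under one fixed $t\neq0$, so, like the paper's, it actually yields ergodicity of each individual $\Theta_t$, which is the form used later in the paper.

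With the paper's conventions the sign comes out as $\mathcal Z\circ\hat\theta_t=\Theta_{-t}\circ\mathcal Z$, because $U_t(\rho^\epsilon_{z})^{\mathrm{per}}=(\rho^\epsilon_{z-(t,0)})^{\mathrm{per}}$; as you anticipated, this is harmless. Your explicit use of $(\hat\theta_t)_{\#}\mathbb P=\mathbb P$ to keep the a.s.\ convergence after composing with $\hat\theta_t$ fills in a detail that the paper leaves implicit.
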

	\begin{proof}
		From Corollary~\ref{HAYSda}, it remains only to prove the ergodicity of
		$\Theta_t$ for each $t\neq0$. We divide the proof into two steps.
		
		\medskip
		\noindent\textbf{Step 1.}
		In this step, we prove that, for each $\epsilon\in(0,1)$,
		\[
		\mathcal{Z}^{\epsilon}(\hat{\theta}_{t}\omega)
		=
		\Theta_{-t}\mathcal{Z}^{\epsilon}(\omega)
		\]
		on a set of full measure, which may depend on $\epsilon$ and $t$, where
		\[
		\mathcal{Z}^{\epsilon}:
		(\Omega,\mathcal{B}(\Omega))
		\longrightarrow
		(\mathcal{M}_0,\mathcal{B}(\mathcal{M}_0))
		\]
		is the Borel measurable map introduced in Lemma~\ref{ASHyasas}.
		By Lemma~\ref{JMasss}, there exists a
		full-measure set on which $\mathcal{P}^{\epsilon}(\omega,\cdot)$ is a
		continuous function satisfying
		\begin{align}\label{Olaspsq}
			\mathcal{P}^{\epsilon}(\omega,z_k)
			=
			\sum_{i\ge1}\omega_i
			\big\langle
			(\rho^{\epsilon}_{z_k})^{\mathrm{per}},
			f_i
			\big\rangle_{\overline{\mathcal{H}}^{Q}},
			\qquad \forall\, k\ge1,
		\end{align}
		where $(z_j)_{j\geq1}$ is the dense sequence introduced in Lemma~\ref{JMasss}.
		Note that $(z_j-(t,0))_{j\ge1}$ is also a dense subset of
		$\mathbb{R}\times\mathbb{T}^{d}$. Therefore, by Remark~\ref{Asaisada}, we may choose the continuous version $\mathcal{P}^{\epsilon}(\omega,\cdot)$ so that, on a full-measure set, in addition to \eqref{Olaspsq}, it also satisfies
		\begin{align}\label{Olaspsq_shift}
			\mathcal{P}^{\epsilon}\bigl(\omega,z_k-(t,0)\bigr)
			=
			\sum_{i\ge1}\omega_i
			\big\langle
			(\rho^{\epsilon}_{z_k-(t,0)})^{\mathrm{per}},
			f_i
			\big\rangle_{\overline{\mathcal{H}}^{Q}},
			\qquad \forall\, k\ge1.
		\end{align}
		From \eqref{HYjas} and \eqref{Olaspsq}, on a set of full measure, we have
		\begin{align}\label{IAkslas8}
			\begin{split}
				\mathcal{P}^{\epsilon}(\hat{\theta}_{t}(\omega),z_k)
				&=
				\sum_{i\ge1} (\hat{\theta}_{t}(\omega))_{i}
				\big\langle (\rho^{\epsilon}_{z_k})^{\mathrm{per}}, f_i \big\rangle_{\overline{\mathcal{H}}^{Q}} \\
				&=
				\sum_{i\ge1}\big\langle (\rho^{\epsilon}_{z_k})^{\mathrm{per}}, f_i \big\rangle_{\overline{\mathcal{H}}^{Q}}
				\bigg(\sum_{j\ge1} C_{ij}(t)\,\omega_j\bigg)
				,
				\qquad \forall k\ge1.
			\end{split}
		\end{align}
		Also, from \eqref{Olaspsq_shift} on a set of full measure,
		\begin{align*}
			\bigl(U_{-t} \mathcal{P}^{\epsilon}(\omega)\bigr)(z_k)=\mathcal{P}^{\epsilon}\left(\omega,z_k-(t,0)\right)=	\sum_{i\ge1}\omega_i
			\big\langle
			(\rho^{\epsilon}_{z_k-(t,0)})^{\mathrm{per}},
			f_i
			\big\rangle_{\overline{\mathcal{H}}^{Q}},
			\qquad \forall\, k\ge1
		\end{align*}
		Recall that $(f_i)_{i\ge1}$ is an orthonormal basis of $\overline{\mathcal{H}}^{Q}$. Also, it is readily verified that
		\[
		(\rho^{\epsilon}_{z_k-(t,0)})^{\mathrm{per}}
		=
		U_{t}(\rho^{\epsilon}_{z_k})^{\mathrm{per}}.
		\]
		Therefore,
		\begin{align*}
			&	\bigl(U_{-t} \mathcal{P}^{\epsilon}(\omega)\bigr)(z_k)
			=
			\sum_{i\ge1}
			\langle U_{t}(\rho_{z_k}^\epsilon)^{\mathrm{per}},f_i\rangle_{\overline{\mathcal{H}}^{Q}}\,\omega_i=
			\sum_{i\ge1}\omega_i
			\langle (\rho_{z_k}^\epsilon)^{\mathrm{per}},U_{-t}f_i\rangle_{\overline{\mathcal{H}}^{Q}}\\\
			&=
			\sum_{i\ge1}\omega_i
			\left\langle
			(\rho_{z_k}^\epsilon)^{\mathrm{per}},
			\sum_{j\ge1}\langle U_{-t}f_i,f_j\rangle_{\overline{\mathcal{H}}^{Q}}f_j
			\right\rangle_{\overline{\mathcal{H}}^{Q}}
			=
			\sum_{i\ge1}\omega_i\left(\sum_{j\ge1}
			C_{i,j}(-t)\,
			\bigl\langle(\rho_{z_k}^\epsilon)^{\mathrm{per}},f_j\bigr\rangle_{\overline{\mathcal{H}}^{Q}}\right)
			\\
			&=
			\sum_{j\ge1}\omega_j\left(\sum_{i\ge1}
			C_{j,i}(-t)\,
			\bigl\langle(\rho_{z_k}^\epsilon)^{\mathrm{per}},f_i\bigr\rangle_{\overline{\mathcal{H}}^{Q}}\right)=	\sum_{j\ge1}\omega_j\left(\sum_{i\ge1}
			C_{i,j}(t)\,
			\bigl\langle(\rho_{z_k}^\epsilon)^{\mathrm{per}},f_i\bigr\rangle_{\overline{\mathcal{H}}^{Q}}\right).
		\end{align*}
		Consequently, by Lemma~\ref{IAskca1} and \eqref{IAkslas8}, there exists a
		full-measure set on which
		\begin{align}\label{Ol45as}
			\mathcal{P}^{\epsilon}(\hat{\theta}_{t}(\omega),z_k)
			=
			\bigl(U_{-t}\mathcal{P}^{\epsilon}(\omega)\bigr)(z_k),
			\qquad \forall k\geq1.
		\end{align}
		Keeping in mind that $(z_j)_{j\geq1}$ is dense in
		$\mathbb{R}\times\mathbb{T}^d$, and using the continuity of
		$z\mapsto\mathcal{P}^{\epsilon}(\omega,z)$ and
		$z\mapsto\bigl(U_{-t}\mathcal{P}^{\epsilon}(\omega)\bigr)(z)$ on a set of full
		measure, together with \eqref{Ol45as}, we conclude that, on a set of full
		measure,
		\begin{align}\label{Ikamsw25}
			\mathcal{P}^{\epsilon}(\hat{\theta}_{t}(\omega),z)
			=
			\bigl(U_{-t}\mathcal{P}^{\epsilon}(\omega)\bigr)(z),
			\qquad \forall z\in\mathbb{R}\times\mathbb{T}^{d}.
		\end{align}
		Consequently, together with \eqref{Baxczxx}, we conclude that on a set of full
		measure,
		\begin{align*}
			\mathcal{Z}^{\epsilon}(\hat{\theta}_{t}(\omega))
			=
			\Theta_{-t}\mathcal{Z}^{\epsilon}(\omega).
		\end{align*}
		\textbf{Step 2.} Thanks to Proposition~\ref{Ujasn785sw},
		
		$$
		\lim_{\epsilon\rightarrow 0}
		\mathbb{E}\Vert\mathcal{Z}^{\epsilon};\mathcal{Z}\Vert_{\mathcal{O}}=0.
		$$
		
		Therefore, by passing to a suitable sequence $(\epsilon_j)_{j\geq1}$ converging to zero, and using the continuity of
		$\Theta_{-t}:\mathcal{M}_0\to\mathcal{M}_0$ together with the identity established in Step~1, we conclude that, on a set of full measure, denoted by
		$\tilde{\Omega}_t$, we have
		\begin{align}\label{Olas[sd]}
			\mathcal{Z}(\hat{\theta}_{t}(\omega))
			=
			\Theta_{-t}\mathcal{Z}(\omega).
		\end{align}
		To prove the ergodicity of $\Theta_t$, let
		$A\in\mathcal{B}(\mathcal{M}_0)$ satisfy
		\[
		(\mathcal{Z})_{\#}\mathbb{P}\bigl(A\Delta\Theta_tA\bigr)=0.
		\]
		It remains to show that
		\[
		(\mathcal{Z})_{\#}\mathbb{P}(A)\in\{0,1\}.
		\]
		To this end, note that $\Theta_t\circ\Theta_{-t}=I$ and, on a set of full
		measure,
		\[
		\hat{\theta}_t\circ\hat{\theta}_{-t}(\omega)=\omega.
		\]
		Therefore, together with \eqref{Olas[sd]}, we obtain
		\begin{align}\label{IK478sd}
			\begin{split}
				&(\mathcal{Z})_{\#}\mathbb{P}\bigl(A\Delta\Theta_tA\bigr)
				=
				\mathbb{P}\!\left(
				\mathcal{Z}^{-1}(A)
				\Delta
				\mathcal{Z}^{-1}(\Theta_tA)
				\right) \\
				&=
				\mathbb{P}\!\left(
				\mathcal{Z}^{-1}(A)
				\Delta
				(\hat{\theta}_t)^{-1}\!\left(\mathcal{Z}^{-1}(A)\right)
				\right)
				=
				\mathbb{P}\!\left(
				\mathcal{Z}^{-1}(A)
				\Delta
				(\hat{\theta}_{-t})\!\left(\mathcal{Z}^{-1}(A)\right)
				\right)
				=0.
			\end{split}
		\end{align}
		From Proposition~\ref{Ik78assq}, we know that
		$\hat{\theta}_{-t}:\Omega\to\Omega$ is ergodic. Hence, from
		\eqref{IK478sd}, we have
		\[
		(\mathcal{Z})_{\#}\mathbb{P}(A)
		=
		\mathbb{P}\bigl(\mathcal{Z}^{-1}(A)\bigr)
		\in\{0,1\}.
		\]
		This finishes the proof.
	\end{proof}
	We can now prove that the solution to \eqref{MAINAA} generates a natural
	cocycle. This follows directly from the previous results.
	
	\begin{theorem}\label{ASA8eweaas}
		Consider the setting of Theorem~\ref{Ujasm}. Then, for each $s\in\R$, there exists a global solution
		$\RR{\mathcal{U}}(s,u_0,\mathcal{Z})$ to the following equation:
		\begin{align}
			\begin{cases}
				(\partial_t-\Delta)\RR{\mathcal{U}}
				=
				\mu\RR{\mathcal{U}}
				+
				\Sigma(\RR{\mathcal{U}})\tilde{\star}\rb,
				& t>s,\\
				\mathcal{R}(\RR{\mathcal{U}})(s,\cdot)=u_0,
				& t=s.
			\end{cases}
		\end{align}
		with respect to the model $\mathcal{Z}$. For $t\geq s$, 
		let
		\[
		\tilde{\phi}(s,t,\mathcal{Z},u_0)\in L^{\infty}(\mathbb{T}^d)
		\]
		be given by
		\begin{align}\label{OLmasw}
			\tilde{\phi}(s,t,\mathcal{Z},u_0)(x)
			= \mathcal{R}\Big(\mathbf{R}^+_{s}\RR{\mathcal{U}}(s,u_0,\mathcal{Z})\Big)(t,x)
		\end{align}
		and define
		\[
		\overline{\phi}(t,\mathcal{Z},u_0)
		:=
		\tilde{\phi}(0,t,\mathcal{Z},u_0).
		\]
		Then the following statements hold:
		\begin{enumerate}
			\item For $s_1<s_2<s_3$,
			\begin{align}\label{ASk85}
				\tilde{\phi}\left(s_1,s_3,\mathcal{Z},u_0\right)
				=
				\tilde{\phi}\left(s_2,s_3,\mathcal{Z},
				\tilde{\phi}\left(s_1,s_2,\mathcal{Z},u_0\right)\right).
			\end{align}
			
			\item For $s_1,s_2>0$, we have
			\begin{align}\label{Asaiqwq}
				\overline{\phi}(s_1+s_2,\mathcal{Z},u_0)
				=
				\overline{\phi}\left(s_1,\Theta_{s_2}\mathcal{Z},
				\overline{\phi}(s_2,\mathcal{Z},u_0)\right).
			\end{align}
			In particular, $\overline{\phi}$ satisfies the cocycle property over
			\begin{align}\label{AIknas}
				(\mathcal{M}_0,\mathcal{B}(\mathcal{M}_0),
				(\mathcal{Z})_{\#}\mathbb{P},
				\{\Theta_t\}_{t\in\mathbb{R}}).
			\end{align}
		\end{enumerate}
	\end{theorem}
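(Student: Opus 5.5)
The plan is to obtain the global solution from Assumption~\ref{ASAS78asaaa}(4), restarted at an arbitrary time $s$, and then derive both identities from uniqueness of local fixed points together with a shift-covariance argument.

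\emph{Existence.} Assumption~\ref{ASAS78asaaa}(4) gives a global solution for $s=0$. For general $s$ we use stationarity: the local theory of Definition~\ref{dfn} is built from $\mathcal{Z}$ restricted to $\overline{O^{s-1}_{s+2}}$, and the operators $\mathcal{G}^s$, $\RR{G^s}$ commute with time translation. Hence the solution started at $s$ for the model $\mathcal{Z}$ is the $s$-translate of the solution started at $0$ for $\Theta_s\mathcal{Z}$. Since $(\Theta_s)_\#\mathcal{Z}_\#\mathbb{P}=\mathcal{Z}_\#\mathbb{P}$ by Proposition~\ref{Ujasn785sw}, this is globally defined for a.e.\ model. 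Alternatively, one can use the $\mathcal{Z}$-pathwise a priori bound of Lemma~\ref{Alex}, iterated via the local fixed point.

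\emph{Flow property (\ref{ASk85}).} This follows the argument of Step~1 in Proposition~\ref{onwall}. Let $\RR{\mathcal U}$ be the solution from $s_1$, and set $v:=\tilde\phi(s_1,s_2,\mathcal Z,u_0)=\mathcal R\RR{\mathcal U}(s_2,\cdot)$. Using the semigroup identity \eqref{SEMIJ} and the reconstruction identities \eqref{A2222}, \eqref{A222233}, \eqref{IKLa}, we show that $\mathbf R^+_{s_2}\RR{\mathcal U}$ satisfies the fixed point equation $\mathcal Q^{s_2}_T(v,\cdot)$ on every short interval after $s_2$. Since the fixed point is characterized by its reconstruction (\cite[Prop.~3.29]{Hai14} with \eqref{521}), uniqueness of the local fixed point applies interval by interval. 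The solution started at $s_2$ from $v$ therefore coincides with $\mathbf R^+_{s_2}\RR{\mathcal U}$ on $(s_2,\infty)$, and reconstructing at $t=s_3$ gives \eqref{ASk85}.

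\emph{Cocycle (\ref{Asaiqwq}).} The key identity is translation covariance:
\begin{align*}
\tilde\phi(s,t,\mathcal Z,u_0)=\tilde\phi(0,t-s,\Theta_s\mathcal Z,u_0).
\end{align*}
To prove it, note that $\Theta_s$ acts by $(U_s\Pi)_z=U_s\Pi_{z+(s,0)}$, and $\Gamma$ is recomputed from \eqref{B1478}. Thus if $\RR{\mathcal U}$ solves the equation from time $s$ for $\mathcal Z$, then $z\mapsto\RR{\mathcal U}(z+(s,0))$ lies in $\mathcal D^{\gamma_1,0}_{P_0,0}$ for $\Theta_s\mathcal Z$ with the same norms. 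The reconstruction operator commutes with the shift by uniqueness in \eqref{AA}. The kernels $K,S,G$ are translation invariant, so $\mathcal G^{s}_{\gamma_0,1-\kappa}$, $\mathcal G^s_{\gamma_1,2}$ and $\RR{G^s}$ are conjugated to their $s=0$ counterparts. Consequently the shifted object is a fixed point of $\mathcal Q^0$ for $\Theta_s\mathcal Z$, and uniqueness gives the identity. Combining it with \eqref{ASk85} for $(s_1,s_2,s_3)=(0,s_2,s_1+s_2)$ yields \eqref{Asaiqwq}. Together with $\overline\phi(0,\cdot,u_0)=u_0$, this is the algebraic cocycle property over \eqref{AIknas}.

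\emph{Measurability and main obstacle.} Joint measurability of $(t,\mathcal Z,u_0)\mapsto\overline\phi$ needs two ingredients. First, continuity in $u_0$ from Lemma~\ref{YHATSs}. Second, continuity in the model, which comes from local Lipschitz dependence of the fixed point on $\mathcal Z$ in $d_{\mathcal M}$ together with patching. Continuity in $t$ comes from the time regularity of $\mathcal U^{\gY}$. I expect the main obstacle to be the shift-covariance step, specifically checking carefully that $\Theta_s$ transports the modelled-distribution spaces, the reconstruction and the abstract integration maps $\mathcal K^{s}$, which include the non-canonical $\gIb$ component. This in turn needs the model $\mathcal Z\in\mathcal M_0$ to realize $K$ consistently after shifting, which holds on the dense set $\mathbf M(f,g,c)$ and hence by continuity on $\mathcal M_0$.
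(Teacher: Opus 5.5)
Your proposal is correct and follows the paper's proof. Existence comes from the global well-posedness built into Assumption~\ref{ASAS78asaaa}, the flow identity \eqref{ASk85} comes from the restarting argument of Step~1 of Proposition~\ref{onwall}, and the cocycle identity comes from the shift identity $\tilde\phi(s,t,\mathcal Z,u_0)=\tilde\phi(0,t-s,\Theta_s\mathcal Z,u_0)$ combined with \eqref{ASk85} at $(0,s_2,s_1+s_2)$ (the paper only asserts the shift identity as easily verified, and you justify it in more detail).

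Two side remarks are inaccurate, though neither is load-bearing.
\begin{enumerate}
\item Lemma~\ref{Alex} alone cannot give global existence, because its bound is expressed through the supremum of the solution itself.
\item The model $\mathbf M(f,g,c)$ realizes $K$ only when $g-K*f$ is constant, so the density argument over $\mathbf M(f,g,c)$ does not work as stated. Admissibility of $\Theta_s\mathcal Z$ should instead be read off directly from the translation invariance of $K$ in \eqref{trtetre}.
\end{enumerate}
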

	\begin{proof}
		Recall that in Assumption~\ref{ASAS78asaaa}, we assume the existence of a
		global solution. The identity \eqref{ASk85} follows by an argument similar
		to the one used in \hyperref[Step10]{\textbf{Step 1}} of
		Proposition~\ref{onwall}.
		Moreover, by \eqref{OLmasw} and Definition~\eqref{dfn}, one can easily
		verify that
		\begin{align}\label{Lm78a}
			\tilde{\phi}\left(s_2,s_1+s_2,\mathcal{Z},u_1\right)
			=
			\overline{\phi}\left(s_1,\Theta_{s_2}\mathcal{Z},u_1\right),
		\end{align}
		for every $u_1\in L^{\infty}(\mathbb{T}^d)$.
		Combining \eqref{ASk85} and \eqref{Lm78a}, we obtain \eqref{Asaiqwq},
		and hence the cocycle property follows.
	\end{proof}
	In the previous result, we showed that one can associate a cocycle to the solution of the SPDE~\eqref{MAIN}. However, a technical problem remains. The natural Banach space in which the cocycle is defined is $L^\infty(\mathbb T^d)$, which is not separable. For our main application, it is crucial that the cocycle be defined on a separable Banach space. This is mainly due to the measurability issues that arise in the absence of separability and that prevent us from applying the multiplicative ergodic theorem.
	Fortunately, in the present setting, this issue can easily be resolved by restricting the cocycle to  $C(\mathbb T^d)$.
	
	\begin{theorem}\label{ASAsewe}
		Consider the setting of Theorem~\ref{ASA8eweaas}. Then
		\begin{align*}
			\phi:[0,\infty)\times \mathcal{M}_0\times C(\mathbb T^d)
			\to C(\mathbb T^d),
			\qquad
			\phi(t,\mathcal{Z},u_0)
			:=\overline{\phi}(t,\mathcal{Z},u_0),
		\end{align*}
		is well-defined and is a continuous cocycle. Additionally, once
		$\Sigma\in C^{n}(\mathbb R,\mathbb R)$ for $n\geq3$, this cocycle is of
		class $C^{n-2}$.
	\end{theorem}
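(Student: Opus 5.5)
The plan is to verify, in order, that the cocycle preserves $C(\mathbb T^d)$, that it is jointly continuous (hence measurable), that the cocycle identity holds, and that it has $C^{n-2}$ regularity in $u_0$.

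\textbf{Well-definedness.} Fix $u_0\in C(\mathbb T^d)$ and $t>0$. The solution is $\RR{\mathcal U}\in\mathcal D^{\gamma_1,0}_{P_0,0}$, and by \eqref{521} its reconstruction is $\mathcal U^{\gY}$. For any modelled distribution in $\mathcal D^{\gamma_1,0}_{P_0,0}$, the $\gY$-component is continuous away from $P_0$. Indeed, the bound on $\|\RR{\mathcal U}(z)-\Gamma_{z,w}\RR{\mathcal U}(w)\|_{\gY}$ from \eqref{Q00}, together with \eqref{BB2}, gives
\[
|\mathcal U^{\gY}(z)-\mathcal U^{\gY}(w)|\lesssim d(z,w)^{1-\kappa}
\]
locally on $\{t>0\}$. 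Hence $x\mapsto\mathcal U^{\gY}(t,x)$ lies in $C(\mathbb T^d)$. At $t=0$ we get $u_0$ itself. So $\phi$ maps $[0,\infty)\times\mathcal M_0\times C(\mathbb T^d)$ into $C(\mathbb T^d)$.

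\textbf{Continuity and measurability.} Continuity of $u_0\mapsto\phi(t,\mathcal Z,u_0)$ follows from Lemma~\ref{YHATSs}, patched over $[0,t]$ using the restart property \eqref{ASk85}. For joint continuity in $(t,\mathcal Z,u_0)$, I would use that the fixed-point map $\mathcal Q_T^s$ in \eqref{FIXED} depends continuously, in fact locally Lipschitz, on the model. This is the standard continuity of the abstract solution map with respect to $d_{\mathcal M}$ from \cite{Hai14}, applied on short intervals and iterated. Local boundedness of the solution norms, from Lemma~\ref{Alex} and the assumed global well-posedness, ensures that finitely many restarts cover $[0,t]$ uniformly for nearby data.

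Continuity in $t$ comes from H\"older continuity of $\mathcal U^{\gY}$ in time for $t>0$. At $t=0$ it comes from $G^0u_0\to u_0$ uniformly, which is exactly where $u_0\in C(\mathbb T^d)$ rather than $L^\infty$ is needed, plus the vanishing contribution of the $\mathcal G$ terms given by \eqref{A2222} and \eqref{A222233} as $T\to0$. Joint continuity on the separable metric space $[0,\infty)\times\mathcal M_0\times C(\mathbb T^d)$ then gives joint measurability.

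\textbf{Cocycle identity.} The identity is inherited directly from \eqref{Asaiqwq} in Theorem~\ref{ASA8eweaas}, since $C(\mathbb T^d)$ is invariant under $\phi$. We also have $\phi(0,\mathcal Z,u_0)=u_0$.

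\textbf{$C^{n-2}$ regularity.} By Lemma~\ref{YHATSs}, $\mathcal J_t$ is $C^{n-2}$ from $L^\infty$ to $L^\infty$. Restricting to the closed subspace $C(\mathbb T^d)$, whose image lies in $C(\mathbb T^d)$ and carries the same norm, preserves Fr\'echet differentiability. The derivatives map $C(\mathbb T^d)$ into $C(\mathbb T^d)$ by the same continuity argument applied to the linearized equation of Proposition~\ref{kkkl}. Here the initial datum $\tilde u$ is continuous when the $G^0\tilde u$ term is continuous at $t=0$.

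\textbf{Main obstacle.} I expect the main difficulty to be the joint continuity in $\mathcal Z$ for arbitrary time horizons. I would handle it by combining local Lipschitz dependence on the model in short windows, with the window length chosen from \eqref{kappa}, and an inductive restart argument controlled by Lemma~\ref{global}.
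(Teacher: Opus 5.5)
Your proposal is correct and follows the paper's proof in substance. The paper also takes the cocycle identity from Theorem~\ref{ASA8eweaas} and the $C^{n-2}$ regularity from Lemma~\ref{YHATSs}, and reduces everything to showing that the $\gY$-component of the solution in $\mathcal{D}^{\gamma_1,0}_{P_0,0}$ is $(1-\kappa)$-H\"older for $t>0$. It records this as continuity of $\overline{\phi}(t,\mathcal Z,\cdot)$ from $C(\mathbb T^d)$ into $C^{1-\kappa}(\mathbb T^d)$, which your estimate also yields and which is reused later in Lemma~\ref{Asqwqw}.

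Your treatment of joint continuity in $(t,\mathcal Z,u_0)$, and hence joint measurability, goes beyond what the paper writes out and is sound, with one imprecision. Uniform control over nearby models does not follow from Lemma~\ref{Alex} plus global well-posedness alone, because Lemma~\ref{Alex} presupposes a bound on $\sup|\mathcal U^{\gY}|$. You should either invoke the a priori bound \eqref{A75qwewc} or argue perturbatively around the reference solution, window by window.
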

	
	\begin{proof}
		The proof is based on Theorem~\ref{ASA8eweaas} and Lemma~\ref{YHATSs}. The only part is that we must prove that for each $t>0$ and $\mathcal{Z}\in \mathcal{M}_0$ we have
		\begin{align*}
			\overline{\phi}(t,\mathcal{Z},C(\mathbb T^d))
			\subseteq C(\mathbb T^d).
		\end{align*}
		We show that, in fact, we have continuously
		\begin{align*}
			\overline{\phi}(t,\mathcal{Z},C(\mathbb T^d))
			\subseteq C^{1-\kappa}(\mathbb T^d).
		\end{align*}
		To this end, recall that
		\begin{align}
			L^{\infty}(\mathbb T^d)&\to \mathcal{D}^{\gamma_1,0}_{P_s,0},\ \quad u_{0}\mapsto \mathbf{R}^+_{0}\RR{\mathcal{U}}(0,u_0,\mathcal{Z})
		\end{align}
		is continuous. 
		Then, by the definition of $\triplenorm{\mathbf{R}^+_{0}\RR{\mathcal{U}}(0,u_0,\mathcal{Z})}_{\gamma_1,0,O_{0}^t}^{(0)}$,
		the claim follows.
	\end{proof}
	\begin{remark}\label{ASyachadw}
		As already mentioned, our choice of noise is mainly motivated by simplicity
		and by the desire to remain close to the classical setting. The same
		framework can, however, be adapted to other classes of Gaussian noises. In
		particular, from the perspective of random dynamical systems, one can
		consider the Gaussian process
		\[
		W^{H,Q}=\{W^{H,Q}(f):f\in\mathcal{W}^{H,Q}\},
		\]
		where $\mathcal{W}^{H,Q}$ is the completion of
		$C_c^\infty(\mathbb{R}\times\mathbb{T}^d)$ with respect to the inner product
		\[
		\langle f,g\rangle_{\mathcal{W}^{H,Q}}
		:=c_H
		\int_{\mathbb{R}}
		\sum_{k\in\mathbb{Z}^d}
		|\tau|^{1-2H}q_k\,
		\widehat f(\tau,k)
		\overline{\widehat g(\tau,k)}
		\,d\tau.
		\]
		where $H\in (0,1)$,
		The corresponding noise satisfies
		\[
		\xi^{H,Q}\in
		C^{2H-2+\beta-\frac d2-\varepsilon}_{(2,1)}
		\qquad\text{for every }\varepsilon>0.
		\]
		The arguments developed in this section can be adapted to this setting,
		with the corresponding modifications of the regularity estimates. This class of noises is particularly interesting in the present context due to
		its temporal dependence structure and, in particular, its temporal stationarity. To satisfy the regularity assumption in \eqref{asasaiasa}, we
		require
		\begin{align}
			2H-2+\beta-\frac{d}{2}>-1-\kappa.
		\end{align}
		In particular, for fixed $\beta$, $d$, and $\kappa$, this condition
		restricts how far $H$ can be below $\frac12$. It should also be possible to consider noises with fractional spatial
		regularity.
	\end{remark}

	\section{Main results}\label{TNT}
	We now have all the ingredients needed to establish our main results. Following the standard approach in dynamical systems, our goal is to describe the local dynamics of the cocycle $\varphi$ in a neighborhood of an equilibrium. In particular, we prove the existence of three families of invariant manifolds that locally characterize the dynamical behavior of the solution.
	We first collect the assumptions under which these results hold.
	\begin{assumption}\label{zasdasdaz}
		Throughout this section, we assume the following.
		\begin{enumerate}
			\item We assume that Assumption~\ref{ASAS78asaaa} holds.
			
			\item For the SPDE
			\begin{align}\label{MAINAAAA}
				\begin{cases}
					(\partial_t-\Delta)u=\mu u+\Sigma(u)\,\xi^Q,
					& t>0,\\
					u_0\in L^\infty(\mathbb T^d),
					& t=0,
				\end{cases}
			\end{align}
			we assume that $\mu\leq0$ and
			$\Sigma(\cdot)\in C^{3+\nu}_{b}(\mathbb{R},\mathbb{R})$ for some
			$\nu\in(0,1]$. The solution theory for this equation is based on the
			random model ${\mathcal Z}$ specified in
			Theorem~\ref{ASA8eweaas}.
			
			\item For simplicity, and by an abuse of notation, we set
			\[
			\left({\Omega},{\mathcal{F}},{\P},\{{\theta}_t\}_{t\in\mathbb{R}}\right)
			=
			\left(
			\mathcal{M}_0,
			\mathcal{B}(\mathcal{M}_0),
			(\mathcal{Z})_{\#}\mathbb{P},
			\{\Theta_t\}_{t\in\mathbb{R}}
			\right).
			\]
			We also set
			\begin{align*}
				\varphi \colon [0,\infty)\times{\Omega}\times C(\T^d)
				\rightarrow C(\T^d).
			\end{align*}
			Where
			\begin{align*}
				\varphi^{t}_{\omega}(u):=\phi(t,\omega,u),
			\end{align*}
			as defined in Theorem~\ref{ASAsewe}. This map is a continuous
			cocycle and is of class $C^{1}$.
		\end{enumerate}
	\end{assumption}
	As already mentioned, our goal is to describe the dynamics around the
	equilibrium. In the theory of random dynamical systems, an equilibrium is
	formulated in terms of a random point that is invariant under the flow. We
	refer to such a random point as a stationary point.
	
	\begin{definition}
		A random point $Y:\Omega\to C(\T^d)$ is called a \emph{stationary point} for
		the cocycle $\varphi$ if
		\begin{enumerate}
			\item $Y$ is measurable and
			\begin{align}\label{AS89asdxa}
				\mathbb{E}\big[\|Y_{\omega}\|_{C(\T^d)}^p\big]<\infty
				\qquad\text{for all }p\geq1;
			\end{align}
			\item for every $t\geq0$ and $\omega\in\Omega$,
			\[
			\varphi^t_\omega(Y_\omega)=Y_{\theta_t\omega}.
			\]
		\end{enumerate}
	\end{definition}
	
	\begin{remark}
		If $\Sigma(0)=0$, then the zero function is a stationary point. More generally,
		the existence of a stationary point is expected when $\mu<0$ is sufficiently
		large in absolute value. We also note that, when the system admits an invariant
		measure, one can construct a new random dynamical system in which the
		corresponding stationary random point is viewed as the zero equilibrium; see,
		for example, \cite[Lemma~7.2.1]{Arn98}.
	\end{remark}
	\begin{remark}\label{sds78asdaad}
		For each $t_{0}>0$,	recall that $\theta_{t_0}:\Omega\to\Omega$ is ergodic. Consider the random variable $X:\Omega\to[0,\infty]$ satisfying $	\mathbb{E}[X]<\infty.$
		Set
		$$
		Y:=X\circ\theta_{t_0}-X.
		$$
		Since $\theta_{t_0}$ is measure-preserving and $X\in L^1$, we have $Y\in L^1$ and
		$$
		\mathbb{E}[Y]
		=
		\mathbb{E}[X\circ\theta_{t_0}]-\mathbb{E}[X]
		=0.
		$$
		Hence, by the Birkhoff ergodic theorem,
		$$
		\frac{1}{m}\sum_{k=0}^{m-1}Y(\theta_{kt_0}\omega)
		\longrightarrow 0
		\qquad\text{almost surely}.
		$$
		On the other hand,
		$$
		\frac{1}{m}\sum_{k=0}^{m-1}Y(\theta_{kt_0}\omega)
		=
		\frac{X(\theta_{mt_0}\omega)-X(\omega)}{m}.
		$$
		Since $X(\omega)<\infty$ almost surely, we conclude that
		$$
		\frac{X(\theta_{mt_0}\omega)}{m}
		\longrightarrow0
		\qquad\text{almost surely}.
		$$
	\end{remark}
	
	\begin{remark}
		As we will see throughout this section, we apply the estimates established in
		Section~\ref{GPAM}, together with the construction introduced in the previous
		section, to derive the main results. Since the estimates in
		Section~\ref{GPAM} are deterministic, we emphasize that our results can be
		applied to different types of noise, such as the one introduced in
		Remark~\ref{ASyachadw}. This also allows us to study stationary points rather
		than being restricted to deterministic equilibria.
	\end{remark}
	We now make the following additional assumption.
	
	\begin{assumption}
		In addition to Assumption~\ref{zasdasdaz}, we assume that
		$Y:\Omega\to C(\T^d)$ is a stationary point for the cocycle $\varphi$.
		We define
		\begin{align*}
			\psi \colon [0,\infty)\times\Omega\times C(\T^d)
			\rightarrow C(\T^d),
			\qquad
			\psi^{t}_{\omega}(u)
			:=
			D_{Y_\omega}\varphi^{t}_{\omega}(u).
		\end{align*}
		That is, $\psi$ is the linearization of $\varphi$ around the stationary
		point $Y$. Moreover, $\psi$ is a linear and continuous cocycle.
	\end{assumption}
	The following lemma shows that $\psi$ is a compact operator. This is particularly
	important, as it allows us to apply the multiplicative ergodic theorem later.
	\begin{lemma}\label{Asqwqw}
		For each $t> 0$ and $\omega\in\Omega$, the following map is compact:
		\begin{align*}
			C(\mathbb{T}^d)\to C(\mathbb{T}^d), \qquad
			u\mapsto \psi^{t}_{\omega}(u).
		\end{align*}
	\end{lemma}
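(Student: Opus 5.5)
The plan is to show that, for fixed $t>0$ and $\omega$, the linear map $\psi^t_\omega$ sends bounded subsets of $C(\mathbb T^d)$ into bounded subsets of the Hölder space $C^{1-\kappa}(\mathbb T^d)$. Compactness then follows from the Arzelà--Ascoli theorem, since $C^{1-\kappa}(\mathbb T^d)\hookrightarrow C(\mathbb T^d)$ is compact on the torus. This is the same mechanism as in the proof of Theorem~\ref{ASAsewe}, applied to the linearized equation instead of the nonlinear one.

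\textbf{Step 1 (reduction to $t\le 1$).} First treat $t\in(0,1]$. For $t>1$, the cocycle property of the linear cocycle $\psi$ gives
\[
\psi^t_\omega=\psi^{t-r}_{\theta_r\omega}\circ\psi^r_\omega,\qquad r\in(0,1].
\]
Here $\psi^{t-r}_{\theta_r\omega}$ is bounded, which follows from Corollary~\ref{UJU} applied on consecutive unit intervals together with the cocycle property. A bounded operator composed with a compact one is compact, so the case $t>1$ reduces to $t\le1$.

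\textbf{Step 2 (modelled-distribution bound).} For $t\le1$, Proposition~\ref{kkkl} (with $u_1=Y_\omega$, $T=1$, $s=0$) identifies $\psi^t_\omega(u)$ with
\[
\mathcal R\bigl(D_{Y_\omega}\RR{\mathcal J_1}[u]\bigr)(t,\cdot)=\mathrm{Pr}_{\gY}\bigl(D_{Y_\omega}\RR{\mathcal J_1}[u]\bigr)(t,\cdot).
\]
Corollary~\ref{UJU} then gives
\[
\triplenorm{\mathbf R^+_0D_{Y_\omega}\RR{\mathcal J_1}[u]}^{(0)}_{\gamma_1,0,O^0_1}\le \mathcal E_1\,\|u\|_{C(\mathbb T^d)},
\]
where the constant $\mathcal E_1$ depends only on $\omega$ (through the model, $[\ \gI\ ]$, $\|\Gamma\|$, the supremum of the solution started from $Y_\omega$, and $\|Y_\omega\|$). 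It does not depend on $u$.

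\textbf{Step 3 (from the $\mathcal D^{\gamma_1,0}_{P_0}$-norm to a Hölder bound at time $t$; main obstacle).} Write $\RR{\mathcal V}=D_{Y_\omega}\RR{\mathcal J_1}[u]$ and take $z=(t,x)$, $w=(t,y)$ with $d(z,w)\le\|z,w\|_{P_0}=\sqrt t\wedge1$. By \eqref{B1478}, the $\gY$-component of $\RR{\mathcal V}(z)-\Gamma_{z,w}\RR{\mathcal V}(w)$ equals
\[
\mathcal V^{\gY}(z)-\mathcal V^{\gY}(w)+\mathcal V^{\gI}(w)\,\Pi_z(\gI)(w)+\sum_i\mathcal V^{\gXXXX}(w)\,(y_i-x_i).
\]
The definition \eqref{Q00} bounds this quantity by $t^{-\gamma_1/2}\,d(z,w)^{\gamma_1}\,\triplenorm{\RR{\mathcal V}}$. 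The component bounds in \eqref{Q00} (with $\eta=0$) control $|\mathcal V^{\gI}|$ and $|\mathcal V^{\gXXXX}|$ by $t^{-(1-\kappa)/2}\triplenorm{\RR{\mathcal V}}$ and $t^{-1/2}\triplenorm{\RR{\mathcal V}}$ respectively. Since $|\Pi_z(\gI)(w)|\le[\ \gI\ ]\,d(z,w)^{1-\kappa}$, this yields
\[
|\mathcal V^{\gY}(z)-\mathcal V^{\gY}(w)|\lesssim_t\bigl(1+[\ \gI\ ]\bigr)\triplenorm{\RR{\mathcal V}}\,|x-y|^{1-\kappa}.
\]
For $|x-y|>\sqrt t\wedge1$, the same Hölder estimate follows from the sup bound on $\mathcal V^{\gY}$. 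The one delicate point is that the singular weights $t^{-\gamma_1/2}$ are harmless only because $t>0$ is fixed, and this must be tracked carefully.

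\textbf{Step 4 (conclusion).} Combining Steps 2 and 3, $\|\psi^t_\omega(u)\|_{C^{1-\kappa}}\le C(t,\omega)\|u\|_{C(\mathbb T^d)}$. The image of the unit ball is therefore bounded and equicontinuous in $C(\mathbb T^d)$, hence relatively compact by Arzelà--Ascoli, which proves the lemma.
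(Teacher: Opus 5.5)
Your proposal is correct and follows the same route as the paper. The paper also shows that $u\mapsto\psi^t_\omega(u)$ is bounded from $C(\mathbb{T}^d)$ into $C^{1-\kappa}(\mathbb{T}^d)$, by inspecting the proof of Theorem~\ref{ASAsewe}, and then concludes via the compact embedding $C^{1-\kappa}(\mathbb{T}^d)\hookrightarrow C(\mathbb{T}^d)$; your Steps~1 and~3 just make explicit what the paper leaves implicit, namely the cocycle reduction to $t\le 1$ and how the spatial H\"older bound at a fixed time $t>0$ is read off from the weighted $\mathcal{D}^{\gamma_1,0}_{P_0}$ norm.
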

	
	\begin{proof}
		By inspecting the proof of Theorem~\ref{ASAsewe}, we can easily see that the following map is continuous:
		\begin{align*}
			C(\mathbb{T}^d)\to C^{1-\kappa}(\mathbb{T}^d), \qquad
			u\mapsto \psi^{t}_{\omega}(u).
		\end{align*}
		The proof follows by noting that the Hölder space $C^{1-\kappa}(\mathbb T^d)$ embeds compactly into $C(\mathbb \T^d)$.
		
	\end{proof}
	\begin{remark}
		Note that in Propositions \ref{kkkl}, \ref{onwall}, and Corollary \ref{UJUJUJU}, we can clearly obtain the same results using the space $C(\T^d)$ instead of $L^{\infty}(\T^d)$. In the sequel, we use this fact without explicitly mentioning it.
	\end{remark}
	With these preliminaries in place, we now present the main strategy for establishing the existence of invariant manifolds. A direct approach, such as the Lyapunov--Perron method, is often lengthy and technically demanding. However, thanks to the abstract results in \cite{GVR23A} and \cite{GVR25C}, it is sufficient to verify a number of hypotheses, which follow readily from the results established in the previous sections. We first summarize the main strategy.
	
	\begin{enumerate}
		\item Rather than studying the nonlinear cocycle $\varphi$ directly, we first analyze its linearization $\psi$. Since $\psi$ is linear, its asymptotic behavior is considerably easier to describe. The main tool is the Multiplicative Ergodic Theorem. A key technical requirement for the validity of this theorem is
		\begin{align}\label{asdeerwdc}
			\sup_{t\in[0,1]}
			\log^{+}\big(
			\Vert\psi^{t}_{\omega}\Vert_{\mathcal{L}(C(\T^d),C(\T^d))}
			\big)
			\in L^{1}(\Omega),
		\end{align}
		where by $\mathcal{L}(C(\T^d),C(\T^d))$ we mean the usual operator norm.
		This assumption is verified using the estimate established in Corollary~\ref{UJU}, together with the a priori bound for the solution. The required compactness follows from Lemma~\ref{Asqwqw}.
		\item The Multiplicative Ergodic Theorem yields a finite or countable family of deterministic numbers, called the \emph{Lyapunov exponents}; see item~(1) of Theorem~\ref{METT}.
		
		\item Corresponding to each Lyapunov exponent, there exists an invariant linear subspace, called the \emph{Oseledets subspace}, for the cocycle $\psi$; see item~(2) of Theorem~\ref{METT}. These subspaces are naturally classified into three families: the unstable subspaces, corresponding to positive Lyapunov exponents; the center subspace, corresponding to the zero Lyapunov exponent; and the stable subspaces, corresponding to negative Lyapunov exponents; see Definition~\ref{ASAqvfdg}.
		
		\item The dynamics of $\psi$ on these subspaces are fundamentally different. On the stable subspaces, trajectories converge exponentially fast to zero. On the unstable subspaces, trajectories grow exponentially fast away from zero. In contrast, the dynamics on the center subspace are neither exponentially contracting nor exponentially expanding. Consequently, the center subspace is typically where bifurcations may occur.
		\item Finally, we use these invariant subspaces of the linear cocycle $\psi$ as tangent spaces to construct invariant manifolds for the nonlinear cocycle $\varphi$. More precisely, we show that each family of Oseledets subspaces gives rise to a corresponding family of random invariant manifolds for $\varphi$. Furthermore, the dynamics on these invariant manifolds inherit the qualitative behavior of the corresponding invariant subspaces of the linearized cocycle. The main technical requirement is the \emph{first-order approximation condition}; see Definition~\ref{first-order-condition}. Once this condition is verified, the existence of the three types of invariant manifolds follows directly, provided that the Lyapunov exponents satisfy the corresponding sign conditions. This provides a unified approach and avoids treating each type of invariant manifold separately. The main ingredients in this step are the estimate established in Proposition~\ref{UJUJUJU}.
	\end{enumerate}
	\begin{definition}\label{first-order-condition}
		We say that the first-order approximation condition holds if there exists a
		random variable $\rho:\Omega\to\mathbb{R}_+$ such that
		\[
		\liminf_{t\to\infty}
		\frac{1}{t}\log\rho(\theta_t\omega)\geq 0
		\qquad\text{almost surely} \]
		and such that, for almost every $\omega\in\Omega$ and all
		$u_1,u_2\in C(\mathbb{T}^d)$ satisfying
		\[
		\|u_1\|_{C(\mathbb{T}^d)},\,
		\|u_2\|_{C(\mathbb{T}^d)}
		<\rho(\omega),
		\]
		the following conditions hold:
		\begin{itemize}
			\item
			\begin{align}\label{A78aqwz}
				\begin{split}
					&\sup_{s\in[0,1]}
					\left\|
					\varphi^s_\omega(u_2+Y_\omega)
					-\varphi^s_\omega(u_1+Y_\omega)
					-\psi^s_\omega[u_2-u_1]
					\right\|_{C(\mathbb{T}^d)}
					\\
					&\qquad\leq
					\|u_2-u_1\|_{C(\mathbb{T}^d)}
					\Big(
					\|u_1\|_{C(\mathbb{T}^d)}^r
					+
					\|u_2\|_{C(\mathbb{T}^d)}^r
					\Big)
					h\Big(
					\|u_1\|_{C(\mathbb{T}^d)}
					+
					\|u_2\|_{C(\mathbb{T}^d)}
					\Big)
					R(\omega);
				\end{split}
			\end{align}
			
			\item $r\in(0,1]$;
			
			\item $h:\mathbb{R}_+\to\mathbb{R}_+$ is an increasing $C^1$ function;
			
			\item $R:\Omega\to\mathbb{R}_+$ is measurable and
			\begin{align}\label{RTRTR}
				\lim_{t\to\infty}
				\frac{1}{t}\log^+R(\theta_t\omega)=0
				\qquad\text{a.s.}
			\end{align}
		\end{itemize}
	\end{definition}
	\begin{remark}\label{IOpasasasas}
		Thanks to Remark \ref{sds78asdaad}, a practical way to check \eqref{RTRTR} is to show that
		\begin{align*}
			\log^+\big(\sup_{s\in [0,1]}R(\theta_s\omega)\big)\in L^{1}(\Omega).
		\end{align*}
	\end{remark}
	\begin{remark}
		The results of Sections~\ref{GPAM} and~\ref{PRO} play complementary roles in this section. The results of Section~\ref{GPAM} provide the technical estimates, while those of Section~\ref{PRO} provide the framework needed to apply these estimates in order to study the dynamics of the SPDE~\ref{MAIN}.
	\end{remark}
	The following result  provides an integrable bound for the
	solution of the SPDE~\eqref{MAINAAAA}.
	
	\begin{lemma}\label{ASasd78axx}
		There exist a constant $C_\mu>0$ and a polynomial ${P_0}$ such that
		\begin{align}\label{A75qwewc}
			\sup_{s\in[0,1]}
			\|\varphi^s_\omega(u)\|_{C(\T^d)}
			\leq
			C_\mu
			\max\left\{
			\|u\|_{C(\T^d)},
			P_0\left(
			\|\Pi\|_{\overline{O_{2}^{-1}}},
			[\ \gI\ ]_{\overline{O_{2}^{-1}}}
			\right)
			\right\}.
		\end{align}
		In particular, for every random point $u_\omega$ such that
		$\|u_\omega\|_{C(\T^d)}$ has finite moments of all orders, we have
		\begin{align}\label{Asddadvse}
			\mathbb{E}\big[
			(
			\sup_{t\in[0,1]}
			\|\varphi^t_\omega(u_\omega)\|_{C(\T^d)}
			)^p
			\big]
			<\infty,
			\qquad \forall p\geq1.
		\end{align}
	\end{lemma}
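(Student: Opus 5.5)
The key input is the a priori bound that the paper assumes when it invokes global well-posedness (\cite{CFW26,SZZ26,ES26}). For $\kappa<1/3$ these references give an estimate for the reconstructed solution on a unit time interval that is \emph{linear} in the initial datum up to a model-dependent additive term. More precisely, I will use it in the form
\[
\sup_{(t,x)\in[0,1]\times\mathbb T^d}|\mathcal U^{\gY}_{u}(t,x)|\lesssim \max\bigl\{\|u\|_{L^\infty},\,P(\|\Pi\|,[\ \gI\ ])\bigr\}
\]
for some polynomial $P$. The linear dependence on $\|u\|$ is natural here: $\Sigma\in C^2_b$, so the multiplicative term is bounded, and $\mu\le0$ makes the drift dissipative. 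The constant $C_\mu$ then absorbs the dependence on $\mu$ and on $\|\Sigma\|_{C^2_b}$.

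\textbf{Step 1: the deterministic bound \eqref{A75qwewc}.} I split according to which of the two quantities in the maximum dominates and run the estimate on $[0,1]$. For the model quantities, the norms on $O_1^0$ are dominated by those on $\overline{O_2^{-1}}$, and this enlarged window is the one required by \eqref{A2222}. Since $\varphi^s_\omega(u)=\mathcal U^{\gY}_u(s,\cdot)$ by \eqref{521}, taking the supremum over $s\in[0,1]$ gives \eqref{A75qwewc}.

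If this external bound were not available in exactly this form, I would instead combine a short-time fixed-point estimate with the patching/Gr\"onwall machinery. Lemma~\ref{Alex} reduces the modelled-distribution norm to the sup of $\mathcal U^{\gY}$. The sup itself, however, has to come from the a priori bound, because Proposition~\ref{onwall} alone yields only exponential-in-model bounds and not polynomial ones. Producing a \emph{polynomial} $P_0$ is the main obstacle, and the only place where the input from \cite{ES26} or \cite{CFW26} is genuinely needed.

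\textbf{Step 2: the moment bound \eqref{Asddadvse}.} I apply \eqref{A75qwewc} with $u=u_\omega$, which gives
\[
\mathbb E\Bigl[\bigl(\sup_t\|\varphi^t_\omega(u_\omega)\|\bigr)^p\Bigr]\lesssim_p \mathbb E\bigl[\|u_\omega\|^p\bigr]+\mathbb E\Bigl[P_0\bigl(\|\Pi\|_{\overline{O_2^{-1}}},[\ \gI\ ]_{\overline{O_2^{-1}}}\bigr)^p\Bigr].
\]
The first term is finite by assumption on $u_\omega$. For the second, $P_0^p$ is itself a polynomial. Hölder's inequality, together with \eqref{uniform-model-moments} and Fatou's lemma along the a.s.\ convergent subsequence supplied by \eqref{Oaslnr}, yields finite moments of all orders of $\|\mathcal Z\|_{\overline{O_2^{-1}}}$ for the limiting model. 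Since $[\ \gI\ ]$ is controlled by $\|\Gamma\|$, which is part of $\|\mathcal Z\|$, the second expectation is finite. This proves \eqref{Asddadvse}.
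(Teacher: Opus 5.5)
Your proposal is correct and follows the paper's proof. The deterministic bound \eqref{A75qwewc} is taken directly from the a priori estimates of \cite{CFW26} or \cite{ES26}, and the moment bound comes from combining it with the finite moments of the limiting model, which Proposition~\ref{Ujasn785sw} supplies via an a.s.\ convergent subsequence and Fatou's lemma. One small correction: \cite{CFW26} only covers $\kappa<\bar\kappa\approx0.1321$, so for the full range $\kappa<1/3$ the reference to rely on is \cite{ES26}.
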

	\begin{proof}
		For \eqref{A75qwewc}, see \cite[Theorems~1 and~3]{CFW26}, valid for
		$\kappa<\bar\kappa\approx0.1321$. An analogous
		a priori estimate in the full subcritical regime is established in
		\cite[Theorem~1.1]{ES26}.
		For the second part, Proposition~\ref{Ujasn785sw} yields, for every
		$p\geq1$,
		\[
		\sup_{\epsilon\in(0,1)}
		\E\Big[
		\Vert\mathcal{Z}^{\epsilon}\Vert_{\overline{O_{2}^{-1}}}^{p}
		\Big]
		<\infty,
		\]
		and
		\begin{align}\label{sadsdsdfsfs}
			\lim_{\epsilon\rightarrow0}
			\E\Vert\mathcal Z^\epsilon;\mathcal Z
			\Vert_{\overline{O_{2}^{-1}}}
			=0.
		\end{align}
		Hence, by passing to a subsequence and using Fatou's lemma,
		$\mathcal Z$ has finite moments of every order. Together with
		\eqref{A75qwewc}, this yields \eqref{Asddadvse}.
	\end{proof}
	\subsection{Multiplicative Ergodic Theorem}
	we are now ready to address the first step of our strategy, namely, the application of the Multiplicative Ergodic Theorem (MET) to the linear cocycle $\psi$.
	\begin{theorem}[Multiplicative Ergodic Theorem]\label{METT}
		For $\lambda\in\mathbb{R}\cup\{-\infty\}$, define
		\[
		F_\lambda(\omega)
		:=
		\left\{
		\xi \in C(\mathbb{T}^d):
		\limsup_{t\to\infty}
		\frac1t
		\log\|\psi_\omega^t (\xi)\|_{C(\mathbb{T}^d)}
		\le\lambda
		\right\}.
		\]
		
		Then there exists a $\theta_t$-invariant subset of $\Omega$ of full measure for all $t\in\mathbb{R}$, again denoted by $\Omega$, on which the following assertions hold:
		\begin{enumerate}
			
			\item There exists a decreasing sequence of Lyapunov exponents
			\[
			\lambda_1>\lambda_2>\cdots,
			\]
			or a finite sequence followed by $-\infty$, satisfying
			\[
			\lim_{i\to\infty}\lambda_i=-\infty.
			\]
			
			\item Corresponding to every finite Lyapunov exponent $\lambda_i$, there exists a finite-dimensional invariant subspace
			\[
			H_\omega^i\subset C(\mathbb{T}^d),
			\]
			called the \emph{Oseledets subspace}, such that
			\[
			\psi_\omega^t(H_\omega^i)
			=
			H_{\theta_t\omega}^i,
			\qquad
			t\ge0.
			\]
			
			\item The spaces $H_\omega^i$ determine the filtration
			\[
			F_{\lambda_1}(\omega)=C(\mathbb{T}^d),
			\qquad
			F_{\lambda_i}(\omega)
			=
			H_\omega^i
			\oplus
			F_{\lambda_{i+1}}(\omega),
			\]
			and hence
			\[
			C(\mathbb{T}^d)
			=
			\bigoplus_{j=1}^iH_\omega^j
			\oplus
			F_{\lambda_{i+1}}(\omega)
			\]
			for every $i\ge1$.
			
			\item If $h_\omega\in H_\omega^i$, then
			\[
			\lim_{t\to\infty}
			\frac1t
			\log
			\|\psi_\omega^t (h_\omega)\|_{C(\mathbb{T}^d)}
			=
			\lambda_i,
			\]
			and
			\[
			\lim_{t\to\infty}
			\frac1t
			\log
			\big\|
			(\psi_{\theta_{-t}\omega}^t|_{H^{i}_{\theta_{-t}\omega}})^{-1}
			(h_\omega)
			\big\|_{C(\mathbb{T}^d)}
			=
			-\lambda_i.
			\]
		\end{enumerate}
	\end{theorem}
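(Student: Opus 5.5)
We will not prove the Multiplicative Ergodic Theorem from scratch. Instead, we obtain Theorem~\ref{METT} by checking the hypotheses of an existing Banach-space version for compact linear cocycles over an ergodic invertible base; see \cite{LL10,GTQ15,Blu16}, in the semi-invertible form used in \cite{GVR23A,GVR25C}. Those hypotheses are four.
\begin{enumerate}
\item The state space is a separable Banach space, and the base is an ergodic, invertible, measurable metric dynamical system.
\item The cocycle is linear, compact and measurable (in the strong sense).
\item It satisfies the integrability condition \eqref{asdeerwdc}.
\item Continuous time can be reduced to the time-one map.
\end{enumerate}
Once these are in place, the conclusions (1)--(4) are exactly the output of that theorem. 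The backward statement in (4) uses that $\psi^t_\omega$ restricted to $H^i_\omega$ is injective onto $H^i_{\theta_t\omega}$, which is also part of the abstract result.

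\textbf{Structural hypotheses.} The state space $C(\mathbb{T}^d)$ is separable. By Theorem~\ref{Ujasm}, $(\Omega,\mathcal F,\P,\theta)$ is ergodic and invertible; ergodicity of each $\theta_t$, $t\neq 0$, gives ergodicity of the discrete system $\theta_1$. By Theorem~\ref{ASAsewe}, $\varphi$ is a $C^1$ cocycle. Differentiating the cocycle identity at the stationary point $Y$ and using $\varphi^t_\omega(Y_\omega)=Y_{\theta_t\omega}$ shows that $\psi$ is a linear cocycle over $\theta$. Compactness for $t>0$ is Lemma~\ref{Asqwqw}.

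For measurability, we write $\psi^t_\omega u$ as a pointwise limit of difference quotients of the jointly measurable map $(t,\omega,u)\mapsto\varphi^t_\omega(u)$, evaluated at the measurable point $Y_\omega$. Because $C(\mathbb T^d)$ is separable, this gives strong measurability of $(t,\omega)\mapsto\psi^t_\omega u$ for each $u$, which is what the Banach-space MET requires.

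\textbf{Integrability.} This is the key estimate. Corollary~\ref{UJU} with $s=0$, $u_1=Y_\omega$ bounds $\|\psi^t_\omega\|$ for $t\in[0,1]$ by $\mathcal E_1=\widetilde C_3P_2\exp(\widetilde C_4P_3)$. The arguments of $\mathcal E_1$ are:
\begin{itemize}
\item model norms $\|\mathcal Z\|_{\overline{O_2^{-1}}}$, $[\,\gI\,]$ and $\|\Gamma\|$;
\item $\sup_{w}\|\mathcal U^{\gY}_{Y_\omega}(w)\|=\sup_{s\le1}\|\varphi^s_\omega(Y_\omega)\|_{C(\mathbb T^d)}$;
\item $\|Y_\omega\|$.
\end{itemize}
The operator norm of the reconstruction-and-evaluation map is controlled by the $\gY$-component of the modelled distribution.

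Taking $\log^+$ removes the exponential. This leaves a polynomial $\lesssim 1+P_3+\log^+P_2$ in these quantities. Every one of them has finite moments of all orders:
\begin{itemize}
\item the model norms, by Proposition~\ref{Ujasn785sw} together with Fatou's lemma, as in the proof of Lemma~\ref{ASasd78axx};
\item $\sup_{s\le1}\|\varphi^s_\omega(Y_\omega)\|$, by \eqref{Asddadvse} applied to the random point $Y$, using \eqref{AS89asdxa}.
\end{itemize}
Hence $\sup_{t\in[0,1]}\log^+\|\psi^t_\omega\|\in L^1(\Omega)$, which is \eqref{asdeerwdc}.

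\textbf{Main obstacle: passing from discrete to continuous time.} We first apply the MET to the time-one map $\psi^1_\omega$ over $\theta_1$. This produces exponents $\lambda_i$, Oseledets spaces $H^i_\omega$ and the filtration, invariant under $\theta_n$ for integer $n$. To upgrade to continuous time we proceed in three steps.
\begin{enumerate}
\item Write $t=n+r$ with $r\in[0,1)$ and use the bound $\log^+\sup_{r\le1}\|\psi^r_{\theta_n\omega}\|=o(n)$ almost surely. This follows from Remark~\ref{sds78asdaad} applied to the integrable variable from the previous step. Consequently, the limits in (4) and the sets $F_\lambda(\omega)$ are unchanged when taken along $t\to\infty$ instead of $n\to\infty$.
\item Prove $\psi^t_\omega H^i_\omega=H^i_{\theta_t\omega}$ for non-integer $t$. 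The plan is to apply the discrete MET also to the time-$t$ map. For rational $t$, the spectra and Oseledets spaces must coincide with the time-one ones: the time-$q$ maps for $t=p/q$ are common iterates of both, and the filtration is characterised purely by growth rates.
\item Extend from rational to real $t$ using the cocycle property, with $\psi^{t}=\psi^{t-\lfloor t\rfloor}\circ\psi^{\lfloor t\rfloor}$ and the characterisation of $F_\lambda$ by growth rates.
\end{enumerate}
Finally, we intersect the countably many full-measure sets obtained and saturate under the flow, taking $\bigcap_{t\in\mathbb{Q}}\theta_t(\cdot)$ and arguing via measurability. This yields a $\theta_t$-invariant full-measure set for all real $t$. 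This continuous-time upgrade, together with the measurability details, is where we expect the real work to lie; the integrability step is straightforward given Corollary~\ref{UJU}.
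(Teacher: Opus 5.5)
Your overall route matches the paper's. You get compactness from Lemma~\ref{Asqwqw} and the integrability \eqref{asdeerwdc} from Corollary~\ref{UJU} plus the moment bounds of Proposition~\ref{Ujasn785sw} and Lemma~\ref{ASasd78axx}. You then apply the discrete-time MET of \cite{GVR23A} and pass to continuous time.

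\textbf{The gap is in the passage to continuous time.} In your Step~1 you assert that $\log^+\sup_{r\le1}\|\psi^r_{\theta_n\omega}\|=o(n)$ makes the limits in (4) insensitive to replacing $n$ by real $t$. Write $t=n+r$. The inequality $\|\psi^t_\omega h\|\le\|\psi^r_{\theta_n\omega}\|\,\|\psi^n_\omega h\|$ only gives $\limsup_{t\to\infty}\frac1t\log\|\psi^t_\omega h\|\le\lambda_i$. That is enough to identify $F_\lambda(\omega)$, but not to show the limit exists.

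The matching lower bound comes from $\|\psi^{n+1}_\omega h\|\le\|\psi^{1-r}_{\theta_{n+r}\omega}\|\,\|\psi^t_\omega h\|$, so it needs $\sup_{r\in[0,1]}\log^+\|\psi^{1-r}_{\theta_{n+r}\omega}\|=o(n)$. The same factor appears in $(\psi^r|_{H^i})^{-1}$ between grid points in the backward statement of (4). By Remark~\ref{sds78asdaad}, this follows from
\[
\log^+\Big(\sup_{s\in[0,1]}\|\psi^{1-s}_{\theta_s\omega}\|_{\mathcal L(C(\mathbb T^d),C(\mathbb T^d))}\Big)\in L^1(\Omega).
\]
This is exactly the second condition the paper checks before invoking the continuous-time extension of \cite{LL10}. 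It is not a consequence of \eqref{asdeerwdc}, which only controls the cocycle started at $\omega$, not at $\theta_s\omega$.

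It does follow from the same estimate shifted in time. By the cocycle property, $\psi^{1-s}_{\theta_s\omega}$ is the linearization of the solution map on $[s,1]$ for the model $\omega$, around $Y_{\theta_s\omega}=\varphi^s_\omega(Y_\omega)$. Corollary~\ref{UJU} with initial time $s\in[0,1]$ then bounds it uniformly in $s$, since $\mathcal E_1$ is increasing in each argument. The bound is $\mathcal E_1$ evaluated at the model norms on $\overline{O^{-1}_3}$ and at $\sup_{u\in[0,2]}\|Y_{\theta_u\omega}\|_{C(\mathbb T^d)}$. These have moments of all orders by Proposition~\ref{Ujasn785sw}, Lemma~\ref{ASasd78axx} and stationarity of $Y$.

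\textbf{Smaller points in your Steps~2--3.}

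First, saturating with $\bigcap_{t\in\mathbb Q}\theta_t(\cdot)$ gives only $\theta_{\mathbb Q}$-invariance, while the statement needs a set invariant under every real $t$. A standard device is the set of $\omega$ such that $\theta_t\omega$ is good for Lebesgue-a.e.\ $t$; it is measurable and of full measure by joint measurability of the flow and Fubini. On that set, define $H^i_\omega$ by transporting the space from a good time $\theta_{-r}\omega$ via $\psi^r_{\theta_{-r}\omega}$.

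Second, showing this is independent of $r$, or that $\psi^t_\omega H^i_\omega=H^i_{\theta_t\omega}$ for irrational $t$, cannot rest on the growth-rate characterization of $F_\lambda$ alone. That characterization determines only the filtration, not its complements $H^i_\omega$. You need the two-sided characterization of $H^i_\omega$: forward rate $\lambda_i$ together with a backward orbit of rate $-\lambda_i$.

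Third, for $t=p/q$ the common iterate of the time-$1$ and time-$t$ maps is the time-$p$ map, not the time-$q$ map.

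With the displayed condition added and the two-sided characterization used, your Steps~2--3 become a self-contained version of what the paper delegates to \cite{LL10}.
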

	\begin{proof}
		The goal is to apply the multiplicative ergodic theorem for Banach spaces
		stated in \cite[Theorem~1.21]{GVR23A}. We first fix an arbitrary time step
		$t_{0}\in (0,1]$. And first obtain the result for the discrete cocycle $(\psi^{nt_0}_{\omega})_{n\geq 0}$. By Lemma~\ref{Asqwqw}, the operator
		$\psi^{t_0}_{\omega}$ is compact for every $\omega\in\Omega$. It remains to
		verify the integrability condition
		\begin{align}\label{ASaickasda}
			\log^{+}\Big(
			\sup_{s\in[0,t_0]}	\|\psi^{s}_{\omega}\|_{\mathcal{L}(C(\T^d),C(\T^d))}
			\Big)
			\in L^{1}(\Omega).
		\end{align}
		To this end, recall that $\psi^{t}_{\omega}=D_{Y_{\omega}}\varphi^{t}_{\omega}$. Thanks to Corollary \ref{UJU}, for each $\tilde{u}\in C(\T^d)$, we have
		\begin{align}\label{TRT1}
			\begin{split}
				&	\sup_{s\in[0,t_0]}\Vert \psi^{s}_{\omega}[\tilde{u}]\Vert\leq \triplenorm{\mathbf{R}^+_0 D_{u_1}\RR{\mathcal{J}_1}[\tilde{u}] }_{\gamma_1,\,0,\,O_{1}}^{(0)}
				\\
				&\qquad\leq
				\mathcal{E}_1\Big(
				\|\mathcal{Z}\|_{\overline{O_{2}^{-1}}},
				[\ \gI\ ]_{\overline{O_{1}^0}},
				\|\Gamma\|_{\overline{O_{1}^0}},
				\sup_{z\in O_{1}^0}
				\|\mathcal{U}^{\gY}_{Y_\omega}(z)\|,
				\|Y_\omega\|_{C(\mathbb{T}^d)}
				\Big)
				\Vert \tilde{u} \Vert_{C(\T^d)},
			\end{split}
		\end{align}
		Note that from lemma \ref{A75qwewc},
		\begin{align}\label{TRT2}
			\begin{split}
				\sup_{z\in O_{1}^0}
				\|\mathcal{U}^{\gY}_{Y_\omega}(z)\|&\leq \sup_{s\in [0,1]}\Vert\varphi^{s}_{\omega}(Y_\omega)\Vert_{C(\T^d)}=\sup_{s\in [0,1]}\Vert Y_{\theta_s\omega}\Vert_{C(\T^d)}\\&\leq C_\mu
				\max\left\{
				\|Y_\omega\|_{C(\T^d)},
				P_0\left(
				\|\Pi\|_{\overline{O_{1}^0}},
				[\ \gI\ ]_{\overline{O_{1}^0}}
				\right)
				\right\}\in\bigcap_{p\geq 1}L^{p}(\Omega).
			\end{split}
		\end{align}
		With an analogous argument as in the proof of Lemma \ref{ASasd78axx}, we have
		\begin{align}\label{TRT3}
			\|\mathcal{Z}\|_{\overline{O_{2}^{-1}}},
			[\ \gI\ ]_{\overline{O_{1}^0}},
			\|\Gamma\|_{\overline{O_{1}^0}}\in\bigcap_{p\geq 1}L^{p}(\Omega)
		\end{align}
		Now thanks to the definition of the function $\mathcal{E}_1$ in Corollary \ref{UJU} and together with \eqref{TRT1}-\eqref{TRT3}, one can indeed derive \eqref{ASaickasda}. Thus, we verify all the criteria needed for the requirement of \cite[Theorem~1.21]{GVR23A}. And so the multiplicative ergodic theorem holds for the discrete cocycle $(\psi^{nt_0}_{\omega})_{n\geq 0}$. To obtain the continuous version, it is sufficient to check that by doing an analogous argument,
		\begin{align*}
			\log^{+}\Big(
			\sup_{s\in[0,t_0]}	\|\psi^{t_0-s}_{\theta_s\omega}\|_{\mathcal{L}(C(\T^d),C(\T^d))}
			\Big)
			\in L^{1}(\Omega).
		\end{align*}
		Then, by a standard adaptation of the arguments in \cite{LL10}[Theorem 3.3 and Lemma 3.4], one can obtain the continuous-time version of the multiplicative ergodic theorem from the discrete case.
	\end{proof}
	\begin{definition}\label{ASAqvfdg}
		Consider the setting of Theorem~\ref{METT}. We set, whenever the corresponding spaces are well defined,
		\begin{align}\label{linn}
			S_{\omega}
			:= F_{\lambda^-}(\omega),\quad
			U_{\omega}
			:= \bigoplus_{i:\lambda_i>0}H_{\omega}^{i},\quad
			C_{\omega}
			&:= H^{i_c}_{\omega},
		\end{align}
		where 	$$
		\lambda^- := \sup\{\lambda_i:\lambda_i<0\},
		\qquad
		\lambda_{i_c}=0,
		$$
	\end{definition}
	
	\begin{remark}
		Thanks to Lemma~\ref{Asqwqw}, one can also regard the cocycle $\psi$ as acting on $C^{1-\kappa-\epsilon}(\mathbb T^d)$ and apply the Multiplicative Ergodic Theorem again once the underlying Banach space is changed to $C^{1-\kappa-\epsilon}(\mathbb T^d)$. The important point is that the Lyapunov exponents and the corresponding Oseledets subspaces $H_i$ remain unchanged; see~\cite[Theorems 5.17 and 5.18]{BGS2025} for the proof. This shows that the Lyapunov exponents and the corresponding Oseledets splittings are unchanged when the cocycle is considered on either $C(\mathbb T^d)$ or $C^{1-\kappa-\epsilon}(\mathbb T^d)$.
	\end{remark}
	\begin{remark}
		There is a rich literature on the Multiplicative Ergodic Theorem; see, e.g., \cite{LL10,GTQ15,Blu16}. The formulation developed in \cite{GVRS22,GVR23A}, which we use in this manuscript, is particularly flexible and well suited to applications, especially when verifying the assumptions of the theorem. 
	\end{remark}
	\subsection{Invariant manifolds}
	We are now in a position to establish the existence of invariant manifolds (stable, unstable, and center) around the stationary point. Having the multiplicative ergodic theorem at hand, the only remaining technical condition that we need is the first-order approximation condition, which is the topic of the next proposition.
	\begin{proposition}\label{FIRTA}
		There exists an increasing $C^1$- function $h:\R_{+}\to \R_{+}$ and a measurable function $R:\Omega\to\mathbb{R}_+$ such that, for each $u_1,u_2\in C(\mathbb{T}^d)$, such that
		\begin{align*}
			\Vert u_i\Vert_{C(\T^d)}\leq 1 , \ \  i=1,2.
		\end{align*}
		we have
		\begin{align}
			\begin{split}
				&\sup_{s\in[0,1]}
				\left\|
				\varphi^s_\omega(u_2+Y_\omega)
				-\varphi^s_\omega(u_1+Y_\omega)
				-\psi^s_\omega[u_2-u_1]
				\right\|_{C(\mathbb{T}^d)}
				\\
				&\qquad\leq
				\|u_2-u_1\|_{C(\mathbb{T}^d)}
				\Big(
				\|u_1\|_{C(\mathbb{T}^d)}^\nu
				+
				\|u_2\|_{C(\mathbb{T}^d)}^\nu
				\Big)
				h\Big(
				\|u_1\|_{C(\mathbb{T}^d)}
				+
				\|u_2\|_{C(\mathbb{T}^d)}
				\Big)
				R(\omega);
			\end{split}
		\end{align}
		Also,
		\begin{align*}
			\lim_{t\to\infty}
			\frac{1}{t}\log^+R(\theta_t\omega)=0
			\qquad\text{a.s.}
		\end{align*}
		In particular, the first-order approximation condition holds.
	\end{proposition}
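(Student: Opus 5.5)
The plan is to write the nonlinear remainder as an integral of differences of linearizations, bound it with Proposition~\ref{UJUJUJU}, and then check integrability of the resulting random constant. By Lemma~\ref{YHATSs} (with $s=0$, $T=1$, and $C(\mathbb T^d)$ in place of $L^\infty$), the map $u\mapsto \RR{\mathcal J_1}(u)$ is $C^1$ on balls, so the mean value formula gives
\begin{align*}
\varphi^s_\omega(u_2+Y_\omega)-\varphi^s_\omega(u_1+Y_\omega)-\psi^s_\omega[u_2-u_1]
=\int_0^1\Big(D_{Y_\omega+u_1+r(u_2-u_1)}\mathcal J_s-D_{Y_\omega}\mathcal J_s\Big)[u_2-u_1]\,\mathrm dr .
\end{align*}
For $s\in[0,1]$ the sup norm of the reconstruction at time $s$ is bounded by $\triplenorm{\mathbf R^+_0(\cdot)}^{(0)}_{\gamma_1,0,O_1^0}$, as in \eqref{TRT1}. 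So it suffices to bound
$\triplenorm{D_{v_r}\RR{\mathcal J_1}[\tilde u]-D_{Y_\omega}\RR{\mathcal J_1}[\tilde u]}$, where $v_r:=Y_\omega+u_1+r(u_2-u_1)$ and $\tilde u=u_2-u_1$.

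To bound this I apply Proposition~\ref{UJUJUJU} with the pair $(Y_\omega,v_r)$. Since $\|v_r-Y_\omega\|\le\|u_1\|+\|u_2\|\le2$, we get $M_\nu(\|v_r-Y_\omega\|)\lesssim\|u_1\|^\nu+\|u_2\|^\nu$, up to a constant depending on the bound $2$. The remaining factor is $\mathcal E_3$ evaluated at $\|\mathcal Z\|_{\overline{O_2^{-1}}}$, $[\gI]$, $\|\Gamma\|$, $\sup_{r,w}|\mathcal U^{\gY}_{\cdot}(w)|$, $\|Y_\omega\|$ and $\|v_r\|\le\|Y_\omega\|+2$. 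The solution sup-norms I control through Lemma~\ref{ASasd78axx}. They are bounded by $C_\mu\max\{\|Y_\omega\|+2,P_0(\|\Pi\|,[\gI])\}$, with no dependence on $u_1,u_2$ beyond the bound $2$. Because $\mathcal E_3=\exp(Q_3)$ is monotone in each argument, I can set
\[
R(\omega):=C\exp\Big(Q_3\big(\|\mathcal Z\|_{\overline{O_2^{-1}}},\,[\gI]_{\overline{O_1^0}},\,\|\Gamma\|_{\overline{O_1^0}},\,C_\mu\max\{\|Y_\omega\|+2,P_0\},\,\|Y_\omega\|,\,\|Y_\omega\|+2\big)\Big),
\]
take $h\equiv1$ (or any increasing $C^1$ function, such as a constant), and use $r=\nu$. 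This gives the displayed estimate.

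The second step is the growth condition $\tfrac1t\log^+R(\theta_t\omega)\to0$. By Remark~\ref{IOpasasasas} it suffices to show $\log^+\sup_{s\in[0,1]}R(\theta_s\omega)\in L^1$, and since $\log R$ is just $Q_3$ of the arguments, this amounts to showing $\sup_{s\in[0,1]}Q_3(\ldots)(\theta_s\omega)\in L^1$. Since $Q_3$ is a polynomial, it is enough that each argument, after taking a sup over shifts $s\in[0,1]$, has moments of all orders. For the model quantities, $\sup_{s\in[0,1]}\|\Theta_s\mathcal Z\|_{\overline{O_2^{-1}}}\le\|\mathcal Z\|_{\overline{O_3^{-1}}}$, which is in every $L^p$ by Proposition~\ref{Ujasn785sw} together with the Fatou argument of Lemma~\ref{ASasd78axx}. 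For the stationary point, $\sup_{s\in[0,1]}\|Y_{\theta_s\omega}\|=\sup_s\|\varphi^s_\omega(Y_\omega)\|$, which is in every $L^p$ by \eqref{Asddadvse} and \eqref{AS89asdxa}.

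Finally, the condition in Definition~\ref{first-order-condition} follows by choosing $\rho\equiv1$, which is constant, so its growth condition is trivial. The main obstacle is keeping precise track of the arguments of $\mathcal E_3$: the supremum over $r$ of the solution values must be uniform in $u_1,u_2$ with $\|u_i\|\le1$, and the shifted quantities must be controlled by a single model norm on an enlarged domain. The key point is that the a priori bound \eqref{A75qwewc} is linear in the initial datum, so it provides exactly this uniformity.
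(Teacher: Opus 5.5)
Your proof is correct and follows the paper's argument. Both write the remainder as $\int_0^1 [D_{Y_\omega+u_1+\tau(u_2-u_1)}\varphi^s_\omega - D_{Y_\omega}\varphi^s_\omega][u_2-u_1]\,\mathrm{d}\tau$, bound it via Proposition~\ref{UJUJUJU}, control the solution sup-norms inside $\mathcal{E}_3$ with \eqref{A75qwewc}, and obtain the growth of $R$ from Remark~\ref{IOpasasasas}. The only differences are cosmetic: you absorb the dependence on $\|u_1\|+\|u_2\|\le 2$ into $R$ and take $h$ constant, whereas the paper keeps an explicit factor $h(\|u_1\|+\|u_2\|)$; and you spell out the enlargement to $\overline{O_3^{-1}}$ needed to control $\sup_{s\in[0,1]}$ of the shifted model norms, which the paper leaves implicit.
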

	\begin{proof}
		For $\tau \in [0,1]$, let
		\begin{align*}
			L^{\tau}_{Y_\omega}(u_1,u_2):=u_1+Y_\omega+\tau (u_2-u_1),
		\end{align*}
		then we have
		\begin{align*}
			\varphi^s_\omega(u_2+Y_\omega)
			-\varphi^s_\omega(u_1+Y_\omega)
			-\psi^s_\omega(u_2-u_1)=\int_{0}^{1}\big[D_{L^{\tau}_{Y_\omega}(u_1,u_2)}\varphi^{s}_{\omega}-D_{Y_{\omega}}\varphi_{\omega}^s\big][u_2-u_1]\mathrm{d}\tau.
		\end{align*}
		Thanks to Proposition \ref{UJUJUJU} 
		\begin{align}\label{I7895454s}
			\begin{split}
				&\sup_{\tau\in [0,1]}\sup_{s\in [0,1]} \big\Vert\big[D_{u_1+Y_\omega+\tau (u_2-u_1)}\varphi^{s}_{\omega}-D_{Y_{\omega}}\varphi^s_\omega\big][u_2-u_1]\big\Vert_{C(\T^d)}\leq \\&\sup_{\tau\in [0,1]}\triplenorm{D_{L^{\tau}_{Y_\omega}(u_1,u_2)}\RR{\mathcal{J}_1}\big[u_2-u_1\big]
					-
					D_{Y_\omega}\RR{\mathcal{J}_1}\big[u_2-u_1\big]
				}_{\gamma_1,0,O_{1}}^{(0)} \\&\lesssim
				\sup_{\tau \in [0,1]}\max\big\{\Vert u_1+\tau (u_2-u_1)\Vert_{C(\T^d)}, \Vert  u_1+\tau (u_2-u_1)\Vert_{C(\T^d)}^{\nu}\big\} \Vert u_2-u_1\Vert_{C(\T^d)}
				\\& \times\sup_{\tau \in [0,1]}\mathcal{E}_3\Big(
				\|\mathcal{Z}\|_{\overline{O_{2}^{-1}}},
				[\ \gI\ ]_{\overline{O_{1}^0}},
				\|\Gamma\|_{\overline{O_{1}^0}},\sup_{\substack{r\in[0,1]\\w\in O_{1}^0}}
				\left\|
				\mathcal{U}^{\gY}_{Y_{\omega}+ru_1+r\tau(u_2-u_1)}(w)
				\right\|,
				\|Y_{\omega}\|_{C(\mathbb{T}^d)},
				\|L^{\tau}_{Y_\omega}(u_1,u_2)\|_{C(\mathbb{T}^d)}
				\Big)\\&\lesssim \big(\Vert u_1\Vert^{\nu}_{C(\T^d)}+\Vert u_2\Vert^{\nu}_{C(\T^d)}\big)\Vert u_2-u_1\Vert_{C(\T^d)}\big(1+\Vert u_1\Vert_{C(\T^d)}+\Vert u_2\Vert_{C(\T^d)}\big)\\& \times\sup_{\tau \in [0,1]}\mathcal{E}_3\Big(
				\|\mathcal{Z}\|_{\overline{O_{2}^{-1}}},
				[\ \gI\ ]_{\overline{O_{1}^0}},
				\|\Gamma\|_{\overline{O_{1}^0}},\sup_{\substack{r\in[0,1]\\w\in O_{1}^0}}
				\left\|
				\mathcal{U}^{\gY}_{Y_{\omega}+ru_1+r\tau(u_2-u_1)}(w)
				\right\|,
				\|Y_{\omega}\|_{C(\mathbb{T}^d)},
				\|L^{\tau}_{Y_\omega}(u_1,u_2)\|_{C(\mathbb{T}^d)}
				\Big)
			\end{split}
		\end{align}
		Note that from \eqref{A75qwewc}, for a polynomial $P_0$
		\begin{align}\label{TRT2aaa}
			\begin{split}
				&	\sup_{\substack{r,\tau\in[0,1]\\ z\in O_1^0}}
				\left\|
				\mathcal{U}^{\gY}_{Y_\omega+r u_1+r\tau(u_2-u_1)}(z)
				\right\|\leq 	\sup_{\substack{r,\tau\in[0,1]\\ s\in[0,1]}}
				\left\|
				\varphi_\omega^s
				\big(
				Y_\omega+r u_1+r\tau(u_2-u_1)
				\big)
				\right\|_{C(\T^d)}\\&\lesssim 
				\max\bigg\{
				\|Y_\omega\|_{C(\T^d)}+\|u_1\|_{C(\T^d)}+\|u_2\|_{C(\T^d)},
				P_0\left(
				\|\Pi\|_{\overline{O_{1}^0}},
				[\ \gI\ ]_{\overline{O_{1}^0}}
				\right)
				\bigg\}
			\end{split}
		\end{align}
		Recall that for a polynomial $Q_3$ which is increasing in its argument, we have
		\begin{align*}
			\mathcal{E}_3(b,c,d,f,g,h)
			&:=
			\exp\Big(
			Q_3(b,c,d,f,g,h)
			\Big),
		\end{align*}
		Consequently, using \eqref{TRT2}, \eqref{TRT2aaa}, and \eqref{I7895454s}, together with some straightforward algebraic and tedious calculations, we can conclude that one can obatin an increasing $C^1$ function $h:\mathbb{R}_+\to\mathbb{R}_+$ and aplynal $Q_4$ such that 
		\begin{align}
			\begin{split}
				&\sup_{s\in [0,1]} \big\Vert \varphi^s_\omega(u_2+Y_\omega)
				-\varphi^s_\omega(u_1+Y_\omega)
				-\psi^s_\omega(u_2-u_1)\big\Vert\\&\leq
				\|u_2-u_1\|_{C(\mathbb{T}^d)}\big(\Vert u_1\Vert^{\nu}_{C(\T^d)}+\Vert u_2\Vert^{\nu}_{C(\T^d)}\big) h\big(\Vert u_1\Vert_{C(\T^d)}+\Vert u_2\Vert_{C(\T^d)}\big)\\
				&\times \underbrace{\exp\bigg(Q_{4}\big(	\|\mathcal{Z}\|_{\overline{O_{2}^{-1}}},
					[\ \gI\ ]_{\overline{O_{1}^0}},
					\|\Gamma\|_{\overline{O_{1}^0}},\|Y_\omega\|_{C(\T^d)}\big)\bigg)}_{=R(\omega)}
			\end{split}
		\end{align}
		The final step is to verify \eqref{RTRTR}. By \eqref{AS89asdxa}, \eqref{TRT2}, and the same argument as in \eqref{TRT3}, we have
		$$
		\log^+\Big(\sup_{s\in[0,1]}R(\theta_s\omega)\Big)\in L^1(\Omega).
		$$
		Hence, Remark~\ref{IOpasasasas} yields \eqref{RTRTR}, and therefore the first-order approximation condition holds.
	\end{proof}
	We begin now with the existence of the stable manifold.
	\begin{theorem}[Local stable manifolds]\label{stable_manifold}
		Consider the situation of Theorem \ref{METT}. Let $\lambda(s)\coloneqq \sup\{\lambda_j:\lambda_j<0\}$ and fix an arbitrary time step $t_0\in (0,1]$.
		Then, for every
		\[
		0<\upsilon<-\lambda(s),
		\]
		there exist a family of immersed submanifolds
		\[
		S^\upsilon_{\mathrm{loc}}(\omega)\subset C(\mathbb{T}^d)
		\]
		and a set $\widetilde{\Omega}\subset\Omega$ of full measure which is $\theta_{t_0}$-invariant and such that the following properties hold:
		\begin{enumerate}
			
			\item There exist random variables 
			\[
			\rho_1^\upsilon(\omega),\rho_2^\upsilon(\omega),
			\]
			which are positive and finite on $\widetilde{\Omega}$, satisfying
			\begin{align}\label{eqn:rho_temp}
				\liminf_{p\to\infty}\frac1p
				\log\rho_i^\upsilon(\theta_{pt_0}\omega)\geq0,
				\qquad i=1,2,
			\end{align}
			such that
			\begin{align}\label{invari}
				\begin{split}
					&\left\{
					u\in C(\mathbb{T}^d):
					\sup_{n\geq0}e^{nt_0\upsilon}
					\left\|\varphi^{nt_0}_{\omega}(u)-Y_{\theta_{nt_0}\omega}\right\|_{C(\mathbb{T}^d)}
					<\rho_1^\upsilon(\omega)
					\right\}\subseteq
					S^\upsilon_{\mathrm{loc}}(\omega)
					\\
					&\qquad\subseteq
					\left\{
					u\in C(\mathbb{T}^d):
					\sup_{n\geq0}e^{nt_0\upsilon}
					\left\|\varphi^{nt_0}_{\omega}(u)-Y_{\theta_{nt_0}\omega}\right\|_{C(\mathbb{T}^d)}
					<\rho_2^\upsilon(\omega)
					\right\}.
				\end{split}
			\end{align}
			
			\item The tangent space of $S^\upsilon_{\mathrm{loc}}(\omega)$ at the stationary point $Y_\omega$ is given by
			\[
			T_{Y_\omega}S^\upsilon_{\mathrm{loc}}(\omega)
			=
			F_{\lambda(s)}(\omega).
			\]
			
			\item There exists a random variable $N(\omega)$ such that, for all 
			$n\geq N(\omega)$,
			\[
			\varphi^{nt_0}_{\omega}
			\left(S^\upsilon_{\mathrm{loc}}(\omega)\right)
			\subseteq
			S^\upsilon_{\mathrm{loc}}(\theta_{nt_0}\omega).
			\]
			
			\item \label{aaaa} For
			\[
			0<\upsilon_1\leq\upsilon_2<-\lambda(s),
			\]
			we have
			\[
			S^{\upsilon_2}_{\mathrm{loc}}(\omega)
			\subseteq
			S^{\upsilon_1}_{\mathrm{loc}}(\omega),
			\]
			and, for $n\geq N(\omega)$,
			\[
			\varphi^{nt_0}_{\omega}
			\left(S^{\upsilon_1}_{\mathrm{loc}}(\omega)\right)
			\subseteq
			S^{\upsilon_2}_{\mathrm{loc}}(\theta_{nt_0}\omega).
			\]
			Consequently, for every 
			$u\in S^\upsilon_{\mathrm{loc}}(\omega)$,
			\begin{align}\label{}
				\limsup_{n\to\infty}
				\frac1n
				\log
				\left\|
				\varphi^{nt_0}_{\omega}(u)-Y_{\theta_{nt_0}\omega}
				\right\|_{C(\mathbb{T}^d)}
				\leq
				t_0\lambda(s).
			\end{align}
			
			\item We have
			\begin{align}\label{78421215}
\begin{split}
	\limsup_{n\to\infty}\frac1n
\log\Bigg[
\sup\Bigg\{
\frac{
	\left\|
	\varphi^{nt_0}_{\omega}(u)
	-\varphi^{nt_0}_{\omega}(\tilde{u})
	\right\|_{C(\mathbb{T}^d)}
}{
	\|u-\tilde{u}\|_{C(\T^d)}
}:
\ u\neq\tilde{u},\
u,\tilde{u}\in S^\upsilon_{\mathrm{loc}}(\omega)
\Bigg\}
\Bigg]
\leq
t_0\lambda(s).
\end{split}
			\end{align}
		\end{enumerate}
		
	\end{theorem}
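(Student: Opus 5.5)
The plan is to reduce the statement to the abstract stable manifold theorem for discrete-time cocycles on separable Banach spaces in \cite{GVR23A} (and its refinement in \cite{GVR25C}). For this we pass to the discrete cocycle $(\varphi^{nt_0}_\omega)_{n\geq0}$ over the ergodic map $\theta_{t_0}$, which is ergodic by Theorem~\ref{Ujasm}. We then work with the centered cocycle
\[
\Phi^{n}_\omega(u):=\varphi^{nt_0}_\omega(u+Y_\omega)-Y_{\theta_{nt_0}\omega}.
\]
This is again a $C^1$ cocycle, by Theorem~\ref{ASAsewe} and the stationarity $\varphi^t_\omega(Y_\omega)=Y_{\theta_t\omega}$. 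It has $0$ as a fixed point, and its derivative at $0$ is $\psi^{nt_0}_\omega$.

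The hypotheses of the abstract theorem are of two kinds. First, the linear part must satisfy the multiplicative ergodic theorem. This is Theorem~\ref{METT} applied at time step $t_0$: compactness comes from Lemma~\ref{Asqwqw} and the integrability \eqref{ASaickasda} from Corollary~\ref{UJU}, so the Oseledets filtration and $F_{\lambda(s)}(\omega)$ are available. Second, the nonlinear remainder
\[
\Phi^1_\omega(u)-\psi^{t_0}_\omega(u)
\]
must satisfy a local Lipschitz bound. Its Lipschitz constant must vanish like $\|u\|^\nu$ near the origin, on a ball of radius $\rho(\omega)$, with a tempered random constant. This is exactly the first-order approximation condition. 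We get it from Proposition~\ref{FIRTA} by taking $s=t_0$ inside the supremum over $s\in[0,1]$, choosing $r=\nu$ and $\rho\equiv 1$ (trivially tempered), and using that $R$ satisfies \eqref{RTRTR}. Restricted to the lattice $nt_0$, \eqref{RTRTR} gives $\frac1n\log^+R(\theta_{nt_0}\omega)\to0$.

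With these inputs, the abstract theorem produces the local stable manifold through a Lyapunov--Perron type fixed point in the space of sequences $(u_n)$ with $\sup_n e^{nt_0\upsilon}\|u_n\|<\infty$. Here $\upsilon<-\lambda(s)$ gives the spectral gap between $F_{\lambda(s)}$ and its complement $\bigoplus_{\lambda_j\geq0}H^j_\omega$. The radii $\rho^\upsilon_{1,2}$ come out as products of $\rho$, $R^{-1}$ and the tempered Oseledets constants, so they satisfy \eqref{eqn:rho_temp}. Translating back by $Y_\omega$ then yields the sandwich \eqref{invari}, the tangent space $F_{\lambda(s)}(\omega)$ at $Y_\omega$, eventual invariance for $n\geq N(\omega)$, the monotonicity in $\upsilon$, and the exponential contraction estimate \eqref{78421215}. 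The pointwise exponential rate follows from the last two items by letting $\upsilon\uparrow-\lambda(s)$.

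The main obstacle is matching the exact form of the hypotheses in \cite{GVR23A}. That theorem requires the remainder estimate at the single step $t_0$ together with measurability and temperedness of all random constants along the orbit $\theta_{nt_0}\omega$. It also requires a $\theta_{t_0}$-invariant full-measure set $\widetilde\Omega$, which we take as the intersection of the MET set with the set on which \eqref{RTRTR} holds. Beyond this, no new analytical estimate is needed.
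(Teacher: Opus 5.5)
Your proposal matches the paper's proof. The paper simply invokes \cite[Theorem~2.10]{GVR23A} for the discretized cocycle, with the multiplicative ergodic theorem hypotheses supplied by Theorem~\ref{METT} and the first-order approximation condition supplied by Proposition~\ref{FIRTA}, which are exactly the two inputs you identify. Your extra bookkeeping is consistent with what that reduction implicitly requires: centering at $Y_\omega$, taking $\rho\equiv 1$ and $r=\nu$, and restricting \eqref{RTRTR} to the times $\{nt_0\}_{n\geq 0}$.
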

	\begin{proof}
		We aim to apply \cite[Theorem 2.10]{GVR23A}. The requirements of this result are provided by Theorem \ref{METT} and Proposition \ref{FIRTA}.
	\end{proof}
	\begin{remark}
		The stable manifold theorem has several important roles. One of those that we discuss in this paper concerns local stability. This result is also important for proving synchronization. Note that the stable manifold always exists, since the Lyapunov exponents converge to $-\infty$.
	\end{remark}
	Next, we state the unstable manifold theorem
	\begin{theorem}[Local unstable manifolds]\label{unstable}
		Consider the situation of Theorem \ref{METT} and suppose that 
		$\lambda_1>0$. Define
		\[
		\lambda(u)\coloneqq \inf\{\lambda_i:\lambda_i>0\}.
		\]
		Fix an arbitrary time step $t_{0}\in (0,1]$. Then, for every
		\[
		0<\upsilon<\lambda(u),
		\]
		there exists a family of immersed submanifolds
		\[
		U^\upsilon_{\mathrm{loc}}(\omega)\subset C(\mathbb{T}^d)
		\]
		and a set $\widetilde{\Omega}\subset\Omega$ of full measure which is $\theta_{t_0}$-invariant such that the following properties hold.	
		\begin{enumerate}
			
			\item There exist random variables 
			\[
			\widetilde{\rho}_1^\upsilon(\omega),
			\widetilde{\rho}_2^\upsilon(\omega),
			\]
			which are positive and finite on $\widetilde{\Omega}$ and satisfy
			\[
			\liminf_{p\to\infty}
			\frac1p
			\log
			\widetilde{\rho}_i^\upsilon(\theta_{-pt_0}\omega)
			\geq0,
			\qquad i=1,2.
			\]
			Moreover,
			\begin{align*}
				&\Bigg\{
				u_\omega\in C(\mathbb{T}^d):
				\exists\{u_{\theta_{-nt_0}\omega}\}_{n\geq1}
				\text{ such that }
				\varphi^{mt_0}_{\theta_{-nt_0}\omega}
				(u_{\theta_{-nt_0}\omega})
				=
				u_{\theta_{(m-n)t_0}\omega},
				\quad 0\leq m\leq n,
				\\
				&\qquad
				\sup_{n\geq0}
				e^{nt_0\upsilon}
				\left\|
				u_{\theta_{-nt_0}\omega}
				-
				Y_{\theta_{-nt_0}\omega}
				\right\|_{C(\T^d)}
				<
				\widetilde{\rho}_1^\upsilon(\omega)
				\Bigg\}
				\\
				&\qquad\subseteq
				U^\upsilon_{\mathrm{loc}}(\omega)
				\subseteq
				\Bigg\{
				u_\omega\in C(\mathbb{T}^d):
				\exists\{u_{\theta_{-nt_0}\omega}\}_{n\geq1}
				\text{ such that }
				\varphi^{mt_0}_{\theta_{-nt_0}\omega}
				(u_{\theta_{-nt_0}\omega})
				=
				u_{\theta_{(m-n)t_0}\omega},
				\quad 0\leq m\leq n,
				\\
				&\qquad
				\sup_{n\geq0}
				e^{nt_0\upsilon}
				\left\|
				u_{\theta_{-nt_0}\omega}
				-
				Y_{\theta_{-nt_0}\omega}
				\right\|_{C(\T^d)}
				<
				\widetilde{\rho}_2^\upsilon(\omega)
				\Bigg\}.
			\end{align*}
			
			\item The tangent space of the unstable manifold is given by
			\[
			T_{Y_\omega}U^\upsilon_{\mathrm{loc}}(\omega)
			=
			\bigoplus_{\lambda_i>0}H^i_\omega .
			\]
			
			\item There exists a random variable $N(\omega)$ such that, for every 
			$n\geq N(\omega)$,
			\[
			U^\upsilon_{\mathrm{loc}}(\omega)
			\subseteq
			\varphi^{nt_0}_{\theta_{-nt_0}\omega}
			\left(
			U^\upsilon_{\mathrm{loc}}(\theta_{-nt_0}\omega)
			\right).
			\]
			
			\item For
			\[
			0<\upsilon_1\leq\upsilon_2<\lambda(u),
			\]
			we have
			\[
			U^{\upsilon_2}_{\mathrm{loc}}(\omega)
			\subseteq
			U^{\upsilon_1}_{\mathrm{loc}}(\omega).
			\]
			Moreover, for $n\geq N(\omega)$,
			\[
			U^{\upsilon_1}_{\mathrm{loc}}(\omega)
			\subseteq
			\varphi^{nt_0}_{\theta_{-nt_0}\omega}
			\left(
			U^{\upsilon_2}_{\mathrm{loc}}(\theta_{-nt_0}\omega)
			\right).
			\]
			Consequently, for every
			$u_\omega\in U^\upsilon_{\mathrm{loc}}(\omega)$,
			\begin{align}\label{eqn:contr_char_1}
				\limsup_{n\to\infty}
				\frac1n
				\log
				\left\|
				u_{\theta_{-nt_0}\omega}
				-
				Y_{\theta_{-nt_0}\omega}
				\right\|_{C(\T^d)}
				\leq
				-t_0\lambda(u).
			\end{align}
			
			\item We have
			\begin{align*}
				\limsup_{n\to\infty}
				\frac1n
				\log
				\Bigg[
				\sup
				\Bigg\{
				\frac{
					\left\|
					u_{\theta_{-nt_0}\omega}
					-
					\tilde{u}_{\theta_{-nt_0}\omega}
					\right\|_{C(\T^d)}
				}{
					\left\|
					u_\omega-\tilde{u}_\omega
					\right\|_{C(\T^d)}
				}
				:
				u_\omega\neq\tilde{u}_\omega,\ 
				u_\omega,\tilde{u}_\omega
				\in U^\upsilon_{\mathrm{loc}}(\omega)
				\Bigg\}
				\Bigg]
				\leq
				-t_0\lambda(u).
			\end{align*}
			
		\end{enumerate}
		
	\end{theorem}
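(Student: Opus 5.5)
The plan mirrors the proof of Theorem~\ref{stable_manifold}: instead of a direct Lyapunov--Perron construction, I would invoke the abstract unstable manifold theorem for compact $C^1$ cocycles on separable Banach spaces from \cite{GVR23A} (and its refinement in \cite{GVR25C}). That theorem does not require invertibility of the flow. It requires three inputs: the Oseledets splitting for the linearization with a strictly positive part of the spectrum, the integrability of the linear cocycle, and the first-order approximation condition of Definition~\ref{first-order-condition} for the nonlinear cocycle around the stationary point.

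\textbf{Step 1: reduction to a discrete cocycle.} Fix $t_0\in(0,1]$ and pass to the discrete cocycle $\Phi_\omega:=\varphi^{t_0}_\omega$ over the ergodic map $\theta_{t_0}$; ergodicity comes from Theorem~\ref{Ujasm}. Next recentre at $Y$ by setting
\[
\widetilde{\Phi}_\omega(u):=\varphi^{t_0}_\omega(u+Y_\omega)-Y_{\theta_{t_0}\omega}.
\]
The stationarity of $Y$ gives $\widetilde{\Phi}_\omega(0)=0$, and the derivative of $\widetilde{\Phi}_\omega$ at $0$ is $\psi^{t_0}_\omega$. Lemma~\ref{Asqwqw} shows this derivative is compact.

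\textbf{Step 2: spectral input.} Theorem~\ref{METT} supplies the Lyapunov exponents and the finite-dimensional Oseledets spaces $H^i_\omega$, including the backward growth statement in item~(4). Since $\lambda_1>0$, the space $U_\omega=\bigoplus_{\lambda_i>0}H^i_\omega$ of Definition~\ref{ASAqvfdg} is nontrivial and finite-dimensional. The gap $\lambda(u)>0$ separates it from the complementary subspace $F_{\lambda^{\le 0}}(\omega)$. The integrability condition \eqref{ASaickasda} was already checked in that proof.

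\textbf{Step 3: nonlinear input.} Proposition~\ref{FIRTA} gives the first-order approximation condition with $r=\nu$ and with an $R$ satisfying \eqref{RTRTR}. Applying Remark~\ref{sds78asdaad} with $\theta_{-t_0}$, which is also ergodic and measure-preserving, in place of $\theta_{t_0}$ gives the \emph{backward} tempering $\frac1n\log^+R(\theta_{-nt_0}\omega)\to0$. The same argument applies to $\|Y_{\theta_{-nt_0}\omega}\|$, using \eqref{AS89asdxa}. Together with the $\liminf$ condition on $\rho$ along backward orbits, this is exactly what the unstable-manifold version of the abstract theorem needs.

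\textbf{Step 4: conclusion and main obstacle.} With all hypotheses checked, \cite[Theorem~2.14]{GVR23A} (the unstable counterpart of Theorem~2.10 there) directly yields items (1)--(5). The tangent space at $Y_\omega$ is $U_\omega$, and $U^\upsilon_{\mathrm{loc}}(\omega)$ is characterised by backward orbits with exponential decay rate $\upsilon$. The main difficulty I expect is that the abstract theorem works with time-$t_0$ maps and needs the backward-time tempering of every random constant, including $\rho$, $R$ and the norms of the inverses of $\psi$ restricted to $U$. These are not obviously covered by the forward-time estimates. My first approach would be to derive them all from $L^1$-integrability of $\log^+$ of suprema over $[0,1]$, via Remark~\ref{sds78asdaad} applied to $\theta_{-t_0}$, together with the second limit in Theorem~\ref{METT}(4) for the inverse restricted to the finite-dimensional $U$.
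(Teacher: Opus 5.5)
Your proposal matches the paper's proof, which simply invokes the abstract unstable manifold theorem of \cite{GVR23A} (cited there as Theorem~2.17, not 2.14), with Theorem~\ref{METT} and Proposition~\ref{FIRTA} as its only inputs. Your recentring at $Y$ and your backward tempering of $R$ and $\|Y_\omega\|_{C(\mathbb{T}^d)}$, via Remark~\ref{sds78asdaad} applied to $\theta_{-t_0}$ (with $\rho\equiv 1$ from Proposition~\ref{FIRTA}), supply details the paper leaves implicit.
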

	\begin{proof}
		The result follows directly from \cite[Theorem~2.17]{GVR23A}, whose requirements are provided by Theorem \ref{METT} and Proposition \ref{FIRTA}.
	\end{proof}
	\begin{remark}
		The unstable manifold describes the directions along which trajectories diverge from the stationary point in forward time. In particular, it characterizes the local behavior of trajectories that originate near the stationary point and move away from it. This provides a natural connection to the global dynamics and, in particular, to the possible attraction toward other invariant sets or attractors.
	\end{remark}
	\begin{remark}\label{CISa}
		In the statements of Theorems~\ref{stable_manifold} and \ref{unstable}, we
		first discretize the cocycle and then state the corresponding results for the
		resulting discrete-time cocycle. This shows that the invariant manifold is eventually
		invariant under the discrete-time cocycle. A natural question is whether this
		invariance property also holds for the continuous-time cocycle. In fact, we
		obtain a slightly weaker result: for every
		$0<\upsilon_1<-\lambda(s)$, on a set of full measure, there exists a time
		$t(\omega)>0$ such that, for all $t>t(\omega)$,
		\[
		\varphi^{t}_{\omega}
		\big(S^{\upsilon_1}_{\mathrm{loc}}(\omega)\big)
		\subset
		S^{\upsilon_1}_{\mathrm{loc}}(\theta_t\omega).
		\]	
		We sketch the main idea. Indeed, let $t\geq0$ and write
		\[
		t=m_t t_0+r,
		\qquad
		m_t:=\left\lfloor\frac{t}{t_0}\right\rfloor\in\mathbb{N}_0,
		\qquad
		0\leq r<t_0.
		\]
		Let
		\[
		0<\upsilon_1<\upsilon_2<-\lambda(s)
		\]
		and let $u\in S^{\upsilon_1}_{\mathrm{loc}}(\omega)$. Then
		\begin{align}\label{Ujmas89qwz}
			\begin{split}
				&\varphi^{t}_{\omega}(u)-Y_{\theta_t\omega}
				\\
				&\quad=
				\varphi^{r}_{\theta_{m_t t_0}\omega}
				\big(
				\varphi^{m_t t_0}_{\omega}(u)
				\big)
				-
				\varphi^{r}_{\theta_{m_t t_0}\omega}
				\big(
				Y_{\theta_{m_t t_0}\omega}
				\big)
				\\
				&\quad=
				\int_0^1
				D_{\alpha\varphi^{m_t t_0}_{\omega}(u)
					+(1-\alpha)Y_{\theta_{m_t t_0}\omega}}
				\varphi^{r}_{\theta_{m_t t_0}\omega}
				\Big[
				\varphi^{m_t t_0}_{\omega}(u)
				-
				Y_{\theta_{m_t t_0}\omega}
				\Big]
				\,\mathrm{d}\alpha .
			\end{split}
		\end{align}
		By Theorem~\ref{stable_manifold}, Item~\ref{aaaa}, for all sufficiently
		large $t$,
		\begin{align}\label{sdf9sfsad}
			\varphi^{m_t t_0}_{\omega}
			\left(S^{\upsilon_1}_{\mathrm{loc}}(\omega)\right)
			\subseteq
			S^{\upsilon_2}_{\mathrm{loc}}
			(\theta_{m_t t_0}\omega).
		\end{align}
	Moreover, by \eqref{invari} together with \eqref{78421215}, the convergence
	\[
	\varphi^{mt_0}_{\omega}(u)
	\longrightarrow
	Y_{\theta_{mt_0}\omega}
	\]
	is exponentially fast uniformly in
	$u\in S^{\upsilon_1}_{\mathrm{loc}}(\omega)$. Therefore, by Corollary~\ref{UJU},
		Remark~\ref{sds78asdaad}, and the estimates established above, we obtain
		\begin{align}\label{Y78azswds}
			\lim_{m\to\infty}
			\frac{1}{m}
	\sup_{\substack{
			u\in S_{\mathrm{loc}}^{\upsilon_1}(\omega)\\
			0\leq\alpha\leq1\\
			0\leq r\leq t_0}}
			\log^+
			\left\|
			D_{\alpha\varphi^{mt_0}_{\omega}(u)
				+(1-\alpha)Y_{\theta_{mt_0}\omega}}
			\varphi^{r}_{\theta_{mt_0}\omega}
			\right\|_{\mathcal{L}(C(\T^d),C(\T^d))}
			=0.
		\end{align}
		Applying \eqref{Ujmas89qwz} with $m_t$ replaced by $m_t+n$, together with \eqref{Y78azswds}, yields the required estimate uniformly in $n\geq0$. Hence, by \eqref{invari},
		$$
		\varphi_\omega^t(u)
		\in S_{\mathrm{loc}}^{\upsilon_1}(\theta_t\omega)
		$$	
		for all sufficiently large $t$. An analogous argument applies to the unstable manifold in Theorem~\ref{unstable}.
	\end{remark}
	Let us now state the final result of this subsection, which addresses the existence of the center manifold.
	\begin{theorem}[Local center manifolds]\label{center}
		In Theorem~\ref{METT}, suppose that, for  $1 \leq i_c$,
		we have $\lambda_{i_c}=0$. Fix an arbitrary time step $t_0>0$ and assume
		\[
		0 < \nu < \min\!\bigl\lbrace \lambda_{i_c-1},\, -\lambda_{i_c+1}\bigr\rbrace,
		\]
		with the convention that $\lambda_0=\infty$ if $i_c=1$.
		Let
		\[
		\mathbb{N}_0t_0
		=
		\bigl\{nt_0:n\in\mathbb{N}_0=\mathbb{N}\cup\lbrace0\rbrace\bigr\}.
		\]
		Then there exist a $\theta_{t_0}$-invariant subset
		$\tilde{\Omega}\subseteq\Omega$ of full measure, a continuous cocycle
		\[
		\bar{\varphi} \colon \mathbb{N}_0 t_0 \times \tilde{\Omega} \times C(\T^d) \to C(\T^d) ,
		\]
		and a positive random variable
		$\rho^c \colon \tilde{\Omega} \to (0,\infty)$ such that
		\[
		\liminf_{n\to\pm\infty}
		\frac{1}{|n|}\log\rho^c\!\bigl(\theta_{nt_0}\omega\bigr)\geq0.
		\]
		Moreover, the functions $\rho^c$ and $\bar{\varphi}$ satisfy
		\[
		\|u\|_{C(\T^d)}\leq\rho^c(\omega)
		\implies
		\bar{\varphi}^{t_0}_\omega(Y_\omega+u)
		=
		\varphi^{t_0}_\omega(Y_\omega+u).
		\]
		In addition, for every $\omega\in\tilde{\Omega}$, there exists a function
		\[
		h^c_\omega\colon C_\omega\to\mathcal{M}^{c,\nu}_\omega\subset C(\T^d),
		\]
		such that:
		\begin{itemize}
			\item[(i)] $h^c_\omega$ is a Lipschitz homeomorphism with
			$$
			h^c_\omega(0)=Y_\omega,
			$$
			and is differentiable at zero.
			\item[(ii)] $\mathcal{M}^{c,\nu}_\omega$ is a topological Banach manifold\footnote{That is, $\mathcal{M}^{c,\nu}_\omega$ is locally homeomorphic to an open subset of the Banach space $C_\omega$.} modeled on $C_\omega$.

			\item[(iii)] $\mathcal{M}^{c,\nu}_\omega$ is
			$\bar{\varphi}$-invariant; that is, for every $n\in\mathbb{N}_0$,
			\[
			\bar{\varphi}^{nt_0}_\omega
			\bigl(\mathcal{M}^{c,\nu}_\omega\bigr)
			\subseteq
			\mathcal{M}^{c,\nu}_{\theta_{nt_0}\omega}.
			\]
			
			\item[(iv)] For every
			$u_\omega\in\mathcal{M}^{c,\nu}_\omega$, there exists a sequence
			$\{u_{\theta_{-nt_0}\omega}\}_{n\geq1}$ such that, if we define
			\[
			j\leq0:\qquad
			\bar{\varphi}^{jt_0}_\omega(u_\omega)
			:=u_{\theta_{jt_0}\omega},
			\]
			then
			\[
			\forall(k,j)\in\mathbb{N}_0\times\mathbb{Z}:\qquad
			\bar{\varphi}^{kt_0}_{\theta_{jt_0}\omega}
			\bigl(\bar{\varphi}^{jt_0}_\omega(u_\omega)\bigr)
			=
			\bar{\varphi}^{(k+j)t_0}_\omega(u_\omega).
			\]
			Moreover,
			\[
			\sup_{j\in\mathbb{Z}}
			\exp(-\nu t_0|j|)
			\left\|
			\bar{\varphi}^{jt_0}_\omega(u_\omega)
			-
			Y_{\theta_{jt_0}\omega}
			\right\|_{C(\T^d)}
			<\infty.
			\]
		\end{itemize}
	\end{theorem}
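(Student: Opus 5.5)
The proof follows the same pattern as Theorems~\ref{stable_manifold} and~\ref{unstable}. We reduce to the abstract center manifold theorem for discrete-time cocycles in \cite{GVR25C}, formulated in the same Banach-space MET framework as \cite{GVR23A}. Its hypotheses are of two kinds: a multiplicative ergodic theorem for the linearized discrete cocycle with a zero Lyapunov exponent, and a first-order approximation condition for the nonlinear cocycle around the stationary point. Both are already available in the paper, so the plan is to verify them for the time-$t_0$ discretization and then read off the conclusions.

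\textbf{Step 1: discretize and recentre.} Fix $t_0>0$. If $t_0>1$, first extend the estimates of Corollary~\ref{UJU}, Proposition~\ref{UJUJUJU} and Lemma~\ref{ASasd78axx} from $[0,1]$ to $[0,t_0]$ by the cocycle property, composing finitely many unit-time pieces. Then consider the discrete cocycle $\Phi^n_\omega(u):=\varphi^{nt_0}_\omega(Y_\omega+u)-Y_{\theta_{nt_0}\omega}$ over the ergodic map $\theta_{t_0}$, using Theorem~\ref{Ujasm} together with ergodicity of $\hat\theta_{t_0}$ from Proposition~\ref{Ik78assq}. Its linearization at $0$ is $\psi^{nt_0}_\omega$. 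By the proof of Theorem~\ref{METT}, the discrete MET holds: $\psi^{t_0}_\omega$ is compact by Lemma~\ref{Asqwqw}, and $\log^+\|\psi^{t_0}_\omega\|\in L^1$. The hypothesis $\lambda_{i_c}=0$ then gives the splitting $C(\mathbb T^d)=U_\omega\oplus C_\omega\oplus F_{\lambda_{i_c+1}}(\omega)$. Here $U_\omega=\bigoplus_{i<i_c}H^i_\omega$ and $C_\omega=H^{i_c}_\omega$ are finite-dimensional, and the spectral gap is $\min\{\lambda_{i_c-1},-\lambda_{i_c+1}\}>\nu$.

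\textbf{Step 2: first-order approximation.} Proposition~\ref{FIRTA} provides the estimate with exponent $\nu$ (from $\Sigma\in C^{3+\nu}_b$) and a tempered $R$. Rescaled to the step $t_0$, it reads
\[
\|\Phi^1_\omega(u_2)-\Phi^1_\omega(u_1)-\psi^{t_0}_\omega[u_2-u_1]\|\le \|u_2-u_1\|(\|u_1\|^\nu+\|u_2\|^\nu)h(\|u_1\|+\|u_2\|)R(\omega).
\]
This holds for $\|u_i\|\le 1$ after possibly enlarging $R$.

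\textbf{Step 3: cut-off and Lyapunov--Perron.} Following \cite{GVR25C}, choose a radius $\rho^c(\omega)$ that is tempered in both time directions, using the temperedness of $R$ via Remark~\ref{sds78asdaad} applied to $\theta_{\pm t_0}$. Then modify the nonlinearity $\Phi^1_\omega-\psi^{t_0}_\omega$ with a Lipschitz cut-off outside the ball of radius $\rho^c(\omega)$. This produces the globally defined cocycle $\bar\varphi$, which coincides with $\varphi^{t_0}_\omega$ on $\|u\|\le\rho^c(\omega)$, and whose nonlinear part has Lipschitz constant smaller than the gap allows. The Lyapunov--Perron fixed point, taken in the space of two-sided sequences weighted by $e^{-\nu t_0|j|}$, yields $h^c_\omega$ on $C_\omega$ as a Lipschitz graph over $C_\omega$. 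This gives (i)--(iv). Differentiability of $h^c_\omega$ at $0$ with $Dh^c_\omega(0)$ the inclusion of $C_\omega$ follows from the superlinear factor $\|u\|^\nu$.

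\textbf{Main obstacle.} The delicate step is making the cut-off radius tempered in both directions, since the unstable and center parts require backward orbits. I would use that $\log^+\sup_{s\in[0,1]}R(\theta_s\omega)\in L^1$ to get $\frac1{|n|}\log^+R(\theta_{nt_0}\omega)\to0$ as $n\to\pm\infty$. I would combine this with the temperedness of the norms of the Oseledets projections, which the MET in \cite{GVR23A} provides, to choose $\rho^c$ so that the Lipschitz constant of the cut-off nonlinearity is uniformly below the gap.
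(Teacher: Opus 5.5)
Your proposal is correct and follows the paper's proof. The paper simply invokes \cite[Theorem~3.14]{GVR25C}, with Theorem~\ref{METT} supplying the Oseledets splitting and Proposition~\ref{FIRTA} supplying the first-order approximation condition, and your Step~3 only unpacks what happens inside that abstract result; your remark that for $t_0>1$ the unit-time estimates must first be propagated to $[0,t_0]$ via the cocycle property, and that the temperedness of $R$ must be two-sided, addresses points the paper passes over silently.
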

	\begin{proof}
		The result follows directly from \cite[Theorem~3.14]{GVR25C}, with the same requirements as for the stable and unstable manifold results, i.e., Theorem \ref{METT} and Proposition \ref{FIRTA}.
	\end{proof}
	\begin{figure}[ht]
		\centering
		\begin{tikzpicture}[x=1cm,y=1cm,
			every node/.style={inner sep=0pt,outer sep=0pt,scale=.75},
			line cap=round,line join=round,
			>={Latex[length=1.9mm,width=1.4mm]}]
			\path[use as bounding box] (0,0) rectangle (16.11,9.76);
			
			% Base dynamical system
			\draw[baseblue,line width=.65pt,rounded corners=2mm]
			(3.41,8.78) rectangle (12.87,9.68);
			\node[baseblue,font=\fontsize{11.5}{12}\selectfont\bfseries]
			at (8.14,9.43) {Base dynamical system};
			\node[scale=1.30,font=\fontsize{9}{11}\selectfont] at (8.14,9.06)
			{Flow property: $\theta_{t+s}\omega=\theta_t(\theta_s\omega),\quad
				\forall\,\omega\in\Omega,\ t,s\in\mathbb R$};
			
			\node[font=\fontsize{8.5}{10}\selectfont] at (3.93,8.36) {$\omega$};
			\node[font=\fontsize{8.5}{10}\selectfont] at (7.56,8.36)
			{$\theta_s\omega$};
			\node[font=\fontsize{8.5}{10}\selectfont] at (12.83,8.36)
			{$\theta_{t+s}\omega=\theta_t(\theta_s\omega)$};
			\draw[->,line width=.65pt] (4.27,8.35)--(6.98,8.35)
			node[midway,above=2pt,font=\fontsize{8}{9}\selectfont]
			{$\theta_s$};
			\draw[->,line width=.65pt] (8.19,8.35)--(11.37,8.35)
			node[midway,above=2pt,font=\fontsize{8}{9}\selectfont]
			{$\theta_t$};
			
			% Fiber labels
			\draw[stablegreen,line width=.55pt,rounded corners=1mm]
			(1.21,7.20) rectangle (3.77,8.05);
			\node[stablegreen,scale=1.08,font=\fontsize{8.6}{9.5}\selectfont]
			at (2.49,7.82) {Fiber over $\omega$};
			\node[scale=1.25,font=\fontsize{8.2}{9}\selectfont]
			at (2.49,7.48)
			{$C(\mathbb T^d)\times\{\omega\}$};
			
			\draw[baseblue,line width=.55pt,rounded corners=1mm]
			(6.52,7.20) rectangle (9.41,8.05);
			\node[baseblue,scale=1.08,font=\fontsize{8.6}{9.5}\selectfont]
			at (7.965,7.82) {Fiber over $\theta_s\omega$};
			\node[scale=1.25,font=\fontsize{8.2}{9}\selectfont]
			at (7.965,7.48)
			{$C(\mathbb T^d)
				\times\{\theta_s\omega\}$};
			
			\draw[fiberpurple,line width=.55pt,rounded corners=1mm]
			(12.55,7.20) rectangle (15.55,8.05);
			\node[fiberpurple,scale=1.08,font=\fontsize{8.6}{9.5}\selectfont]
			at (14.05,7.82) {Fiber over $\theta_{t+s}\omega$};
			\node[scale=1.25,font=\fontsize{8.2}{9}\selectfont]
			at (14.05,7.48)
			{$C(\mathbb T^d)
				\times\{\theta_{t+s}\omega\}$};
			
			% Draw the ellipse and manifolds in one fiber
			\newcommand{\drawfiber}[5]{%
				\begin{scope}[shift={(#1,#2)}]
					\draw[#3,line width=.68pt]
					(0,-.09) ellipse [x radius=#4,y radius=#5];
					
					\draw[baseblue,line width=.74pt]
					(-1.91,-.08)
					.. controls (-1.30,.26) and (-1.00,.16) .. (-.43,.06)
					.. controls (.26,-.03) and (1.44,.03) .. (1.90,.25);
					
					\draw[stablegreen,line width=.72pt,
					postaction={decorate},
					decoration={markings,
						mark=at position .68 with {\arrow{Latex}}}]
					(-1.39,-1.31)
					.. controls (-.85,-1.12) and (-.44,-.44) .. (0,0);
					
					\draw[stablegreen,line width=.72pt,
					postaction={decorate},
					decoration={markings,
						mark=at position .70 with {\arrow{Latex}}}]
					(1.62,1.06)
					.. controls (1.00,1.01) and (.44,.44) .. (0,0);
					
					\draw[unstablered,->,line width=.72pt]
					(0,0) .. controls (-.20,.50) and (-1.00,.81) .. (-1.45,1.18);
					
					\draw[unstablered,->,line width=.72pt]
					(0,0) .. controls (.20,-.55) and (.78,-1.05) .. (1.25,-1.22);
					
					\fill (0,0) circle[radius=.075cm];
				\end{scope}%
			}
			
			\drawfiber{2.30}{5.41}{stablegreen}{2.16cm}{1.88cm}
			\drawfiber{7.95}{5.41}{baseblue}{2.11cm}{1.88cm}
			\drawfiber{13.88}{5.37}{fiberpurple}{2.16cm}{1.93cm}
			
			% Labels in the left fiber
			\node[stablegreen,scale=1.12,font=\fontsize{7.8}{9}\selectfont]
			at (3.28,6.66) {$S_{\mathrm{loc}}(\omega)$};
			\node[unstablered,scale=1.12,font=\fontsize{7.8}{9}\selectfont]
			at (1.85,6.45) {$U_{\mathrm{loc}}(\omega)$};
			\node[baseblue,scale=1.12,font=\fontsize{7.8}{9}\selectfont]
			at (.85,5.09) {$C_{\mathrm{loc}}(\omega)$};
			\node[anchor=west,scale=1.12,font=\fontsize{7.8}{9}\selectfont]
			at (2.55,5.22) {$Y(\omega)$};
			
			% Labels in the middle fiber
			\node[stablegreen,scale=1.12,font=\fontsize{7.8}{9}\selectfont]
			at (8.8,6.64) {$S_{\mathrm{loc}}(\theta_s\omega)$};
			\node[unstablered,scale=1.12,font=\fontsize{7.8}{9}\selectfont]
			at (7.62,6.44) {$U_{\mathrm{loc}}(\theta_s\omega)$};
			\node[baseblue,scale=1.12,font=\fontsize{7.8}{9}\selectfont]
			at (6.72,5.06) {$C_{\mathrm{loc}}(\theta_s\omega)$};
			\node[anchor=west,scale=1.12,font=\fontsize{7.8}{9}\selectfont]
			at (8.20,5.19) {$Y(\theta_s\omega)$};
			
			% Labels in the right fiber
			\node[stablegreen,scale=1.12,font=\fontsize{7.8}{9}\selectfont]
			at (14.6,6.61) {$S_{\mathrm{loc}}(\theta_{t+s}\omega)$};
			\node[unstablered,scale=1.12,font=\fontsize{7.8}{9}\selectfont]
			at (13.54,6.39) {$U_{\mathrm{loc}}(\theta_{t+s}\omega)$};
			\node[baseblue,scale=1.12,font=\fontsize{7.8}{9}\selectfont]
			at (12.73,5.00) {$C_{\mathrm{loc}}(\theta_{t+s}\omega)$};
			\node[anchor=west,scale=1.12,font=\fontsize{7.8}{9}\selectfont]
			at (14.13,5.15) {$Y(\theta_{t+s}\omega)$};
			
			% Maps between fibers
			\draw[->,line width=.67pt] (4.61,5.44)--(5.66,5.44);
			\node[scale=1.12,font=\fontsize{8.2}{9}\selectfont]
			at (5.13,5.91) {$\varphi_\omega^s$};
			\node[align=center,font=\fontsize{7.3}{8.3}\selectfont]
			at (5.13,4.90) {cocycle map\\over $\theta_s$};
			
			\draw[->,line width=.67pt] (10.22,5.44)--(11.52,5.44);
			\node[scale=1.12,font=\fontsize{8.2}{9}\selectfont]
			at (10.87,5.91) {$\varphi_{\theta_s\omega}^{t}$};
			\node[align=center,font=\fontsize{7.3}{8.3}\selectfont]
			at (10.87,4.90) {cocycle map\\over $\theta_t$};
			
			% Base dynamics
			\draw[dashed,-{Latex[length=2mm]},line width=.65pt,
			dash pattern=on 3pt off 2.5pt]
			(6.88,3.17)
			.. controls (5.51,3.37) and (4.71,3.65) .. (3.98,3.90);
			\draw[dashed,-{Latex[length=2mm]},line width=.65pt,
			dash pattern=on 3pt off 2.5pt]
			(9.01,3.12)
			.. controls (10.19,3.20) and (11.40,3.56) .. (12.16,3.88);
			\node[align=center,scale=1.08,
			font=\fontsize{7.5}{8.6}\selectfont]
			at (7.94,3.13)
			{Base dynamics\\$\theta:\mathbb R\times\Omega\to\Omega$};
			
			% Cocycle property
			\draw[fiberpurple,line width=.57pt,rounded corners=1.2mm]
			(5.21,2.05) rectangle (10.55,2.80);
			\node[fiberpurple,scale=1.05,
			font=\fontsize{8.5}{10}\selectfont\bfseries]
			at (7.88,2.57) {Cocycle property:};
			\node[scale=1.26,font=\fontsize{8}{9}\selectfont]
			at (7.88,2.25)
			{$\varphi_\omega^{t+s}
				=\varphi_{\theta_s\omega}^{t}\circ\varphi_\omega^{s},
				\quad\forall\,\omega\in\Omega,\ t,s\in\mathbb R$};
			
			% Legend
			\draw[line width=.52pt,rounded corners=1mm]
			(1.15,.71) rectangle (14.72,1.94);
			
			\draw[stablegreen,line width=1pt] (1.61,1.70)--(2.44,1.70);
			\node[font=\fontsize{8}{9}\selectfont]
			at (3.16,1.69) {$S_{\mathrm{loc}}(\cdot)$};
			\node[align=center,font=\fontsize{6.6}{7.3}\selectfont]
			at (2.84,1.19) {local stable manifold\\(contracting)};
			
			\draw[unstablered,line width=1pt] (4.44,1.70)--(5.18,1.70);
			\node[font=\fontsize{8}{9}\selectfont]
			at (5.95,1.69) {$U_{\mathrm{loc}}(\cdot)$};
			\node[align=center,font=\fontsize{6.6}{7.3}\selectfont]
			at (5.67,1.19) {local unstable manifold\\(expanding)};
			
			\draw[baseblue,line width=1pt] (7.35,1.70)--(8.09,1.70);
			\node[font=\fontsize{8}{9}\selectfont]
			at (8.85,1.69) {$C_{\mathrm{loc}}(\cdot)$};
			\node[align=center,font=\fontsize{6.6}{7.3}\selectfont]
			at (8.56,1.19)
			{local center manifold\\(center directions)};
			
			\fill (10.69,1.70) circle[radius=.055cm];
			\node[font=\fontsize{8}{9}\selectfont]
			at (11.43,1.69) {$Y(\cdot)$};
			\node[align=center,font=\fontsize{6.6}{7.3}\selectfont]
			at (11.25,1.19)
			{equilibrium (fixed point)\\
				in fiber $C(\mathbb T^d)$};
			
			\draw[->,line width=.62pt] (12.97,1.70)--(13.53,1.70);
			\node[font=\fontsize{8}{9}\selectfont]
			at (14.05,1.69) {$\varphi_\omega^t(\cdot)$};
			\node[align=center,font=\fontsize{6.6}{7.3}\selectfont]
			at (13.78,1.19) {cocycle map\\(time-$t$ map)};
			
			% Bottom note
			\draw[gray!80,line width=.48pt,rounded corners=.8mm]
			(2.50,.12) rectangle (13.35,.59);
			\node[scale=1.09,font=\fontsize{7.1}{8}\selectfont]
			at (7.925,.35)
			{Each fiber is $C(\mathbb T^d)$.
				The invariant manifolds and equilibrium depend measurably
				on the base point $\omega$.};
		\end{tikzpicture}
		\caption{Visualization of the invariant manifolds.}
		\label{fig:invariant-manifolds}
	\end{figure}
\subsection{Stability}

A useful consequence of the stable manifold theorem is that, if the top Lyapunov exponent is negative, the stable manifold contains a neighborhood of the stationary point. This allows us to deduce local exponential stability from the linearized dynamics.
We first recall the following lemma, which is an immediate consequence of the stable manifold theorem.
\begin{lemma}\label{ASasxassas}
	Consider the setting of Theorem~\ref{stable_manifold}. Assume that
	\[
	\lambda(s)=\lambda_1<0,
	\]
	and fix an arbitrary time step $t_0\in(0,1]$. Then, for every
	\[
	0<\upsilon<-\lambda_1,
	\]
	there exists a set of full measure $\widetilde{\Omega}$ and a positive random variable $\rho_3^\upsilon$ satisfying
	\begin{align}\label{eqn:rho_tempa}
		\liminf_{p\to\infty}\frac1p
		\log\rho_3^\upsilon(\theta_{pt_0}\omega)\geq0,
	\end{align}
	such that, for every $\omega\in\widetilde{\Omega}$,
	\begin{enumerate}
		\item
		\begin{align}\label{asaswqew}
			B_{C(\T^d)}\bigl(Y_\omega,\rho_3^\upsilon(\omega)\bigr)
			\subseteq
			S^\upsilon_{\mathrm{loc}}(\omega).
		\end{align}
		
		\item We have
		\begin{align}\label{eqn:local_contraction}
			\limsup_{t\to\infty}
			\frac1t
			\log
			\left[
			\sup_{\substack{
					u\neq\tilde u\\
					u,\tilde u\in
					B_{C(\T^d)}(Y_\omega,\rho_3^\upsilon(\omega))}}
			\frac{
				\left\|
				\varphi^t_\omega(u)
				-\varphi^t_\omega(\tilde u)
				\right\|_{C(\T^d)}
			}{
				\|u-\tilde u\|_{C(\T^d)}
			}
			\right]
			\leq
			\lambda_1,
		\end{align}
Consequently
		\begin{align}\label{eqn:contr_char6}
			\limsup_{t\to\infty}
			\frac1t
			\log
			\left[
			\sup_{u\in B_{C(\T^d)}(Y_\omega,\rho_3^\upsilon(\omega))}
			\left\|
			\varphi^{t}_{\omega}(u)-Y_{\theta_t\omega}
			\right\|_{C(\mathbb{T}^d)}
			\right]
			\leq
			\lambda_1.
		\end{align}
	\end{enumerate}
\end{lemma}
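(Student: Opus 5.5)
The plan is to read both items off Theorem~\ref{stable_manifold} in the case $\lambda(s)=\lambda_1<0$, and then pass from discrete to continuous time using the interpolation argument of Remark~\ref{CISa}.

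\textbf{Step 1 (a ball inside the manifold).} Since $\lambda_1<0$, the filtration in Theorem~\ref{METT} gives $F_{\lambda_1}(\omega)=C(\mathbb{T}^d)$. So the tangent space $T_{Y_\omega}S^\upsilon_{\mathrm{loc}}(\omega)$ is the whole space, and the stable manifold should contain a neighbourhood of $Y_\omega$. Rather than relying on the tangent space, I would use the lower inclusion in \eqref{invari} directly. It suffices to find $\rho_3^\upsilon(\omega)>0$ such that
\[
\|u-Y_\omega\|<\rho_3^\upsilon(\omega)\ \Longrightarrow\ \sup_{n\ge0}e^{nt_0\upsilon}\|\varphi^{nt_0}_\omega(u)-Y_{\theta_{nt_0}\omega}\|<\rho_1^\upsilon(\omega).
\]
This is the main obstacle, and it is exactly the fact used inside \cite[Theorem~2.10]{GVR23A}. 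When $F_{\lambda_1}$ is the whole space, the Lyapunov--Perron map there is built on the full space, so the graph function is trivial and $S^\upsilon_{\mathrm{loc}}(\omega)$ is an honest ball of random radius. If one prefers an independent argument, I would estimate the nonlinear orbit by writing $\varphi^{nt_0}_\omega(Y_\omega+u)-Y_{\theta_{nt_0}\omega}$ as $\psi$ applied to $u$ plus a nonlinear remainder. The linear part obeys $\|\psi^{nt_0}_\omega\|\le D_\epsilon(\omega)e^{nt_0(\lambda_1+\epsilon)}$, with $D_\epsilon$ tempered by Theorem~\ref{METT}. The remainder is controlled by Proposition~\ref{FIRTA}. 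Choosing $\epsilon<-\lambda_1-\upsilon$, a discrete Gronwall/bootstrap argument keeps the weighted orbit below $\rho_1^\upsilon$ provided $\|u\|$ is smaller than a tempered random radius. Setting $\rho_3^\upsilon:=\min\{\rho_1^\upsilon/(2D_\epsilon),\ldots\}$ gives a quantity that is tempered because all its ingredients are, which yields \eqref{eqn:rho_tempa}.

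\textbf{Step 2 (discrete-time contraction).} With \eqref{asaswqew} in hand, \eqref{78421215} immediately gives the Lipschitz contraction rate $t_0\lambda_1$ along $nt_0$, uniformly over pairs in the ball. Since $\lambda(s)=\lambda_1$ here, the rate is exactly the one claimed.

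\textbf{Step 3 (continuous time).} For $t=m_tt_0+r$ with $0\le r<t_0$, I would write
\[
\varphi^t_\omega(u)-\varphi^t_\omega(\tilde u)=\int_0^1 D\varphi^r_{\theta_{m_tt_0}\omega}\big[\varphi^{m_tt_0}_\omega(u)-\varphi^{m_tt_0}_\omega(\tilde u)\big]\,\mathrm d\alpha,
\]
where the derivative is evaluated at convex combinations of the two discrete-time orbits. These evaluation points stay in a tempered neighbourhood of $Y_{\theta_{m_tt_0}\omega}$. Corollary~\ref{UJU}, together with the integrability from Lemma~\ref{ASasd78axx} and Remark~\ref{sds78asdaad}, then shows that the $\log^+$ of the operator norm is $o(m)$, as in \eqref{Y78azswds}. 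Dividing by $t\sim m_tt_0$ gives \eqref{eqn:local_contraction}.

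Finally, \eqref{eqn:contr_char6} follows by taking $\tilde u=Y_\omega$, which lies in the ball and satisfies $\varphi^t_\omega(Y_\omega)=Y_{\theta_t\omega}$, and bounding $\|u-Y_\omega\|<\rho_3^\upsilon(\omega)$.
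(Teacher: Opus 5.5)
Your proposal is correct and follows essentially the same route as the paper. Item~1 comes from the proof of \cite[Theorem~2.10]{GVR23A} once $F_{\lambda_1}(\omega)=C(\mathbb{T}^d)$; your reduction to the lower inclusion in \eqref{invari} makes explicit what the paper leaves implicit. The contraction in item~2 comes from \eqref{78421215} combined with the discrete-to-continuous interpolation of Remark~\ref{CISa}, and \eqref{eqn:contr_char6} follows by taking $\tilde u=Y_\omega$.

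One small caveat concerns your optional independent argument. The uniform bound $\|\psi^{nt_0}_\omega\|\le D_\epsilon(\omega)e^{nt_0(\lambda_1+\epsilon)}$ with tempered $D_\epsilon$ is not literally part of Theorem~\ref{METT}, which is stated vector by vector. It does, however, follow from Kingman's subadditive ergodic theorem together with a standard temperedness lemma.
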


\begin{proof}
	The proof essentially follows from Theorem~\ref{stable_manifold} and Remark~\ref{CISa}. Indeed, since $\lambda_1<0$ is the top Lyapunov exponent, we have
	\[
	F_{\lambda_1}(\omega)=C(\T^d).
	\]
	Then, by following the proof of \cite[Theorem~2.10]{GVR23A}, one obtains a positive random variable $\rho_3^\upsilon$ satisfying \eqref{eqn:rho_tempa} such that \eqref{asaswqew} holds. The second assertion follows from Theorem~\ref{stable_manifold} together with Remark~\ref{CISa}.
\end{proof}
The next result shows that, in some cases, one can prove that such an exponential estimate holds.

\begin{theorem}\label{stabi}
	Consider the SPDE
	\begin{align}\label{MAINAAssAA}
		\begin{cases}
			(\partial_t-\Delta)u^{\delta}
			=
			\mu u^{\delta}
			+\delta\Sigma(u^\delta)\,\xi^Q,
			& t>0,\\
			u^\delta(0,\cdot)=u_0\in L^\infty(\mathbb T^d),
		\end{cases}
	\end{align}
	where $\Sigma(0)=0$, $\mu<0$, and $\delta>0$. Then, for $\delta>0$ sufficiently small, the top Lyapunov exponent $\lambda_1^\delta$ is negative. In particular, the conclusions of Lemma~\ref{ASasxassas} hold with $Y_\omega=0$.
\end{theorem}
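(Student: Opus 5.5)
Since $\Sigma(0)=0$, the zero function is a stationary point of the cocycle $\varphi^\delta$ generated by \eqref{MAINAAssAA}. Its linearization $\psi^{\delta,t}_\omega=D_0\varphi^{\delta,t}_\omega$ is, by Proposition~\ref{kkkl}, the solution map of
\[
(\partial_t-\Delta)v=\mu v+\delta\Sigma'(0)\,v\,\xi^Q ,
\]
that is, of the mild equation \eqref{UIO} with $\RR{\mathcal{V}}=\delta\Sigma'(0)\gY$. Because $\RR{\mathcal{J}_1}(0)=0$, the coefficient is constant, so $\psi^\delta$ is a linear parabolic Anderson cocycle with noise intensity $\delta\Sigma'(0)$.

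By Theorem~\ref{METT}, the top exponent is
\[
\lambda_1^\delta=\lim_{n\to\infty}\tfrac1n\log\|\psi^{\delta,n}_\omega\|
=\inf_n\tfrac1n\mathbb{E}\log\|\psi^{\delta,n}_\omega\|
\le \mathbb{E}\log\|\psi^{\delta,1}_\omega\|
\]
(Kingman, subadditivity). It therefore suffices to prove $\limsup_{\delta\to0}\mathbb{E}\log\|\psi^{\delta,1}_\omega\|\le\mu<0$. We split $v=e^{\mu t}G^0\tilde u+w$, where $w$ collects the $\delta$-dependent part.

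\textbf{Unperturbed part.} $\|e^{\mu}G^0(1)\tilde u\|_{C(\T^d)}\le e^{\mu}\|\tilde u\|$, since the heat semigroup is a contraction on $C(\T^d)$.

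\textbf{Perturbation.} In the mild formulation, $w$ solves \eqref{OL<45} with $\RR{\mathcal V}=\delta\Sigma'(0)\gY$, initial datum $0$, and forcing $\RR{\mathcal W}=\delta\Sigma'(0)\,\mathbf{R}^+_0\RR{G^0}(\tilde u)\tilde\star\rb$ (one has to handle $\mu$ inside $\mathcal G_{\gamma_1,2}$; alternatively keep the $\mu$ term in $w$ and bound it by the Grönwall-type estimate). Proposition~\ref{onwall} together with Lemma~\ref{stat} and \eqref{A-1} then give
\[
\|w(1)\|\le \delta\, K(\omega)\|\tilde u\|,\qquad
K(\omega)=C\|\Gamma\|^2(1+\|\mathcal Z\|^2)^{c}\exp\big(C(1+\|\mathcal Z\|^2)^{2/(1-\kappa)}\big),
\]
uniformly for $\delta\le1$, because $\triplenorm{\RR{\mathcal V}}$ is bounded by a constant.

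\textbf{Integrability (main obstacle).} Combining the two parts,
\[
\log\|\psi^{\delta,1}_\omega\|\le\log\big(e^{\mu}+\delta K(\omega)\big)\le \mu+\log\big(1+\delta e^{-\mu}K(\omega)\big).
\]
As $\delta\to0$, the last term tends to $0$ pointwise. To pass the limit under $\mathbb E$ by dominated convergence I need $\log(1+K)\in L^1$, i.e.\ $\|\mathcal Z\|^{4/(1-\kappa)}\in L^1$. This holds because the model norms have moments of all orders (Proposition~\ref{Ujasn785sw} and Fatou, as in Lemma~\ref{ASasd78axx}). The delicate point is exactly this step: I must check that the exponent produced by Proposition~\ref{onwall} is polynomial in $\|\mathcal Z\|$, which it is, so that $\log$ makes it integrable; Gaussian tails are not needed.

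\textbf{Conclusion.} The previous step gives $\lambda_1^\delta\le\mu+o(1)<0$ for small $\delta$. Assumption~\ref{zasdasdaz} holds for $\delta\Sigma$, $Y=0$ trivially has all moments, and Proposition~\ref{FIRTA} applies. Lemma~\ref{ASasxassas} then yields the stated local exponential stability.
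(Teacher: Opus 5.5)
Your argument is correct, and its overall strategy is the paper's. You bound $\lambda_1^\delta$ by the expected log-norm of the linearized cocycle at a fixed time via subadditivity. You then show that this expectation is asymptotically at most $\mu$ as $\delta\to0$, using Proposition~\ref{onwall} and the moment bounds on the model to pass to the limit.

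\textbf{Where the two routes differ.} The difference is in how the small-noise limit is taken.
\begin{itemize}
\item The paper first removes $\mu$ by the substitution $v=e^{\mu t}\bar v$, so that $\psi^{t,\delta}_\omega=e^{\mu t}\bar\psi^{t,\delta}_\omega$. It then argues qualitatively: viewing the $\bar v$-equation as driven by the $\delta$-scaled model, stability of the solution in the model gives $\bar\psi^{t_0,\delta}_\omega\to e^{t_0\Delta}$. Dominated convergence, with Proposition~\ref{onwall} supplying the majorant, then gives $\mathbb{E}\log\|\bar\psi^{t_0,\delta}_\omega\|\to0$.
\item You instead split off the deterministic flow and apply Proposition~\ref{onwall} to the remainder with zero initial datum. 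This gives the explicit bound $\lambda_1^\delta\le\mu+\mathbb{E}\log(1+\delta e^{-\mu}K)$.
\end{itemize}
Your version yields a quantitative estimate and avoids invoking continuity of the solution map with respect to the model. It also needs only an upper majorant, which is all that $\lambda_1^\delta<0$ requires. The paper's version is shorter and identifies the limit as the norm of the heat semigroup.

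\textbf{Corrections.}
\begin{itemize}
\item If you keep $\mu$ inside \eqref{OL<45}, the forcing $\mathcal W$ must be built from $e^{\mu t}G^0\tilde u$, not from $G^0\tilde u$. Alternatively, perform the paper's substitution first; then your forcing, together with \eqref{OL<45} at $\mu=0$, is exactly right, and the factor $e^{\mu}$ multiplies the whole bound afterwards.
\item Your parenthetical alternative of keeping the $\mu$-term inside $w$ would not work. The remainder would then carry a forcing of size $|\mu|\,G^0\tilde u$, so it would no longer be $O(\delta)$, and the contraction factor $e^{\mu}$ would be lost.
\item Theorem~\ref{METT} as stated does not give the equality $\lambda_1^\delta=\lim_n\frac1n\log\|\psi^{\delta,n}_\omega\|$. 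You only need the inequality $\le$, and that follows directly from item~(4) applied to any nonzero $h\in H^1_\omega$.
\end{itemize}
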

\begin{proof}
Thanks to Lemma~\ref{ASasxassas}, it remains to prove that the top Lyapunov exponent is negative. Let
	\[
	\psi^{t,\delta}_\omega:=D_0\varphi^{t,\delta}_\omega .
	\]
	By Theorem~\ref{METT}, for almost every $\omega$ there exists
	\[
	u_\omega\in H^1_\omega\setminus\{0\}
	\]
	such that
	\begin{align}\label{Ikaswwe25}
		\lambda_1^\delta
		=
		\lim_{m\to\infty}
		\frac{1}{mt_0}
		\log
		\left\|
		\psi^{mt_0,\delta}_\omega(u_\omega)
		\right\|_{C(\T^d)} .
	\end{align}
	Moreover, by the cocycle property,
	\begin{align*}
		\frac{1}{knt_0}
		\log
		\left\|
		\psi^{knt_0,\delta}_\omega
		\right\|_{\mathcal L(C(\T^d),C(\T^d))}
		\leq
		\frac1k
		\sum_{0\leq j<k}
		\frac{
			\log
			\left\|
			\psi^{nt_0,\delta}_{\theta_{jnt_0}\omega}
			\right\|_{\mathcal L(C(\T^d),C(\T^d))}
		}{nt_0}.
	\end{align*}
	Hence, by the Birkhoff ergodic theorem and the ergodicity of
	$\theta_{nt_0}$,
	\begin{align*}
		\limsup_{k\to\infty}
		\frac{1}{knt_0}
		\log
		\left\|
		\psi^{knt_0,\delta}_\omega
		\right\|_{\mathcal L(C(\T^d),C(\T^d))}
		\leq
		\frac{1}{nt_0}
		\E\left[
		\log
		\left\|
		\psi^{nt_0,\delta}_\omega
		\right\|_{\mathcal L(C(\T^d),C(\T^d))}
		\right],
		\qquad \mathbb P\text{-a.s.}
	\end{align*}
	Together with \eqref{Ikaswwe25}, this yields
	\begin{align}\label{Kaser}
		\lambda_1^\delta
		\leq
		\inf_{n\geq1}
		\frac{1}{nt_0}
		\E\left[
		\log
		\left\|
		\psi^{nt_0,\delta}_\omega
		\right\|_{\mathcal L(C(\T^d),C(\T^d))}
		\right].
	\end{align}
	We now use \eqref{Kaser} to show that $\lambda_1^\delta<0$ for sufficiently small $\delta$. By Proposition~\ref{kkkl}, the linearized equation at the stationary solution $0$ is
	\begin{align}\label{MAINAAssAAsasd}
		\begin{cases}
			(\partial_t-\Delta)v^\delta
			=
			\mu v^\delta
			+\delta\Sigma'(0)v^\delta\,\xi^Q,
			& t>0,\\
			v^\delta(0,\cdot)=v_0\in C(\T^d),
		\end{cases}
	\end{align}
	where the solution is understood in the sense of Definition~\ref{dfn}.
	Let
	\[
	v^\delta(t,x)=e^{\mu t}\overline v^\delta(t,x).
	\]
	Then, by the properties of the reconstruction operator, $\overline v^\delta$ solves
	\begin{align}\label{MAINAAssAAsassdd}
		\begin{cases}
			(\partial_t-\Delta)\overline v^\delta
			=
			\delta\Sigma'(0)\overline v^\delta\,\xi^Q,
			& t>0,\\
			\overline v^\delta(0,\cdot)=v_0\in C(\T^d).
		\end{cases}
	\end{align}
	Consequently,
	\[
	\psi^{t,\delta}_\omega
	=
	e^{\mu t}\overline\psi^{t,\delta}_\omega,
	\]
	where $\overline\psi^{t,\delta}$ denotes the cocycle associated with
	\eqref{MAINAAssAAsassdd}. Thus, by \eqref{Kaser},
	\begin{align}\label{small-noise-Lyap}
		\lambda_1^\delta
		\leq
		\mu
		+
		\frac{1}{t_0}
		\E\left[
		\log
		\left\|
		\overline\psi^{t_0,\delta}_\omega
		\right\|_{\mathcal L(C(\T^d),C(\T^d))}
		\right].
	\end{align}
Regarding \eqref{MAINAAssAAsassdd} as an equation driven by the corresponding model scaled by $\delta$, the stability of the solution with respect to the model, together with Proposition~\ref{onwall} and the dominated convergence theorem, yields
\[
\lim_{\delta\to0}
\frac{1}{t_0}
\E\left[
\log
\left\|
\overline\psi^{t_0,\delta}_\omega
\right\|_{\mathcal L(C(\T^d),C(\T^d))}
\right]
=
\frac{1}{t_0}
\log
\left\|
e^{t_0\Delta}
\right\|_{\mathcal L(C(\T^d),C(\T^d))}
=0.
\]
	Since $\mu<0$, \eqref{small-noise-Lyap} implies that, for $\delta>0$
	sufficiently small,
	\[
	\lambda_1^\delta<0.
	\]
\end{proof}
	
	\appendix
	\section{ Proofs of Lemma \ref{Alex} and Lemma \ref{DCXSZ}}
	\begin{proof}[\textbf{Proof of Lemma \ref{Alex}}]\label{Alexi}
		Some parts of the proof are similar to \cite[Lemmas~2 and~3 and Theorem~4]{CFW26}.
		The main difference is that we formulate the argument within the framework of
		\cite{Hai14}. In particular, we use the Schauder-type estimates established in
		\cite{Hai14}. Moreover, we employ a patching argument to extend the local
		estimates. First, recall that for
		$w\in O^s_{s+T}$, 
		\begin{align*}
			\left(\mathbf{R}^+_s\RR{\mathcal{U}}_{u_0}\right)(w)
			:= {}&
			\mathcal{G}^{s}_{\gamma_{0},1-\kappa}
			\big(
			\Sigma(\RR{\mathcal{U}}_{u_0}) \tilde{\star} \rb
			\big)(w)
			+\mu\mathcal{G}^{s}_{\gamma_{1},2}
			\left(\RR{\mathcal{U}}_{u_0}\right)(w)
			+\RR{G^{s}}(u_0)(w).
		\end{align*}	
		Using an analogous argument to that in the proof of Proposition~\ref{onwall}
		(cf.~\hyperref[Step10]{\textbf{Step 1}}), we deduce that, for every
		$r\in[s,s+T)$ and $w\in O^{r}_{s+T}$,
		\begin{align}\label{CZ1}
			\begin{split}
				\left(\mathbf{R}^+_r\RR{\mathcal{U}}_{u_0}\right)(w)
				={}&
				\mathcal{G}^{r}_{\gamma_{0},1-\kappa}
				\big(
				\Sigma(\RR{\mathcal{U}}_{u_0}) \tilde{\star} \rb
				\big)(w)
				+\mu\mathcal{G}^{r}_{\gamma_{1},2}
				\left(\RR{\mathcal{U}}_{u_0}\right)(w)
				+\RR{G^{r}}
				\big(\mathcal{U}^{\gY}_{u_0}(r,\cdot)\big)(w),
				\\
				&\mathcal{R}\RR{\mathcal{U}}_{u_0}(r,x)
				=\mathcal{U}^{\gY}_{u_0}(r,x),
				\quad
				\text{with the convention that, if } r=s,\;
				\mathcal{U}^{\gY}_{u_0}(s,\cdot)=u_0(\cdot).
			\end{split}
		\end{align}
		Thus, \begin{align}\label{CZ22} 
			\left(\mathbf{R}^+_r\RR{\mathcal{U}}_{u_0}\right)(w) = \mathcal{Q}_{s+T-r}^{r} \left( \mathcal{R}\RR{\mathcal{U}}_{u_0}(r,\cdot), \RR{\mathcal{U}}_{u_0} \right)(w). 
		\end{align}
		Consider
		\begin{align}\label{BBBBB}
			r\in [s,s+T)
			\quad \text{and} \quad
			0<\delta\leq s+T-r.
		\end{align}
		With these preparations, we divide the rest of the proof into several steps.
		
		\textbf{Step 1.}\label{Step1111}
		In this step, for $a\in \mathcal{A}^-=\{\rb, \gIb, \gXXXX\rb, i=1,\cdots d\}$, we focus on estimating
		\begin{align*}
			\sup_{\substack{
					z,w\in O_{\delta+r}^{r},\ z\neq w\\
					d(z,w)\leq\Vert z,w\Vert_{P_r}
			}}
			\frac{
				\Vert z,w\Vert_{P_r}^{\gamma_1}
				\,
				\Big\Vert
				\mathrm{Pr}_{a}\Big[
				\big(\Sigma(\RR{\mathcal{U}}_{u_0})\tilde{\star}\rb\big)(z)
				-\Gamma_{z,w}
				\big(
				\Sigma(\RR{\mathcal{U}}_{u_0})\tilde{\star}\rb
				\big)(w)
				\Big]
				\Big\Vert
			}{
				d(z,w)^{\gamma_0-|a|_h}
			}.
		\end{align*}
		\textbf{Case $a=\rb$}: Let
		\begin{align}\label{Bnassssw}
			\begin{split}
				&\eta_{1}(z,w):=-\Sigma(\mathcal{U}^{\gY}_{u_0}(w))\Pi_z(\gI)(w),\\
				&\eta_{2}(z,w):=-\sum_{i=1}^{d}\mathcal{U}^{\gXXXX}_{u_0}(w)	\Pi_z(\gXXXX)(w).
			\end{split}
		\end{align}
		Thus
		\begin{align}\label{NMasz}
			\mathrm{Pr}_{\gY}\big[
			\RR{\mathcal{U}}_{u_0}(z)-\Gamma_{z,w}\left(
			\RR{\mathcal{U}}_{u_0}(w)
			\right)\big]
			=
			\mathcal{U}^{\gY}_{u_0}(z)-\mathcal{U}^{\gY}_{u_0}(w)-\eta_1(z,w)-\eta_2(z,w).
		\end{align}
		By \eqref{B1478} and \eqref{Hnasssss}, we have
		\begin{align}
			\begin{split}
				&\mathrm{Pr}_{\rb}\big[
				\big(\Sigma(\RR{\mathcal{U}}_{u_0}) \tilde{\star} \rb
				\big)(z)-\Gamma_{z,w}\big(
				\left(
				\Sigma(\RR{\mathcal{U}}_{u_0}) \tilde{\star} \rb
				\right)(w)
				\big)\big]\\&=\Sigma(\mathcal{U}^{\gY}_{u_0}(z))-\Sigma(\mathcal{U}^{\gY}_{u_0}(w))+\Sigma^{\prime}(\mathcal{U}^{\gY}_{u_0}(w))\Sigma(\mathcal{U}^{\gY}_{u_0}(w))\Pi_z(\gI)(w) +\sum_{i=1}^{d}\Sigma^{\prime}(\mathcal{U}^{\gY}_{u_0}(w))\mathcal{U}^{\gXXXX}_{u_0}(w)	\Pi_z(\gXXXX)(w)\\&=\underbrace{\Sigma(\mathcal{U}^{\gY}_{u_0}(z))-\Sigma\big(\mathcal{U}^{\gY}_{u_0}(w)+\eta_1(z,w)+\eta_2(z,w)\big)}_{\text{I}(\RR{\mathcal{U}}_{u_0},z,w)}\\
				&+\underbrace{\Sigma\big(\mathcal{U}^{\gY}_{u_0}(w)+\eta_1(z,w)+\eta_2(z,w)\big)-\Sigma\big(\mathcal{U}^{\gY}_{u_0}(w)+\eta_2(z,w)\big)-\Sigma^{\prime}\big(\mathcal{U}^{\gY}_{u_0}(w)\big)\eta_1(z,w)}_{\text{II}(\RR{\mathcal{U}}_{u_0},z,w)}\\&+\underbrace{\Sigma\big(\mathcal{U}^{\gY}_{u_0}(w)+\eta_2(z,w)\big)-\Sigma(\mathcal{U}^{\gY}_{u_0}(w))-\Sigma^{\prime}(\mathcal{U}^{\gY}_{u_0}(w))\eta_2(z,w)}_{\text{III}(\RR{\mathcal{U}}_{u_0},z,w)}.
			\end{split}
		\end{align}
		From the Taylor expansion, it is clear that
		\begin{enumerate}
			\item
			\begin{align*}
				&\text{I}(\RR{\mathcal{U}}_{u_0},z,w) \\
				&= \int_{0}^{1}
				\Sigma'\bigg(
				\alpha \mathcal{U}^{\gY}_{u_0}(z)
				+ (1-\alpha)\big(\mathcal{U}^{\gY}_{u_0}(w)+\eta_1(z,w)+\eta_2(z,w)\big)
				\bigg) 
				\mathrm{Pr}_{\gY}\big[
				\RR{\mathcal{U}}_{u_0}(z)-\Gamma_{z,w}\big(\RR{\mathcal{U}}_{u_0}(w)\big)
				\big]\,\mathrm{d}\alpha.
			\end{align*}
			\item 
			\begin{align*}
				& \text{II}(\RR{\mathcal{U}}_{u_0},z,w) \\
				&= \int_0^1\int_0^1
				\Sigma''\big(\mathcal{U}^{\gY}_{u_0}(w)+\beta\eta_2(z,w)+\alpha\beta\eta_1(z,w)\big)
				\big[\eta_2(z,w)+\alpha\eta_1(z,w)\big]\eta_1(z,w)
				\,\mathrm{d}\beta\,\mathrm{d}\alpha.
			\end{align*}
			
			\item 
			\begin{align*}
				\text{III}(\RR{\mathcal{U}}_{u_0},z,w)
				&=
				\int_{0}^{1}
				\big[
				\Sigma^{\prime}\big(\mathcal{U}^{\gY}_{u_0}(w)+\alpha\eta_{2}(z,w)\big)
				-
				\Sigma^{\prime}\big(\mathcal{U}^{\gY}_{u_0}(w)\big)
				\big]
				\eta_{2}(z,w)\,\mathrm{d}\alpha
				\\
				&=
				\int_0^1\int_0^1
				\alpha\Sigma^{\prime\prime}\big(
				\mathcal{U}^{\gY}_{u_0}(w)+\alpha\beta\eta_2(z,w)
				\big)
				\,\,(\eta_2(z,w))^2
				\,\mathrm{d}\beta\,\mathrm{d}\alpha.
			\end{align*}
			Thanks to the assumptions on $\Sigma$ and a simple interpolation argument, we obtain that
			\begin{align*}
				\Vert \text{III}(\RR{\mathcal{U}}_{u_0},z,w)\Vert
				\lesssim
				\Vert\eta_{2}(z,w)\Vert^{\gamma_1}.
			\end{align*}
		\end{enumerate}
		Using these latter expressions together with the assumptions on $\Sigma$, we conclude that
		\begin{align}\label{PLPLP}
			\begin{split}
				&\big\Vert \mathrm{Pr}_{\rb}\big[
				\big(\Sigma(\RR{\mathcal{U}}_{u_0}) \tilde{\star} \rb
				\big)(z)-\Gamma_{z,w}\big(
				\left(
				\Sigma(\RR{\mathcal{U}}_{u_0}) \tilde{\star} \rb
				\right)(w)
				\big)\big]\big\Vert \lesssim \big\Vert \mathrm{Pr}_{\gY}\big[
				\RR{\mathcal{U}}_{u_0}(z)-\Gamma_{z,w}\big(
				\RR{\mathcal{U}}_{u_0}(w)
				\big)\big]\big\Vert\\&+\Vert \Pi_z(\gI)(w)\Vert^{2}+d(z,w) \Vert \Pi_z(\gI)(w)\Vert\sum_{i=1}^{d}\Vert\mathcal{U}^{\gXXXX}_{u_0}(w)\Vert+ d(z,w)^{\gamma_1}\sum_{i=1}^{d}\Vert\mathcal{U}^{\gXXXX}_{u_0}(w)\Vert^{\gamma_1}.
			\end{split}
		\end{align}
		Since $\gamma_1<2-2\kappa$ and, for
		$z,w\in O_{\delta+r}^{\,r}$, we have $\|z,w\|_{P_r}\leq \delta^{\frac12},$ it follows from \eqref{PLPLP} that
		\begin{align}\label{A11}
			\begin{split}
				&\sup_{\substack{ z,w \in O_{\delta+r}^{r},\ z\neq w\\ d(z,w)\leq\Vert z,w\Vert_{P_r}}}
				\frac{\Vert z,w \Vert_{P_{r}}^{\gamma_1} \, \big\Vert \mathrm{Pr}_{\rb}\big[
					\big(\Sigma(\RR{\mathcal{U}}_{u_0}) \tilde{\star} \rb
					\big)(z)-\Gamma_{z,w}\big(
					\left(
					\Sigma(\RR{\mathcal{U}}_{u_0}) \tilde{\star} \rb
					\right)(w)
					\big)\big]\big\Vert}{d(z,w)^{\gamma_1}}\\&\lesssim
				\sup_{\substack{ z,w \in O_{\delta+r}^{r},\ z\neq w\\ d(z,w)\leq\Vert z,w\Vert_{P_r}}}
				\frac{\Vert z,w \Vert_{P_{r}}^{\gamma_1}\big\Vert \mathrm{Pr}_{\gY}\big[
					\RR{\mathcal{U}}_{u_0}(z)-\Gamma_{z,w}\big(
					\RR{\mathcal{U}}_{u_0}(w)
					\big)\big]\big\Vert}{d(z,w)^{\gamma_1}}+\delta^{1-\kappa}\big(
				[\ \gI\ ]_{\overline{{O}_{\delta+r}^r}}\big)^2\\&+\delta^{\frac{1-\kappa}{2}}[\ \gI\ ]_{\overline{{O}_{\delta+r}^r}}\sup_{w\in O^{r}_{\delta+r}}\sum_{i=1}^{d}\Vert w\Vert_{P_{r}}\Vert\mathcal{U}^{\gXXXX}_{u_0}(w)\Vert+\big(\sum_{i=1}^{d}\sup_{w\in O^{r}_{\delta+r}}\Vert w\Vert_{P_{r}}\Vert\mathcal{U}^{\gXXXX}_{u_0}(w)\big)^{\gamma_1}
			\end{split}
		\end{align}
		\textbf{Case} $a=\gIb$:
		By definition,
		\begin{align}\label{ZZXZX}
			\begin{split}
				&\mathrm{Pr}_{\gIb}\big[
				\big(\Sigma(\RR{\mathcal{U}}_{u_0}) \tilde{\star} \rb
				\big)(z)-\Gamma_{z,w}\big(
				\left(
				\Sigma(\RR{\mathcal{U}}_{u_0}) \tilde{\star} \rb
				\right)(w)
				\big)\big]\\&= \Sigma^{\prime}(\mathcal{U}^{\gY}_{u_0}(z))\Big[\Sigma(\mathcal{U}^{\gY}_{u_0}(z))-\Sigma(\mathcal{U}^{\gY}_{u_0}(w))\Big]+\Sigma(\mathcal{U}^{\gY}_{u_0}(w))\Big[\Sigma^{\prime}(\mathcal{U}^{\gY}_{u_0}(z))-\Sigma^{\prime}(\mathcal{U}^{\gY}_{u_0}(w))\Big].
			\end{split}
		\end{align}
		Let $\widetilde{\Sigma}$ denote either $\Sigma$ or $\Sigma'$. In either case, we have
		\begin{align}\label{LLMOLPa}
			\begin{split}
				&\widetilde{\Sigma}(\mathcal{U}^{\gY}_{u_0}(z))
				-\widetilde{\Sigma}(\mathcal{U}^{\gY}_{u_0}(w))
				\\
				&=
				\int_{0}^{1}
				\widetilde{\Sigma}^{\prime}\big(
				\alpha\mathcal{U}^{\gY}_{u_0}(z)
				+(1-\alpha)\mathcal{U}^{\gY}_{u_0}(w)
				\big)
				\mathrm{Pr}_{\gY}\big[
				\RR{\mathcal{U}}_{u_0}(z)-\Gamma_{z,w}\big(
				\RR{\mathcal{U}}_{u_0}(w)
				\big)
				\big]
				\,\mathrm{d}\alpha
				\\
				&\quad+
				\int_{0}^{1}
				\widetilde{\Sigma}^{\prime}\big(
				\alpha\mathcal{U}^{\gY}_{u_0}(z)
				+(1-\alpha)\mathcal{U}^{\gY}_{u_0}(w)
				\big)
				\big[
				\eta_1(z,w)+\eta_2(z,w)
				\big]
				\,\mathrm{d}\alpha .
			\end{split}
		\end{align}
		Therefore, from the assumptions on $\Sigma$ and since $\gamma_1<2-2\kappa$, we conclude from \eqref{ZZXZX} that
		\begin{align}\label{A33}
			\begin{split}
				&\sup_{\substack{
						z,w \in O_{\delta+r}^{r},\ z\neq w\\
						d(z,w)\leq\Vert z,w\Vert_{P_r}}}
				\frac{
					\Vert z,w \Vert_{P_{r}}^{\gamma_1}
					\big\Vert\mathrm{Pr}_{\gIb}\big[
					\big(\Sigma(\RR{\mathcal{U}}_{u_0}) \tilde{\star} \rb\big)(z)
					-\Gamma_{z,w}\big(
					\big(\Sigma(\RR{\mathcal{U}}_{u_0}) \tilde{\star} \rb\big)(w)
					\big)
					\big]\big\Vert
				}{
					d(z,w)^{\gamma_1-1+\kappa}
				}
				\\
				&\lesssim
				\delta^{\frac{1-\kappa}{2}}	\sup_{\substack{
						z,w \in O_{\delta+r}^{r},\ z\neq w\\
						d(z,w)\leq\Vert z,w\Vert_{P_r}}}
				\frac{
					\Vert z,w \Vert_{P_{s}}^{\gamma_1}
					\big\Vert \mathrm{Pr}_{\gY}\big[
					\RR{\mathcal{U}}_{u_0}(z)-\Gamma_{z,w}\big(
					\RR{\mathcal{U}}_{u_0}(w)
					\big)
					\big]\big\Vert
				}{
					d(z,w)^{\gamma_1}
				}
				\\&+\delta^{1-\kappa}[\ \gI \ ]_{\overline{{O}_{\delta+r}^r}}
				+\delta^{\frac{1-\kappa}{2}}
				\sum_{i=1}^{d}\sup_{w\in O_{\delta+r}^r}
				\Vert w\Vert_{P_s}\Vert \mathcal{U}^{\gXXXX}_{u_0}(w)\Vert .
			\end{split}
		\end{align} 
		\textbf{Case $a=\gXXXX\rb$}: By definition,
		\begin{align*}
			&\mathrm{Pr}_{\gXXXX\rb}\big[
			\big(\Sigma(\RR{\mathcal{U}}_{u_0}) \tilde{\star} \rb\big)(z)
			-\Gamma_{z,w}\big(
			\big(\Sigma(\RR{\mathcal{U}}_{u_0}) \tilde{\star} \rb\big)(w)
			\big)
			\big]\\&=	\mathcal{U}^{\gXXXX}_{u_0}(z)
			\big[
			\Sigma^{\prime}(\mathcal{U}^{\gY}_{u_0}(z))
			-
			\Sigma^{\prime}(\mathcal{U}^{\gY}_{u_0}(w))
			\big]+
			\Sigma^{\prime}(\mathcal{U}^{\gY}_{u_0}(w))
			\big[
			\mathcal{U}^{\gXXXX}_{u_0}(z)
			-
			\mathcal{U}^{\gXXXX}_{u_0}(w)
			\big]
		\end{align*}	
		
		Also,
		\begin{align}\label{BBNMNa}
			\begin{split}
				&\big\Vert\mathrm{Pr}_{\gXXXX\rb}\big[
				\big(\Sigma(\RR{\mathcal{U}}_{u_0}) \tilde{\star} \rb\big)(z)
				-\Gamma_{z,w}\big(
				\big(\Sigma(\RR{\mathcal{U}}_{u_0}) \tilde{\star} \rb\big)(w)
				\big)
				\big]\big\Vert
				\\
				&\leq 
				\big\Vert
				\mathcal{U}^{\gXXXX}_{u_0}(z)
				\big[
				\Sigma^{\prime}(\mathcal{U}^{\gY}_{u_0}(z))
				-
				\Sigma^{\prime}(\mathcal{U}^{\gY}_{u_0}(w))
				\big]\big\Vert+
				\big\Vert
				\Sigma^{\prime}(\mathcal{U}^{\gY}_{u_0}(w))
				\big[
				\mathcal{U}^{\gXXXX}_{u_0}(z)
				-
				\mathcal{U}^{\gXXXX}_{u_0}(w)
				\big]
				\big\Vert
				\\
				&\leq
				\big\Vert
				\mathcal{U}^{\gXXXX}_{u_0}(z)
				\big[
				\Sigma^{\prime}(\mathcal{U}^{\gY}_{u_0}(z))
				-
				\Sigma^{\prime}\big(
				\mathcal{U}^{\gY}_{u_0}(w)+\eta_1(z,w)+\eta_2(z,w)
				\big)
				\big]
				\big\Vert
				\\
				&\quad+
				\big\Vert
				\mathcal{U}^{\gXXXX}_{u_0}(z)
				\big[
				\Sigma^{\prime}\big(
				\mathcal{U}^{\gY}_{u_0}(w)+\eta_1(z,w)+\eta_2(z,w)
				\big)
				-
				\Sigma^{\prime}\big(
				\mathcal{U}^{\gY}_{u_0}(w)+\eta_2(z,w)
				\big)
				\big]
				\big\Vert
				\\
				&\quad+
				\big\Vert
				\mathcal{U}^{\gXXXX}_{u_0}(z)
				\big[
				\Sigma^{\prime}\big(
				\mathcal{U}^{\gY}_{u_0}(w)+\eta_2(z,w)
				\big)
				-
				\Sigma^{\prime}(\mathcal{U}^{\gY}_{u_0}(w))
				\big]
				\big\Vert
				\\
				&\quad+
				\big\Vert
				\Sigma^{\prime}(\mathcal{U}^{\gY}_{u_0}(w))
				\big[
				\mathcal{U}^{\gXXXX}_{u_0}(z)
				-
				\mathcal{U}^{\gXXXX}_{u_0}(w)
				\big]
				\big\Vert .
			\end{split}
		\end{align}
		We now estimate the terms on the right-hand side of \eqref{BBNMNa} as follows:
		\begin{enumerate}
			\item
			For the first term, using the identity
			$AB=(AB^{\frac{1}{\gamma_1}})B^{\frac{\gamma_1-1}{\gamma_1}}$
			and the assumptions on $\Sigma$, we obtain
			\begin{align*}
				&\big\Vert
				\mathcal{U}^{\gXXXX}_{u_0}(z)
				\big[
				\Sigma^{\prime}(\mathcal{U}^{\gY}_{u_0}(z))
				-
				\Sigma^{\prime}\big(
				\mathcal{U}^{\gY}_{u_0}(w)+\eta_1(z,w)+\eta_2(z,w)
				\big)
				\big]
				\big\Vert
				\\
				&\lesssim
				\Vert \mathcal{U}^{\gXXXX}_{u_0}(z)\Vert
				\big\Vert
				\mathrm{Pr}_{\gY}\big[
				\RR{\mathcal{U}}_{u_0}(z)
				-\Gamma_{z,w}\big(
				\RR{\mathcal{U}}_{u_0}(w)
				\big)
				\big]
				\big\Vert^{\frac{\gamma_1-1}{\gamma_1}}.
			\end{align*}
			
			\item
			For the second term, using the boundedness of $\Sigma$, $\Sigma^{\prime}$, and $\Sigma^{\prime\prime}$, we have
			\begin{align*}
				&\big\Vert
				\mathcal{U}^{\gXXXX}_{u_0}(z)
				\big[
				\Sigma^{\prime}\big(
				\mathcal{U}^{\gY}_{u_0}(w)+\eta_1(z,w)+\eta_2(z,w)
				\big)
				-
				\Sigma^{\prime}\big(
				\mathcal{U}^{\gY}_{u_0}(w)+\eta_2(z,w)
				\big)
				\big]
				\big\Vert
				\\
				&\lesssim
				\Vert \mathcal{U}^{\gXXXX}_{u_0}(z)\Vert
				\Vert\Pi_z(\gI)(w)\Vert .
			\end{align*}
			
			\item
			For the third term, using the boundedness of $\Sigma$, $\Sigma^{\prime}$, and $\Sigma^{\prime\prime}$ together with an interpolation argument yields
			\begin{align*}
				&\big\Vert
				\mathcal{U}^{\gXXXX}_{u_0}(z)
				\big[
				\Sigma^{\prime}\big(
				\mathcal{U}^{\gY}_{u_0}(w)+\eta_2(z,w)
				\big)
				-
				\Sigma^{\prime}(\mathcal{U}^{\gY}_{u_0}(w))
				\big]
				\big\Vert
				\\
				&\lesssim
				\Vert \mathcal{U}^{\gXXXX}_{u_0}(z)\Vert
				\Vert\eta_{2}(z,w)\Vert^{\gamma_1-1}.
			\end{align*}
			
			\item
			Thanks to the boundedness of $\Sigma^{\prime}$, we have
			\begin{align*}
				&\big\Vert
				\Sigma^{\prime}(\mathcal{U}^{\gY}_{u_0}(w))
				\big[
				\mathcal{U}^{\gXXXX}_{u_0}(z)
				-
				\mathcal{U}^{\gXXXX}_{u_0}(w)
				\big]
				\big\Vert
				\\
				&\lesssim
				\Vert
				\mathcal{U}^{\gXXXX}_{u_0}(z)
				-
				\mathcal{U}^{\gXXXX}_{u_0}(w)
				\Vert .
			\end{align*}
		\end{enumerate}
		Putting these estimates together, and keeping in mind that $\gamma_1<2-2\kappa$, we conclude that
		\begin{align}\label{NNNNN}
			\begin{split}
				&\sum_{i=1}^{d}\sup_{\substack{
						z,w \in O_{\delta+r}^{r},\ z\neq w\\
						d(z,w)\leq\Vert z,w\Vert_{P_r}}}
				\frac{\Vert z,w \Vert_{P_{r}}^{\gamma_1}\big\Vert \mathrm{Pr}_{\gXXXX\rb}\big[
					\big(\Sigma(\RR{\mathcal{U}}_{u_0}) \tilde{\star} \rb
					\big)(z)-\Gamma_{z,w}\big(
					\left(
					\Sigma(\RR{\mathcal{U}}_{u_0}) \tilde{\star} \rb
					\right)(w)
					\big)\big]\big\Vert}{d(z,w)^{\gamma_1-1}}\\&\lesssim \Big[\sum_{i=1}^{d}\sup_{w\in O_{\delta+r}^r } \Vert w\Vert_{P_r}\Vert \mathcal{U}^{\gXXXX}(w)\Vert\Big]\bigg[\sup_{\substack{
						z,w \in O_{\delta+r}^{r},\ z\neq w\\
						d(z,w)\leq\Vert z,w\Vert_{P_r}}}
				\frac{\Vert z,w \Vert_{P_{r}}^{\gamma_1}\big\Vert \mathrm{Pr}_{\gY}\big[
					\RR{\mathcal{U}}_{u_0}(z)-\Gamma_{z,w}\big(
					\RR{\mathcal{U}}_{u_0}(w)
					\big)\big]\big\Vert}{d(z,w)^{\gamma_1}}\bigg]^{\frac{\gamma_1-1}{\gamma_1}}\\&+ \delta^{\frac{1-\kappa}{2}}[\ \gI \ ]_{{O}_{\delta+r}^r}\Big[\sum_{i=1}^{d}\sup_{w\in O_{T+s}^s } \Vert w\Vert_{P_s}\Vert \mathcal{U}^{\gXXXX}(w)\Vert\Big]+\Big[\sum_{i=1}^{d}\sup_{w\in {O}_{\delta+r}^r} \Vert w\Vert_{P_s}\Vert \mathcal{U}^{\gXXXX}(w)\Vert\Big]^{\gamma_1}\\&+\sum_{i=1}^d\sup_{\substack{
						z,w \in O_{\delta+r}^{r},\ z\neq w\\
						d(z,w)\leq\Vert z,w\Vert_{P_r}}}
				\frac{
					\Vert z,w \Vert_{P_{s}}^{\gamma_1}
					\big\Vert \mathrm{Pr}_{\gXXXX}\big[
					\RR{\mathcal{U}}_{u_0}(z)-\Gamma_{z,w}\big(
					\RR{\mathcal{U}}_{u_0}(w)
					\big)
					\big]\big\Vert
				}{
					d(z,w)^{\gamma_1-1}
				}.
			\end{split}
		\end{align}
		By using the inequality
		\begin{align*}
			AB^{\frac{\gamma_1-1}{\gamma_1}}\lesssim A^{\gamma_1}+B,
		\end{align*}
		it follows from \eqref{NNNNN} that
		\begin{align}\label{A44}
			\begin{split}
				&\sum_{i=1}^{d}\sup_{\substack{
						z,w \in O_{\delta+r}^{r},\ z\neq w\\
						d(z,w)\leq\Vert z,w\Vert_{P_r}}}
				\frac{\Vert z,w \Vert_{P_{r}}^{\gamma_1}\big\Vert \mathrm{Pr}_{\gXXXX\rb}\big[
					\big(\Sigma(\RR{\mathcal{U}}_{u_0}) \tilde{\star} \rb
					\big)(z)-\Gamma_{z,w}\big(
					\left(
					\Sigma(\RR{\mathcal{U}}_{u_0}) \tilde{\star} \rb
					\right)(w)
					\big)\big]\big\Vert}{d(z,w)^{\gamma_1-1}}\\&\lesssim \Big[\sum_{i=1}^{d}\sup_{w\in O_{T+s}^s } \Vert w\Vert_{P_s}\Vert \mathcal{U}^{\gXXXX}(w)\Vert\Big]^{\gamma_1}+ \sup_{\substack{
						z,w \in O_{\delta+r}^{r},\ z\neq w\\
						d(z,w)\leq\Vert z,w\Vert_{P_r}}}
				\frac{\Vert z,w \Vert_{P_{r}}^{\gamma_1}\big\Vert \mathrm{Pr}_{\gY}\big[
					\RR{\mathcal{U}}_{u_0}(z)-\Gamma_{z,w}\big(
					\RR{\mathcal{U}}_{u_0}(w)
					\big)\big]\big\Vert}{d(z,w)^{\gamma_1}}\\&+\delta^{\frac{1-\kappa}{2}}[\ \gI \ ]_{{O}_{\delta+r}^r}\Big[\sum_{i=1}^{d}\sup_{w\in O_{T+s}^s } \Vert w\Vert_{P_s}\Vert \mathcal{U}^{\gXXXX}(w)\Vert\Big]+\sum_{i=1}^d\sup_{\substack{
						z,w \in O_{\delta+r}^{r},\ z\neq w\\
						d(z,w)\leq\Vert z,w\Vert_{P_r}}}
				\frac{
					\Vert z,w \Vert_{P_{s}}^{\gamma_1}
					\big\Vert \mathrm{Pr}_{\gXXXX}\big[
					\RR{\mathcal{U}}_{u_0}(z)-\Gamma_{z,w}\big(
					\RR{\mathcal{U}}_{u_0}(w)
					\big)
					\big]\big\Vert
				}{
					d(z,w)^{\gamma_1-1}
				}.
			\end{split}
		\end{align}
		\textbf{Step 2.}\label{Step2222}  Now we establish a clean estimate for
		\[
		\triplenorm{\mathbf{R}^+_r\left(\Sigma(\RR{\mathcal{U}}_{u_0}) \tilde{\star} \rb\right)}_{\gamma_0,-1-\kappa,O_{\delta+r}^r}^{(r)}.
		\]
		First,	we now proceed to estimate
		\begin{align*}
			\left(
			\sum_{i=1}^{d}
			\sup_{w\in O_{\delta+r}^r}
			\Vert w\Vert_{P_r}\Vert \mathcal{U}^{\gXXXX}(w)\Vert
			\right)^{\gamma_1}.
		\end{align*}
		For $w=(t,x)\in O_{\delta+r}^{r}$, we set
		\begin{align}\label{TTyyta}
			\begin{split}
				\mathcal{P}_{1}(w,\delta+r)
				&=
				\sup_{\substack{
						z \in O_{\delta+r}^r\\
						d(w,z)\leq\frac{\Vert w\Vert_{P_r}}{2}
				}}
				\big\Vert 
				\mathcal{U}^{\gY}_{u_0}(z)
				-\mathcal{U}^{\gY}_{u_0}(w)
				-\eta_1(z,w)
				\big\Vert ,
				\\
				\mathcal{P}_{2}(w,\delta+r)
				&=
				\sup_{\substack{
						z\in O_{\delta+r}^r,\ z\neq w\\
						d(w,z)\leq\frac{\Vert w\Vert_{P_r}}{2}
				}}
				\frac{
					\big\Vert 
					\mathrm{Pr}_{\gY}\big[
					\RR{\mathcal{U}}_{u_0}(z)-\Gamma_{z,w}
					\big(
					\RR{\mathcal{U}}_{u_0}(w)
					\big)
					\big]\big\Vert
				}{
					d(z,w)^{\gamma_1}
				},
				\\
				R(w,\delta)
				&=
				\min\bigg\{
				\frac{\Vert w\Vert_{P_r}}{2},
				\left[
				\frac{\mathcal{P}_{1}(w,\delta+r)}
				{\mathcal{P}_{2}(w,\delta+r)}
				\right]^{\frac{1}{\gamma_1}}
				\bigg\}\footnotemark.
			\end{split}
		\end{align}
		\footnotetext{We assume that
			$\mathcal{P}_{1}(w,\delta+r)$ and $\mathcal{P}_{2}(w,\delta+r)$ are non-zero.
			The degenerate cases lead to the same final estimate.}
		For each $1\leq j\leq d$, we choose
		$\tilde{w}_{j}=(t,\tilde{x})\in O_{\delta+r}^{\,r}$ such that
		\begin{align*}
			X^{k}(\tilde{w}_{j})=
			\begin{cases}
				X^{j}(w)-R(w,\delta)\dfrac{\mathcal{U}^{\gXXXXX}_{u_0}(w)}
				{\Vert \mathcal{U}^{\gXXXXX}_{u_0}(w)\Vert}\footnotemark,
				& k=j,
				\\[6pt]
				X^{k}(w),
				& \text{otherwise}.
			\end{cases}
		\end{align*}
		\footnotetext{We assume that
			$\Vert \mathcal{U}^{\gXXXXX}_{u_0}(w)\Vert \neq 0$.
			Otherwise, the final estimate is trivial.}	
		Thus,
		\[
		d(w,\tilde{w}_j)=R(w,\delta)\leq \frac{\Vert w\Vert_{P_r}}{2}.
		\]
		
		From \eqref{Bnassssw}, we obtain
		\[
		-\eta_2(\tilde{w}_{j},w)=\sum_{i=1}^{d}\mathcal{U}^{\gXXXX}_{u_0}(w)\Pi_{\tilde{w}_j}(\gXXXX)(w)
		=
		R(w,\delta)\Vert \mathcal{U}^{\gXXXXX}_{u_0}(w)\Vert.
		\]
		Therefore, together with \eqref{NMasz} and \eqref{TTyyta}, we conclude that
		\begin{align*}
			\Vert \mathcal{U}^{\gXXXXX}_{u_0}(w)\Vert
			\leq
			\mathcal{P}_{2}(w,\delta+r)\,R(w,\delta)^{\gamma_1-1}
			+
			\frac{\mathcal{P}_{1}(w,\delta+r)}{R(w,\delta)}.
		\end{align*}
		and hence, by substituting the definition of $R(w,\delta)$, we obtain:
		\begin{align*}
			&\Vert \mathcal{U}^{\gXXXXX}_{u_0}(w)\Vert
			\\&\leq
			\big[\mathcal{P}_{2}(w,\delta+r)\big]^{\frac{1}{\gamma_1}}
			\big[\mathcal{P}_{1}(w,\delta+r)\big]^{\frac{\gamma_1-1}{\gamma_1}}
			+
			\max\bigg\{
			\big[\mathcal{P}_{2}(w,\delta+r)\big]^{\frac{1}{\gamma_1}}
			\big[\mathcal{P}_{1}(w,\delta+r)\big]^{\frac{\gamma_1-1}{\gamma_1}},
			\frac{2\mathcal{P}_{1}(w,\delta+r)}{\Vert w\Vert_{P_r}}
			\bigg\}.
		\end{align*}
		Thus
		\begin{align*}
			&	\Vert w\Vert_{P_r}\Vert \mathcal{U}^{\gXXXXX}_{u_0}(w)\Vert
			\leq
			\big[\Vert w\Vert_{P_r}^{\gamma_1}\mathcal{P}_{2}(w,\delta+r)\big]^{\frac{1}{\gamma_1}}
			\big[\mathcal{P}_{1}(w,\delta+r)\big]^{\frac{\gamma_1-1}{\gamma_1}} \\
			&\quad +
			\max\bigg\{
			\big[\Vert w\Vert_{P_r}^{\gamma_1}\mathcal{P}_{2}(w,\delta+r)\big]^{\frac{1}{\gamma_1}}
			\big[\mathcal{P}_{1}(w,\delta+r)\big]^{\frac{\gamma_1-1}{\gamma_1}},
			2\mathcal{P}_{1}(w,\delta+r)
			\bigg\}.
		\end{align*}
		Consequently
		\begin{align}\label{TTT0}
			\begin{split}
				&\big(\sum_{i=1}^{d}\sup_{w\in O_{\delta+r}^r } \Vert w\Vert_{P_r}\Vert \mathcal{U}^{\gXXXX}_{u_0}(w)\Vert\big)^{\gamma_1}\\&
				\lesssim 
				\max\bigg\{
				\big[\sup_{w\in O_{\delta+r}^r}\Vert w\Vert_{P_r}^{\gamma_1}\mathcal{P}_{2}(w,\delta+r)\big]
				\big[\sup_{w\in O_{\delta+r}^r}\mathcal{P}_{1}(w,\delta+r)\big]^{\gamma_1-1} ,
				\big[\sup_{w\in O_{\delta+r}^r}\mathcal{P}_{1}(w,\delta+r)\big]^{\gamma_1} 
				\bigg\}.
			\end{split}
		\end{align}
		By the definition, we have
		\begin{align}\label{TTT1}
			\sup_{w\in O_{\delta+r}^r}\mathcal{P}_{1}(w,\delta+r)
			\lesssim
			\sup_{w\in O_{\delta+r}^r}\Vert \mathcal{U}^{\gY}_{u_0}(w)\Vert
			+
			\delta^{\frac{1-\kappa}{2}}[\ \gI \ ]_{\overline{{O}_{\delta+r}^r}},
		\end{align}
		and
		\begin{align}\label{TTT2}
			\sup_{w\in O_{\delta+r}^r}\Vert w\Vert_{P_r}^{\gamma_1}\mathcal{P}_{2}(w,\delta+r)
			\lesssim
			\sup_{\substack{
					z ,w\in O_{\delta+r}^r,\ z\neq w\\
					d(w,z)\leq \Vert z,w\Vert_{P_r}
			}}
			\frac{
				\Vert z,w \Vert_{P_{r}}^{\gamma_1}
				\big\Vert \mathrm{Pr}_{\gY}\big[
				\RR{\mathcal{U}}_{u_0}(z)-\Gamma_{z,w}\big(
				\RR{\mathcal{U}}_{u_0}(w)
				\big)\big]\big\Vert
			}{
				d(z,w)^{\gamma_1}
			}.
		\end{align}
		Thus, from \eqref{TTT0}--\eqref{TTT2}, we conclude that
		\begin{align}\label{A22}
			\begin{split}
				&\big[\sum_{i=1}^{d}\sup_{w\in O_{\delta+r}^r } 
				\| w\|_{P_r}\|\mathcal{U}^{\gXXXX}_{u_0}(w)\|\big]^{\gamma_1}\\
				&\lesssim \max\Bigg\{
				\Big[
				\sup_{\substack{
						z ,w\in O_{\delta+r}^r,\ z\neq w\\
						d(w,z)\leq \Vert z,w\Vert_{P_r}
				}}
				\frac{\| z,w \|_{P_{r}}^{\gamma_1}
					\big\| \mathrm{Pr}_{\gY}\big[
					\RR{\mathcal{U}}_{u_0}(z)
					-\Gamma_{z,w}\big(
					\RR{\mathcal{U}}_{u_0}(w)
					\big)\big]\big\|}
				{d(z,w)^{\gamma_1}}
				\Big]\\
				&\quad \times
				\Big[
				\Big(\sup_{w\in O_{\delta+r}^r}
				\|\mathcal{U}^{\gY}_{u_0}(w)\|\Big)^{\gamma_1-1}
				+ \delta^{\frac{(1-\kappa)(\gamma_1-1)}{2}}
				\big([\ \gI\ ]_{\overline{{O}_{\delta+r}^r}}\big)^{\gamma_1-1}
				\Big]\\
				&\quad ,
				\Big(\sup_{w\in O_{\delta+r}^r}
				\|\mathcal{U}^{\gY}_{u_0}(w)\|\Big)^{\gamma_1}
				+ \delta^{\frac{(1-\kappa)\gamma_1}{2}}
				\big([\ \gI\ ]_{\overline{{O}_{\delta+r}^r}}\big)^{\gamma_1}
				\Bigg\}.
			\end{split}
		\end{align}
		Note that for $a\in \mathcal{A}^-=\{\rb, \gIb, \gXXXX\rb,\, i=1,\ldots,d\}$, we have
		\begin{align*}
			\sup_{z\in O_{\delta+r}^r\setminus P_r}
			\Vert z\Vert_{P_r}^{|a|_h+1+\kappa}
			\big\Vert \mathrm{Pr}_{a}\big[
			\mathbf{R}^{+}_{r}\left(\Sigma(\RR{\mathcal{U}}_{u_0}) \tilde{\star} \rb\right)(z)
			\big]\big\Vert
			\lesssim 1+\triplenorm{\mathbf{R}^+_r\RR{\mathcal{U}}_{u_0}}_{\gamma_1,0,O_{\delta+r}^r}^{(r)}.
		\end{align*}
		Recall that $0<\delta \leq 1$ and $\gamma_1=\gamma_0+1+\kappa$;  therefore, together \eqref{A11}, \eqref{A33} and \eqref{A44}, we deduce that
		\begin{align}
			\begin{split}
				&\triplenorm{\mathbf{R}^+_r\left(\Sigma(\RR{\mathcal{U}}_{u_0}) \tilde{\star} \rb
					\right)}_{\gamma_0,-1-\kappa,O_{\delta+r}^r}^{(r)}\lesssim \Big[\sum_{i=1}^{d}\sup_{w\in O_{\delta+r}^r } \Vert w\Vert_{P_r}\Vert \mathcal{U}^{\gXXXX}(w)\Vert\Big]^{\gamma_1}\\&+\left[1+\delta^{\frac{1-\kappa}{2}}[\ \gI \ ]_{\overline{{O}_{\delta+r}^r}}\right]\triplenorm{\mathbf{R}^+_r\RR{\mathcal{U}}_{u_0}}_{\gamma_1,0,O_{\delta+r}^r}^{(r)}+\delta^{1-\kappa}\left[[\ \gI \ ]_{\overline{{O}_{\delta+r}^r}}+\big([\ \gI \ ]_{\overline{{O}_{\delta+r}^r}}\big)^2\right]+1.
			\end{split}
		\end{align}
		Consequently, by \eqref{A22}, we obtain
		\begin{align}\label{A6}
			\begin{split}
				&\triplenorm{\mathbf{R}^+_r\left(\Sigma(\RR{\mathcal{U}}_{u_0}) \tilde{\star} \rb
					\right)}_{\gamma_0,-1-\kappa,O_{\delta+r}^r}^{(r)}\lesssim \\&+\triplenorm{\mathbf{R}^+_r\RR{\mathcal{U}}_{u_0}}_{\gamma_1,0,O_{\delta+r}^r}^{(r)}\Big(\sup_{w\in O_{\delta+r}^r}
				\|\mathcal{U}^{\gY}_{u_0}(w)\|\Big)^{\gamma_1-1}+\Big(\sup_{w\in O_{\delta+r}^r}
				\|\mathcal{U}^{\gY}_{u_0}(w)\|\Big)^{\gamma_1}\\&+\left[1+\delta^{\frac{1-\kappa}{2}}[\ \gI \ ]_{\overline{{O}^{r}_{\delta+r}}}\right]\triplenorm{\mathbf{R}^+_r\RR{\mathcal{U}}_{u_0}}_{\gamma_1,0,O_{\delta+r}^r}^{(r)}\\&+\delta^{1-\kappa}\left[[\ \gI \ ]_{\overline{{O}_{\delta+r}^r}}+\big([\ \gI \ ]_{\overline{{O}^{r}_{\delta+r}}}\big)^2\right]+ 
				\delta^{\frac{(1-\kappa)\gamma_1}{2}}\big([\ \gI\ ]_{\overline{{O}_{\delta+r}^r}}\big)^{\gamma_1}+1,
			\end{split}
		\end{align}
		where we used the fact that
		\begin{align*}
			b^{\gamma_1-1}\lesssim 1+b,
			\qquad b\geq 0.
		\end{align*}
		\textbf{Step 3.}\label{Step3333} We now finalize the proof. Recall that \eqref{CZ1} and \eqref{CZ22} hold. By \eqref{A-1}, \eqref{fixed-point}, and \eqref{A6}, one can find a constant $C\geq 1$ such that
		\begin{align}\label{JMMAs}
			\begin{split}
				&\triplenorm{\mathbf{R}^+_r\RR{\mathcal{U}}_{u_0}}_{\gamma_1,0,O_{\delta+r}^r}^{(r)}\leq 	C\delta^{\frac{1-\kappa}{2}}\left[ \|\mathcal{Z}\|_{\overline{O_{r+1}^{r-1}}}^2
				+1\right]
				\triplenorm{\mathbf{R}^+_r \left(\Sigma(\RR{\mathcal{U}}_{u_0}) \tilde{\star} \rb\right)}
				_{\gamma_0,\,-1-\kappa,\,O_{\delta+r}^r}^{(r)}
				\\&+	C|\mu| \delta^{\frac{1-\kappa}{2}}
				\left[
				\|\mathcal{Z}\|_{\overline{O_{r+1}^{r-1}}}^2+1
				\right]
				\triplenorm{\mathbf{R}^+_r\RR{\mathcal{U}}_{u_0}}_
				{\gamma_1,0,O_{\delta+r}^r}^{(r)}	+C\|\mathcal{U}^{\gY}_{u_0}(r,\cdot)\|_{L^{\infty}(\mathbb{T}^d)}\\&\leq C\|\mathcal{U}^{\gY}_{u_0}(r,\cdot)\|_{L^{\infty}(\mathbb{T}^d)}+M_{1}(\mathcal{Z},r,\delta+r)\\&+\left[ M_{2}(\mathcal{Z},r,\delta+r)+M_{3}(\mathcal{Z},r,\delta+r)\Big(\sup_{w\in O_{\delta+r}^r}
				\|\mathcal{U}^{\gY}_{u_0}(w)\|\Big)^{\gamma_1-1}\right]\triplenorm{\mathbf{R}^+_r\RR{\mathcal{U}}_{u_0}}_{\gamma_1,0,O_{\delta+r}^r}^{(r)}\\&+M_{3}(\mathcal{Z},r,\delta+r)\Big(\sup_{w\in O_{\delta+r}^r}
				\|\mathcal{U}^{\gY}_{u_0}(w)\|\Big)^{\gamma_1},
			\end{split}
		\end{align}
		where for some $\widetilde{C}\geq 1$
		\begin{align*}
			&M_{1}(\mathcal{Z},r,\delta+r)\\&=\widetilde{C}\delta^{\frac{1-\kappa}{2}}\left[  \|\mathcal{Z}\|_{\overline{O_{r+1}^{r-1}}}^2
			+1\right]\left( \delta^{1-\kappa}\left[ [\ \gI \ ]_{\overline{{O}_{\delta+r}^r}}+\big([\ \gI\ ]_{\overline{{O}_{\delta+r}^r}}\big)^2\right]+ 
			\delta^{\frac{(1-\kappa)\gamma_1}{2}}\big([\ \gI\ ]_{\overline{{O}_{\delta+r}^r}}\big)^{\gamma_1}+1\right),
			\\&M_{2}(\mathcal{Z},r,\delta+r)=\widetilde{C}\delta^{\frac{1-\kappa}{2}}\left[  \|\mathcal{Z}\|_{\overline{O_{r+1}^{r-1}}}^2
			+1\right]\left[1+|\mu|+\delta^{\frac{1-\kappa}{2}}[\ \gI\ ]_{\overline{{O}_{\delta+r}^r}}\right],\\
			&M_{3}(\mathcal{Z},r,\delta+r)=\widetilde{C}\delta^{\frac{1-\kappa}{2}}\left[  \|\mathcal{Z}\|_{\overline{O_{r+1}^{r-1}}}^2
			+1\right].
		\end{align*}
		Clearly, for each $r,\delta$ satisfying \eqref{BBBBB}, the following inequalities hold:
		\begin{align*}
			M_{1}(\mathcal{Z},r,\delta+r)
			&\leq
			\widetilde{C}\delta^{\frac{1-\kappa}{2}}
			\left[\|\mathcal{Z}\|_{\overline{O_{s+1+T}^{s-1}}}^2+1\right]
			\left[
			1
			+\big([\ \gI\ ]_{\overline{O_{s+T}^s}}\big)^2
			\right] \\
			&=: \delta^{\frac{1-\kappa}{2}}
			\widetilde{M}_{1}(\mathcal{Z},s,T),\\[0.3em]
			M_{2}(\mathcal{Z},r,\delta+r)
			&\leq
			\widetilde{C}\delta^{\frac{1-\kappa}{2}}
			\left[\|\mathcal{Z}\|_{\overline{O_{s+1+T}^{s-1}}}^2+1\right]
			\left[1+|\mu|+[\ \gI\ ]_{\overline{O_{s+T}^s}}\right] \\
			&=: \delta^{\frac{1-\kappa}{2}}
			\widetilde{M}_{2}(\mathcal{Z},s,T),\\[0.3em]
			M_{3}(\mathcal{Z},r,\delta+r)
			&\leq
			\widetilde{C}\delta^{\frac{1-\kappa}{2}}
			\left[\|\mathcal{Z}\|_{\overline{O_{s+1+T}^{s-1}}}^2+1\right] \\
			&=: \delta^{\frac{1-\kappa}{2}}
			\widetilde{M}_{3}(\mathcal{Z},s,T).
		\end{align*}
		Consequently, it follows from \eqref{JMMAs} that
		\begin{align}\label{IAkwewe}
			\begin{split}
				&\triplenorm{\mathbf{R}^+_r\RR{\mathcal{U}}_{u_0}}_{\gamma_1,0,O_{\delta+r}^r}^{(r)}\\&\leq C\|\mathcal{U}^{\gY}_{u_0}(r,\cdot)\|_{L^{\infty}(\mathbb{T}^d)}+\delta^{\frac{1-\kappa}{2}}
				\widetilde{M}_{1}(\mathcal{Z},s,T)\\&+\left[ \delta^{\frac{1-\kappa}{2}}
				\widetilde{M}_{2}(\mathcal{Z},s,T)+\delta^{\frac{1-\kappa}{2}}
				\widetilde{M}_{3}(\mathcal{Z},s,T)\big(\sup_{w\in O_{s+T}^s}
				\|\mathcal{U}^{\gY}_{u_0}(w)\|\big)^{\gamma_1-1}\right]\triplenorm{\mathbf{R}^+_r\RR{\mathcal{U}}_{u_0}}_{\gamma_1,0,O_{\delta+r}^r}^{(r)}\\&+\delta^{\frac{1-\kappa}{2}}
				\widetilde{M}_{3}(\mathcal{Z},s,T)\Big(\sup_{w\in O_{\delta+r}^r}
				\|\mathcal{U}^{\gY}_{u_0}(w)\|\Big)^{\gamma_1},
			\end{split}
		\end{align}
		For an arbitrary $\beta\in(0,1)$, we choose $\delta_\beta\in(0,T]$ such that
		\begin{align*}
			&\delta_\beta^{\frac{1-\kappa}{2}}
			\widetilde{M}_{2}(\mathcal{Z},s,T)+\delta_\beta^{\frac{1-\kappa}{2}}
			\widetilde{M}_{3}(\mathcal{Z},s,T)\big(\sup_{w\in O_{s+T}^s}
			\|\mathcal{U}^{\gY}_{u_0}(w)\|\big)^{\gamma_1-1}\\&=\widetilde{C}
			\delta_\beta^{\frac{1-\kappa}{2}}\left[ \|\mathcal{Z}\|_{\overline{O_{s+1+T}^{s-1}}}^2
			+1\right]\left[1+|\mu|+[\ \gI \ ]_{\overline{{O}^{s}_{s+T}}}+\big(\sup_{w\in O_{s+T}^s}
			\|\mathcal{U}^{\gY}_{u_0}(w)\|\big)^{\gamma_1-1}\right]\leq \beta.
		\end{align*}
		In particular, we make the following explicit choice of $\delta_\beta$,
		\begin{align}\label{NMaz}
			\delta_\beta
			:=
			\min\Bigg\lbrace	\frac{
				\beta^{\frac{2}{1-\kappa}}
			}{
				\widetilde{C}^{\frac{2}{1-\kappa}}\left[\|\mathcal{Z}\|_{\overline{O_{s+1+T}^{s-1}}}^2+1\right]^{\frac{2}{1-\kappa}}
				\left[
				1+|\mu|+[\ \gI\ ]_{\overline{O_{s+T}^s}}
				+\big(\sup_{w\in O_{s+T}^s}
				\|\mathcal{U}^{\gY}_{u_0}(w)\|\big)^{\gamma_1-1}
				\right]^{\frac{2}{1-\kappa}}
			},T\Bigg\rbrace.
		\end{align}
		We aim to apply Lemma~\ref{global}. Let $b < c$ be points in $[s, s+T]$ such that $c-b \leq \delta_\beta$. By definition
		\begin{align}\label{NMasas}
			\begin{split}
				\delta_\beta^{\frac{1-\kappa}{2}}
				\widetilde{M}_{2}(\mathcal{Z},s,T)+\delta_\beta^{\frac{1-\kappa}{2}}
				\widetilde{M}_{3}(\mathcal{Z},s,T)\big(\sup_{w\in O_{s+T}^s}
				\|\mathcal{U}^{\gY}_{u_0}(w)\|\big)^{\gamma_1-1}\leq \beta.
			\end{split}
		\end{align}
		Now, thanks to \eqref{IAkwewe} and \eqref{NMasas}, we obtain 
		\begin{align*}
			\begin{split}
				&\triplenorm{\mathbf{R}^+_{b}\RR{\mathcal{U}}_{u_0}}_{\gamma_1,0,O_{c}^b}^{(b)}\\&\leq 	C\|\mathcal{U}^{\gY}_{u_0}(b,\cdot)\|_{L^{\infty}(\mathbb{T}^d)}+\widetilde{M}_{1}(\mathcal{Z},s,T)+\beta\triplenorm{\mathbf{R}^+_{b}\RR{\mathcal{U}}_{u_0}}_{\gamma_1,0,O_{c}^b}^{(b)}+\widetilde{M}_{3}(\mathcal{Z},s,T)\Big(\sup_{w\in O_{s+T}^s}
				\|\mathcal{U}^{\gY}_{u_0}(w)\|\Big)^{\gamma_1}
			\end{split}
		\end{align*}
		and so
		\begin{align}\label{ASSferg}
			\begin{split}
				&\triplenorm{\mathbf{R}^+_{b}\RR{\mathcal{U}}_{u_0}}_{\gamma_1,0,O_{c}^b}^{(b)}
				\\
				&\leq \frac{1}{1-\beta}\Big[
				C\|\mathcal{U}^{\gY}_{u_0}(b,\cdot)\|_{L^{\infty}(\mathbb{T}^d)}
				+\widetilde{M}_{1}(\mathcal{Z},s,T)
				+\widetilde{M}_{3}(\mathcal{Z},s,T)
				\Big(\sup_{w\in O_{s+T}^s}
				\|\mathcal{U}^{\gY}_{u_0}(w)\|\Big)^{\gamma_1}
				\Big]
				\\
				&\leq \frac{1}{1-\beta}\Big[
				C\sup_{w\in O_{s+T}^s}
				\|\mathcal{U}^{\gY}_{u_0}(w)\|
				+C\|u_0\|_{L^{\infty}(\mathbb{T}^d)}
				+\widetilde{M}_{1}(\mathcal{Z},s,T)
				+\widetilde{M}_{3}(\mathcal{Z},s,T)
				\Big(\sup_{w\in O_{s+T}^s}
				\|\mathcal{U}^{\gY}_{u_0}(w)\|\Big)^{\gamma_1}
				\Big]
				\\
				&=:\Lambda(\beta,u_0,\mathcal{Z}).
			\end{split}
		\end{align}
		Let $\beta=\frac{1}{2}$. Then
		\begin{enumerate}
			\item If $\delta_{\frac{1}{2}}<T$, then from \eqref{NMaz}, \eqref{ASSferg} and Lemma~\ref{global}, we obtain
			\begin{align}\label{SSAS78sasas}
				\begin{split}
					&\triplenorm{\mathbf{R}^+_{s}\RR{\mathcal{U}}_{u_0}}_{\gamma_1,0,O_{s+T}}^{(s)}
					\lesssim
					\left(\frac{1}{\delta_{\frac{1}{2}}}\right)^{\frac{\gamma_1+2}{2}}
					\big( \|\Gamma\|_{\overline{O_{s+T}^s}}\big)^2
					\Lambda(\frac{1}{2},u_0,\mathcal{Z})\\
					&\lesssim P_{1}\Big(\|\mathcal{Z}\|_{\overline{O_{s+1+T}^{s-1}}},[\ \gI\ ]_{\overline{O_{s+T}^s}}, \|\Gamma\|_{\overline{O_{s+T}^s}},\sup_{w\in O_{s+T}^s}
					\|\mathcal{U}^{\gY}_{u_0}(w)\|,\|u_0\|_{L^{\infty}(\mathbb{T}^d)}\Big)
				\end{split}
			\end{align}
			where
			\begin{align*}
				P_{1}(b,c,d,f,g):=&
				d^2
				(1+b^2)^{\frac{\gamma_1+2}{1-\kappa}}
				\big(1+|\mu|+c+f^{\gamma_1-1}\big)^{\frac{\gamma_1+2}{1-\kappa}}
				\\
				&\times
				\big(
				f+g+(1+b^2)(1+c^2+f^{\gamma_1})
				\big).
			\end{align*}
			\item  If $\delta_{\frac{1}{2}}=T$, then from \eqref{ASSferg},
			\begin{align*}
				&\triplenorm{\mathbf{R}^+_{s}\RR{\mathcal{U}}_{u_0}}_{\gamma_1,0,O_{s+T}}^{(s)}\lesssim \tilde{P}_{1}\Big(\|\mathcal{Z}\|_{\overline{O_{s+1+T}^{s-1}}},[\ \gI\ ]_{\overline{O_{s+T}^s}}, \|\Gamma\|_{\overline{O_{s+T}^s}},\sup_{w\in O_{s+T}^s}
				\|\mathcal{U}^{\gY}_{u_0}(w)\|,\|u_0\|_{L^{\infty}(\mathbb{T}^d)}\Big)
			\end{align*}
			where
			\begin{align*}
				\tilde{P}_{1}(b,c,d,f,g)=\big(f+g+(1+b^2)(1+c^2+f^{\gamma_{1}})\big).
			\end{align*}
		\end{enumerate}
		Since $ \|\Gamma\|_{\overline{O_{s+T}^s}}\geq 1$, the bound obtained in \eqref{SSAS78sasas} is also valid when $\delta_{\frac{1}{2}}=T$. This finishes the proof.
	\end{proof}
	\begin{comment}
		content...
		
		For $z=(s_1,x)\in O^{s}_{s+1}$, if $\Vert z\Vert_{P_s}=\sqrt{s_1-s}\geq \sqrt{T_0}$ then clearly we have $z\in O^{s_1-T_0}_{s_1}$. And so for $ a\in \{\gY, \gI, \gXXXX: i=1,..,d\}$
		\begin{align*}
			(T_{0})^{|a|_h}\Vert \RR{\mathcal{U}}_{u_0}(z)\Vert_{a}\leq \triplenorm{\mathbf{R}^+_{s_1-T_0}\RR{\mathcal{U}}_{u_0}}_{\gamma_1,0,O_{s_1}}^{(s_1-T_0)}.
		\end{align*}
		Thus
		\begin{align*}
			\Vert z\Vert_{P_s}\Vert^{|a|_h} \Vert\RR{\mathcal{U}}_{u_0}(z)\Vert_{a}\leq \frac{\triplenorm{\mathbf{R}^+_{s_1-T_0}\RR{\mathcal{U}}_{u_0}}_{\gamma_1,0,O_{s_1}}^{(s_1-T_0)}}{(T_{0})^{|a|_h}}.
		\end{align*}
		
	\end{comment}
	\begin{proof}[\textbf{Proof of Lemma \ref{DCXSZ}}]\label{AAzza1}
		We estimate each of the terms appearing in
		\[
		\triplenorm{
			\mathbf{R}^{+}_{s}F\big(\RR{\mathcal{U}}\big)
			-
			\mathbf{R}^{+}_{s}F\big(\RR{\widetilde{\mathcal{U}}}\big)
		}_{\gamma_1,0,O_{T+s}^{s}}^{(s)}
		\]
		in \eqref{Q00}. Since the proof is somewhat lengthy, we divide it into several
		steps. We first introduce the following notation.
		\begin{align*}
			&\eta_1(z,w):=-\Sigma(\mathcal{U}^{\gY}(w))\Pi_z(\gI)(w), \qquad
			\eta_2(z,w):=-\sum_{i=1}^{d}\mathcal{U}^{\gXXXX}(w)\Pi_z(\gXXXX)(w), \\
			&\widetilde{\eta}_{1}(z,w):=-\Sigma(\widetilde{\mathcal{U}}^{\gY}(w))\Pi_z(\gI)(w), \qquad
			\widetilde{\eta}_{2}(z,w):=-\sum_{i=1}^{d}\widetilde{\mathcal{U}}^{\gXXXX}(w)\Pi_z(\gXXXX)(w),
		\end{align*}
		for all $z,w\in O_{T+s}^{s}$ such that $d(z,w)\leq \Vert z,w\Vert_{P_s}$. Also, throughout the proof, we repeatedly use the following elementary identity without further mention:
		\begin{align*}
			\prod_{i=1}^{m} A_i - \prod_{i=1}^{m} B_i
			= \sum_{j=1}^{m}
			\big(\prod_{i=1}^{j-1} B_i\big)
			(A_j - B_j)
			\big(\prod_{k=j+1}^{m} A_k\big),
			\qquad A_i,B_i\in\mathbb{R}, \ \ \prod_{i=1}^{0}(\cdot)=\prod_{i=m+1}^{m}(\cdot)=1.
		\end{align*}
		Also, $M_{\nu}(b) := \max\{b, b^{\nu}\}$.
		
		\textbf{Step 1.}\label{Step11}
		In this step, we estimate
		\begin{align*}
			\sup_{\substack{z,w \in O_{T+s}^{s}, z\neq w\\ d(z,w)\leq \Vert z,w \Vert_{P_{s}}}}
			\frac{
				\Vert z,w \Vert_{P_{s}}^{\gamma_1}
				\, \big\Vert \mathrm{Pr}_{\gY}\big[
				\big(F(\RR{\mathcal{U}})-F(\RR{\widetilde{\mathcal{U}}})\big)(z)
				-\Gamma_{z,w}\big(\big(F(\RR{\mathcal{U}})-F(\RR{\widetilde{\mathcal{U}}})\big)(w)\big)
				\big]\big\Vert
			}{
				d(z,w)^{\gamma_1}
			}.
		\end{align*}
		This is the most involved term in
		$\triplenorm{
			\mathbf{R}^{+}_{s}F\big(\RR{\mathcal{U}}\big)
			-
			\mathbf{R}^{+}_{s}F\big(\RR{\widetilde{\mathcal{U}}}\big)
		}_{\gamma_1,0,O_{T+s}^{s}}^{(s)}$.
		Note that
		\begin{align}\label{NASBs}
			\begin{split}
				&z,w \in O_{T+s}^{s}, \quad d(z,w)\leq \Vert z,w \Vert_{P_{s}}:\\
				&\Vert\eta_1(z,w)-\widetilde{\eta}_{1}(z,w)\Vert
				\leq \Vert \Pi_z(\gI)(w)\Vert\,
				\Vert \Sigma(\mathcal{U}^{\gY}(w)) - \Sigma(\widetilde{\mathcal{U}}^{\gY}(w))\Vert \\
				&\qquad \lesssim d(z,w)^{1-\kappa}[\ \gI \ ]_{\overline{{O}_{T+s}^s}}\,
				\triplenorm{ \mathbf{R}^{+}_{s}\RR{\mathcal{U}} - \mathbf{R}^{+}_{s}\RR{\widetilde{\mathcal{U}}} }_{\gamma_1,0,O_{T+s}^s}^{(s)},\\[0.5em]
				&\Vert\eta_2(z,w)-\widetilde{\eta}_{2}(z,w)\Vert
				\leq \sum_{i=1}^{d}\Vert \Pi_z(\gXXXX)(w)\Vert\,
				\Vert \mathcal{U}^{\gXXXX}(w)-\widetilde{\mathcal{U}}^{\gXXXX}(w)\Vert \\
				&\qquad \leq \sum_{i=1}^{d}d(z,w)\,
				\Vert \mathcal{U}^{\gXXXX}(w)-\widetilde{\mathcal{U}}^{\gXXXX}(w)\Vert \lesssim\triplenorm{\mathbf{R}^{+}_{s} \RR{\mathcal{U}} - \mathbf{R}^{+}_{s}\RR{\widetilde{\mathcal{U}}} }_{\gamma_1,0,O_{T+s}^s}^{(s)}.
			\end{split}
		\end{align}
		From \eqref{B1478}, \eqref{Hnasssss}, we have :
		\begin{align*}
			&\mathrm{Pr}_{\gY}\big[
			F(\RR{\mathcal{U}})(z)-\Gamma_{z,w}\big(
			F(\RR{\mathcal{U}})(w)
			\big)\big] \\
			&= \underbrace{F(\mathcal{U}^{\gY}(z))
				- F\big(\mathcal{U}^{\gY}(w)+\eta_1(z,w)+\eta_2(z,w)\big)}_{\text{I}(\RR{\mathcal{U}},z,w)} \\
			&\quad + \underbrace{
				F\big(\mathcal{U}^{\gY}(w)+\eta_1(z,w)+\eta_2(z,w)\big)
				- F\big(\mathcal{U}^{\gY}(w)+\eta_2(z,w)\big)
				- F^{\prime}\big(\mathcal{U}^{\gY}(w)\big)\eta_1(z,w)
			}_{\text{II}(\RR{\mathcal{U}},z,w)} \\
			&\quad + \underbrace{
				F\big(\mathcal{U}^{\gY}(w)+\eta_2(z,w)\big)
				- F(\mathcal{U}^{\gY}(w))
				- F^{\prime}(\mathcal{U}^{\gY}(w))\eta_2(z,w)
			}_{\text{III}(\RR{\mathcal{U}},z,w)}
		\end{align*}
		Using this expression, we conclude the following items:
		\begin{itemize}
			\item[1)] By a simple Taylor expansion, we have:
			\begin{align*}
				&\text{I}(\RR{\mathcal{U}},z,w) \\
				&= \int_{0}^{1}
				F'\bigg(
				\alpha \mathcal{U}^{\gY}(z)
				+ (1-\alpha)\big(\mathcal{U}^{\gY}(w)+\eta_1(z,w)+\eta_2(z,w)\big)
				\bigg) 
				\mathrm{Pr}_{\gY}\big[
				\RR{\mathcal{U}}(z)-\Gamma_{z,w}\big(\RR{\mathcal{U}}(w)\big)
				\big]\,\mathrm{d}\alpha.
			\end{align*}
			Together with the assumptions on $F$ and $\Sigma$, we obtain that
			\begin{align*}
				& \Vert\text{I}(\RR{\mathcal{U}},z,w)-\text{I}(\RR{\widetilde{\mathcal{U}}},z,w)\Vert	
				\lesssim \big\Vert \mathrm{Pr}_{\gY}\big[
				(\RR{\mathcal{U}}-\RR{\widetilde{\mathcal{U}}})(z)-\Gamma_{z,w}\big(
				(\RR{\mathcal{U}}-\RR{\widetilde{\mathcal{U}}})(w)
				\big)\big]\big\Vert+\\&\big(\Vert \mathcal{U}^{\gY}(z)-\widetilde{\mathcal{U}}^{\gY}(z)\Vert+\Vert \mathcal{U}^{\gY}(w)-\widetilde{\mathcal{U}}^{\gY}(w)\Vert+\Vert\eta_1(z,w)-\widetilde{\eta}_{1}(z,w)\Vert+\Vert\eta_2(z,w)-\widetilde{\eta}_{2}(z,w)\Vert \big)\\
				&\times \big\Vert \mathrm{Pr}_{\gY}\big[
				\RR{\mathcal{U}}(z)-\Gamma_{z,w}\big(
				\RR{\mathcal{U}}(w)
				\big)\big]\big\Vert.
			\end{align*}
			Therefore, from \eqref{NASBs}, we conclude that
			\begin{align*}
				&\sup_{\substack{z,w \in O_{T+s}^{s},z\neq w\\d(z,w)\leq \Vert z,w \Vert_{P_{s}}}} \frac{\Vert z,w \Vert_{P_{s}}^{\gamma_1} \Vert\text{I}(\RR{\mathcal{U}},z,w)-\text{I}(\RR{\widetilde{\mathcal{U}}},z,w)\Vert }{d(z,w)^{\gamma_1}}	
				\\&\lesssim   \triplenorm{  \mathbf{R}^{+}_{s}\RR{\mathcal{U}} -\mathbf{R}^{+}_{s} \RR{\widetilde{\mathcal{U}}} }_{\gamma_1,0,O_{T+s}^s}^{(s)}\big[\big(1+[\ \gI \ ]_{\overline{{O}_{T+s}^s}}\big)\triplenorm{ \RR{\mathcal{U}}}_{\gamma_1,0,O_{T+s}^s}^{(s)}+1 \big]
			\end{align*}
			\item[2)]  Again, a direct consequence of the Taylor expansion yields
			\begin{align*}
				& \text{II}(\RR{\mathcal{U}},z,w) \\
				&= \int_0^1\int_0^1
				F''\big(\mathcal{U}^{\gY}(w)+\beta\eta_2(z,w)+\alpha\beta\eta_1(z,w)\big)
				\big[\eta_2(z,w)+\alpha\eta_1(z,w)\big]\eta_1(z,w)
				\,\mathrm{d}\beta\,\mathrm{d}\alpha.
			\end{align*}
			Thus, under the assumptions on $F$ and $H$, we obtain
			\begin{align*}
				&\Vert \text{II}(\RR{\mathcal{U}},z,w) -\text{II}(\RR{\widetilde{\mathcal{U}}},z,w) \Vert\\
				&\lesssim \big(\Vert \mathcal{U}^{\gY}(w)-\widetilde{\mathcal{U}}^{\gY}(w)\Vert+\Vert\eta_1(z,w)-\widetilde{\eta}_{1}(z,w)\Vert+\Vert\eta_2(z,w)-\widetilde{\eta}_{2}(z,w)\Vert \big)^{\nu}\\&\times\big(\Vert \eta_1(z,w)\Vert^2+\Vert \eta_1(z,w)\Vert \Vert \eta_2(z,w)\Vert\big)\\&+\big(\Vert\eta_1(z,w)-\widetilde{\eta}_{1}(z,w)\Vert+\Vert\eta_2(z,w)-\widetilde{\eta}_{2}(z,w)\Vert\big)\Vert\eta_1(z,w)\Vert\\&+\big(\Vert \widetilde{\eta}_{1}(z,w)\Vert+\Vert \widetilde{\eta}_{2}(z,w)\Vert\big)\Vert\eta_1(z,w)-\widetilde{\eta}_{1}(z,w)\Vert.
			\end{align*}
			Consequently, from \eqref{NASBs}, it follows that
			\begin{align*}
				&\sup_{\substack{z,w \in O_{T+s}^{s},z\neq w\\d(z,w)\leq \Vert z,w \Vert_{P_{s}}}} \frac{\Vert z,w \Vert_{P_{s}}^{\gamma_1} \Vert\text{II}(\RR{\mathcal{U}},z,w)-\text{II}(\RR{\widetilde{\mathcal{U}}},z,w)\Vert }{d(z,w)^{\gamma_1}}	
				\\&\lesssim   \big(\triplenorm{\mathbf{R}^{+}_{s} \RR{\mathcal{U}} - \mathbf{R}^{+}_{s}\RR{\widetilde{\mathcal{U}}} }_{\gamma_1,0,O_{T+s}^s}^{(s)}\big)^{\nu}\big(1+[\ \gI \ ]_{\overline{{O}_{T+s}^s}}\big)^\nu[\ \gI \ ]_{\overline{{O}_{T+s}^s}}\bigg(\triplenorm{\mathbf{R}^{+}_{s} \RR{\mathcal{U}} }_{\gamma_1,0,O_{T+s}^s}^{(s)}+[\ \gI \ ]_{\overline{{O}_{T+s}^s}}\bigg)\\&+\triplenorm{\mathbf{R}^{+}_{s} \RR{\mathcal{U}} - \mathbf{R}^{+}_{s}\RR{\widetilde{\mathcal{U}}} }_{\gamma_1,0,O_{T+s}^s}^{(s)}[\ \gI \ ]_{\overline{{O}_{T+s}^s}}\bigg(1+[\ \gI \ ]_{\overline{{O}_{T+s}^s}}+\triplenorm{ \mathbf{R}^{+}_{s}\RR{\widetilde{\mathcal{U}}} }_{\gamma_1,0,O_{T+s}^s}^{(s)}\bigg)\\&\lesssim M_{\nu}(\triplenorm{ \mathbf{R}^{+}_{s}\RR{\mathcal{U}} - \mathbf{R}^{+}_{s}\RR{\widetilde{\mathcal{U}}} }_{\gamma_1,0,O_{T+s}^s}^{(s)})\\&\times[\ \gI \ ]_{\overline{{O}_{T+s}^s}}\big(1+[\ \gI \ ]_{\overline{{O}_{T+s}^s}}\big)^\nu\bigg(1+[\ \gI \ ]_{\overline{{O}_{T+s}^s}}+\triplenorm{\mathbf{R}^{+}_{s} \RR{\mathcal{U}} }_{\gamma_1,0,O_{T+s}^s}^{(s)}+\triplenorm{\mathbf{R}^{+}_{s} \RR{\widetilde{\mathcal{U}}} }_{\gamma_1,0,O_{T+s}^s}^{(s)}\bigg).
			\end{align*}
			\item[3)] Analogously to the previous items, from the Taylor expansion we have
			\begin{align*}
				&\text{III}(\RR{\mathcal{U}},z,w)= \int_0^1\int_0^1
				F^{\prime\prime}\big(\mathcal{U}^{\gY}(w)+\alpha\beta\eta_2(z,w)\big)
				\,\alpha\,(\eta_2(z,w))^2
				\,\mathrm{d}\beta\,\mathrm{d}\alpha.
			\end{align*}
			Then
			\begin{align*}
				&\Vert \text{III}(\RR{\mathcal{U}},z,w) -\text{III}(\RR{\widetilde{\mathcal{U}}},z,w) \Vert\\
				&\lesssim \big(\Vert \mathcal{U}^{\gY}(w)-\widetilde{\mathcal{U}}^{\gY}(w)\Vert+\Vert\eta_2(z,w)-\widetilde{\eta}_{2}(z,w)\Vert \big)^{\nu}\Vert \eta_2(z,w)\Vert^2\\&+\big(\Vert {\eta}_{2}(z,w)\Vert+\Vert \widetilde{\eta}_{2}(z,w)\Vert\big)\Vert\eta_2(z,w)-\widetilde{\eta}_{2}(z,w)\Vert.
			\end{align*}
			Consequently, from \eqref{NASBs},
			\begin{align*}
				&\sup_{\substack{z,w \in O_{T+s}^{s}, z\neq w\\d(z,w)\leq \Vert z,w \Vert_{P_{s}}}} \frac{\Vert z,w \Vert_{P_{s}}^{\gamma_1} \Vert\text{III}(\RR{\mathcal{U}},z,w)-\text{III}(\RR{\widetilde{\mathcal{U}}},z,w)\Vert }{d(z,w)^{\gamma_1}}\\&\lesssim M_{\nu}(\triplenorm{ \mathbf{R}^{+}_{s}\RR{\mathcal{U}} -\mathbf{R}^{+}_{s} \RR{\widetilde{\mathcal{U}}} }_{\gamma_1,0,O_{T+s}^s}^{(s)})\bigg[\triplenorm{\mathbf{R}^{+}_{s} \RR{\mathcal{U}} }_{\gamma_1,0,O_{T+s}^s}^{(s)}+\big(\triplenorm{ \mathbf{R}^{+}_{s}\RR{\mathcal{U}} }_{\gamma_1,0,O_{T+s}^s}^{(s)}\big)^2+\triplenorm{ \mathbf{R}^{+}_{s}\RR{\widetilde{\mathcal{U}}} }_{\gamma_1,0,O_{T+s}^s}^{(s)}\bigg].	
			\end{align*}
		\end{itemize}
		Finally, combining these three items, we conclude that
		\begin{align}\label{Z11}
			\begin{split}
				&
				\frac{
					\Vert z,w \Vert_{P_{s}}^{\gamma_1}
					\, \big\Vert \mathrm{Pr}_{\gY}\big[
					\big(F(\RR{\mathcal{U}})-F(\RR{\widetilde{\mathcal{U}}})\big)(z)
					-\Gamma_{z,w}\big(\big(F(\RR{\mathcal{U}})-F(\RR{\widetilde{\mathcal{U}}})\big)(w)\big)
					\big]\big\Vert
				}{
					d(z,w)^{\gamma_1}
				}
				\\
				&\lesssim
				M_{\nu}\!\left(
				\triplenorm{  \mathbf{R}^{+}_{s}\RR{\mathcal{U}} -\mathbf{R}^{+}_{s} \RR{\widetilde{\mathcal{U}}} }_{\gamma_1,0,O_{T+s}^s}^{(s)}
				\right)
				P\big(
				[\ \gI \ ]_{\overline{{O}_{T+s}^s}},
				\triplenorm{\mathbf{R}^{+}_{s} \RR{\mathcal{U}} }_{\gamma_1,0,O_{T+s}^s}^{(s)},
				\triplenorm{ \mathbf{R}^{+}_{s}\RR{\widetilde{\mathcal{U}}} }_{\gamma_1,0,O_{T+s}^s}^{(s)}
				\big),
			\end{split}
		\end{align}
		where
		\begin{align*}
			P(b,c,d)=\max\left\lbrace (1+b)c+1, b(1+b)^\nu(1+b+c+d),c+c^2+d\right\rbrace.
		\end{align*}
		\textbf{Step 2.}\label{Step22}
		Let us now estimate
		\begin{align*}
			\sup_{\substack{z,w \in O_{T+s}^{s}, z\neq w\\ d(z,w)\leq \Vert z,w \Vert_{P_{s}}}}
			\frac{
				\Vert z,w \Vert_{P_{s}}^{\gamma_1}
				\, \big\Vert \mathrm{Pr}_{\gI}\big[
				\big(F(\RR{\mathcal{U}})-F(\RR{\widetilde{\mathcal{U}}})\big)(z)
				-\Gamma_{z,w}\big(\big(F(\RR{\mathcal{U}})-F(\RR{\widetilde{\mathcal{U}}})\big)(w)\big)
				\big]\big\Vert
			}{
				d(z,w)^{\gamma_1-1+\kappa}
			}.
		\end{align*}
		The method of proof is similar to that of the previous step. From \eqref{B1478}, \eqref{Hnasssss}, by a direct use of the Taylor expansion, we have
		\begin{align}\label{Aza41}
			\begin{split}
				&\mathrm{Pr}_{\gI}\big[
				F(\RR{\mathcal{U}})(z)-\Gamma_{z,w}\big(F(\RR{\mathcal{U}})(w)\big)
				\big]
				=F^\prime(\mathcal{U}^{\gY}(w))\big[\Sigma(\mathcal{U}^{\gY}(z))-\Sigma(\mathcal{U}^{\gY}(w))\big]+\Sigma(\mathcal{U}^{\gY}(z))\big[F^\prime(\mathcal{U}^{\gY}(z))-F^\prime(\mathcal{U}^{\gY}(w))\big]\\
				&=
				F^{\prime}(\mathcal{U}^{\gY}(w))
				\int_{0}^{1}
				\Sigma'\big(
				\alpha \mathcal{U}^{\gY}(z)
				+(1-\alpha)\mathcal{U}^{\gY}(w)
				\big)
				\mathrm{Pr}_{\gY}\big[
				\RR{\mathcal{U}}(z)-\Gamma_{z,w}\big(\RR{\mathcal{U}}(w)\big)
				\big]
				\,\mathrm{d}\alpha
				\\
				&\quad+
				F^{\prime}(\mathcal{U}^{\gY}(w))
				\int_{0}^{1}
				\Sigma'\big(
				\alpha \mathcal{U}^{\gY}(z)
				+(1-\alpha)\mathcal{U}^{\gY}(w)
				\big)
				\big[\eta_1(z,w)+\eta_2(z,w)\big]
				\,\mathrm{d}\alpha
				\\
				&\quad+
				\Sigma(\mathcal{U}^{\gY}(z))
				\int_{0}^{1}
				F^{\prime\prime}\big(
				\alpha \mathcal{U}^{\gY}(z)
				+(1-\alpha)\mathcal{U}^{\gY}(w)
				\big)
				\mathrm{Pr}_{\gY}\big[
				\RR{\mathcal{U}}(z)-\Gamma_{z,w}\big(\RR{\mathcal{U}}(w)\big)
				\big]
				\,\mathrm{d}\alpha
				\\
				&\quad+
				\Sigma(\mathcal{U}^{\gY}(z))
				\int_{0}^{1}
				F^{\prime\prime}\big(
				\alpha \mathcal{U}^{\gY}(z)
				+(1-\alpha)\mathcal{U}^{\gY}(w)
				\big)
				\big[\eta_1(z,w)+\eta_2(z,w)\big]
				\,\mathrm{d}\alpha .
			\end{split}
		\end{align}
		
		Recalling the assumptions on $F$ and $\Sigma$, we then derive
		\begin{align}\label{LLm}
			\begin{split}
				&\big\Vert \mathrm{Pr}_{\gI}\big[
				\big(F(\RR{\mathcal{U}})-F(\RR{\widetilde{\mathcal{U}}})\big)(z)
				-\Gamma_{z,w}\big(\big(F(\RR{\mathcal{U}})-F(\RR{\widetilde{\mathcal{U}}})\big)(w)\big)
				\big]\big\Vert
				\\
				&\lesssim
				M_{\nu}\big(
				\Vert \mathcal{U}^{\gY}(z)-\widetilde{\mathcal{U}}^{\gY}(z)\Vert
				+\Vert \mathcal{U}^{\gY}(w)-\widetilde{\mathcal{U}}^{\gY}(w)\Vert
				\big)
				\\
				&\qquad\times
				\bigg[
				\big\Vert\mathrm{Pr}_{\gY}\big[
				\RR{\mathcal{U}}(z)-\Gamma_{z,w}\big(\RR{\mathcal{U}}(w)\big)
				\big]\big\Vert
				+\Vert\eta_1(z,w)\Vert
				+\Vert\eta_2(z,w)\Vert
				\bigg]
				\\
				&\quad+
				\bigg[
				\big\Vert \mathrm{Pr}_{\gY}\big[
				(\RR{\mathcal{U}}-\RR{\widetilde{\mathcal{U}}})(z)
				-\Gamma_{z,w}\big((\RR{\mathcal{U}}-\RR{\widetilde{\mathcal{U}}})(w)\big)
				\big]\big\Vert
				+	\\
				&\qquad\qquad
				\Vert\eta_1(z,w)-\widetilde{\eta}_{1}(z,w)\Vert
				+\Vert\eta_2(z,w)-\widetilde{\eta}_{2}(z,w)\Vert
				\bigg].
			\end{split}
		\end{align}
		Keeping in mind that $\gamma_1 < 2-2\kappa$ and using \eqref{NASBs}, we deduce that
		\begin{align}\label{LLMM}
			\begin{split}
				&z,w \in O_{T+s}^{s}, z\neq w , \quad d(z,w)\leq \Vert z,w \Vert_{P_{s}}:
				\\
				&
				\frac{
					\Vert z,w \Vert_{P_{s}}^{\gamma_1}
					\Vert\eta_1(z,w)-\widetilde{\eta}_{1}(z,w)\Vert
				}{
					d(z,w)^{\gamma_1-1+\kappa}
				}
				\lesssim
				\Vert z,w \Vert_{P_{s}}^{\gamma_1}
				d(z,w)^{2-2\kappa-\gamma_1}
				[\ \gI \ ]_{\overline{{O}_{T+s}^s}}
				\triplenorm{
					\mathbf{R}^{+}_{s}\RR{\mathcal{U}}
					-\mathbf{R}^{+}_{s}\RR{\widetilde{\mathcal{U}}}
				}_{\gamma_1,0,O_{T+s}^s}^{(s)}
				\\
				&\qquad\lesssim
				[\ \gI \ ]_{\overline{{O}_{T+s}^s}}\,
				\triplenorm{
					\mathbf{R}^{+}_{s}\RR{\mathcal{U}}
					-\mathbf{R}^{+}_{s}\RR{\widetilde{\mathcal{U}}}
				}_{\gamma_1,0,O_{T+s}^s}^{(s)},
				\\[0.5em]
				&
				\frac{
					\Vert z,w \Vert_{P_{s}}^{\gamma_1}
					\Vert\eta_2(z,w)-\widetilde{\eta}_{2}(z,w)\Vert
				}{
					d(z,w)^{\gamma_1-1+\kappa}
				}
				\lesssim
				\sum_{i=1}^{d}
				\Vert z,w \Vert_{P_{s}}^{\gamma_1}
				d(z,w)^{2-\gamma_1-\kappa}
				\Vert \mathcal{U}^{\gXXXX}(w)-\widetilde{\mathcal{U}}^{\gXXXX}(w)\Vert
				\\
				&\qquad\lesssim
				\triplenorm{
					\mathbf{R}^{+}_{s}\RR{\mathcal{U}}
					-\mathbf{R}^{+}_{s}\RR{\widetilde{\mathcal{U}}}
				}_{\gamma_1,0,O_{T+s}^s}^{(s)}.
			\end{split}
		\end{align}
		Thus, from \eqref{LLm} and \eqref{LLMM}, we deduce that
		\begin{align}\label{Z22}
			\begin{split}
				&\sup_{\substack{z,w \in O_{T+s}^{s},z\neq w\\d(z,w)\leq \Vert z,w \Vert_{P_{s}}}} \frac{	\Vert z,w \Vert_{P_{s}}^{\gamma_1}\big\Vert 	\mathrm{Pr}_{\gI}\big[
					\big(F(\RR{\mathcal{U}})-F(\RR{\widetilde{\mathcal{U}}})\big)(z)-\Gamma_{z,w}\big(\big(F(\RR{\mathcal{U}})-F(\RR{\widetilde{\mathcal{U}}})\big)(w)\big)
					\big]\big\Vert }{d(z,w)^{\gamma_1-1+\kappa}}\\&\lesssim M_{\nu}\big(\triplenorm{  \mathbf{R}^{+}_{s}\RR{\mathcal{U}} -\mathbf{R}^{+}_{s} \RR{\widetilde{\mathcal{U}}} }_{\gamma_1,0,O_{T+s}^s}^{(s)}\big)\big[1+[\ \gI \ ]_{\overline{{O}_{T+s}^s}}+\triplenorm{ \mathbf{R}^{+}_{s}\RR{\mathcal{U}}}_{\gamma_1,0,O_{T+s}^s}^{(s)}\big].
			\end{split}
		\end{align}
		\textbf{Step 3.}\label{Step33}
		Let us now estimate
		\begin{align*}
			\sup_{\substack{z,w \in O_{T+s}^{s},z\neq w\\ d(z,w)\leq \Vert z,w \Vert_{P_{s}}}}\sum_{i=1}^{d}
			\frac{
				\Vert z,w \Vert_{P_{s}}^{\gamma_1}
				\, \big\Vert \mathrm{Pr}_{\gXXXX}\big[
				\big(F(\RR{\mathcal{U}})-F(\RR{\widetilde{\mathcal{U}}})\big)(z)
				-\Gamma_{z,w}\big(\big(F(\RR{\mathcal{U}})-F(\RR{\widetilde{\mathcal{U}}})\big)(w)\big)
				\big]\big\Vert
			}{
				d(z,w)^{\gamma_1-1}
			}.
		\end{align*}
		Analogous to \eqref{Aza41}, we have
		\begin{align*}
			&	\mathrm{Pr}_{\gXXXX}\big[
			F(\RR{\mathcal{U}})(z)-\Gamma_{z,w}(
			F(\RR{\mathcal{U}})(w)
			)\big]\\&= \mathcal{U}^{\gXXXX}(z)\big[F^\prime(\mathcal{U}^{\gY}(z))-F^\prime(\mathcal{U}^{\gY}(w))\big]+F^\prime(\mathcal{U}^{\gY}(w))\big[\mathcal{U}^{\gXXXX}(z)-\mathcal{U}^{\gXXXX}(w)\big]\\&
			=\mathcal{U}^{\gXXXX}(z)	\int_{0}^{1} F^{\prime\prime}\big(
			\alpha \mathcal{U}^{\gY}(z)
			+ (1-\alpha)\mathcal{U}^{\gY}(w)\big)\mathrm{Pr}_{\gY}\big[
			\RR{\mathcal{U}}(z)-\Gamma_{z,w}\big(
			\RR{\mathcal{U}}(w)
			\big)\big]\mathrm{d}\alpha \\&+\mathcal{U}^{\gXXXX}(z)	\int_{0}^{1} F^{\prime\prime}\big(
			\alpha \mathcal{U}^{\gY}(z)
			+ (1-\alpha)\mathcal{U}^{\gY}(w)\big)[ \eta_1(z,w)
			+\eta_2(z,w)\big]\mathrm{d}\alpha+F^\prime(\mathcal{U}^{\gY}(w))\underbrace{\big[\mathcal{U}^{\gXXXX}(z)-\mathcal{U}^{\gXXXX}(w)\big]}_{\mathrm{Pr}_{\gXXXX}\big[
				\RR{\mathcal{U}}(z)-\Gamma_{z,w}(
				\RR{\mathcal{U}}(w)
				)\big]} .
		\end{align*}
		Therefore, for each $1 \leq i \leq d$, it holds that
		
		\begin{align}\label{NNMass}
			\begin{split}
				&\big\Vert 	\mathrm{Pr}_{\gXXXX}\big[
				\big(F(\RR{\mathcal{U}})-F(\RR{\widetilde{\mathcal{U}}})\big)(z)-\Gamma_{z,w}\big(\big(F(\RR{\mathcal{U}})-F(\RR{\widetilde{\mathcal{U}}})\big)(w)\big)
				\big]\big\Vert\\ \lesssim
				&\Vert  \mathcal{U}^{\gXXXX}(z)-\widetilde{\mathcal{U}}^{\gXXXX}(z)\Vert\bigg[\big\Vert\mathrm{Pr}_{\gY}\big[
				\RR{\mathcal{U}}(z)-\Gamma_{z,w}\left(
				\RR{\mathcal{U}}(w)
				\right)\big]\big\Vert+\Vert\eta_1(z,w)\Vert+\Vert\eta_2(z,w)\Vert\bigg]\\&+\Vert\widetilde{\mathcal{U}}^{\gXXXX}(z)\Vert
				M_{\nu}\Big(
				\Vert \mathcal{U}^{\gY}(z)-\widetilde{\mathcal{U}}^{\gY}(z)\Vert
				+
				\Vert \mathcal{U}^{\gY}(w)-\widetilde{\mathcal{U}}^{\gY}(w)\Vert
				\Big)
				\\
				&\times
				\bigg[
				\big\Vert\mathrm{Pr}_{\gY}\big[
				\RR{\mathcal{U}}(z)-\Gamma_{z,w}\big(
				\RR{\mathcal{U}}(w)
				\big)\big]\big\Vert
				+\Vert\eta_1(z,w)\Vert
				+\Vert\eta_2(z,w)\Vert
				\bigg]\\&+\Vert\widetilde{\mathcal{U}}^{\gXXXX}(z)\Vert \bigg[\big\Vert\mathrm{Pr}_{\gY}\big[
				(\RR{\mathcal{U}}-\RR{\widetilde{\mathcal{U}}})(z)-\Gamma_{z,w}\big(
				(\RR{\mathcal{U}}-\RR{\widetilde{\mathcal{U}}})(w)
				\big)\big]\big\Vert+\Vert\eta_1(z,w)-\widetilde{\eta}_{1}(z,w)\Vert +\Vert\eta_2(z,w)-\widetilde{\eta}_{2}(z,w)\Vert\bigg]\\&+\Vert \mathcal{U}^{\gY}(w)-\widetilde{\mathcal{U}}^{\gY}(w)\Vert\Vert \mathcal{U}^{\gXXXX}(z)-\mathcal{U}^{\gXXXX}(w)\Vert+\big\Vert \mathrm{Pr}_{\gXXXX}\big[
				(\RR{\mathcal{U}}-\RR{\widetilde{\mathcal{U}}})(z)-\Gamma_{z,w}\big(
				(\RR{\mathcal{U}}-\RR{\widetilde{\mathcal{U}}})(w)
				\big)\big]\big\Vert.
			\end{split}
		\end{align}
		For the second line of \eqref{NNMass}, one can see that
		\begin{align*}
			&z,w \in O_{T+s}^{s}, \quad
			d(z,w)\leq \Vert z,w \Vert_{P_{s}}
			=\min\{\Vert z\Vert_{P_s},\Vert w\Vert_{P_s}\}:
			\\
			&\frac{\Vert z,w \Vert_{P_{s}}^{\gamma_1}}{d(z,w)^{\gamma_1-1}}
			\Vert \mathcal{U}^{\gXXXX}(z)-\widetilde{\mathcal{U}}^{\gXXXX}(z)\Vert
			\bigg[
			\big\Vert\mathrm{Pr}_{\gY}\big[
			\RR{\mathcal{U}}(z)-\Gamma_{z,w}\big(\RR{\mathcal{U}}(w)\big)
			\big]\big\Vert
			+\Vert\eta_1(z,w)\Vert
			+\Vert\eta_2(z,w)\Vert
			\bigg]
			\\
			&\lesssim
			d(z,w)\Vert \mathcal{U}^{\gXXXX}(z)-\widetilde{\mathcal{U}}^{\gXXXX}(z)\Vert
			\bigg[
			\frac{\Vert z,w \Vert_{P_{s}}^{\gamma_1}}{d(z,w)^{\gamma_1}}
			\big\Vert\mathrm{Pr}_{\gY}\big[
			\RR{\mathcal{U}}(z)-\Gamma_{z,w}\big(\RR{\mathcal{U}}(w)\big)
			\big]\big\Vert
			\bigg]
			\\
			&+
			[\ \gI\ ]_{\overline{{O}_{T+s}^s}}\,
			\Vert z,w \Vert_{P_{s}}^{\gamma_1}
			d(z,w)^{2-\kappa-\gamma_1}
			\Vert \mathcal{U}^{\gXXXX}(z)-\widetilde{\mathcal{U}}^{\gXXXX}(z)\Vert\\&+
			\Vert z,w \Vert_{P_{s}}^{\gamma_1}
			d(z,w)^{2-\gamma_1}
			\Vert \mathcal{U}^{\gXXXX}(z)-\widetilde{\mathcal{U}}^{\gXXXX}(z)\Vert
			\sum_{j=1}^{d}\Vert \mathcal{U}^{\gXXXXX}(w)\Vert
			\\
			&\lesssim
			[\ \gI\ ]_{\overline{{O}_{T+s}^s}}
			\triplenorm{ \mathbf{R}^{+}_{s}\RR{\mathcal{U}} -\mathbf{R}^{+}_{s} \RR{\widetilde{\mathcal{U}}}}_{\gamma_1,0,O_{T+s}^s}^{(s)}
			+
			\triplenorm{ \mathbf{R}^{+}_{s}\RR{\mathcal{U}} -\mathbf{R}^{+}_{s} \RR{\widetilde{\mathcal{U}}}}_{\gamma_1,0,O_{T+s}^s}^{(s)}
			\triplenorm{\mathbf{R}^{+}_{s}\RR{\mathcal{U}}}_{\gamma_1,0,O_{T+s}^s}^{(s)}.
		\end{align*}	
		The same argument applies to the other terms on the right-hand side of \eqref{NNMass}.
		Therefore, we conclude that
		\begin{align}\label{Z33}
			\begin{split}
				&\sum_{i=1}^d\sup_{\substack{z,w \in O_{T+s}^{s}, z\neq w\\d(z,w)\leq \Vert z,w \Vert_{P_{s}}}}\frac{\Vert z,w \Vert_{P_{s}}^{\gamma_1}\big\Vert 	\mathrm{Pr}_{\gXXXX}\big[
					\big(F(\RR{\mathcal{U}})-F(\RR{\widetilde{\mathcal{U}}})\big)(z)-\Gamma_{z,w}\big(\big(F(\RR{\mathcal{U}})-F(\RR{\widetilde{\mathcal{U}}})\big)(w)\big)
					\big]\big\Vert}{d(z,w)^{\gamma_1-1}}\\&
				\lesssim   M_{\nu}\big(\triplenorm{  \mathbf{R}^{+}_{s}\RR{\mathcal{U}} -\mathbf{R}^{+}_{s} \RR{\widetilde{\mathcal{U}}} }_{\gamma_1,0,O_{T+s}^s}^{(s)}\big)\big(1+ [\ \gI \ ]_{\overline{{O}_{T+s}^s}}+\triplenorm{ \mathbf{R}^{+}_{s}\RR{{\mathcal{U}}}}_{\gamma_1,0,O_{T+s}^s}^{(s)}\big)\big(1+\triplenorm{ \mathbf{R}^{+}_{s}\RR{\widetilde{\mathcal{U}}}}_{\gamma_1,0,O_{T+s}^s}^{(s)}\big).
			\end{split}
		\end{align}
		\textbf{Step 4.}\label{Step44} Now we can finish the proof. Note that, from \eqref{Hnasssss} and the assumptions on $F$ and $\Sigma$, for each
		$a\in \{\gY,\gI,\gXXXX: i=1,\dots,d\}$, we have
		\begin{align}\label{Z44}
			\sup_{z\in O_{T+s}^{s}}
			\Vert z\Vert_{P_s}^{|a|_{h}}
			\big\Vert\mathrm{Pr}_{a}\big[
			\big(F(\RR{\mathcal{U}})-F(\RR{\widetilde{\mathcal{U}}})\big)(z)
			\big]\big\Vert
			\lesssim
			\Big(
			1+\triplenorm{\mathbf{R}^{+}_{s}\RR{{\mathcal{U}}}}_{\gamma_1,0,O_{T+s}^s}^{(s)}
			\Big)
			\triplenorm{\mathbf{R}^{+}_{s}\RR{\mathcal{U}}-\mathbf{R}^{+}_{s}\RR{{\mathcal{U}}}}_{\gamma_1,0,O_{T+s}^s}^{(s)}.
		\end{align}
		By \eqref{Z11}, \eqref{Z22}, \eqref{Z33}, and \eqref{Z44}, we deduce that
		\begin{align*}
			&\triplenorm{\mathbf{R}^{+}_{s}F\big(\RR{\mathcal{U}}\big) - \mathbf{R}^{+}_{s}F\big(\RR{\widetilde{\mathcal{U}}}\big)}_{\gamma_1,0,O_{T+s}^s}^{(s)}\\&\lesssim  M_{\nu}\big(\triplenorm{  \mathbf{R}^{+}_{s}\RR{\mathcal{U}} -\mathbf{R}^{+}_{s} \RR{\widetilde{\mathcal{U}}} }_{\gamma_1,0,O_{T+s}^s}^{(s)}\big) Q\big(
			[\ \gI \ ]_{\overline{{O}_{T+s}^s}},
			\triplenorm{ \mathbf{R}^{+}_{s}\RR{\mathcal{U}} }_{\gamma_1,0,O_{T+s}^s}^{(s)},
			\triplenorm{ \mathbf{R}^{+}_{s}\RR{\widetilde{\mathcal{U}}} }_{\gamma_1,0,O_{T+s}^s}^{(s)}
			\big),
		\end{align*}
		where 
		\begin{align*}
			Q(b,c,d)&=\max\left\lbrace P(b,c,d),(1+b+c)(1+d)\right\rbrace\\&=\max\left\lbrace (1+b)c+1, b(1+b)^\nu(1+b+c+d),c+c^2+d,(1+b+c)(1+d)\right\rbrace.
		\end{align*}
		This concludes the proof.
	\end{proof}
	\subsection*{Acknowledgements} \label{sec:acknowledgements} The author gratefully acknowledges funding from the Deutsche Forschungsgemeinschaft (DFG, German Research Foundation) through CRC/TRR 388 ``Rough Analysis, Stochastic Dynamics and Related Fields'' (Project-ID 516748464). The author also thanks Alexandra Blessing Neam\c{t}u for helpful suggestions that improved the clarity of the paper. 
	\\
	\\
	\textbf{Declaration on the use of AI tools.}
	The author acknowledges the use of OpenAI tools for language and stylistic editing, as well as for checking notation, correcting typographical errors, and refining the presentation of standard definitions. All mathematical results, proofs, and underlying ideas were developed solely by the author.
	
	\bibliographystyle{alpha}
	\bibliography{refs}
	
\end{document}